\definecolor{purple}{HTML}{961C8C}
\theoremstyle{plain}
\newtheorem*{theorem*}{\bf Theorem}
\newtheorem{theorem}{\bf Theorem}[subsection]
\newtheorem{conjecture}{Conjecture}
\renewcommand{\theconjecture}{\Roman{conjecture}}
\newtheorem{cor}[theorem]{Corollary}
\newtheorem{lemma}[theorem]{Lemma}
\newtheorem{prp}[theorem]{Proposition}
\newtheorem{problem}{\bf Problem}
\renewcommand{\theproblem}{\Roman{problem}}
\newtheorem{problemm}[theorem]{Problem}
\newtheorem{thmmain}{\bf Theorem}
\theoremstyle{definition}
\newtheorem{rem}[theorem]{Remark}
\newtheorem*{rem*}{Remark}
\newtheorem{definition}[theorem]{Definition}
\newtheorem{obs}[theorem]{Observation}
\newtheorem{example}[theorem]{Example}
\newtheorem{fml}[theorem]{Formula}
\newtheorem{examples}[theorem]{Examples}
\let\oldbibliography\thebibliography
\renewcommand{\thebibliography}[1]{
  \oldbibliography{#1}
  \setlength{\itemsep}{-1pt}
}
\DeclareMathAlphabet{\mathpzc}{OT1}{pzc}{m}{it}
\newcommand{\size}{\mathrm{size} }  
\newcommand{\sm}{\hspace{.08em}}
\newcommand{\Lk}{\mathrm{Lk}\sm}
\newcommand{\St}{\mathrm{St}\sm}
\newcommand{\intx}{\mathrm{int}\sm}
\newcommand{\rint}{\mathrm{relint}\sm} 
\newcommand{\parti}{\partial \sm}
\newcommand{\TT}{{T}}
\newcommand{\NO}{{N}}
\newcommand{\cl}{\mathrm{cl}\sm}
\newcommand{\conv}{\mathrm{conv}\sm }
\newcommand{\Sp}{\mathrm{sp} \sm }
\newcommand{\rot}{{\mathrm{r}} \sm}
\newcommand{\spi}{\mathrm{s} \sm}
\newcommand{\SSp}{\mathrm{sp} \sm}
\newcommand\Defn[1]{\emph{{#1}}}
\newcommand{\RR}{\mathrm{R} }
\newcommand{\F}{\mathrm{F} }
\newcommand{\fat}{\mathrm{fat} }
\newcommand{\R}{\mathbb{R}}
\newcommand{\Z}{\mathbb{Z}}
\newcommand{\pp}{\mathrm{p} }
\newcommand{\n}{\vv{n}}
\newcommand{\CT}{\mathrm{T}}
\newcommand{\PS}{\CT^{s}}
\newcommand{\eq}{{\mathrm{eq}}}
\newcommand{\e}{\varepsilon}
\newcommand{\T}{\mathfrak{T}}
\newcommand{\RC}{\mathfrak{C}}
\newcommand{\cR}{\mathcal{RS}}
\newcommand\DL{\delta}
\newcommand{\cm}[1]{}
\newcommand{\bigslant}[2]{{\raisebox{.3em}{$#1$} \Big/ \raisebox{-.3em}{$#2$}}}
\renewcommand{\thefigure}{\arabic{section}.\arabic{figure}}
\renewcommand{\thetable}{\arabic{section}.\arabic{table}}
\begin{document}

\author{
Karim A. Adiprasito
\thanks{This work was supported by the DFG within the research training group ``Methods for Discrete Structures'' (GRK1408) and by the Romanian NASR, CNCS – UEFISCDI, project PN-II-ID-PCE-2011-3-0533.}\\ 
\small Institut des Hautes \'Etudes Scientifiques \\
\small Le Bois-Marie 35, route de Chartres \\
\small  91440 Bures-sur-Yvette, France \\
\small \url{adiprasito@math.fu-berlin.de}
\and
\setcounter{footnote}{6}
G\"unter M. Ziegler\thanks{The research leading to these results has received funding from the European Research Council under the European Union's Seventh Framework Programme (FP7/2007-2013) / ERC Grant agreement no.~247029-SDModels and the DFG Collaborative Research Center
SFB/TR 109 ``Discretization in Geometry and Dynamics''} \\
\small Institut f\" ur Mathematik, FU Berlin\\
\small Arnimallee 2\\
\small 14195 Berlin, Germany\\
\small \url{ziegler@math.fu-berlin.de}
}

\date{\footnotesize submitted July 22, 2013\\ revised March 13, 2014}
\title{Many projectively unique polytopes}
\maketitle

\begin{abstract}
We construct an infinite family of $4$-polytopes whose realization spaces have dimension smaller or equal to $96$. This in particular settles a problem going back to Legendre and Steinitz: whether and how
the dimension of the realization space of a polytope is determined/bounded by its $f$-vector.
 
From this, we derive an infinite family of combinatorially distinct $69$-dimensional polytopes whose realization is unique up to projective transformation. This answers a problem posed by Perles and Shephard in the sixties. Moreover, our methods naturally lead to several interesting classes of projectively unique polytopes, among them projectively unique polytopes inscribed to the sphere.

The proofs rely on a novel construction technique for polytopes based on solving Cauchy problems for discrete conjugate nets in $S^d$, a new Alexandrov--van Heijenoort Theorem for manifolds with boundary and a generalization of Lawrence's extension technique for point configurations. 
\end{abstract}

\enlargethispage{5mm}

\section{Introduction}
Legendre initiated the study of the spaces of geometric realizations of polytopes, motivated by problems in mechanics. One of the questions studied in his 1794 monograph on geometry~\cite{Legendre} is:
\begin{quote}
\emph{How many variables are needed to determine a geometric realization of a given (combinatorial type~of) polytope?}
\end{quote}

In other words, Legendre asks for the dimension of the \emph{realization space} $\cR (P)$ of a given polytope~$P$, that is, the space of all coordinatizations for the particular combinatorial type of polytope (cf.\ Definition~\ref{def:realization_space}). For $2$-polytopes (i.e., $2$-dimensional polytopes), it is not hard to see that the number of variables needed is given by $f_0(P)+f_1(P)$. Using Euler's formula, Legendre concluded that for $3$-polytopes the number of variables needed to determine the polytope up to congruence equals the number of its edges 
\cite[Note VIII]{Legendre}. While his argument made use of some tacit assumptions, Legendre's reasoning was later confirmed by Steinitz who supplied the first full proof 
(cf.~\cite[Sec.\ 34]{steinenc}~\cite[Sec.\ 69]{stein}) of what we here call the Legendre--Steinitz formula:
    
\begin{theorem*}[Legendre--Steinitz Formula 
	{\cite[Note VIII]{Legendre}} {\cite[Sec.\ 34]{steinenc}}]\label{thm:legst} For any $3$-polytope $P$, the realization space has dimension $f_0(P)+f_2(P)+4=f_1(P)+6$.
\end{theorem*}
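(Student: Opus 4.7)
The plan is to parametrize realizations by vertex coordinates in $\R^3$, giving a naive parameter space of dimension $3 f_0(P)$, and then subtract the planarity constraints coming from the $2$-faces of~$P$.

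For each $2$-face $F$ with $k = f_0(F)$ vertices, the demand that these $k$ vertices be coplanar imposes $k-3$ scalar conditions: three of the vertices determine an affine plane, and each of the remaining $k-3$ must lie on it, giving one linear equation apiece. Summing $f_0(F)$ over the $2$-faces double-counts the edges (each edge lies in exactly two $2$-faces of a $3$-polytope), so the total number of conditions is
\[
\sum_{F} \bigl(f_0(F) - 3\bigr) \;=\; 2 f_1(P) - 3 f_2(P).
\]
Assuming independence of these conditions, $\cR(P)$ has dimension
\[
3 f_0(P) - \bigl(2 f_1(P) - 3 f_2(P)\bigr) \;=\; 3 f_0(P) - 2 f_1(P) + 3 f_2(P),
\]
which by Euler's relation $f_0 - f_1 + f_2 = 2$ equals $f_0(P) + f_2(P) + 4$, and, by a second application of the same relation, also equals $f_1(P) + 6$.

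The genuine content of the theorem, beyond the bookkeeping above, is that the planarity conditions really are independent at every realization, so that $\cR(P)$ is a smooth manifold of the expected dimension. Following Steinitz, the natural approach is induction on $f_1(P)$: every $3$-polytope can be built from a tetrahedron by a finite sequence of vertex splits (dually, by stacking a vertex onto a face), and one verifies that each such move adds precisely the number of degrees of freedom predicted by the count. The delicate input is the local surjectivity of a split onto the tangent directions prescribed by the formula---a rigidity-type statement ensuring that no unexpected algebraic coincidences collapse the dimension. This is exactly where Steinitz's theorem on the realizability of $3$-connected planar graphs does the work, and it is also what guarantees that $\cR(P)$ is nonempty in the first place, so that the count delivers the true dimension rather than merely an upper bound.
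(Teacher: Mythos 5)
The paper does not prove this result---it cites Legendre and Steinitz and merely remarks that Legendre's argument rested on Euler's formula. Your bookkeeping is correct and matches that classical heuristic: $3f_0(P)$ coordinate parameters, minus $\sum_F\bigl(f_0(F)-3\bigr)=2f_1(P)-3f_2(P)$ coplanarity conditions, rewritten via Euler's relation as $f_0(P)+f_2(P)+4=f_1(P)+6$. You also correctly identify the substantive issue, namely that the coplanarity conditions must be shown to be independent at every point of $\cR(P)$, so that the count gives the true dimension and not merely an upper bound.

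The gap is in your treatment of that substantive issue. Vertex splits alone do not suffice to build every $3$-polytope from the tetrahedron: a vertex split adds one vertex and one edge but leaves $f_2$ unchanged, so starting from $\Delta_3$ (with $f_2=4$) one can never reach the cube (with $f_2=6$) this way. Moreover, the polar dual of a vertex split is a face split (inserting a new edge across a $2$-face), not ``stacking a vertex onto a face''; Steinitz's actual reduction uses $\Delta Y$ and $Y\Delta$ transformations. Beyond these inaccuracies, the crucial assertion---that each combinatorial move lifts to a geometric construction whose degrees of freedom match the count, at \emph{every} realization---is stated but not argued, and it is the entire content of the theorem. One would need to actually prove this local transversality (for instance via the Maxwell--Cremona correspondence or Tutte embeddings, as in Richter-Gebert's modern treatment, which establishes the stronger fact that $\cR(P)$ is a smooth contractible manifold of the stated dimension). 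Finally, note that $\cR(P)$ is nonempty by hypothesis since $P$ is given as a polytope, so Steinitz's realizability theorem is not what is doing the work here. As written, the proposal correctly computes the expected dimension but does not prove that it is attained.
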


In this paper we treat two questions concerning the spaces of geometric realizations of 
higher-dimensional polytopes. The first problem originates with Perles and Shephard~\cite{PerlesShephard}: 

\renewcommand{\theproblem}{P-S}
\begin{problem}[Perles \& Shephard~\cite{PerlesShephard} Kalai~\cite{Kalai}]\label{prb:projun} 
Is it true that, for each fixed $d\ge  2$, the number of distinct combinatorial types of projectively unique $d$-polytopes is finite?
\end{problem}

For the case of $d=4$, McMullen and Shephard made a bolder conjecture, referring to a list of $11$ projectively unique $4$-polytopes constructed by Shephard in the sixties (see Appendix~\ref{ssc:Shphrdlist}).

\renewcommand{\theconjecture}{M-S}
\begin{conjecture}[McMullen \& Shephard~\cite{McMullen}]\label{conj:mcmsh}
Every projectively unique $4$-polytope is accounted for in Shephard's list of $11$ combinatorial types of
projectively unique $4$-polytopes.
\end{conjecture}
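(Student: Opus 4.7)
The plan is to reduce Conjecture M-S to a finiteness statement and then verify the residual cases against Shephard's list. A $4$-polytope $P$ is projectively unique exactly when $\dim\cR(P)$ equals the dimension of the projective group $PGL(5,\R)$ acting on $\R\mathrm{P}^4$, namely $24$. So the task splits into two parts: first, show that only finitely many combinatorial types of $4$-polytopes achieve $\dim\cR(P)=24$; and second, identify these types with Shephard's eleven.

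For the first part I would aim at a Legendre--Steinitz-type lower bound, ideally of the form $\dim\cR(P)\ge c_1 f_0(P)+c_2 f_3(P)-c_3$ with $c_1,c_2>0$. In dimension $3$, Steinitz's proof is inductive: one removes a vertex (or a facet), realizes the remainder, and counts the new parameters available when the vertex is reattached, exploiting the planarity of the vertex-edge graph and Euler's relation. For $4$-polytopes I would try to mimic this by removing a vertex $v$, realizing the complement as a polytope-with-boundary, and quantifying the freedom available when $v$ is reattached — the link of $v$ in a generic $4$-polytope is a $3$-polytope whose realization space already has substantial dimension by the classical Legendre--Steinitz formula, which should translate into a per-vertex contribution to $\dim\cR(P)$. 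Once such a bound is in hand, only finitely many $f$-vectors are compatible with $\dim\cR(P)=24$, and for each $f$-vector the number of combinatorial types of $4$-polytopes is finite; the remaining candidates could then be enumerated and matched with the eleven on Shephard's list.

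The hard part will be the dimension lower bound. Unlike the $3$-dimensional case, reattaching a vertex in dimension $4$ imposes many simultaneous hyperplane incidences on the surrounding facets, and these constraints can propagate globally through the face lattice. In particular, configurations in which many vertices are forced onto a shared collection of hyperplanes — discrete analogs of conjugate nets, precisely the kind of structure alluded to in the abstract — can conceivably absorb all of the naive degrees of freedom produced by the inductive step, so that adding a vertex contributes zero new parameters. If such rigid configurations can be iterated indefinitely, the inductive lower bound collapses and the conjecture is false. I would therefore expect the per-vertex parameter count to go through only under additional genericity hypotheses on $P$, and any route to a proof of Conjecture M-S in its stated form would have to rule out these net-like constructions — which seems out of reach without substantial new tools, and indeed the announcement in the abstract of infinitely many $4$-polytopes of realization-space dimension at most $96$ strongly suggests this strategy is doomed.
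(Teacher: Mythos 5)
The statement you were handed is a \emph{conjecture} that the paper records but does not prove; indeed, the authors present their main theorems as ``strong evidence that Conjecture M-S is wrong,'' not as a proof of it. So there is no paper proof to compare against, and a correct response would be to flag that fact rather than to attempt a derivation. You do ultimately reach the right verdict---that the strategy is doomed and the conjecture is likely false---but there is a substantive error in the setup that is worth correcting independently.

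Your opening premise, that ``a $4$-polytope $P$ is projectively unique exactly when $\dim\cR(P)$ equals $\dim\mathrm{PGL}(5,\R)=24$,'' is not correct. The paper only asserts a one-way implication: if $P$ is projectively unique (and not a join), then $\mathrm{PGL}(\R^{5})$ acts transitively and freely on $\cR(P)$, forcing $\dim\cR(P)=24$. The converse fails: $\cR(P)$ is merely a semialgebraic set of dimension $24$, and nothing prevents it from consisting of several $\mathrm{PGL}$-orbits or from containing $24$-dimensional strata that are not single orbits. The paper is explicit about this gap---``one cannot even determine from the $f$-vector whether a polytope is projectively unique''---and the whole point of Theorem~I is to construct infinitely many non-join $4$-polytopes with $\dim\cR\le 96$; if your ``exactly when'' were true, any of them hitting the lower bound $24$ would instantly be a counterexample to M-S, which the authors do not claim. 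Consequently, even a successful per-vertex lower bound on $\dim\cR$ would only bound the number of \emph{candidates} for projective uniqueness; it would not by itself identify the projectively unique ones, and a further argument (examining transitivity of the group action) would be needed. The paper's own route to projectively unique polytopes is entirely different---Lawrence extensions of projectively unique polytope--point configurations, which certify uniqueness structurally rather than dimensionally.
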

\enlargethispage{3mm}
The connection of Problem~\ref{prb:projun} to the study of the dimension of $\cR(P)$ is this: A $d$-polytope $P$ can be projectively unique only if $\dim \cR (P)$ is smaller or equal to $d(d+2)$, the dimension of the projective linear group $\mathrm{PGL}(\R^{d+1})$ of projective transformations on $\R^d$. In particular, we obtain:

\begin{compactitem}[$\circ$]
\item A $2$-polytope can be projectively unique only if $f_0(P)+f_1(P)\le  8= \dim \mathrm{PGL}(\R^3)$. 
\item A $3$-polytope can be projectively unique only if $f_1(P)+6 
    \le  15= \dim \mathrm{PGL}(\R^4)$. 
\end{compactitem}

\noindent A more careful analysis reveals that this is a complete characterization of projectively unique polytopes in dimensions up to $3$: A $2$-polytope is projectively unique if and only it has $3$ or $4$ vertices;
a $3$-polytope is projectively unique if and only if it has at most $9$ edges~\cite[Sec.~4.8, Pr.~30]{Grunbaum}.

Projectively unique polytopes in dimensions higher than $3$ are far from understood. There has been substantial progress in the understanding of realization spaces of polytopes up to “stable equivalence” (thus, in particular, up to homotopy equivalence), due to the work of Mn\"ev~\cite{Mnev} and Richter-Gebert~\cite{RG}.
Nevertheless, no substantial progress was made on the problem of Perles and Shephard since it was asked in the sixties (see~\cite{PerlesShephard}). Related results on projectively unique polytopes include:

\begin{compactitem}[$\circ$]
\item Any $d$-polytope with at most $d+2$ vertices is projectively unique.
\item There are projectively unique $d$-polytopes with exponentially many vertices~\cite{McMullen} \cite{PerlesShephard}.
\item There are projectively unique polytopes that, while realizable in $\R^8$, are not realizable such that all vertices have rational coordinates~(Perles, see~\cite[Sec.\ 5.5, Thm.\ 4]{Grunbaum}). 
\end{compactitem} 
The above discussion for the low-dimensional cases of Problem~\ref{prb:projun} motivates one to look for bounds on the parameter $\dim \cR (P)$ for $d$-dimensional polytopes $P$ as a step towards the problem of projectively unique polytopes. 
Does the Legendre--Steinitz formula have a 
high-dimensional analogue? If the \Defn{size} of a polytope is defined as the
dimension times the total number of its vertices and facets, 
\[
\size(P)\ := \ d\big(f_0(P)+f_{d-1}(P)\big),
\]
this problem can be made more concrete as follows.

\renewcommand{\theproblem}{L-S}
\begin{problem}[Legendre--Steinitz in general dimensions~\cite{ZA}]\label{prb:steinitz}
How does, for $d$-dimensional polytopes, the dimension of the realization space grow with the size of the polytope?
\end{problem}

We have $\dim\cR(P)=\frac12\size(P)$ for $d=2$ and $\dim\cR(P)=\frac13\size(P)+4$ for $d=3$, so in both cases
the dimension of the realization space grows linearly with the size of the polytope.
In contrast, it is known that for $d\ge 4$ the $f$-vector of a $d$-polytope $P$ does not suffice to 
determine $\dim \cR(P)$. One cannot even determine from the $f$-vector whether a polytope is projectively unique 
(cf.\ Gr\"unbaum~\cite[Sec.\ 4.8, Pr.\ 30]{Grunbaum}). Thus Problem~\ref{prb:steinitz} does not ask for a formula 
for $\dim \cR(\cdot)$ in terms of the size 
of~$P$, but rather for good upper and lower bounds. 
Upper bounds are given by
\[
\dim \cR(P)\le  d\cdot f_0(P)\qquad \text{and}\qquad \dim \cR(P)\le  d\cdot f_{d-1}(P);
\]
these bounds are sharp for simplicial resp.\ simple polytopes. In particular,
we always have
\[
\dim \cR(P) < \size(P).
\]
The quest for lower bounds, however, relates Problem~\ref{prb:steinitz} to the McMullen--Shephard Conjecture and Problem~\ref{prb:projun}, and is apparently a much harder problem about which little is known. Still,   
$4$-polytopes for which the dimension of the realization space grows sublinearly with the size are known:  
In~\cite{ZA}, we argued that for the “{neighborly cubical polytopes}” $\mathrm{NCP}_4[n]$ 
constructed by Joswig \& Ziegler~\cite{JZ} the dimensions of the realization spaces are low relative to their size: 
\[\dim \cR (\mathrm{NCP}_4[n]) \sim 
	(\log\size\, \mathrm{NCP}_4[n])^2.\]

\noindent\textbf{Main Results.} Trivially, the dimension of $\cR(P)$ for a $d$-polytope $P$ is always at least $d(d+1)$. Moreover, unless $P$ is the join of two of its faces, i.e.\ the convex hull of two faces positioned in skew affine subspaces, $\mathrm{PGL}(\R^{d+1})$ acts freely on the realization space $\cR(P)$ and therefore $\dim \cR(P)\ge d(d+2)$.

We answer Problem~\ref{prb:steinitz} by showing
that, in all dimensions $d\ge 4$, this trivial lower bound is asymptotically optimal: There exists, in every dimension $d\ge  4$, an infinite family of 
combinatorially distinct $d$-polytopes, which are not joins, and for which the dimension of the realization space is a 
constant that depends on $d$ --- indeed, we can bound this by $d(d+2)$ plus an absolute constant.

\enlargethispage{1mm}

\begin{thmmain}[Cross-bedding cubical torus polytopes]\label{mthm:Lowdim}
For each $d\ge  4$, there exists an infinite family  of distinct $d$-dimensional polytopes $\mathrm{CCTP}_d[n]$ with $12(n+1)+d-4$ vertices such that $\dim \cR (\mathrm{CCTP}_d[n])\le 76+d(d+1)$ for all~$n\ge1$.
\end{thmmain}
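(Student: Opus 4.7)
The plan is to construct, for $d=4$, an infinite family of $4$-polytopes whose boundary contains a rigidly realized cubical $2$-manifold of torus topology (the \emph{cross-bedding cubical torus} $\T_n$), and then to pass to $d > 4$ by iterated Lawrence-style extensions. The central insight is that $\T_n$ carries the combinatorial structure of a discrete conjugate net: the quadrilateral $2$-faces of a $4$-polytope must be planar, so the realization of $\T_n$ on the boundary of $\mathrm{CCTP}_4[n]$ obeys the discrete conjugate-net equations in $S^3$, and a Cauchy-type propagation then forces the entire net to be determined by a bounded amount of initial data, independently of $n$.

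First I would define $\T_n$ combinatorially as a cubical torus assembled from two interleaved cyclic strips of $12(n+1)$ squares meeting in the "cross-bedding" pattern, and then describe $\mathrm{CCTP}_4[n]$ as a $4$-polytope whose $2$-skeleton contains $\T_n$ together with a small, combinatorially fixed "cap" attached on each side of $\T_n$. In any realization of $\mathrm{CCTP}_4[n]$, each $2$-face of $\T_n$ is planar, so $\T_n$ is realized as a discrete conjugate net in $S^3$. The Cauchy problem for discrete conjugate nets then says that once two transverse initial strips of squares are placed, each subsequent vertex is uniquely determined as the intersection of prescribed planes forced by the conjugate condition; consequently $\T_n$, together with the rigid caps, depends up to projective equivalence on a bounded seed whose realization space has dimension at most an absolute constant.

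Next I would invoke the new Alexandrov--van Heijenoort theorem for manifolds with boundary to promote any such locally convex realization of $\T_n$ with the correct cap data to a genuine boundary of a convex $4$-polytope. Careful accounting of the seed freedom, the caps, and the projective group gives $\dim \cR(\mathrm{CCTP}_4[n]) \leq 76 + 4\cdot 5 = 96$. For $d > 4$, I would apply the generalized Lawrence extension $d - 4$ times to $\mathrm{CCTP}_4[n]$. Each application preserves the bounded-parameter rigidity of the base, adds a single vertex, raises the ambient dimension by one, and increases $\dim \cR$ by a quantity depending only on the new dimension (not on $n$). Telescoping produces both the vertex count $12(n+1) + d - 4$ and the bound $\dim \cR(\mathrm{CCTP}_d[n]) \leq 76 + d(d+1)$; infinitely many pairwise distinct combinatorial types follow from the strictly increasing vertex count in $n$.

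The main obstacle is the global convexity step. A priori, the discrete Cauchy evolution of the conjugate net could, after wrapping around the torus $n$ times, produce a self-intersecting or non-convex cubical immersion in $S^3$, destroying the polytope structure. Ruling this out uniformly in $n$ will require a careful spherical-geometric analysis showing that local convexity imposed on the initial strips propagates along the conjugate-net evolution around all of $\T_n$, so that the hypotheses of the Alexandrov--van Heijenoort theorem for manifolds with boundary are met. This is where the bulk of the technical work lies, and it is the reason the theorem rests on a new extension of Alexandrov--van Heijenoort rather than its classical form.
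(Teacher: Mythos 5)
Your high-level strategy---a cubical torus-like complex carrying discrete conjugate-net rigidity, a generalized Alexandrov--van Heijenoort theorem to ensure global convexity, and a dimension-raising step for $d>4$---is the right shape, but two of your key ingredients are described in a way that would make the argument fail.

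First, you describe $\T_n$ as a \emph{$2$-dimensional} cubical torus ``assembled from two interleaved cyclic strips of squares,'' and you invoke planarity of the quadrilateral $2$-faces as the conjugate-net condition. This does not deliver the rigidity you claim. For a surface built from planar quadrilaterals, knowing three vertices of a quad and the containing plane still leaves a $2$-parameter family for the fourth vertex; a discrete $2$-dimensional conjugate net has many degrees of freedom per step, not zero. The determinacy you want---``each subsequent vertex is uniquely determined as the intersection of prescribed planes''---requires a \emph{three}-dimensional cubical complex, because a $3$-cube is pinned down by any seven of its vertices (the eighth is the intersection of the three planes of the three new quadrilaterals). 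The cross-bedding cubical tori in the paper are $3$-dimensional complexes homeomorphic to $(S^1\times S^1)\times I$, and it is precisely this $3$-cube rigidity (Lemma~\ref{lem:cubecmpl} and its consequence Lemma~\ref{lem:uniextem}) that makes the realization space of $\CT[n]$ embed into that of $\CT[1]$. With the $2$-dimensional surface you describe, the realization-space dimension would grow with $n$ and the theorem would be false.

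Second, for $d>4$ you propose ``iterated Lawrence-style extensions.'' Lawrence extensions add \emph{two} vertices per external point, not one, so $d-4$ of them would give $12(n+1)+2(d-4)$ vertices rather than the claimed $12(n+1)+d-4$; they are also the wrong tool here---they are used for Theorem~\ref{mthm:projun} to obtain \emph{projective uniqueness}, at the cost of a large jump in dimension. What your bookkeeping actually describes (one new vertex, ambient dimension up by one, $\dim\cR$ up by $2(d+1)$) is the ordinary \emph{pyramid} construction, which is exactly what the paper uses: $\mathrm{CCTP}_d[n]$ is the $(d-4)$-fold pyramid over $\mathrm{CCTP}_4[n]$, and $76+d(d+1) - \bigl(76+(d-1)d\bigr) = 2d$ matches the $2(d+1)$-increment one dimension down. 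You should replace ``Lawrence-style extensions'' by iterated pyramids.

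Finally, you defer the entire global convexity step (``the main obstacle''), which in the paper occupies Sections~\ref{ssc:extension} and~\ref{ssc:convex} and is where almost all of the technical content lives (the slope monotonicity of Proposition~\ref{prp:slmono}, the local extension Proposition~\ref{prp:locatt}, and the convex-position propagation via Theorem~\ref{thm:locglowib}). As written, your proposal identifies the correct theorem to appeal to but does not supply the spherical-geometric estimates needed to verify its hypotheses uniformly in $n$; without those, the claim that the construction stays in convex position is unjustified.
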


Not only does this settle Problem~\ref{prb:steinitz}, it also provides strong evidence that Conjecture~\ref{conj:mcmsh} is wrong. Building on the proof of Theorem~\ref{mthm:Lowdim} we then solve Problem~\ref{prb:projun} for high-dimensional polytopes:

\begin{thmmain}[Projectively unique cross-bedding cubical torus polytopes]\label{mthm:projun}
For each $d\ge  69$, there exists an infinite family of distinct $d$-dimensional polytopes $\mathrm{PCCTP}_{d} [n],\, n\ge 1,$ with $12(n+1)+60+d$ vertices, all of which are projectively unique.
\end{thmmain}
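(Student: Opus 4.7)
The strategy is to start from the infinite family $\mathrm{CCTP}_d[n]$ provided by Theorem~\ref{mthm:Lowdim} and rigidify it via a controlled application of (a generalization of) Lawrence's extension technique for point configurations, as announced in the abstract. Recall that a $d$-polytope $P$ that is not a join is projectively unique precisely when $\dim\cR(P)=d(d+2)=\dim\mathrm{PGL}(\R^{d+1})$, since $\mathrm{PGL}(\R^{d+1})$ then acts freely on $\cR(P)$ with a single orbit. For $\mathrm{CCTP}_d[n]$ Theorem~\ref{mthm:Lowdim} gives $\dim\cR\le 76+d(d+1)$, so the ``non-projective'' part of the realization space is controlled by a bounded number of parameters, independent of~$n$. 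The task is to kill these residual parameters by adding a bounded number of vertices, uniformly in~$n$, while preserving the fact that different values of $n$ produce combinatorially distinct polytopes.

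The key tool is a generalized Lawrence extension: given a polytope $Q\subset\R^d$ and a projective relation among its vertices that one wishes to enforce, one may append a bounded number of auxiliary vertices in such a way that the combinatorial type of the enlarged polytope forces precisely that relation in every realization. Each application of such a gadget rigidifies one further projective degree of freedom at the cost of a bounded number of new vertices. By inspecting the explicit parametrization of $\cR(\mathrm{CCTP}_d[n])$ used in the proof of Theorem~\ref{mthm:Lowdim}, the residual parameters form a bounded list whose entries do not depend on $n$: they encode the positions and orientations of a constant number of ``seed'' and ``boundary'' data of the cross-bedding cubical torus construction. Attaching one Lawrence-type gadget per entry produces a polytope whose realization space has dimension exactly $d(d+2)$, i.e., a projectively unique polytope.

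The bookkeeping fixes the constants in the statement. The $d-4$ vertices absorbed in $\mathrm{CCTP}_d[n]$ in going from dimension $4$ to dimension $d$ are used to raise the ambient dimension via iterated pyramidal extensions. On top of these, exactly $64$ additional vertices are consumed by the Lawrence gadgets, for a total of $12(n+1)+(d-4)+64 = 12(n+1)+60+d$. The threshold $d\ge 69$ arises from the requirement that the ambient dimension be large enough to accommodate all gadgets simultaneously in generic relative position, without forcing unwanted incidences among them, or between them and the underlying $\mathrm{CCTP}_d[n]$. Since the gadgets are local modifications that leave the cross-bedding cubical torus ``core'' of $\mathrm{CCTP}_d[n]$ intact, distinct values of $n$ still give rise to combinatorially distinct polytopes $\mathrm{PCCTP}_d[n]$, yielding the claimed infinite family.

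The principal obstacle is to certify that the Lawrence gadgets are genuinely \emph{combinatorial}, meaning that the projective relations they enforce are consequences of the face lattice of $\mathrm{PCCTP}_d[n]$ rather than of some accidental metric coincidence in a particular realization. This amounts to a careful compatibility analysis: one must identify those subconfigurations of $\mathrm{CCTP}_d[n]$ that admit a rigidifying Lawrence extension compatible with the cross-bedding structure, and verify that attaching such extensions cannot create projective equivalences between the boundary data that would collapse distinct combinatorial types into one. Once this combinatorial rigidity is established, the dimension count above yields $\dim\cR(\mathrm{PCCTP}_d[n])=d(d+2)$ for all $d\ge 69$ and $n\ge 1$, so each $\mathrm{PCCTP}_d[n]$ is projectively unique.
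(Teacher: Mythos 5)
The central step in your plan is the claim that a non-join $d$-polytope with $\dim\cR(P)=d(d+2)$ is projectively unique. This implication is false: since $\mathrm{PGL}(\R^{d+1})$ acts freely, $\dim\cR(P)\ge d(d+2)$ always holds, and equality only says the quotient $\cR(P)/\mathrm{PGL}(\R^{d+1})$ is zero-dimensional, i.e.\ that there are finitely many projective equivalence classes of realizations --- not one. The paper never argues by a dimension count; projective uniqueness of $\mathrm{PCCTP}_{69}[n]$ is established \emph{constructively}, by exhibiting a projective transformation taking any realization to the reference one. The route is: (i) introduce \emph{weak projective triples} $(P,Q,R)$ --- a polytope $P$, a framing vertex subset $Q\subset\F_0(P)$, and a point configuration $R$ such that $Q\cup R$ is a projectively unique point configuration and some subset of $R$ spans a hyperplane missing $P$ (Definition~\ref{def:wpt}); (ii) show the \emph{subdirect cone} turns such a triple into a projectively unique PP configuration one dimension up (Lemma~\ref{lem:subdc}); (iii) apply a Lawrence extension (Proposition~\ref{prp:mlwextn}). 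Steps (ii) and (iii) are proved by composing explicit projective maps. This detour is needed because, as the paper stresses at the start of Section~\ref{sec:projun}, Lawrence equivalence only sees affine dependences external to the polytope, so applying Lawrence extensions directly to $\mathrm{CCTP}_4[n]$ would require precise information about how an external point set sits relative to each $\mathrm{CCTP}_4[n]$ --- information that is unavailable because these polytopes are defined only recursively.

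Your proposed gadget --- given a polytope and an internal projective relation, attach a few vertices to force that relation in every realization --- is not a primitive the paper has or uses; the genuine Lawrence extension acts on a PP configuration $(P,R)$, raises the dimension by $|R|$, and is only known to rigidify the positions of the external points $R$ relative to $P$. This also explains why your bookkeeping does not describe the actual construction. The paper first restricts to $d=69$ (taking iterated pyramids for $d>69$), and the $129$ extra vertices arise as $1$ (apex of the subdirect cone, raising dimension $4\to5$) plus $2\cdot 64$ (Lawrence extension on the $64$-point set $K=\F_0(\PS[1])\cup R$, raising dimension $5\to 69$); your decomposition $(d-4)+64$ coincides numerically at $d=69$ but posits pyramid extensions before rigidification, which is not what happens. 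The threshold $d\ge 69$ is simply $5+|K|$, not a genericity requirement. Finally, the bulk of Section~\ref{sec:projun} is the explicit construction (Section~\ref{ssc:constr}) of the $40$-point configuration $R$ making $(\mathrm{CCTP}_4[n],\F_0(\PS[1]),R)$ a weak projective triple for all $n$ simultaneously (Lemma~\ref{lem:affine}); this is the technical heart of the proof, and in your plan it is absent, deferred to an unspecified ``compatibility analysis.''
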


This provides further evidence against Conjecture~\ref{conj:mcmsh}. By exploiting algebrao-geometric properties of our construction, we are able to give constructions for even more projectively unique polytopes with additional properties (Section~\ref{sec:varieties}), among them many inscribed projectively unique polytopes.

\begin{thmmain}[cf.\ Theorem \ref{thm:anyfield}]
For any finite field extension $F\subset \R$ over $\mathbb{Q}$ and for all $d\ge 0$ large enough, there is a family of projectively unique $d$-polytopes $\mathrm{PCCTP}^F_d[n],\ n\ge 1,$ on $12(n+1)+d+\mathrm{const.}$ vertices with coordinates in $F$, that are not realizable with coordinates in any strict subfield of $F$.
\end{thmmain}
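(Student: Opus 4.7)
The plan is to upgrade the projectively unique polytopes $\mathrm{PCCTP}_d[n]$ from Theorem II by hard-wiring a primitive element $\alpha$ of $F = \mathbb{Q}(\alpha)$ into the combinatorics as a projective invariant. Since any realization of a projectively unique polytope is projectively equivalent to the canonical one, and cross-ratios are preserved by projective transformations, this forces the field generated by the coordinates to contain $\alpha$ and hence to equal $F$.

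The first step is to build a small \emph{arithmetic gadget}: a projectively unique polytope of bounded dimension and combinatorial complexity that contains four distinguished collinear vertices whose cross-ratio equals~$\alpha$. Essentially any $\alpha \in \R \setminus \{0,1\}$ can arise this way --- for instance via a Lawrence-type extension that enforces collinearity of four marked points, so that their cross-ratio becomes a projective invariant of the whole polytope. Projective uniqueness of the gadget is obtained by the same Lawrence-extension arguments used in the proof of Theorem II, and the gadget contributes only a constant number of vertices and to the dimension, absorbed into the additive constants in the statement.

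The second step is to inject this gadget into the initial (``Cauchy'') data of the discrete conjugate net in $S^d$ underlying $\mathrm{PCCTP}_d[n]$. Since the Cauchy rule for discrete conjugate nets is a rational projective recursion, every vertex produced by the evolution has coordinates in the field generated by the initial data, i.e.\ in $F$. The subsequent Lawrence/Shephard-type extensions enforcing projective uniqueness are also rational over $F$ and are applied in the same combinatorial pattern as in Theorem II, so they preserve both the field of definition and projective uniqueness, giving $\mathrm{PCCTP}^F_d[n]$ realized over $F$. Conversely, if a realization had all coordinates in a proper subfield $F' \subsetneq F$, projective uniqueness would write it as the image of the canonical realization under some element of $\mathrm{PGL}(\R^{d+1})$; the cross-ratio of the four marked gadget vertices would still equal $\alpha$, but cross-ratios of points with coordinates in $F'$ lie in $F'$, contradicting $\alpha \notin F'$.

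The main obstacle is to show that for \emph{every} admissible $\alpha$ the perturbed Cauchy evolution remains non-degenerate, preserving general position, convexity, and the intended combinatorial type for all $n \ge 1$, and that the gadget can be grafted onto the initial data without destroying the inductive step of the discrete conjugate net construction. This is an arithmetic refinement of the non-degeneracy/openness analysis behind Theorem II: one must argue that the locus of ``bad'' parameters --- those causing a degeneration somewhere in the iterated construction or in the Lawrence extensions --- is a proper algebraic subset of $\R$, so that any prescribed algebraic $\alpha$ can be accommodated (possibly after a harmless rational projective re-parametrization of the gadget) while keeping the combinatorial type and projective uniqueness intact.
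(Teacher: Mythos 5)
Your proposal takes a genuinely different route from the paper, and it runs into a difficulty that the paper carefully avoids. You propose to embed a primitive element $\alpha$ of $F$ directly into the initial (Cauchy) data of the cross-bedding cubical torus construction, so that the CCT's own coordinates carry the arithmetic of $F$; you correctly identify that the resulting ``main obstacle'' is to show that the iterative extension machinery (idealness, transversality, convex position, Theorems~\ref{thm:ext} and~\ref{thm:convp}) remains valid for an $\alpha$-dependent family of starting data. This is not a mere general-position argument: idealness and convex position are open but delicate metric conditions, the paper verifies them by hand for a single explicit starting CCT, and nothing in the paper gives a parametrized version of this verification. Your proposal does not resolve this, so the central step is a genuine gap.

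The paper's proof sidesteps the problem entirely by decoupling the two ingredients. It first constructs a \emph{single fixed} rational CCT $\CT^{\mathbb{Q}}[1]$ (with starting coordinates $\vartheta^{\mathbb{Q}}_0$, $\vartheta^{\mathbb{Q}}_1$), checks once that its width-three extension is ideal and in convex position, and uses the rationality of the cube-completion recursion (Formula~\ref{fml:explicit}) to conclude that all extensions $\mathrm{CCTP}^{\mathbb{Q}}_4[n]$ are rational. The field $F$ then enters only through the \emph{external} point configuration $R$ of the weak projective triple: by Corollary~\ref{cor:AP} (the von Staudt--Perles-type construction of \cite{AP}), one can append to the rational set $Q=\F_0(\mathrm{CCTP}^{\mathbb{Q}}_4[1])$ a projectively unique configuration $\mathrm{COOR}^F[Q\cup V]$ that is realizable over $F$ but over no proper subfield. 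Since $R$ never touches the CCT's geometry, no non-degeneracy analysis of the Cauchy evolution over $F$ is needed; the subdirect cone and Lawrence extension both preserve the field of definition and projective uniqueness, giving $\mathrm{PCCTP}^F_d[n]$. Your cross-ratio argument is essentially the content of the von Staudt mechanism behind Corollary~\ref{cor:AP}, so the arithmetic heart of your idea is right; it is only the placement (inside the CCT rather than in the external configuration $R$) that creates the unresolved obstacle.
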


\begin{thmmain}[cf.\ Theorem \ref{thm:projuni}]
For all $d\ge 0$ large enough, there is a family of inscribed projectively unique $d$-polytopes $\mathrm{PCCTP}^{\hspace{.04em} \mathrm{in}}_{d} [n],\, n\ge 1,$ with $12(n+1)+d+\mathrm{const.}$ vertices each.
\end{thmmain}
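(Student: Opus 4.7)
The plan is to upgrade the construction of $\mathrm{PCCTP}_d[n]$ from Theorem~II so that every vertex comes to lie on a common sphere. Two features of that construction make this natural. First, the underlying cross-bedding cubical torus $\mathrm{CCTP}_d[n]$ is produced by solving a Cauchy problem for a discrete conjugate net on $S^d$, and the propagation rules of such nets preserve the sphere; so by choosing the Cauchy data on $S^d$ one obtains an inscribed realization of $\mathrm{CCTP}_d[n]$ essentially for free. Second, the rigidifying gadgets added on top---chiefly the Lawrence-style extensions mentioned in the abstract---constrain only the combinatorial incidences of new vertices, not their positions along a one-parameter pencil; this leaves enough room to place them on the sphere as well.

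Concretely, I would proceed as follows. Start with an inscribed $\mathrm{CCTP}_d[n]$ obtained from spherical Cauchy data. Then replay the proof of Theorem~II, performing each rigidifying extension \emph{inscribedly}: where the original proof places two auxiliary vertices on a line through a vertex $v$ to be rigidified, choose that line to be a secant of $S^d$ through $v$ and take the two new vertices to be the intersection points of that secant with $S^d$. The same substitution applies to the other gadgets used in Theorem~II: each prescribes its auxiliary vertex only up to a one-parameter family, and intersecting that family with $S^d$ provides an inscribed representative. The output is a combinatorial polytope $\mathrm{PCCTP}^{\,\mathrm{in}}_d[n]$ with vertex count of the form $12(n+1)+d+\mathrm{const.}$ (the constant absorbing the additional rigidifying vertices), carrying an inscribed realization by construction.

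Projective uniqueness is then inherited from the argument for Theorem~II: the inscribed realization just constructed is a single point of the realization space of $\mathrm{PCCTP}^{\,\mathrm{in}}_d[n]$, and since the rigidifying gadgets were only replaced in a way that does not change their combinatorial effect, that realization space is again a single $\mathrm{PGL}(\R^{d+1})$-orbit. Hence every realization of $\mathrm{PCCTP}^{\,\mathrm{in}}_d[n]$ is projectively equivalent to the inscribed one above, so the polytope is both inscribed (in at least one, hence---after an appropriate projective transformation---in any chosen realization) and projectively unique.

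The main obstacle will be verifying feasibility of the inscribed replacements of the rigidifying gadgets. For each gadget used in Theorem~II, one must check that $S^d$ meets its admissible pencil of auxiliary positions transversally at a point compatible with all prescribed facet incidences, and that the resulting collection of vertices does not introduce accidental degeneracies (such as three extension vertices becoming collinear or four coplanar) that would alter the combinatorial type. These are finitely many local, open conditions, which I expect can be secured by a small perturbation of the initial Cauchy data on $S^d$ if needed; it is here that I expect the bulk of the technical work to sit.
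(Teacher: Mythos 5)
Your proposal follows essentially the same route as the paper: exploit the Q-net/conjugate-net incidence to propagate the ``all vertices on a sphere'' condition through the CCT extensions (the paper's Proposition~\ref{prp:quadric}), and place the two Lawrence-extension points $\underline{r},\overline{r}$ on the sphere by intersecting the secant line through $r$ with a sphere extending the circumsphere (the paper's Proposition~\ref{prp:li}). Two places where the paper is more careful than your sketch and where you would need to do real work: you must also show the \emph{subdirect cone} step preserves inscribability (Lemma~\ref{lem:sbdci}), which requires the wedge hyperplane and the external point set $R$ to avoid the open ball $\mathcal{B}(P)$ --- this is the concrete form of your ``finitely many local, open conditions,'' not a generic-perturbation matter; and you need to exhibit at least one inscribed \emph{ideal} $3$-CCT in convex position to seed the construction (the paper does this with explicit, rather complicated algebraic coordinates for $\CT^{\mathrm{in}}[3]$), since Theorems~\ref{thm:ext} and~\ref{thm:convp} only propagate from such a seed. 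Also note that projective uniqueness is not literally ``inherited'' from Theorem~\ref{mthm:projun} --- $\mathrm{PCCTP}^{\mathrm{in}}_d[n]$ is a different combinatorial type built from a different weak projective triple --- but is re-established by the same two lemmas (Lemma~\ref{lem:subdc}, Proposition~\ref{prp:mlwextn}) applied to the new inscribed data.
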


\noindent\textbf{Ansatz.} Our construction method is novel in polytope theory: It relies on the construction of convex hypersurfaces by solving the Cauchy problem for a collection of (discrete) partial differential equations.

Our work starts with the simple observation that every realization of the $3$-cube is determined by any seven of its vertices.
(A sharpened version of this will be provided in Lemma~\ref{lem:cubecmpl}.)
Thus there are cubical complexes, and indeed cubical $4$-polytopes,
for which rather few vertices of a realization successively determine all the others.
For example, for the neighborly-cubical $4$-polytopes $\mathrm{NCP}_4[n]$, which have $f_3=(n-2)2^{n-2}$ facets,
any realization is determined by any vertex, its $n$ neighbors, and the $\binom n2$ vertices at distance~$2$, that is,
by $1+n+\binom n2$ of the $f_0=2^n$ vertices.

In order to obtain \emph{arbitrarily large} cubical complexes determined by a \emph{constant} number of vertices, 
we start with the standard unit cube tiling $\RC$ of $\R^3$; let  the $i$th “layer” of this tiling for $i\in\Z$ be given by all the 
$3$-cubes whose vertices have sum-of-coordinates between $i$ and $i+3$, and thus centers with sum-of-coordinates equal to $i+\frac32$.
Successively realizing all the $3$-cubes of the abstract cubical $3$-complex given by $\RC$, starting from those at level~$0$, amounts to a solution of a
Cauchy problem for Q-nets, as studied by Adler \& Veselov~\cite{AdlVes} in a “discrete differential geometry” setting.
Here the number of initial values, namely the vertex coordinates for the cubes in the layer~$0$ of~$\RC$, is still infinite.
However, if we divide the standard unit cube tiling $\RC$ by a suitable $2$-dimensional integer lattice $\Lambda_2$ spanned by two vectors
of sum-of-coordinates~$0$, we obtain an infinite (abstract) cubical $3$-complex $\RC/\Lambda_2$ that is homeomorphic to $(S^1\times S^1)\times\R$, for
which coordinates for a finite number $K$ of cubes determines all the others, where $K$ is given by the determinant of the lattice $\Lambda_2$.
Similarly, if we consider only the subcomplex $\RC[N+3]$ of the unit cube tiling formed by all cubes of layers $0$ to~$N$,
then the quotient $\RC[N+3]/\Lambda_2$ by the $2$-dimensional lattice $\Lambda_2$ is a finite $3$-complex homeomorphic to $(S^1\times S^1)\times[0,N]$
which are arbitrarily large (consisting of $K(N+1)$ $3$-cubes), where any realization is determined by the coordinates
for the vertices first layer of $K$ $3$-cubes. 
The cubical $3$-complexes $\CT[n]:=\RC[N+3]/\Lambda_2$, for $n\ge1$, will be called \emph{cross-bedding cubical tori}\footnote{In geology/sedimentology, \href{http://en.wikipedia.org/wiki/Cross-bedding}{“cross-bedding”} refers to horizontal structures that are internally composed from inclined layers---says \href{http://en.wikipedia.org}{Wikipedia}.}, short \emph{CCTs},
in the following. 
The major part of this work will be to construct realizations for the CCTs in convex position, that is, in the boundaries of $4$-dimensional polytopes.
For this our construction will be inspired by the fibration of $S^3$ into Clifford tori, as used in Santos' work~\cite{Santostriang, Santos}. We note that in~\cite{Santostriang}, the idea to construct polytopal complexes along tori is used to obtain a result related to ours in spirit: Santos provides simplicial complexes that admit only few geometric bistellar flips, whereas we construct complexes (and indeed, polytopes) with few degrees of freedom w.r.t.\ possible realizations.

\medskip

\noindent\textbf{Outline of the paper.} We now sketch the main steps of the paper.

\noindent In \textbf{Section~\ref{sec:nota}}, we review basic facts and definitions about polytopes, polytopal complexes and realization spaces of polytopes.  

\noindent In \textbf{Section~\ref{sec:bblocks}}, we define the family of \emph{cross-bedding cubical tori}, short \emph{CCTs}: 
Throughout the paper, $\CT[n]$ will denote a CCT of \emph{width} $n$, which is a cubical complex on $12(n+1)$ vertices. 
It is of dimension~$3$ for $n\ge  3$. These complexes allow for a natural application of Lemma~\ref{lem:cubecmpl} and 
form the basic building blocks for our constructions. For our approach to the main theorems we use a class of very 
symmetric geometric realizations of the CCTs, the \emph{ideal CCTs}.

\noindent In \textbf{Section~\ref{sec:Lowdim}} we construct in four steps the family of 
convex $4$-polytopes $\mathrm{CCTP}_4[n]$ of Theorem~\ref{mthm:Lowdim}. 
In order to avoid potential problems with unboundedness, we perform this construction in~$S^4$, 
while measuring “progress” with respect to the Clifford torus fibration of the equator $3$-sphere:

\begin{compactenum}[(1)]
\item We start with a CCT $\PS[1]$ in~$S^4$. The first two extensions of~$\PS[1]$ will be treated manually. Thus we obtain $\PS[3]$, an ideal CCT in convex position in~$S^4$. 
\item We prove that our extension techniques apply to $\PS[n-1],\ n\ge  4$, providing the existence of a family of 
polytopal complexes $\PS[n]$ in~$S^4$. The proof works in the following way: We project $\PS[n-1]$ to the equator $3$-sphere, 
prove the existence of the extension, and lift the construction back to $\PS[n]$ in~$S^4$. The existence of the extension is 
the most demanding part of the construction, even though it uses only elementary spherical geometry, since we have to ensure 
that new facets of $\PS[n]$ intersect only in ways predicted by the combinatorics of the complex. 
\item Now that we have constructed the complexes $\PS[n]$, we need to verify that they are in \emph{convex position}, 
i.e.\ that every $\PS[n]$ is the subcomplex of the boundary complex of a convex polytope. A natural corollary of 
the construction is that the $\PS[n]$ are in \emph{locally convex position} (i.e.\ the star of each vertex is 
in convex position). A theorem of Alexandrov and van Heijenoort~\cite{Heij} states that, for $d\ge  3$, 
a locally convex $(d-1)$-manifold without boundary in $\R^d$ is in fact the boundary of a convex body. 
As the complexes $\PS[n]$ are manifolds with boundary, we need a version of the Alexandrov--van Heijenoort Theorem 
for polytopal manifolds with boundary. We provide such a result (Theorem~\ref{thm:locglowib} in Section~\ref{ssc:convex}), 
and use it to prove that the complexes constructed are in convex position. 
\item Next, we introduce the family of polytopes $\mathrm{CCTP}_4[n]:=\conv \PS[n],\ n\ge  1$. 
This is the family announced in Theorem~\ref{mthm:Lowdim}: The realization space of $\mathrm{CCTP}_4[n]$ 
is naturally embedded into the realization space of $\PS[n]$, which in turn is embedded in the realization space of $\PS[1]$, 
in particular, $\dim \cR (\mathrm{CCTP}_4[n])$ is bounded from above by $ \dim \cR (\PS[1])$. 
\end{compactenum}

\noindent In \textbf{Section~\ref{sec:projun}}, we turn to the proof of Theorem~\ref{mthm:projun}. 
The idea is to use Lawrence extensions (cf.\ Richter-Gebert~\cite[Sec.\ 3.3.]{RG}), which produce 
projectively unique polytopes from projectively unique polytope--point configurations. In order to circumvent difficulties from the fact that we have only recursive descriptions of the $\mathrm{CCTP}$ available, 
we introduce the notion of \emph{weak projective triples} (Definition~\ref{def:wpt}).  Weak projective triples 
give rise to projectively unique polytope--point configurations by a variant of the wedge construction 
of polytopes (Definition~\ref{def:subd} and Lemma~\ref{lem:subdc}). To the resulting configuration we can then apply 
Lawrence extensions, obtaining the desired family of projectively unique polytopes $\mathrm{PCCTP}_{69}[n]$.

Weak projective triples and the wedge construction, as introduced in Section~\ref{sec:projun} 
of this paper, 
have already been employed successfully in subsequent work: In~\cite{AP}, they are used to prove a universality theorem for projectively unique polytopes and to provide polytopes that are not subpolytopes of any stacked polytope, thus disproving a conjecture of Shephard~\cite{Shephard74} and Kalai~\cite[p.\ 468]{Kalai}~\cite{KalaiKyoto}.

\noindent Finally, in the \textbf{Appendix} we provide the following:

\begin{compactitem}[$\circ$]
\item In \textbf{Section~\ref{sec:convps}}, we discuss the notion of a polytopal complex in (locally) convex position, and establish an extension of the Alexandrov--van Heijenoort Theorem for polytopal manifolds with boundary.
\item \textbf{Section~\ref{ssc:altproof}} sketches an alternative proof of Main Theorem~\ref{mthm:Lowdim}, which does not need the Alexandrov--van Heijenoort Theorem. The appeal of this approach, which builds on the Maxwell--Cremona relation between reciprocals and liftings, is that it proves that extensions of CCTs are naturally in convex~position.
\item In \textbf{Section~\ref{ssc:Shphrdlist}}, we record Shephard's (conjecturally complete) list of projectively unique $4$-polytopes.
\item An explicit recursion formula for vertex coordinates of the polytopes $\mathrm{CCTP}_4[n]$ is given in \textbf{Section~\ref{ssc:expformula}}.
\item \textbf{Section~\ref{sec:varieties}} presents constructions of even more projectively unique polytopes, including such polytopes with rational vertex coordinates, and projectively unique polytopes inscribed to the sphere.
\item In \textbf{Section~\ref{sec:Lemmas}}, we provide proofs for two lemmas that were deferred in order to get a more transparent presentation for the proof of Theorem~\ref{mthm:Lowdim}.
\end{compactitem}

\begin{small}
	\tableofcontents
\end{small}
\enlargethispage{4mm}
\section{Set-up}\label{sec:nota}
We consider $\R^d$ with the standard orthonormal basis $(e_1,\,\dots,\,e_d)$, and the (unit) sphere $S^d\subseteq \R^{d+1}$ with the canonical intrinsic space form metric induced by the euclidean metric on $\R^{d+1}$. For a point $x$ in~$S^d$ or in $\R^{d+1}$, we denote the coordinates of $x$ with respect to the canonical basis $(e_1,\,\dots,\,e_{d+1})$ by~$x_i$ for~$1\le  i\le  d+1$. 
 
A \Defn{subspace} of $S^d$ shall denote the intersection of a linear subspace in $\R^{d+1}$ with $S^d$. The \Defn{upper hemisphere} $S^d_+$ of $S^d$ is the open hemisphere with center $e_{d+1}$. The \Defn{equator} $S^{d-1}_\eq:=\parti S^d_+$ is the $(d-1)$-sphere, considered as a subspace of $S^d$. For points in $S^d_+\subset S^d$ we use homogeneous coordinates, that is, we normalize the last coordinate of a point in $S^d_+$ to $1$.
 
A \Defn{polytope} in $\R^d$ is the convex hull of finitely many points in $\R^d$. A \Defn{(spherical) polytope} in $S^d$ is the convex hull of a finite number of points in some open hemisphere of $S^d$. Polytopes in a fixed open hemisphere of $S^d$ are in one-to-one correspondence with polytopes in $\R^d$ via central resp.\ radial projection.

\begin{definition}[The realization space of a polytope, cf.\ Richter-Gebert~\cite{RG}]\label{def:realization_space}
 Let $P$ be a convex $d$-polytope, and consider the vertices of $P$ labeled with the integers from $1$ to $n=f_0 (P)$. A $d$-polytope $P'\subset S^d$ with a labeled vertex set is said to \Defn{realize} $P$ if there exists an isomorphism between the face lattices of $P$ and $P'$ respecting the labeling of their vertex sets. We define the \Defn{realization space of $P$} as 
\[ \cR(P):=\big\{V\in(S^{d})^{n}:\conv(V)\text{ realizes }P \big\},\]
that is, as the set of vertex descriptions of realizations of $P$. The realization space is a primary semialgebraic subset of $(S^{d})^{n}$ defined over $\Z$~\cite{Grunbaum}~\cite{ZA}; in particular, its \Defn{dimension} is well-defined. 
\end{definition}

We denote by $\Sp X$ the linear span of a set $X$ in $\R^d$. Likewise, the span of a subset $X$ of $S^d$, that is, the minimal subspace of $S^d$ containing $X$, shall be denoted by $\SSp X$. Similarly, $\conv X$ shall denote the convex hull of a set $X$, and $\cl X$, $\intx X$, $\rint X$ and $\parti X$ shall denote the closure, interior, relative interior and boundary of $X$ respectively. The \Defn{orthogonal projection} from $S^d{\setminus} \{\pm e_{d+1}\}$ to the equator $S^{d-1}_\eq$ associates to $x\in S^d{\setminus} \{\pm e_{d+1}\}$ the unique point $\pp(x)\in S^{d-1}_\eq$ that minimizes the distance to $x$ among all the elements of $S^{d-1}_\eq$. If $x,y$ are two points in $\R^d$ or two non-antipodal points in  $S^d$, then $[x,y]$ is the \Defn{segment} (i.e.\ the shortest path) from $x$ to $y$, parametrized by unit speed. Following a convention common in the literature, we will not strictly differentiate between a segment and its image in $S^d$. The midpoint $m(x,y)$ of a segment $[x,y]$ is the unique point $m$ in $[x,y]$ whose distance to $x$ equals its distance to $y$. The angle between segments sharing a common starting point $x$ is the angle between the tangent vectors of the segments at $x$; in particular it takes a value in the interval $[0,\pi]$. Finally, we denote the $d \times d$ identity matrix by $\mathrm{I}_{d}$, and the matrix representing the reflection at the orthogonal complement of a vector $\nu$ in $\R^d$ by $\spi^\nu_{d}$. For example, $\spi^{e_4}_{5}$ coincides with the diagonal $5\times 5$ matrix with diagonal entries $(1,1,1,-1,1)$.

Furthermore, we set 
\[
R(\beta)\,:=\,
\left(\begin{array}{cc} 
\cos{\beta} & -\sin{\beta} \\
\sin{\beta} & \cos{\beta} \end{array} \right)
\]
\noindent With this, we define the following rotations in~$S^4$ resp.\ $\R^5$:
\[\rot_{1,2}\,:=\,
\left(\begin{array}{ccc} 
R(\nicefrac{\pi}{2}) & 0		&0    \\
0 & \mathrm{I}_2		&0    \\
0                 & 0 & 1\end{array} \right),
 \qquad
\rot_{3,4}\,:=\, 
\left(\begin{array}{ccc} 
\mathrm{I}_2 & 0		&0    \\
0 & R(\nicefrac{\pi}{3})	&0    \\
0                 & 0 & 1 \end{array} \right).
\]

A \Defn{(geometric) polytopal complex} in $\R^d$ (resp.\ in $S^d$) is a collection of polytopes in $\R^d$ (resp.~$S^d$) such that the intersection of any two polytopes is a face of both (cf.~\cite{RourkeSanders}). Our polytopal complexes are usually finite, i.e.\ the number of polytopes in the collection is finite. An \Defn{abstract} polytopal complex is a collection of polytopes that are attached along isometries of their faces (cf.~\cite[Sec.\ 2]{DM-NP}). Two polytopal complexes $C,\, C'$ are \Defn{combinatorially equivalent} if their face posets are isomorphic.

If $v$, $u$ are vertices of $C$ connected by an edge of $C$, then we denote this edge by $[v,u]$, in an intuitive overlap in notion with the segment from $v$ to $u$. A polytope combinatorially equivalent to the regular unit cube $[0,1]^k$ shall simply be called a \Defn{cube}, and a polytopal complex is \Defn{cubical} if all its faces are cubes. The set of dimension $k$ faces of a polytopal complex $C$ will be denoted by $\F_k(C)$, and the cardinality of this set is denoted by $f_k(C).$

Let $C$ be a polytopal complex, and let $\sigma$ be a face of $C$. The \Defn{star} of $\sigma$ in $C$, denoted by $\St(\sigma, C)$, is the minimal subcomplex of $C$ that contains all faces of $C$ containing $\sigma$. If $C$ is a geometric complex in $X=S^d$ or $X=\R^d$, let $\NO_\sigma X$ denote the subspace of the tangent space $\TT_p X$ of $X$ at a point $p\in \rint\sigma$ spanned by tangent vectors orthogonal to $\TT_p \sigma$. The subspace $\NO^1_\sigma X$ of unit tangent vectors in $\NO_\sigma X$ is isometric to the unit sphere of dimension $d-\dim \sigma-1$. If $\tau$ is any face of $C$ containing $\sigma$, then the set of unit tangent vectors in $\NO^1_\sigma X$ pointing towards $\tau$ forms a polytope in $\NO^1_\sigma X$. The collection of all polytopes in $\NO^1_\sigma X$ obtained this way forms a polytopal complex in the $(d-\dim \sigma-1)$-sphere $\NO^1_\sigma X$, denoted by $\Lk(\sigma, C)$, the \Defn{link} of $C$ at $\sigma$. This is well-defined: Up to isometry, $\NO_\sigma X,\ \NO^1_\sigma X$ and $\Lk(\sigma, C)$ do not depend on the choice of the point $p$ in $\rint\sigma$. 

\vskip -2mm

\begin{rem*}
Take care that there are two notions of ``link'' in the literature; alternative to the notion adopted here, the complex $\{\tau\in \St(\sigma,C): \tau \cap \sigma =\emptyset\}$ is sometimes called the link of $\sigma$ as well (for instance by Gr\"unbaum \cite{Grunbaum}). The notion we use here is more reminiscent of tangent spaces in differential geometry, and in particular more prevalent in geometric group theory (compare for instance \cite{Gromov}).
\end{rem*}

\vskip -2mm
\enlargethispage{3mm}

\setcounter{figure}{0}
\section{Cross-bedding cubical tori}\label{sec:bblocks}

In this section, we define our basic building blocks for the proofs of Theorem~\ref{mthm:Lowdim} and, ultimately, Theorem~\ref{mthm:projun}. 
These building blocks, called \Defn{cross-bedding cubical tori}, short \Defn{CCTs}, are cubical complexes homeomorphic to $(S^1\times S^1) \times I$. 
They are obtained as quotients of periodic subcomplexes of the regular unit cube tiling in~$\R^3$. 
The section is divided into three parts: We start by defining CCTs abstractly (Section~\ref{ssc:trc}), 
then define the particular geometric realizations of CCTs used in our construction (Section~\ref{ssc:trg}) 
and close by observing some properties of these geometric realizations (Section~\ref{ssc:trp}). 
Before we start, let us remark that it is not a priori clear that CCTs exist as geometric polytopal complexes 
satisfying the constraints that we define in Section~\ref{ssc:trg}. Explicit examples will be obtained in Section~\ref{ssc:example}.

\subsection{Cross-bedding cubical tori, the combinatorial picture}\label{ssc:trc}

We will now define our building blocks, first as abstract cubical complexes, obtained as quotients of infinite complexes by a lattice. 
We then provide Lemma~\ref{lem:cubecmpl}, a sharpened version of the observation that any seven vertices of a $3$-cube determine
the eighth one, and observe that CCTs allow for a direct application of this lemma.

\begin{definition}[Cross-bedding cubical torus, CCTs, $k$-CCTs]\label{def:CCT}
Let $\RC$ be the unit cube tiling of $\R^3$ on the vertex set $\Z^3\subset \R^3$. For $k\ge  0$, let $\RC[k]$ denote the subcomplex on the vertices $v$ of $\RC$ with $0\le \langle v, \mathbbm{1} \rangle\le  k$. The complexes $\RC$ and $\RC[k]$ are invariant under translations by vectors $(1,-1,0)$, $(1,0,-1)$ and $(0,1,-1)$, and in particular under translation by $(3,-3,0)$ and $(-2,-2,4)$. A cubical complex $\CT$ in some $\R^d$ or $S^d$ is called a \Defn{$k$-CCT} (\Defn{cross-bedding cubical torus of width $k$}) if it is combinatorially equivalent to the abstract polytopal complex \[\T[k]:=\bigslant{\RC[k]}{(3,-3,0)\Z \times (-2,-2,4)\Z.}\]
A \Defn{CCT}, or \Defn{cross-bedding cubical torus}, is a finite polytopal complex that is a $k$-CCT for some $k\ge  0$. The vertices of $\T[k]$ are divided into layers $\TT_\ell[k],\ 0\le \ell \le k$, defined as the sets of vertices $\widetilde{v}$ of $\T$ such that for any representative $v$ of $\widetilde{v}$ in $\RC$, we have $\langle v, \mathbbm{1} \rangle = \ell$. For a $k$-CCT $\CT$, let $\varphi_{\CT}$ denote the isomorphism 
\[\varphi_{\CT}: \T[k] \longrightarrow \CT\]
The $\ell$-th \Defn{layer} of $\CT$, $0\le  \ell\le  k$ of $\CT$ is defined to be the vertex set $\RR(\CT,\ell):=\varphi_{\CT} (\TT_\ell)$, the \Defn{restriction} of~$\CT$ to the $\ell$-th layer. More generally, if $I$ is a subset of $\Z$, then we denote by $\RR(\CT,I)$ the \Defn{restriction} of~$\CT$ to the subcomplex induced by the vertices $\bigcup_{i\in I}\, \RR(\CT,i)$.
\end{definition}
 
\begin{rem}[On cross-bedding cubical tori]\label{rem:prp} $\quad$
\begin{compactitem}[$\circ$]

\item The $f$-vector $f(\CT)=(f_0(\CT),\,f_1(\CT),\,f_2(\CT),\,f_3(\CT))$ of any CCT is given by $f(\T[0])=\big(12,\, 0,\, 0,\, 0)$, $f(\T[1])=\big(24,\, 36,\, 0,\, 0)$ and, for $k\ge  2$, \[f(\T[k])=\big(12(k+1),\,36k,\, 36(k-1),\, 12(k-2)  \big).\]
\item A $0$-CCT consists of $12$ vertices, and no faces of higher dimension. A $1$-CCT is a bipartite $3$-regular graph on $24$ vertices. For $k\ge  2$, any $k$-CCT is homotopy equivalent to the $2$-torus. If $k=2$, it is even homeomorphic to the $2$-torus, and if $k\ge  5$, it is homeomorphic to the product of the $2$-torus with an interval. 
\end{compactitem}
\end{rem}

\begin{definition}[Extensions]
If $\CT$ is a $k$-CCT in some euclidean or spherical space, a CCT $\CT'$ of width $\ell>k$ is an \Defn{extension} of $\CT$ if $\RR(\CT',[0,k])=\CT$, where $[0,k]=\{0,\,1,\,\dots,\,k\}$. If $\ell=k+1$, then $\CT'$ is an \Defn{elementary extension} of $\CT$.
\end{definition}

The following two lemmas will show that extensions are unique.

\begin{lemma}\label{lem:cubecmpl}
Let $Q_1$, $Q_2$, $Q_3$ be three quadrilaterals in some euclidean space (or in some sphere) on vertices $\{a_1,\, a_2,\, a_3,\, a_4\}$, $\{a_1,\, a_4,\, a_5,\, a_6\}$ and $\{a_1,\, a_2,\, a_7,\, a_6\}$ respectively, such that the three quadrilaterals do not lie in a common plane. If $Q'_1$, $Q'_2$, $Q'_3$ is any second such triple with the property that $a_i=a'_i$ for all $i\in \{2,\, \dots,\, 7\}$, then we have $a_1=a'_1$ and $Q_j=Q'_j$  for all $j\in\{1,\,2,\,3\}$. In other words, the coordinates of the vertex $a_1$ can be reconstructed from the coordinates of the vertices $a_i,\, i\in \{2,\, \dots,\, 7\}$. \emph{\qed}

\begin{figure}[htbf]  
\centering 
  \includegraphics[width=0.5\linewidth]{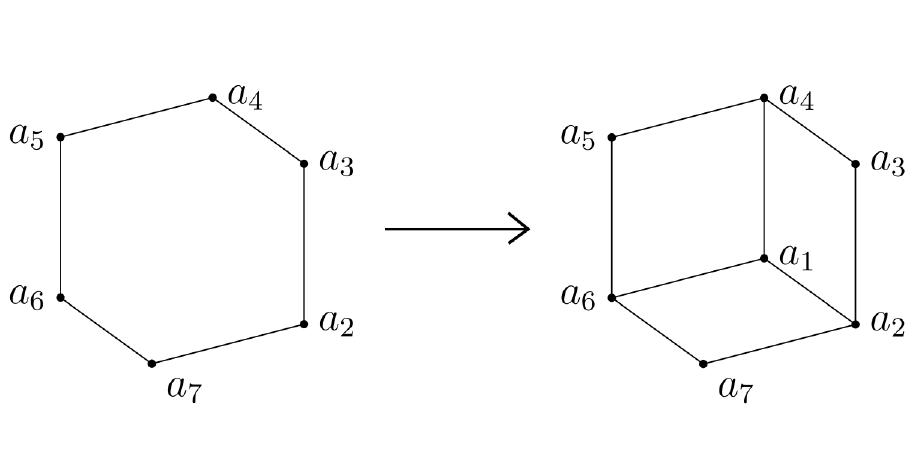} 
  \caption{\small  The vertex $a_1$ of can be reconstructed from the remaining vertices.} 
  \label{fig:cube}
\end{figure}
\end{lemma}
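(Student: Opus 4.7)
The plan is to show that the coordinates of $a_1$ are forced by those of $a_2,\ldots,a_7$ via a transversality argument on three affine (resp.\ spherical) $2$-planes. For $i=1,2,3$, let $\pi_i$ denote the $2$-dimensional affine subspace, or the great $2$-subsphere in the spherical case, spanned by the four vertices of $Q_i$. Because three non-collinear vertices of each $Q_i$ are among $a_2,\ldots,a_7$ and therefore coincide with the corresponding vertices of $Q_i'$, we have $\pi_i=\pi_i'$ for all $i$. Consequently both $a_1$ and $a_1'$ belong to the triple intersection $\pi_1\cap\pi_2\cap\pi_3$, so it will suffice to prove this intersection equals the single point $a_1$.

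For the core of the argument I would observe that each pair of the three planes already shares two of the original seven vertices: $\pi_1\cap\pi_2$ contains $a_1$ and $a_4$, $\pi_1\cap\pi_3$ contains $a_1$ and $a_2$, and $\pi_2\cap\pi_3$ contains $a_1$ and $a_6$. If the three planes are pairwise distinct, each of these pairwise intersections is forced to be an honest line, call them $L_{14}$, $L_{12}$, $L_{16}$, all three passing through $a_1$. The triple intersection then lies in $L_{14}\cap L_{12}$, which is $\{a_1\}$ unless $L_{14}=L_{12}$; but the latter would place $a_1,a_2,a_4$ on a common line, contradicting the fact that no three vertices of a non-degenerate quadrilateral $Q_1$ are collinear.

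If on the other hand two of the planes coincide, say $\pi_1=\pi_2$, then the hypothesis that the three quadrilaterals are not coplanar forces $\pi_3$ to be different from them. In that case $\pi_1\cap\pi_3=\pi_2\cap\pi_3$ is a single line containing both $a_2$ and $a_6$ as well as $a_1$, which again violates the collinearity constraint, this time for $Q_3$. Combining the two cases gives $\pi_1\cap\pi_2\cap\pi_3=\{a_1\}$, so $a_1=a_1'$, and since each $Q_j$ is a planar quadrilateral determined by its four cyclically ordered vertices, we conclude $Q_j=Q_j'$ for $j=1,2,3$.

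The main obstacle, such as it is, is navigating the degenerate situation where two or more of the affine hulls coincide; the combination of the non-coplanarity hypothesis and the non-collinearity of any three vertices of a convex quadrilateral is precisely what prevents the triple intersection from being larger than a point, and the argument simply has to route each constraint to the correct one of $Q_1,Q_2,Q_3$. The spherical case is handled in parallel by interpreting $\pi_i$ as the intersection of $S^d$ with the $3$-dimensional linear subspace of $\R^{d+1}$ spanned by the vertices of $Q_i$, after which the incidences and dimension counts are identical to those in the affine setting.
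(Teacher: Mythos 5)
Your proof is correct, and it is the expected argument for this lemma, which the paper in fact states without proof. One small point deserves attention though: in the spherical case, two distinct great circles on the $2$-sphere $\pi_1$ meet in a \emph{pair} of antipodal points, not one, so the triple intersection $\pi_1\cap\pi_2\cap\pi_3$ a priori contains $\{a_1,-a_1\}$; contrary to your closing sentence, the incidence pattern is not literally identical to the affine case. The antipode is easily ruled out: since $Q_1$ is a convex spherical quadrilateral with cyclic vertex order $(a_1,a_2,a_3,a_4)$, the points $a_1,a_3$ lie on opposite sides of the great circle through $a_2,a_4$, hence $-a_1$ lies on the same side as $a_3$, and so $(a_1',a_2,a_3,a_4)$ with $a_1'=-a_1$ cannot be the cyclic vertex order of a convex quadrilateral (nor would $\{-a_1,a_2,a_3,a_4\}$ necessarily fit in an open hemisphere). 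Apart from this, your case analysis over which of the planes $\pi_i$ may coincide is cleanly handled, and the non-collinearity of any three vertices of a $2$-dimensional polytope is exactly the right constraint, routed to the appropriate $Q_j$ each time.
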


\begin{lemma}[Unique Extension, I]\label{lem:uniext}
Let $\CT$ be a CCT of width $k\ge  2$ in $\R^d$ or $S^d$. If $\CT'$ and $\CT''$ are two elementary extensions of $\CT$, then $\CT''$ coincides with $\CT'$.
\end{lemma}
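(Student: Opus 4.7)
The plan is to reduce the equality $\CT' = \CT''$ to twelve local applications of Lemma~\ref{lem:cubecmpl}, one at each new vertex of layer $k+1$. Because $\RR(\CT', [0,k]) = \RR(\CT'', [0,k]) = \CT$ by hypothesis, the two extensions already agree on every vertex and every face in layers $0, 1, \ldots, k$; so all the work is in layer $k+1$ and in the new $3$-cubes whose top vertex lies there.

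First I would extract the combinatorial bookkeeping inside the model $\T[k+1]$. There, every vertex at layer $k+1$ is the maximum-sum (``top'') vertex of exactly one $3$-cube of $\T[k+1]$, and the remaining seven vertices of that cube lie at layers $k-2, k-1, k$. This is precisely where the hypothesis $k \ge 2$ enters: layer $k-2$ has to sit inside $\T[k]$, hence inside $\CT$. Dually, each vertex at layer $k-2$ is the bottom vertex of a unique new $3$-cube with top at layer $k+1$. Transported along the combinatorial equivalences $\varphi_{\CT'}$ and $\varphi_{\CT''}$, this gives, for every vertex $v$ of $\CT$ at layer $k-2$, a unique new $3$-cube $C' \subset \CT'$ and a unique new $3$-cube $C'' \subset \CT''$ with $v$ as bottom vertex. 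The seven base vertices of $C'$ and of $C''$ all lie in $\CT$ (at layers $k-2, k-1, k$) and are determined by $v$ together with the cubical structure already present in $\CT$, so they coincide as concrete points in $S^d$ (respectively $\R^d$).

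Now I would apply Lemma~\ref{lem:cubecmpl} to the three upper quadrilateral facets of $C'$ meeting at the top vertex $w'$, and to those of $C''$ meeting at $w''$. These three facets are non-coplanar since $C'$ and $C''$ are genuine $3$-dimensional polytopes; their seven base vertices agree between the two cubes; hence Lemma~\ref{lem:cubecmpl} forces $w' = w''$, and consequently $C' = C''$ as geometric cubes. Running this over all twelve layer-$(k-2)$ vertices exhausts the new vertices and new cubes, and gives $\CT' = \CT''$. The only subtle point in the argument is the combinatorial bookkeeping of the previous paragraph: verifying that the shared sub-complex $\CT \subset \CT'$, $\CT \subset \CT''$ really does induce compatible identifications of the ``new cube above $v$'' and of its base vertices between the two extensions. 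Once that is pinned down, the substantive geometric content is entirely packaged in Lemma~\ref{lem:cubecmpl}.
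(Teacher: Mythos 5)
Your proposal matches the paper's proof in approach: both observe that each new $3$-cube $W$ of an elementary extension meets $\CT$ in three quadrilateral $2$-faces (forcing $k\ge 2$ so that the bottom vertex of $W$, at layer $k-2$, already lies in $\CT$), and then invoke Lemma~\ref{lem:cubecmpl} to conclude that the single remaining vertex of $W$ — hence $W$ itself — is determined by $\CT$. The paper's proof is a terse two-sentence version of this; your write-up supplies the combinatorial bookkeeping (the bijection between new cubes and layer-$(k-2)$ vertices, the count of twelve new vertices, the agreement of the base quadrilaterals) that the paper leaves implicit, but the substantive step — Lemma~\ref{lem:cubecmpl} applied to the three faces meeting at the new vertex — is identical.
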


\begin{proof}
Let $W$ denote any facet of $\CT'$ that is not in $\CT$. Then the geometric realization of $W$ is determined by $\CT$: $W\cap \CT$ consists of three $2$-faces of $W$, in particular, by Lemma~\ref{lem:cubecmpl}, $W$ is determined by $\CT$. The same applies to the facets of $\CT''$ not in $\CT$. Thus $\CT'$ and $\CT''$ coincide.
\end{proof}

\begin{rem}
The assumption $k\ge 2$ in Lemma~\ref{lem:uniext} can be weakened to $k\ge 1$ if the vertices of $\CT$ are in sufficiently general position: If for all vertices $v$ of $\RR(\CT',2)$ the faces of $\St(v,\CT')$ are \emph{not} coplanar, then $\CT'$ is uniquely determined by $\CT$.
\end{rem}

\subsection{Cross-bedding cubical tori, the geometric picture}\label{ssc:trg}

In this section we give a geometric framework for CCTs. The control structures for our construction are provided by the weighted Clifford tori in~$S^3_\eq$.

\begin{definition}[Weighted Clifford tori $\mathcal{C}_\lambda$ and Clifford projections $\mathcal{C}_\lambda$]\label{def:clifpauto}
The \Defn{weighted Clifford torus} $\mathcal{C}_\lambda$, $\lambda\in [0,2]$, is the algebraic subset of~$S^3_\eq$ given by the equations $x_1^2+x_2^2=1-\nicefrac{\lambda}{2}$ and $x_3^2+x_4^2=\nicefrac{\lambda}{2}$. If $\lambda$ is in the open interval $(0,2)$, then $\mathcal{C}_\lambda$ is a flat $2$-torus. In the extreme cases $\lambda=0$ and $\lambda=2$, $\mathcal{C}_\lambda$ degenerates to (isometric copies of) $S^1$. 

For $\lambda\in (0,2)$, we define 
\begin{eqnarray*}
\pi_\lambda :\quad	S^3_\eq{\setminus} (\mathcal{C}_0\cup \mathcal{C}_2) &\longrightarrow& \mathcal{C}_\lambda,\\ 
	y & \longmapsto & \big(\mu y_1, \mu y_2, \nu y_3, \nu y_4\big),\ \ \mu =\sqrt{ \frac{1-\tfrac{\lambda}{2}}{y_1^2+y_2^2}},\ \ \nu=\sqrt{\frac{\tfrac{\lambda}{2}}{y_3^2+y_4^2}}.
\end{eqnarray*}
Similarly, we define 
\begin{eqnarray*}
\pi_0 :\quad S^3_\eq{\setminus} \mathcal{C}_2 &\longrightarrow& \mathcal{C}_0,\qquad
	y \ \longmapsto \ \frac{1}{\sqrt{y_1^2+y_1^2}}\big(y_1, y_2, 0, 0\big),\\ 
\noalign{\noindent and}
\pi_2 :\quad S^3_\eq{\setminus} \mathcal{C}_0 &\longrightarrow& \mathcal{C}_2,\qquad
	y \ \longmapsto \ \frac{1}{\sqrt{y_3^2+y_4^2}}\big(0, 0, y_3, y_4\big). 
\end{eqnarray*}
The maps $\pi_\lambda$ are the \Defn{Clifford projections}.
\end{definition}

We now define restrictions on the geometry of CCTs that will be crucial for the most difficult steps in our proof of Theorem~\ref{mthm:Lowdim}, namely 
to establish that extensions of the CCTs \emph{exist} (Section~\ref{ssc:extension}) and that they are \emph{in convex position} (Section~\ref{ssc:convex}). 
Here is a preview on how we will use the restrictions:
\begin{compactitem}[$\circ$]
\item The property of \emph{symmetry} (Definition~\ref{def:symrig}) reduces the construction and analysis of the symmetric CCTs to local problems in a fundamental domain of the symmetry. 
\item The property of \emph{transversality} (Definition~\ref{def:trnsm}) reduces several steps of our study to ``planar'' problems in the torus $\mathcal{C}_1$. It is critically used in Lemma~\ref{lem:localem} (an important step in the proof that extensions of CCTs exist) and in Proposition~\ref{prp:loccrt1lay} (which provides a basic local to global convexity result for CCTs).
\item A CCT is \emph{ideal} (Definition~\ref{def:slor}) if it satisfies, in addition to symmetry and transversality, two technical requirements: The first requirement is an inequality on a quantity that we call the \emph{slope} of a symmetric transversal CCT. This inequality is crucially used in the proof that extensions exist (Proposition~\ref{prp:locatt}). The second condition, \emph{orientation}, is merely a convention that simplifies the notation, and ensures that we extend a CCT into a fixed direction. 
\end{compactitem} 

\begin{figure}[htbf]
\centering 
 \includegraphics[width=0.95\linewidth]{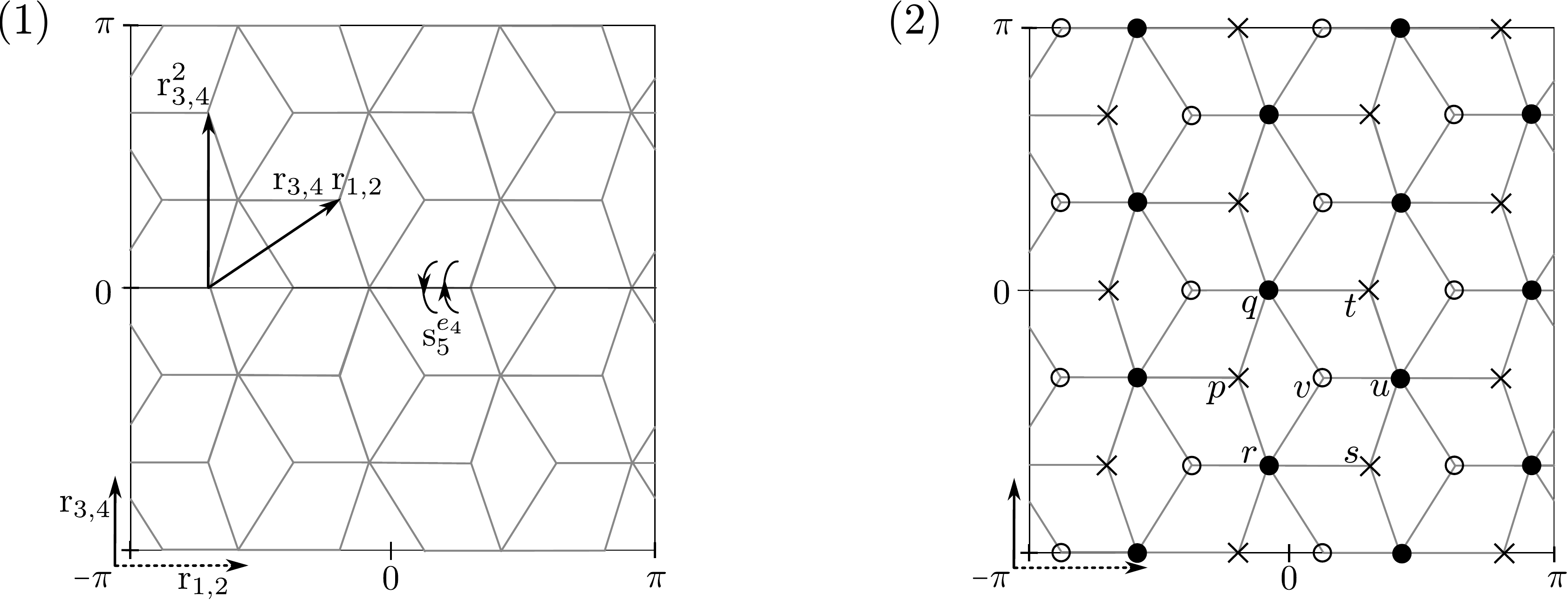} 
 \caption{\small A picture of $\pi_1(\CT)\subset \mathcal{C}_1$ for a transversal symmetric CCT $\CT$ of width $2$ in~$S^3_\eq$. \newline (1) We indicate the action of the symmetries $\mathfrak{S}$ on $\CT$. The compass rose in the lower left corner indicates the action of the rotations $\rot_{1,2} $ of the $\Sp\{e_1,\,e_2\}$-plane and the  action of the rotations $\rot_{3,4}$ of the $\Sp\{e_3,\,e_4\}$-plane. \newline (2) We labeled the images of vertices of layer $0$ by $\circ$, vertices of layer $1$ by $\bullet$ and of layer $2$ by $\times$.}
  \label{fig:protorus}
\end{figure}

\begin{definition}[Symmetric CCTs]\label{def:symrig}
A CCT $\CT$ in~$S^3_\eq$ or~$S^4$ is \Defn{symmetric} if it satisfies the following three conditions:
\begin{compactenum}[\rm(a)]
\item The automorphism $\spi^{(1,-1,0)}_{3}$ of $\T[k]$ corresponds to the reflection $\spi^{e_4}_{5}:\CT\rightarrow \CT$. In terms of the restriction 
$\varphi_{\CT}:\T[k]\rightarrow \CT$ of Definition~\ref{def:CCT}, 
\[\spi^{e_4}_{5}(\varphi_{\CT}(\T[k]))=\varphi_{\CT} (\spi^{(1,-1,0)}_{3}\T[k]).\] 
\item The automorphism of $\T[k]$ induced by the translation by vector $(-1,1,0)$ corresponds to the rotation $\rot_{3,4}^2:\CT\rightarrow \CT$, i.e.\  
\[\rot_{3,4}^2(\varphi_{\CT}(\T[k]))=\varphi_{\CT} (\T[k]+(-1,1,0)).\]
\item The automorphism of $\T[k]$ induced by the translation by vector $(0,-1,1)$ corresponds to the rotation $\rot_{3,4}\rot_{1,2}:\CT\rightarrow \CT$, i.e.\  
\[\rot_{3,4}\rot_{1,2}(\varphi_{\CT}(\T[k]))=\varphi_{\CT} (\T[k]+(0,-1,1)).\] 
\end{compactenum}
\end{definition}

\begin{definition}[Groups of symmetries $\mathfrak{S}$ and $\mathfrak{R}$]
The subgroup $\mathfrak{S}$ of $O(\R^5)$ is generated by the elements $\spi^{e_4}_{5}$, $\rot_{3,4}^2$ and $\rot_{3,4}\rot_{1,2}$. 
Its subgroup $\mathfrak{R}$ is generated by the rotations $\rot_{3,4}^2$ and $\rot_{3,4}\rot_{1,2}$.
\end{definition}

\begin{obs} The group $\mathfrak{R}$ acts simply transitively on the vertices of each layer of a symmetric CCT.
	In particular, $|\mathfrak{R}|=12$ and $|\mathfrak{S}|=2|\mathfrak{R}|=24$.
\end{obs}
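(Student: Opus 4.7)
The plan is to translate everything to the combinatorial model $\T[k]$ and carry out an elementary lattice computation.

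First, I would work out $|\mathfrak{R}|$ intrinsically in $O(\R^5)$. Since $\rot_{1,2}$ and $\rot_{3,4}$ act on orthogonal $2$-planes (and fix the $e_5$-axis), they commute, so $\mathfrak{R}$ is an abelian subgroup of $\langle\rot_{3,4}\rangle \times \langle\rot_{1,2}\rangle \cong \Z/6\Z \times \Z/4\Z$. Writing the two generators additively as $(2,0)$ and $(1,1)$, one checks that $(1,1)$ has order $12$ in $\Z/6\Z \times \Z/4\Z$, and that $8\cdot(1,1)=(2,0)$, so $(2,0)$ is redundant. Hence $\mathfrak{R}$ is cyclic of order $12$, generated by $\rot_{3,4}\rot_{1,2}$.

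Next I would show that $\mathfrak{R}$ acts on each layer. Via $\varphi_\CT$ and symmetry conditions (b) and (c) of Definition~\ref{def:symrig}, the generators of $\mathfrak{R}$ correspond to the translations of $\RC[k]$ by the two vectors $(-1,1,0)$ and $(0,-1,1)$, each of coordinate-sum zero. Therefore $\mathfrak{R}$ preserves the layers $\TT_\ell[k]$ of $\T[k]$, and hence preserves $\RR(\CT,\ell)$.

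For simple transitivity, I would compute the action on a single layer. The lattice $L := \Z(-1,1,0) + \Z(0,-1,1) \subset \Z^3$ is the full sublattice of layer-preserving translations in $\RC$, and it acts freely and transitively on the integer points of any fixed affine plane $\{v\in\Z^3:\langle v,\mathbbm{1}\rangle=\ell\}$. The torus sublattice $\Lambda := \Z(3,-3,0) + \Z(-2,-2,4)$ lies inside $L$ (both generators have coordinate-sum $0$); expressing them in the basis $\{(-1,1,0),(0,-1,1)\}$ gives the matrix
\[
\begin{pmatrix}-3 & 2\\ 0 & 4\end{pmatrix},
\]
of determinant $-12$. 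Hence $L/\Lambda$ has order $12$, so the induced action of $L$ on $\TT_\ell[k] = \{v:\langle v,\mathbbm{1}\rangle=\ell\}/\Lambda$ is simply transitive and each layer has exactly $12$ vertices. Conditions (b) and (c) give a surjection $\mathfrak{R} \twoheadrightarrow L/\Lambda$; since both sides have order $12$ this map is an isomorphism, so $\mathfrak{R}$ acts simply transitively on each layer $\RR(\CT,\ell)$.

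Finally, for $|\mathfrak{S}|=2|\mathfrak{R}|$, note that $\mathfrak{S}$ is generated by $\mathfrak{R}$ together with the reflection $\spi^{e_4}_5$. The reflection has determinant $-1$ while every element of $\mathfrak{R}$ is a product of rotations and hence has determinant $+1$, so $\spi^{e_4}_5 \notin \mathfrak{R}$ and $\mathfrak{S} = \mathfrak{R} \sqcup \spi^{e_4}_5\,\mathfrak{R}$ has order $24$. The main obstacle is verifying the index calculation $[L:\Lambda]=12$ cleanly and matching it with the intrinsic order of $\mathfrak{R}$; once those two $12$'s are identified, simple transitivity is automatic and the statement about $\mathfrak{S}$ is a one-line determinant argument.
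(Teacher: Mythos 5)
Your computations that $\mathfrak{R}$ is cyclic of order $12$ (generated by $\rot_{3,4}\rot_{1,2}$, with $(\rot_{3,4}\rot_{1,2})^8=\rot_{3,4}^2$), that $L=\Z(-1,1,0)+\Z(0,-1,1)$ is the full sum-zero sublattice acting simply transitively on each affine layer of $\Z^3$, and that $[L:\Lambda]=|\det\begin{psmallmatrix}-3&2\\0&4\end{psmallmatrix}|=12$ are all correct, and this is exactly the right framework. The problem is the sentence ``Conditions (b) and (c) give a surjection $\mathfrak{R}\twoheadrightarrow L/\Lambda$.'' For the assignment $\rot_{3,4}^2\mapsto\tau_{(-1,1,0)}$, $\rot_{3,4}\rot_{1,2}\mapsto\tau_{(0,-1,1)}$ to extend to a group homomorphism, the relation $\rot_{3,4}^2=(\rot_{3,4}\rot_{1,2})^8$ would have to map to $\tau_{(-1,1,0)}=\tau_{(0,-1,1)}^8$ in $L/\Lambda$. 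But in $L/\Lambda$ one has $\tau_{(-1,1,0)}=\tau_{(0,-1,1)}^{4}$, not $\tau_{(0,-1,1)}^{8}$: indeed $(0,-4,4)-(-1,1,0)=(1,-5,4)=(3,-3,0)+(-2,-2,4)\in\Lambda$, while $(0,-8,8)-(-1,1,0)=(1,-9,8)\notin\Lambda$. Since $4\not\equiv 8\pmod{12}$, the proposed generator assignment does \emph{not} define a homomorphism, so this step cannot be taken as stated. (This mismatch is actually a consistency issue in Definition~\ref{def:symrig} as printed --- it is forced by (b) and (c) together --- so at minimum you should flag it rather than assert compatibility silently.)

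The good news is that the Observation itself only needs condition (c), and your machinery already gives it: $\mathfrak{R}=\langle\rot_{3,4}\rot_{1,2}\rangle\cong\Z/12\Z$, and by (c) this generator acts on $\T[k]$ (via $\varphi_\CT$-conjugation) as the translation $\tau_{(0,-1,1)}$. You showed $\tau_{(0,-1,1)}$ has order $12$ in $L/\Lambda\cong\Z/12\Z$, so it acts as a single $12$-cycle on each $12$-element layer; therefore $\mathfrak{R}$ acts simply transitively on each layer, without ever invoking (b) or a two-generator homomorphism. Finally, for $|\mathfrak{S}|=2|\mathfrak{R}|$, the determinant argument only shows $\spi^{e_4}_5\notin\mathfrak{R}$; to conclude $\mathfrak{S}=\mathfrak{R}\sqcup\spi^{e_4}_5\mathfrak{R}$ you also need $\spi^{e_4}_5$ to normalize $\mathfrak{R}$. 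This holds because $\spi^{e_4}_5\rot_{3,4}\spi^{e_4}_5=\rot_{3,4}^{-1}$ and $\spi^{e_4}_5\rot_{1,2}\spi^{e_4}_5=\rot_{1,2}$, so conjugation sends $\rot_{3,4}\rot_{1,2}\mapsto\rot_{3,4}^{-1}\rot_{1,2}=(\rot_{3,4}\rot_{1,2})^{5}\in\mathfrak{R}$; one line, but it should appear.
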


\begin{obs}\label{obs:fix}
The translations by vectors $(-1,1,0)$ and $(-1,-1,2)$ induce fixed-point free actions on $\T[k]$. Thus if $\CT$ is a symmetric CCT in~$S^4$ or~$S^3_\eq$, then the actions of $\rot_{3,4}^2$ and $\rot_{1,2}^2$ on $\mathrm{T}$ are fixed point free. In particular, $\CT$ does not intersect $\Sp\{e_1,\, e_2,\, e_5\}\cup \Sp\{e_3,\, e_4,\, e_5\}$. 
\end{obs}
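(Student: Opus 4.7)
The plan is threefold: (i) verify fixed-point freeness of the two translations on the abstract complex $\T[k]$, (ii) identify $\rot_{3,4}^2$ and $\rot_{1,2}^2$ with translations of $\T[k]$ that lie outside the quotient lattice, and (iii) pass from ``no fixed cell'' to ``no fixed geometric point'' for $\CT\subset S^4$.

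For (i), I work inside the lattice $\Lambda:=(3,-3,0)\Z\times(-2,-2,4)\Z$. A general element $a(3,-3,0)+b(-2,-2,4)$ has third coordinate $4b$: for $(-1,1,0)$ this forces $b=0$ and then $3a=-1$, which has no integer solution, while for $(-1,-1,2)$ it forces $b=\tfrac12$. Hence neither vector lies in $\Lambda$; since a nonzero translation of $\R^3$ has no fixed point and two points of $\R^3/\Lambda$ coincide iff their lifts differ by an element of $\Lambda$, both translations induce fixed-point free maps on $\T[k]$.

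For (ii), Definition~\ref{def:symrig}(b) directly identifies $\rot_{3,4}^2$ with translation by $(-1,1,0)$. To handle $\rot_{1,2}^2$, I use that $\rot_{3,4}$ and $\rot_{1,2}$ commute (being rotations in orthogonal $2$-planes), so $(\rot_{3,4}\rot_{1,2})^2=\rot_{3,4}^2\rot_{1,2}^2$; by Definition~\ref{def:symrig}(c) this corresponds to translation by $(0,-2,2)$, whence $\rot_{1,2}^2=\rot_{3,4}^{-2}(\rot_{3,4}\rot_{1,2})^2$ corresponds to $(1,-1,0)+(0,-2,2)=(1,-3,2)\equiv(-2,0,2)=(-1,1,0)+(-1,-1,2)\pmod{\Lambda}$. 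By (i) both $(-1,1,0)$ and $(-2,0,2)$ lie outside $\Lambda$, and a non-$\Lambda$ translation on $\RC[k]/\Lambda$ fixes no cell whatsoever (not just no vertex), because distinct translates of a unit cube in $\R^3$ are distinct cubes. Therefore $\rot_{3,4}^2$ and $\rot_{1,2}^2$ fix no cell of $\CT$.

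For (iii), if either rotation fixed a point $p\in\CT$, then $p$ would lie in the relative interior of a unique cell $\sigma$, and the rotation -- a cell-preserving homeomorphism -- would send $\sigma$ to a cell meeting $\sigma$ in an interior point, hence to $\sigma$ itself, contradicting (ii). The concluding ``in particular'' then follows because the fixed loci of $\rot_{3,4}^2$ and $\rot_{1,2}^2$ in $\R^5$ are exactly $\Sp\{e_1,e_2,e_5\}$ and $\Sp\{e_3,e_4,e_5\}$. The main obstacle I anticipate is the lattice bookkeeping in (ii)---correctly identifying which coset of the rank-$2$ quotient $\Lambda_0/\Lambda$ of layer-preserving integer translations represents $\rot_{1,2}^2$---after which both the abstract-to-geometric transfer and the final intersection statement are formal.
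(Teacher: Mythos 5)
Your proof is correct and is precisely the argument the paper leaves implicit for this Observation: the lattice-membership check via the third coordinate $4b$, the transfer of fixed-point freeness from $\T[k]$ to $\CT$ through Definition~\ref{def:symrig} together with the relative-interior (cell-to-point) argument, and the identification of the fixed loci of $\rot_{3,4}^2$ and $\rot_{1,2}^2$ with $\Sp\{e_1,e_2,e_5\}$ and $\Sp\{e_3,e_4,e_5\}$. One small slip: $(-2,0,2)\notin\Lambda$ is not a formal consequence of your step (i), since a sum of two non-lattice vectors can lie in the lattice, but the same third-coordinate computation ($4b=2$ has no integer solution) settles it at once, and your bookkeeping correctly places $\rot_{1,2}^2$ in the translation class of $(-2,0,2)=(-1,1,0)+(-1,-1,2)$ rather than of $(-1,-1,2)$ itself, which is the right way to read the Observation's wording.
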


\begin{definition}[Control CCT]\label{def:ctrcct}
Let $\CT$ denote a symmetric CCT in~$S^4$. The orthogonal projection of $S^4{\setminus} \{\pm e_5\}$ to~$S^3_\eq$ is well-defined on $\CT$ by the previous observation. If the projection is furthermore injective on $\CT$, then the CCT arising as projection of $\CT$ to $S^3_\eq$ is a symmetric CCT, the \Defn{control CCT} of~$\CT$ in $S^3_\eq$.
\end{definition}

\begin{definition}[Transversal symmetric CCTs]\label{def:trnsm}
A symmetric $2$-CCT $\CT$ in~$S^3_\eq$ is \Defn{transversal} if the Clifford projection $\pi_1: S^3_\eq{\setminus}(\mathcal{C}_0\cup \mathcal{C}_2)\rightarrow \mathcal{C}_1 $ is injective on $\CT$. A symmetric $k$-CCT $\CT$ in~$S^3_\eq$  is \Defn{transversal} if  $\RR(\CT,[i-1,i+1])$ is transversal for all $i\in [1,k-1]$. A symmetric CCT $\CT$ in~$S^4$ is \Defn{transversal} if its control CCT in~$S^3_\eq$ is well-defined and transversal.
\end{definition}

\begin{figure}[htbf]
\centering 
  \includegraphics[width=0.72\linewidth]{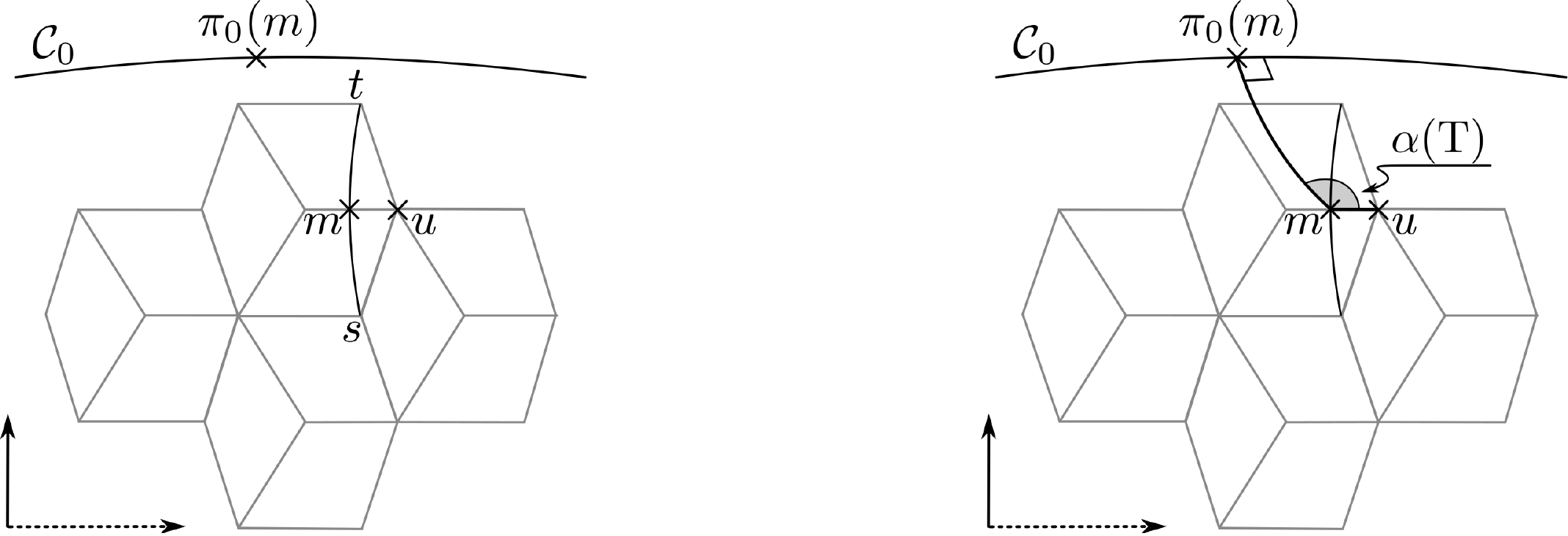}
  \caption{\small  The definition of the slope $\alpha(\CT)$ of a symmetric CCT: $\alpha(\CT)$ is defined as the angle between the segments $[m,\pi_0 (m)]$ and $[m,u]$ at $m$.}
  \label{fig:defslope}
\end{figure}

\begin{definition}[Slope, orientation and ideal CCTs]\label{def:slor}

Let $s$ be any vertex of layer $2$ of a transversal symmetric $2$-CCT $\CT$ in $S^3_\eq$. Then $t:=\rot_{3,4}^2 s$ is in $\RR(\CT,2)$, and $s$ and $t$ are connected by a unique length $2$ edge-path in $\CT$ whose middle vertex $u$ lies in $\RR(\CT,1)$, cf.\ Figure~\ref{fig:defslope}. If $m=m(s,\,t)$ is the midpoint of the geodesic segment $[s,t]$ in $S^3_\eq$, then the angle $\alpha$ between the segments $[m,\pi_0 (m)]$ and $[m,u]$ at $m$ is called the \Defn{slope} $\alpha(\CT)$ of $\CT$.

A transversal symmetric $k$-CCT $\CT$, $k\ge  2$ in $S^3_\eq$ is \Defn{ideal} if $\alpha(\RR(\CT,[k-2,k]))>\nicefrac{\pi}{2}$ and it is \Defn{oriented} towards $\mathcal{C}_0$, i.e.\ the component of $S^3_\eq{\setminus} \CT$ containing $\mathcal{C}_0$ whose closure intersects $\RR(\CT,k)$. A transversal symmetric CCT in $\R^4$ or~$S^4$ is \Defn{ideal} if the associated control CCT in $S^3_\eq$ is ideal.
\end{definition}

\subsection{Some properties of ideal cross-bedding cubical tori}\label{ssc:trp}

Now we record some properties of ideal CCTs, which follow in particular from the conditions of transversality and symmetry (Proposition~\ref{prp:alignsymm}). Furthermore, we give a tool to check transversality (Proposition~\ref{prp:inj3}). The verification of Propositions~\ref{prp:alignsymm} and~\ref{prp:inj3} is straightforward; their proof can be found in the first author's thesis \cite{AD2013}. We close the section with a Proposition~\ref{prp:slmono}, which says that the slope is monotone under extensions.
\enlargethispage{2mm}
\begin{prp}\label{prp:alignsymm}
Let $\CT$ be a symmetric $2$-CCT in~$S^3_\eq$, let $v$ be a vertex of layer $0$ of $\CT$, and let the remaining vertices of $\CT$ be labeled as in Figure~\ref{fig:protorus}. Then
\begin{compactenum}[\rm(a)]
\item $\CT\cap (\mathcal{C}_0\cup \mathcal{C}_2)$ is empty,
\item $\pi_2(s)=\pi_2(r)$, $\pi_2(p)=\pi_2(v)=\pi_2(u)$ and $\pi_2(t)=\pi_2(q)$,
\item $\pi_0(t)=\pi_0(s)$ and $\pi_0(q)=\pi_0(r)$, and
\item $\pi_2(p)$ is the midpoint of the (nontrivial) segment $[\pi_2(s),\pi_2(t)]$.
\end{compactenum}
\noindent If $\CT$ is additionally transversal, then
\begin{compactenum}[\rm(a)]
\setcounter{enumi}{+4}
\item $\pi_0(v)$, $\pi_0(s)$, $\pi_0(r)$ lie in the interior of segment $[\pi_0(u),\pi_0(p)]$,
\item $\pi_0(s)$ and $\pi_0(v)$ lie in the interior of segment $[\pi_0(u),\pi_0(r)]$, and
\item $\pi_0(r)$ lies in the interior of segment $[\pi_0(v),\pi_0(p)]$ and segment $[\pi_0(s),\pi_0(p)]$. \emph{\qed}
\end{compactenum}
\end{prp}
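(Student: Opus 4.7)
The plan is to exploit systematically the structure of the symmetry group $\mathfrak{S}$ together with the observation that the Clifford projections $\pi_0$ and $\pi_2$ depend on disjoint coordinate pairs. Concretely, from Definition~\ref{def:clifpauto}, $\pi_0$ depends only on $(y_1,y_2)$ and is $\rot_{3,4}$-invariant, while $\pi_2$ depends only on $(y_3,y_4)$ and is $\rot_{1,2}$-invariant. Since $\rot_{3,4}$ and $\rot_{1,2}$ commute and every element of $\mathfrak{R}$ decomposes uniquely as $\rot_{1,2}^a \rot_{3,4}^b$, the orbit structure of the vertex set of $\CT$ under $\mathfrak{R}$ directly controls which Clifford images coincide. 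Moreover, part (a) is immediate: $\mathcal{C}_0 \subset \Sp\{e_1,e_2\}$ and $\mathcal{C}_2 \subset \Sp\{e_3,e_4\}$ are contained in $\Sp\{e_1,e_2,e_5\} \cup \Sp\{e_3,e_4,e_5\}$, which is disjoint from $\CT$ by Observation~\ref{obs:fix}.

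For parts (b)--(d), I would use the isomorphism $\varphi_\CT : \T[2] \to \CT$ of Definition~\ref{def:CCT} to read off, for each pair of labeled vertices in Figure~\ref{fig:protorus}, the lattice translation relating their preimages in $\RC[2]$, then apply Definition~\ref{def:symrig} to convert this into an element of $\mathfrak{R}$, and finally decompose that element into its $\rot_{1,2}$- and $\rot_{3,4}$-factors. A pair whose connecting element is a pure $\rot_{1,2}$-power has equal $\pi_2$-image; a pair whose connecting element is a pure $\rot_{3,4}$-power has equal $\pi_0$-image. The midpoint statement (d) then follows by additionally invoking the reflection $\spi^{e_4}_{5} \in \mathfrak{S}$, which by Definition~\ref{def:symrig}(a) swaps the appropriate layer-$2$ vertices and fixes the intermediate layer-$1$ vertex; this forces the $\pi_2$-image of the latter to lie on the perpendicular bisector of the short arc from $\pi_2(s)$ to $\pi_2(t)$ in $\mathcal{C}_2$, i.e.\ at its midpoint.

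For parts (e)--(g), the transversality hypothesis enters: injectivity of $\pi_1$ on $\CT$, together with the identifications from (b)--(d), forces a consistent cyclic ordering on the $\pi_0$-projection of each layer. Intuitively, $\CT$ is displayed as a graph over $\mathcal{C}_1$, so distinct Clifford fibres $\pi_1^{-1}(q)$ cannot interleave; combined with the explicit rotation angles of the generators of $\mathfrak{R}$ (namely $2\pi/3$ on the $\{e_3,e_4\}$-plane and $\pi/2$ on the $\{e_1,e_2\}$-plane, together with the mixed generator $\rot_{3,4}\rot_{1,2}$), this pins down the relative positions of $\pi_0(v), \pi_0(u), \pi_0(p), \pi_0(s), \pi_0(r), \pi_0(t), \pi_0(q)$ on the circle $\mathcal{C}_0$. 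The three betweenness assertions are then read off one arc at a time, using (f) and (g) as successive refinements of (e) on shrinking subarcs of $[\pi_0(u),\pi_0(p)]$.

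The main obstacle is bookkeeping: one must carefully maintain the dictionary between the combinatorial edges of $\T[2]$, the lattice translations that induce them, and the corresponding elements of $\mathfrak{R}$ together with their $\rot_{1,2}/\rot_{3,4}$-decomposition. Once this dictionary is in place and the fundamental domain of $\mathfrak{S}$-action is fixed, no genuinely nontrivial geometry remains: the assertions reduce to elementary circle geometry on $\mathcal{C}_0$ and $\mathcal{C}_2$ plus the interleaving constraint furnished by transversality.
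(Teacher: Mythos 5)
Your plan for (a) and (c) is sound, but the core of (b) and (d) has a genuine gap. The ``dictionary'' you propose---read off the lattice translation between two labeled vertices, convert it via Definition~\ref{def:symrig} to an element of $\mathfrak{R}$, then check whether that element is a pure $\rot_{1,2}$- or $\rot_{3,4}$-power---only applies to pairs of vertices lying in the \emph{same} layer, since $\mathfrak{R}$ acts within each layer (simply transitively) and does not mix layers. But every identity asserted in (b) relates vertices in \emph{different} layers: $v$ is in layer $0$, $u,q,r$ in layer $1$, and $p,s,t$ in layer $2$. The generating translations $(-1,1,0)$ and $(0,-1,1)$, and also the reflection across $\{x_1=x_2\}$, all preserve the coordinate sum and hence the layer, so no element of $\mathfrak{S}$ carries $s$ to $r$, or $v$ to $u$ to $p$, or $t$ to $q$. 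Your argument is therefore silent on all of (b). (By contrast, (c) really is intra-layer, since $t=\rot_{3,4}^{2}s$ and $q=\rot_{3,4}^{2}r$, so that part is fine.)

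What is actually going on is a two-step argument that your sketch only half-contains. The reflection $\spi^{e_4}_{5}$ fixes $v,u,p$ (forcing $x_4=0$ for these three) and swaps $s\leftrightarrow t$ and $q\leftrightarrow r$; combined with $t=\rot_{3,4}^{2}s$ and $q=\rot_{3,4}^{2}r$ this pins $(s_3,s_4)$ and $(r_3,r_4)$ to a common line through the origin. Thus each of the $\pi_2$-equalities in (b) holds \emph{up to antipode} in $\mathcal{C}_2$, and nothing more follows from symmetry alone. The residual sign is then resolved by (a) together with the incidence structure: for an edge of $\CT$ whose endpoints' $(x_3,x_4)$-vectors lie on a common line through $0$, the edge (a great-circle arc in the $2$-plane those endpoints span) would meet $\mathcal{C}_0$ if the two vectors pointed into opposite rays, contradicting (a). This handles $\pi_2(u)=\pi_2(v)$, $\pi_2(s)=\pi_2(r)$, and $\pi_2(t)=\pi_2(q)$ via the edges $[u,v]$, $[s,r]$, $[t,q]$; a further small argument is still needed for $\pi_2(p)=\pi_2(v)$, since $p$ and $v$ are not adjacent. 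The same issue afflicts your (d): saying $\pi_2(p)$ lies on the perpendicular bisector of $[\pi_2(s),\pi_2(t)]$ leaves two candidates (the midpoint and its antipode), and you give no reason to exclude the antipode. Your outline for (e)--(g) is a plausible direction but is also too vague to evaluate; the concrete missing step, however, is the antipode/sign resolution in (b) and (d), which genuinely requires invoking part (a) and the edge structure of $\CT$, not the group action.
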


For $x\in S^3_\eq{\setminus} \mathcal{C}_0$, we write $\pi^{\mathrm{f}}_2 (x)$ to denote the $\pi_2$-fiber $\pi_2^{-1}(\pi_2(x))$ in~$S^3_\eq$, 
and $\pi^{\mathrm{f}}_0(x),\ x\in S^3_\eq{\setminus} \mathcal{C}_2$ to denote the $\pi_0$-fiber $\pi_0^{-1}(\pi_0(x))\subset S^3_\eq$. 
Furthermore, we denote by $\pi^{\SSp}_2 (x),\ x\in  S^3_\eq{\setminus} \mathcal{C}_0$  the hyperplane in~$S^3_\eq$ spanned by $\pi^{\mathrm{f}}_2 (x)$, i.e.\  
\[
\pi^{\SSp}_2 (x):=\SSp(\pi^{\mathrm{f}}_2 (x))=\pi^{\mathrm{f}}_2 (x)\cup \pi^{\mathrm{f}}_2 (-x)\cup \mathcal{C}_0\subset S^3_\eq,
\] 
and similarly, for $x\in S^3_\eq{\setminus} \mathcal{C}_2$, 
\[
\pi^{\SSp}_0 (x):=\SSp(\pi^{\mathrm{f}}_0 (x))=\pi^{\mathrm{f}}_0 (x)\cup \pi^{\mathrm{f}}_0 (-x)\cup \mathcal{C}_2\subset S^3_\eq.
\] 
Then the statements of Proposition~\ref{prp:alignsymm} can be reformulated using the following dictionary:

\begin{prp}\label{prp:dict}
Let $a$,$b$ and $x$ be three points in $S^3_\eq{\setminus} \mathcal{C}_{2-i}\,,\ i\in\{0,\,2\}$. Then $\pi_i(x)$ lies in the interior of the segment $[\pi_i(a),\pi_i(b)]$ if any two of the three following statements hold:
\begin{compactenum}[\rm(a)]
\item $a$ and $b$ lie in different components of $S^3_\eq{\setminus}\pi^{\SSp}_i (x)$, 
\item $a$ and $x$ lie in the same component of 
$S^3_\eq{\setminus}\pi^{\SSp}_i (b)$, 
\item $b$ and $x$ lie in the same component of 
$S^3_\eq{\setminus}\pi^{\SSp}_i (a)$.
\end{compactenum}
\noindent Conversely, if $\pi_i(x)$ lies in the interior of the segment $[\pi_i(a),\pi_i(b)]$, then all three of the above statements hold. \emph{\qed}
\end{prp}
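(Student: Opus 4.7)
The plan is to push the statement through the Clifford projection $\pi_i$, converting each separation/same-side condition in $S^3_\eq$ into its counterpart on the circle $\mathcal{C}_i \cong S^1$, and then establish the resulting one-dimensional dictionary by an elementary case check.

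The essential preliminary step will be to show that, for every $y \in S^3_\eq \setminus \mathcal{C}_{2-i}$, the Clifford projection $\pi_i$ induces a bijection between the two connected components of $S^3_\eq \setminus \pi^{\SSp}_i(y)$ and the two open arcs of $\mathcal{C}_i \setminus \{\pm \pi_i(y)\}$. For $i = 0$, if $\pi_0(y) = (\cos\theta, \sin\theta, 0, 0)$, then $\pi^{\SSp}_0(y) = S^3_\eq \cap \Sp(\pi_0(y), e_3, e_4)$ is the zero set on $S^3_\eq$ of the linear form $\nu(z) := -z_1\sin\theta + z_2\cos\theta$; a one-line computation using the definition of $\pi_0$ gives $\mathrm{sign}\,\nu(z) = \mathrm{sign}\,\nu(\pi_0(z))$ for all $z \in S^3_\eq \setminus \pi^{\SSp}_0(y)$, which yields the bijection. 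The case $i = 2$ is symmetric. Applying this bijection with $y = a, b, x$ in turn, the hypotheses (a), (b), (c) translate verbatim into the analogous conditions on $A := \pi_i(a)$, $B := \pi_i(b)$, $X := \pi_i(x)$ and their antipodes on $\mathcal{C}_i$; the proposition thus reduces to the following one-dimensional dictionary: for three pairwise non-antipodal distinct points $A, B, X \in S^1$, the point $X$ lies in $\rint [A, B]$ iff any two (equivalently, all three) of the translated conditions hold.

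To finish, I will parametrize $\mathcal{C}_i$ as $\R/2\pi\Z$, place $A = 0$ and, after a reflection if necessary, $B \in (0, \pi)$, so that $\rint [A, B] = (0, B)$. Then $X$ falls into exactly one of the four open arcs $(0, B)$, $(B, \pi)$, $(\pi, B + \pi)$, $(B + \pi, 2\pi)$. A routine case check, in each arc deciding which of the three antipodal pairs $\{\pm X\}$, $\{\pm B\}$, $\{\pm A\}$ separates which pair of $\{A, B, X\}$, shows that in the arc $(0, B)$ all three translated conditions hold, whereas in each of the remaining three arcs \emph{exactly one} of them holds; this establishes both directions simultaneously. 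The substantive content is the coordinate identity $\mathrm{sign}\,\nu(z) = \mathrm{sign}\,\nu(\pi_0(z))$ in the bijection step, and the only subtlety in the rest of the argument is careful bookkeeping of antipodes on the circle.
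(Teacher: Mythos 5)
The paper states this proposition with only a \qed{} and no argument, so there is no paper proof to compare against. Your proof is correct, and the strategy --- transport the three separation conditions through $\pi_i$ to the circle $\mathcal{C}_i$, then settle the one-dimensional dictionary by a case check --- is the natural route. Two small points you should make explicit when writing this up. First, the reduction to a statement about arcs of $\mathcal{C}_i$ tacitly uses that $\mathcal{C}_0$ and $\mathcal{C}_2$ are \emph{great} circles of $S^3_\eq$ (the intersections with $\Sp\{e_1,e_2\}$ and $\Sp\{e_3,e_4\}$ respectively), so the spherical segment $[\pi_i(a),\pi_i(b)]$ in $S^3_\eq$ really is the shorter arc on $\mathcal{C}_i$; without this the change of ambient from $S^3_\eq$ to $S^1$ would not be automatic (and it would indeed fail for the non-great torus $\mathcal{C}_1$). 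Second, your sign identity checks out: with $\nu(z)=-z_1\sin\theta+z_2\cos\theta$, the zero set of $\nu$ on $S^3_\eq$ is exactly $\pi^{\SSp}_0(y)=\pi^{\mathrm{f}}_0(y)\cup\pi^{\mathrm{f}}_0(-y)\cup\mathcal{C}_2$, and $\nu(\pi_0(z))=\nu(z)/\sqrt{z_1^2+z_2^2}$ has the same sign as $\nu(z)$. As for the case analysis: with $A=0$, $B\in(0,\pi)$, one finds that all three conditions hold for $X\in(0,B)$, while exactly one of them holds (namely (c), (a), (b), respectively) in each of the remaining arcs $(B,\pi)$, $(\pi,B+\pi)$, $(B+\pi,2\pi)$; this cleanly gives both directions of the dictionary at once, just as you claim. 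Also note that the hypothesis that ``any two of (a)--(c) hold'' already forces $\pi_i(a),\pi_i(b),\pi_i(x)$ to be pairwise distinct and non-antipodal, so the segment $[\pi_i(a),\pi_i(b)]$ is well defined --- worth a sentence, since the proposition as stated does not separately assume it.
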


Finally, we present our tool for checking transversality. 

\begin{prp}\label{prp:inj3}
Let $C$ denote a cubical complex that arises as the union of three quadrilaterals on vertices $\{u,\,t,\,v,\,q\}$, $\{u,\,s,\,v,\,r\}$ and $\{p,\,q,\,v,\,r\}$ respectively, such that
\begin{compactenum}[\rm(a)]
\item $C\cap (\mathcal{C}_0\cup \mathcal{C}_2)$ is empty,
\item $\pi_2(s)=\pi_2(r)$, $\pi_2(p)=\pi_2(v)=\pi_2(u)$ and $\pi_2(t)=\pi_2(q)$,
\item $\pi_0(t)=\pi_0(s)$ and $\pi_0(q)=\pi_0(r)$,
\item $\pi_2(p)$ lies in the interior of the segment $[\pi_2(s),\pi_2(t)]$,
\item $\pi_0(v)$, $\pi_0(r)$ lie in the interior of segment $[\pi_0(u),\pi_0(p)]$, and
\item $\pi_0(s)$ lies in the interior of segment $[\pi_0(u),\pi_0(r)]$.
\end{compactenum}
\noindent Then $\pi_1$ is injective on $C$. \emph{\qed}
\end{prp}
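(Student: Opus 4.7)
The idea is to reduce injectivity of $\pi_1$ on $C$ to that of the two-dimensional product map $\Pi := (\pi_0, \pi_2) : C \to \mathcal{C}_0 \times \mathcal{C}_2$, and then verify this by direct analysis. The reduction is immediate from Definition~\ref{def:clifpauto}: for $y, y' \in S^3_\eq \setminus (\mathcal{C}_0 \cup \mathcal{C}_2)$, one has $\pi_1(y) = \pi_1(y')$ iff $\pi_0(y) = \pi_0(y')$ and $\pi_2(y) = \pi_2(y')$, because $\pi_1$ rescales $(y_1, y_2)$ and $(y_3, y_4)$ by positive scalars to prescribed target norms, thereby encoding exactly the directions of these two vectors. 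By condition (a), $\Pi$ is defined on all of $C$, so it suffices to show that $\Pi$ is injective.

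I next use conditions (b)--(f) to place the $\Pi$-images of the seven vertices. Writing $A := \pi_2(s) = \pi_2(r)$, $B := \pi_2(u) = \pi_2(v) = \pi_2(p)$ and $C := \pi_2(t) = \pi_2(q)$, condition (d) puts $B$ strictly between $A$ and $C$ on $\mathcal{C}_2$. On $\mathcal{C}_0$, conditions (c), (e), (f) force $\pi_0(s) = \pi_0(t)$ and $\pi_0(r) = \pi_0(q)$ to sit strictly inside the arc $[\pi_0(u), \pi_0(p)]$, with $\pi_0(s) \in (\pi_0(u), \pi_0(r))$ and $\pi_0(v) \in (\pi_0(u), \pi_0(p))$. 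For any edge $[x, y]$ of $C$, an elementary calculation on the explicit geodesic parametrization $\gamma$, using that the $\R^2$-segments $t \mapsto (\gamma_1, \gamma_2)(t)$ and $t \mapsto (\gamma_3, \gamma_4)(t)$ miss the origin by (a), shows that $\pi_0 \circ \gamma$ and $\pi_2 \circ \gamma$ traverse geodesic arcs in $\mathcal{C}_0$ and $\mathcal{C}_2$ monotonically between their endpoint images.

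Combining these ingredients, each quadrilateral $Q_i$, parametrized by geodesic interpolation, maps via $\Pi$ homeomorphically onto a closed topological rectangle in $\mathcal{C}_0 \times \mathcal{C}_2$. It remains to check that the three rectangles meet only along the $\Pi$-images of shared edges of $C$. By the placement above, $\Pi(Q_1)$ lies in the $\pi_2$-band between $B$ and $C$, while $\Pi(Q_2)$ lies in the band between $A$ and $B$, so they intersect only along the meridian $\pi_2 = B$, which is the image of the common edge $[u, v]$. The quadrilateral $Q_3$ takes $\pi_2$-values in all three classes and straddles both bands, but its $\pi_0$-range is confined to the sub-arc with endpoints $\pi_0(v)$ and $\pi_0(p)$; hence $\Pi(Q_3)$ meets $\Pi(Q_1)$ and $\Pi(Q_2)$ only along the $\Pi$-images of the shared edges $[v, q]$ and $[v, r]$, and $\Pi$ is injective on $C$.

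The delicate part is the disjointness analysis for $Q_3$: since $Q_3$ mixes all three $\pi_2$-classes, one must track the two edges of $Q_3$ crossing the meridian $\pi_2 = B$ and verify that they do not re-enter the $\pi_0$-footprint of $Q_1 \cup Q_2$. The ordering constraints in (d)--(f) are calibrated precisely so that this does not occur, and this is where one uses Proposition~\ref{prp:dict} in its full strength to turn the interval conditions on the $\pi_i$-images into side-of-hyperplane statements governing how the geodesic edges can cross each other.
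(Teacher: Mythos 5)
The paper does not include a proof of Proposition~\ref{prp:inj3}: it is stated with a \qed and the surrounding text explicitly defers the verification to the first author's thesis~\cite{AD2013}, so there is no ``paper's own proof'' to compare against. I will therefore assess your argument on its own merits.

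Your opening reduction is correct and clean: since $\pi_1$ rescales $(y_1,y_2)$ and $(y_3,y_4)$ by positive scalars, $\pi_1(y)=\pi_1(y')$ is equivalent to $\pi_0(y)=\pi_0(y')$ and $\pi_2(y)=\pi_2(y')$, so injectivity of $\pi_1$ on $C$ is equivalent to injectivity of $\Pi=(\pi_0,\pi_2)$. The monotonicity claim for $\pi_0\circ\gamma$ and $\pi_2\circ\gamma$ along a geodesic edge is also correct: writing $\gamma(t)=\cos t\cdot x+\sin t\cdot x'$, the angular velocity of $(\gamma_1,\gamma_2)(t)$ equals $(x_1 x_2'-x_2 x_1')/\|(\gamma_1,\gamma_2)(t)\|^2$, which has constant sign away from $\mathcal{C}_2$.

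Beyond that, though, what you have written is a plan rather than a proof. Two issues stand out. First, the claim that $\Pi$ restricted to each quadrilateral $Q_i$ is a homeomorphism onto its image is asserted from the edge analysis alone; injectivity in the interior is a genuinely two-dimensional statement and needs an argument (e.g.\ slicing $Q_i$ by the fibers $\pi_2^{\mathrm{f}}(\beta)$, observing that each slice of the planar quadrilateral $Q_i$ is a single geodesic arc, and invoking your edge monotonicity on each slice). Moreover, the image $\Pi(Q_3)$ is not a rectangle: $\Pi(p)$ and $\Pi(v)$ share the $\pi_2$-value $B$, while $\Pi(q)$ and $\Pi(r)$ share the $\pi_0$-value $\pi_0(r)$, so the image is a ``kite.'' Second, and more seriously, your disjointness analysis for $Q_3$ rests on the assertion that the $\pi_0$-range of $Q_3$ is the arc with endpoints $\pi_0(v)$ and $\pi_0(p)$. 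The $\pi_0$-values at the vertices of $Q_3$ are $\pi_0(p)$, $\pi_0(r)=\pi_0(q)$, and $\pi_0(v)$, so this claim needs $\pi_0(r)\in[\pi_0(v),\pi_0(p)]$, i.e.\ $\pi_0(v)\in[\pi_0(u),\pi_0(r)]$. That ordering does hold for transversal symmetric CCTs by Proposition~\ref{prp:alignsymm}(f), but it is not among the hypotheses (e)--(f) of Proposition~\ref{prp:inj3}, which only place $\pi_0(v)$ and $\pi_0(r)$ somewhere in $\rint[\pi_0(u),\pi_0(p)]$ and $\pi_0(s)$ in $\rint[\pi_0(u),\pi_0(r)]$. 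As written, your argument either needs to derive the ordering of $\pi_0(v)$ versus $\pi_0(r)$ from (a)--(f), or to treat the alternative ordering separately and show the images of $Q_3$ and $Q_1\cup Q_2$ still meet only along the images of the shared edges $[v,q]$ and $[v,r]$. You flagged this as ``the delicate part,'' and you are right -- it is exactly the part that remains to be done, and an appeal to Proposition~\ref{prp:dict} ``in its full strength'' is too vague to close the gap.
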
 

We close with the crucial monotonicity result for the slope. 

\begin{prp}[The slope is monotone]\label{prp:slmono}
Let $\CT$ denote an ideal CCT in~$S^3_\eq$ and let $\CT'$ denote its elementary extension. Then the slope of $\CT'$ is larger or equal to the slope of $\CT$, i.e.\ $\alpha(\CT')\ge \alpha(\CT)$.
\end{prp}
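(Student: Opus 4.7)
The plan is to compare the slopes of the two width-$2$ top slabs of $\CT$ and $\CT'$. Set $\CT_0:=\RR(\CT,[k-2,k])$ and $\CT_1:=\RR(\CT',[k-1,k+1])$, so by definition $\alpha(\CT)=\alpha(\CT_0)$ and $\alpha(\CT')=\alpha(\CT_1)$; both are transversal symmetric $2$-CCTs in $S^3_\eq$, and they share the two-layer subcomplex $\RR(\CT',[k-1,k])$. Using the simply transitive action of $\mathfrak{R}$ on each layer, it is enough to examine one representative triple $(s,t=\rot_{3,4}^2 s,u)$ for $\CT_0$, with $s,t\in\RR(\CT,k)$ and $u\in\RR(\CT,k-1)$ the unique middle vertex of the length-$2$ edge-path from $s$ to $t$, and the analogous triple $(s',t',u')$ for $\CT_1$, with $s',t'\in\RR(\CT',k+1)$ and $u'\in\RR(\CT',k)$.

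To describe $s'$ (hence $t'=\rot_{3,4}^2 s'$) in terms of $\CT$, invoke Lemma~\ref{lem:cubecmpl}: in the combinatorial model $\T[k+1]$ the layer-$(k+1)$ vertex $s'$ is the unique top corner of a $3$-cube $Q\subseteq\CT'$ whose remaining seven vertices (one in layer $k{-}2$, three in layer $k{-}1$, three in layer $k$, and in particular $u'$) all lie in $\CT$. Thus $s'$ is a prescribed geometric completion of data coming from $\CT_0$, and the same applies to $t'$ by symmetry.

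Next I would translate the slope comparison into a spherical-geometric statement inside a single $\pi_0$-fiber. Proposition~\ref{prp:alignsymm}(c) applied to $\CT_1$ gives $\pi_0(s')=\pi_0(t')$, so $s'$ and $t'$ lie in a common $\pi_0$-fiber. Since $\pi_0^{\SSp}(s')$ is the intersection of $S^3_\eq$ with a linear $3$-plane, it is a totally geodesic $2$-sphere and the fiber is a geodesically convex hemisphere of it; hence the midpoint $m':=m(s',t')$ lies in this fiber and satisfies $\pi_0(m')=\pi_0(s')\in\mathcal{C}_0$. The analogous statement holds for $m:=m(s,t)$. Consequently the slope $\alpha(\CT_1)$ is the spherical angle at $m'$ between the geodesic $[m',\pi_0(m')]$, which stays inside the fiber and heads toward $\mathcal{C}_0$, and the geodesic $[m',u']$, and likewise for $\alpha(\CT_0)$.

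Finally, the inequality $\alpha(\CT_1)\ge\alpha(\CT_0)$ should follow by combining the cube-completion above with the orientation of $\CT$ toward $\mathcal{C}_0$ and the ideality inequality $\alpha(\CT_0)>\pi/2$. The discrete Q-net (cube) relation forces the ``horizontal'' components of the edges $[u',s']$ and $[u',t']$, measured along $\mathcal{C}_0$, to match those of $[u,s]$ and $[u,t]$, while the ``vertical'' components along the fiber direction grow because the CCT is oriented toward~$\mathcal{C}_0$; a larger vertical-to-horizontal ratio is exactly what produces a larger midpoint angle. The main obstacle I anticipate is making this last step rigorous in spherical rather than euclidean geometry: a planar Q-net obeys an exact parallelogram law, but on $S^3_\eq$ this holds only up to second-order corrections, and one must check that the hypothesis $\alpha(\CT_0)>\pi/2$ is precisely the condition controlling the sign of those corrections. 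I expect this to reduce to an explicit coordinate computation inside the $\pi_0$-fiber, using Proposition~\ref{prp:alignsymm} together with Lemma~\ref{lem:cubecmpl}, and to yield a single spherical-trig identity that compares the two midpoint angles directly.
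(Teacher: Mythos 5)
Your reduction to the top slabs and the observations that $\pi_0(s')=\pi_0(t')$ (which is immediate, since $\rot_{3,4}$ does not move the first two coordinates) and $\pi_0(m')=\pi_0(s')$ are fine, but they only restate the definitions; the entire content of Proposition~\ref{prp:slmono} is the comparison you defer to your last step, and that step has a genuine gap. Lemma~\ref{lem:cubecmpl} only says that the eighth vertex of the cube is \emph{determined} by the other seven; it does not give the claimed invariance that the ``horizontal'' components of $[u',s']$ and $[u',t']$ along $\mathcal{C}_0$ match those of $[u,s]$ and $[u,t]$ --- even in the Euclidean model the completed face of a cube is not a translate of the opposite face, so there is no parallelogram law to perturb. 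Likewise, ``oriented towards $\mathcal{C}_0$'' is a purely topological statement about which component of $S^3_\eq\setminus\CT$ the new layer enters, and yields no quantitative growth of a ``vertical'' component. Finally, the slope is not an angle inside a single $\pi_0$-fiber: $[m',\pi_0(m')]$ does lie in the fiber of $s'$, but $u'$ in general does not, so the problem cannot be reduced to a spherical-trig identity within that fiber. As written, the decisive inequality $\alpha(\CT')\ge\alpha(\CT)$ is not proved, and your sketch does not show how the hypothesis $\alpha(\CT)>\nicefrac{\pi}{2}$ would enter.

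For comparison, the paper's proof avoids controlling the position of the new vertices beyond symmetry and transversality, by routing the comparison through the shared layer-$k$ vertex $u$, the middle vertex of the edge-path joining $s$ and $t=\rot_{3,4}^2 s$ of layer $k+1$. With $m=m(s,t)$ and $\beta$ the angle at $u$ between $[u,m]$ and $[u,\pi_0(u)]$, two estimates give the claim: first, the convex spherical quadrilateral on $u$, $m$, $\pi_0(m)$, $\pi_0(u)$ has right angles at its two vertices on $\mathcal{C}_0$ and angle sum at least $2\pi$, whence $\alpha(\CT')\ge\pi-\beta$; second, in the link $\NO^1_u S^3_\eq$ the directions towards $\rot_{3,4}^{\pm2}u$, $s$ and $t$ span a convex quadrilateral symmetric about the axis through the directions of $[u,\pi_0(u)]$ and $[u,m]$, and the second spherical law of cosines together with $\pi-\alpha(\CT)<\nicefrac{\pi}{2}$ (this is exactly where ideality is used) gives $\beta\le\pi-\alpha(\CT)$. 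Combining the two inequalities yields $\alpha(\CT')\ge\alpha(\CT)$. If you wish to keep your fiber-based picture, you would need to find and prove a quantitative substitute for these two inequalities; as it stands, your argument stops exactly where the proposition begins.
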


\begin{proof}
Let $D'$ denote the restriction of $\CT'$ to the top three layers, i.e.\ if $\CT$ is a $k$-CCT, then $\CT'$ is a symmetric and transversal $(k+1)$-CCT and $D':=\RR(\CT,[k-1,k+1])$. Let $s$ denote any vertex of layer $k+1$ of $D'$. The vertices $t:=\rot_{3,4}^{2}s$ and $s$ of $D'$ are connected by a unique length $2$ edge-path in $\CT'$ whose middle vertex is $u\in \RR(\CT,k)$. Let $m:=m(s,t)$ denote the midpoint of the segment $[s,t]$. Let $\beta$ denote the angle between the segment $[u,m]$ and the segment $[u,\pi_0(u)]$. We claim the following two inequalities: 
\begin{compactenum}[\rm(a)]
\item $\alpha(\CT')\ge \pi-\beta$.
\item $\beta\le \pi-\alpha(\CT)$.
\end{compactenum}
This immediately implies that $\alpha(\CT')\ge \alpha(\CT)$, and thus finishes the proof. For inequality (a), consider the convex quadrilateral on the vertices $u$, $m$, $\pi_0(u)$ and $\pi_0(m)$. The slope $\alpha(\CT')$ of $\CT'$ is the angle at $m$, the angle $\beta$ is the angle at $u$. The remaining two angles of the quadrilateral measure to $\nicefrac{\pi}{2}$. The angle-sum in a convex spherical quadrilateral is at least $2\pi$, so \[\beta + \alpha(\CT') + \nicefrac{\pi}{2} + \nicefrac{\pi}{2} \ge  2\pi\Rightarrow\alpha(\CT')\ge  \pi-\beta.\]

\begin{figure}[htbf] 
\centering 
 \includegraphics[width=0.96\linewidth]{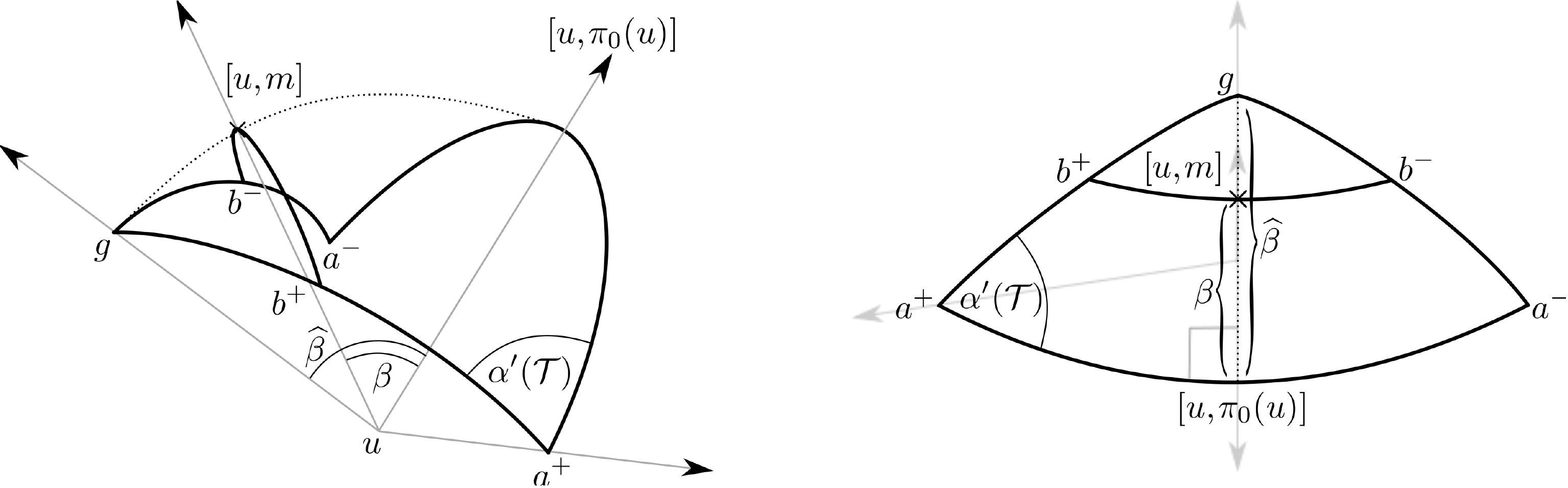} 
\caption{\small The figures show part of the sphere $\NO^1_u S^3_\eq$ from two perspectives.}
  \label{fig:sphcosine}
\end{figure}

To see inequality (b),  we work in the $2$-sphere $\NO^1_u S^3_\eq$, and identify segments $\gamma$ emanating at $u$ with their tangent direction at $u$. Consider the points $a^+:= [u,\rot_{3,4}^{2}u]$, $a^-:= [u,\rot_{3,4}^{-2}u]$, $b^+:= [u,t]$ and $b^-:= [u,s]$ in $\NO^1_u S^3_\eq$, cf.\ Figure~\ref{fig:sphcosine}. Symmetry and transversality of $D'$ imply that these four points form the vertices of a convex quadrilateral $Q$ that is symmetric under reflection at the axis $\Sp\{[u,\pi_0(u)],\,  [u,m]\} \subset \NO_u^1 S^3_\eq$. In particular, the segment $ [u,m]$ coincides with the midpoint of the edge $[b^+,b^-]\subset \NO^1_u S^3_\eq$ of $Q$, and similarly, $ [u,\pi_0(u)]$ is the midpoint of the edge $[a^+,a^-]\subset \NO^1_u S^3_\eq$ of $Q$. 

To estimate the distance $\beta$ between $ [u,m]$ in $ [u,\pi_0(u)]$ in $\NO^1_u S^3_\eq$, consider the intersection point $g$ in $\Sp\{a^+,\, b^+\} \cap \Sp \{a^-,\, b^-\}\subset \NO^1_u S^3_\eq$ for which $b^+\in [a^+,g]$, cf.\ Figure~\ref{fig:sphcosine}. Then, $ [u,m]$ lies in the segment from $g$ to $[u,\pi_0(u)]$, and thus the angle $\beta$ is bounded above by the distance $\widehat{\beta}$ of $g$ to $[u,\pi_0(u)]$ in $ S^3_\eq$. 

To bound $\widehat{\beta}$, consider the triangle in $\NO^1_u S^3_\eq$ on $[u,\pi_0(u)]$, $g$ and $a^+=[u,\rot_{3,4}^{2}u]$. Let $\eta$ denote the angle of this triangle at $g$, and notice that the angle at $[u,\pi_0(u)]$ is $\nicefrac{\pi}{2}$ by reflective symmetry at $\SSp\{[u,\pi_0(u)],\, [u,m]\}$, and that the angle at $[u,\rot_{3,4}^{-2}u]$ measures to $\pi-\alpha(\CT)$. Then the second spherical law of cosines gives \[\cos\big(\pi-\alpha(\CT)\big)=\sin\eta\cos\widehat{\beta},\]   
and since $\pi-\alpha(\CT)<\nicefrac{\pi}{2}$, we have \[\cos\big(\pi-\alpha(\CT)\big)\le \cos\widehat{\beta}\]
and consequently $\pi-\alpha(\CT)\ge \widehat{\beta}$, which gives the desired bound $\pi-\alpha(\CT)\ge \beta$.
\end{proof}

\setcounter{figure}{0}
\section{Many 4-polytopes with low-dimensional realization space}\label{sec:Lowdim}

This section is dedicated to the proof of Theorem~\ref{mthm:Lowdim}. It suffices to consider the case $d=4$; the general case follows immediately by considering iterated pyramids over the polytopes $\mathrm{CCTP}_4[n]$.
As outlined above, we will proceed in four steps:
\begin{compactitem}[$\circ$]
\item \emph{Section~\ref{ssc:example}:}
In this section, we provide the initial CCT for the construction, onto which we will build larger and larger CCTs by iterative extension. Our extension techniques for ideal CCTs develop their full power only if $k\ge  3$ (cf.\ Theorem~\ref{thm:convp}). Thus the initial example is $\PS[3]$, an ideal $3$-CCT in convex position in~$S^4$ which is constructed manually from a CCT $\PS[1]$ of width $1$.
\item \emph{Section~\ref{ssc:extension}:} Any ideal CCT can be extended:
\begin{theorem}\label{thm:ext}
Let $\CT\subset S^4$ be an ideal CCT of width $k\ge  3$. Then there exists an ideal $(k+1)$-CCT $\CT'$ extending $\CT$.
\end{theorem}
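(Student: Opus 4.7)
The plan is to reduce the construction to the control CCT in $S^3_\eq$, exploit the symmetry group $\mathfrak{S}$ to cut the problem down to a fundamental domain containing a single new vertex, use the cube completion Lemma~\ref{lem:cubecmpl} to write down the forced position of that vertex, and then verify that ideality is preserved using Propositions~\ref{prp:inj3} and~\ref{prp:slmono}. I expect the heart of the argument to be a local computation showing that the slope hypothesis $\alpha(\CT) > \pi/2$ forces the reconstructed vertex to land on the correct side of the existing complex.

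First I would pass to the control CCT $\overline{\CT} \subset S^3_\eq$, which is well-defined by Observation~\ref{obs:fix} and the ideality assumption, build its elementary extension $\overline{\CT}'$ there, and lift back to $S^4$ by choosing the fifth coordinate of each new vertex as forced by $\spi^{e_4}_5$-symmetry. Since $\mathfrak{R}$ acts simply transitively on each layer of a symmetric CCT, constructing $\overline{\CT}'$ reduces to choosing a single new vertex $s \in S^3_\eq$ in layer $k+1$, placed in a fundamental domain for the reflection $\spi^{(1,-1,0)}_{3}$; the other eleven vertices of layer $k+1$ are then defined as the $\mathfrak{R}$-orbit of $s$, and the full $\mathfrak{S}$-symmetry follows automatically from the equivariance of the defining cube-completion.

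Next, Lemma~\ref{lem:cubecmpl} pins down the candidate for $s$: any new top $3$-cube of $\overline{\CT}'$ already has three quadrilateral faces contained in $\RR(\overline{\CT},[k-1,k])$, so its remaining vertex is determined uniquely as the point making the configuration into a (possibly degenerate) combinatorial cube. The key geometric content is to show that this candidate is genuinely non-degenerate, lies on the $\mathcal{C}_0$-side of $\RR(\overline{\CT},k)$, and fits together with its $\mathfrak{S}$-images into a valid polytopal complex without spurious intersections. This is where I would use the strict slope inequality $\alpha(\CT) > \pi/2$: working in the tangent sphere $\NO^1_u S^3_\eq$ at the common neighbour $u \in \RR(\overline{\CT},k)$, much as in the proof of Proposition~\ref{prp:slmono}, the inequality translates into the statement that the cone of admissible directions from $u$ into layer $k+1$ is nonempty and lies strictly on the $\mathcal{C}_0$-side. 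The main obstacle of the whole proof sits here: one has to combine this angular estimate with the combinatorial data coming from Lemma~\ref{lem:cubecmpl} to produce an honest point, and then use the strict transversality of $\CT$ together with symmetry to rule out unintended incidences between the new cubes.

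Finally, I would check that the resulting complex $\overline{\CT}'$ is an ideal CCT. Symmetry holds by construction; transversality of the new top slab $\RR(\overline{\CT}',[k-1,k+1])$ follows by matching the hypotheses of Proposition~\ref{prp:inj3} against the conclusions of Proposition~\ref{prp:alignsymm} and the dictionary of Proposition~\ref{prp:dict}, where the non-trivial segment-interior conditions (e) and (f) are again underwritten by $\alpha(\CT) > \pi/2$. Orientation toward $\mathcal{C}_0$ is preserved because $s$ was chosen on that side, and the strict slope bound $\alpha(\CT') > \pi/2$ for the extended complex is then automatic from the monotonicity Proposition~\ref{prp:slmono}. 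Lifting back to $S^4$ via the $\spi^{e_4}_5$-symmetric prescription of the fifth coordinate yields the desired ideal $(k+1)$-CCT $\CT'$.
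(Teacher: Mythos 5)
Your proposal is correct and follows essentially the same route as the paper's own proof: pass to the control CCT in $S^3_\eq$ (Theorem~\ref{thm:exts}), construct the new vertex at each layer-$(k-2)$ vertex by cube completion and use the slope bound $\alpha>\nicefrac{\pi}{2}$ to place it on the $\mathcal{C}_0$-side (Proposition~\ref{prp:locatt} via Lemma~\ref{lem:dihang} and Corollary~\ref{cor:diffcomp}), verify transversality of the new top slab by checking the hypotheses of Proposition~\ref{prp:inj3} (Lemma~\ref{lem:localem}) and rule out spurious incidences via injectivity of $\pi_1$, and obtain the slope bound for the extension from Proposition~\ref{prp:slmono}. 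Two minor slips do not change the validity of the approach: the three quadrilateral faces that each new cube shares with the existing complex form $\St(v,\RR(\CT,[k-2,k]))$ and therefore include the layer-$(k-2)$ vertex $v$ (they are not contained in $\RR(\CT,[k-1,k])$), and the lift of the new vertices back to $S^4$ is forced by cube completion (Lemma~\ref{lem:cubecmpl}) applied in $S^4$ rather than by the reflection $\spi^{e_4}_{5}$, which preserves the fifth coordinate and hence does not determine it.
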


An ideal $k$-CCT $\CT$ in~$S^4$, $k\ge  2$, has an ideal elementary extension if and only if the associated ideal control CCT in~$S^3_\eq$ has an ideal elementary extension. Thus Theorem~\ref{thm:ext} is equivalent to the following theorem, which we will prove in Section~\ref{ssc:extension}.
\begin{theorem}\label{thm:exts}
Let $\CT\subset S^3_\eq$ be an ideal CCT of width $k\ge  3$. Then there exists an ideal $(k+1)$-CCT $\CT'$ extending $\CT$.
\end{theorem}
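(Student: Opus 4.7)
Since $\mathfrak{R}$ acts simply transitively on the vertices of each layer of a symmetric CCT, it suffices to place a single new vertex $w$ in a prospective layer $k+1$: its twelve $\mathfrak{R}$-images then form $\RR(\CT',k+1)$, and the new 2-faces and 3-cubes are forced by the combinatorics of $\T[k+1]$. Lemma~\ref{lem:cubecmpl} applied to the unique 3-cube of $\CT'$ having $w$ as apex shows that $w$ must lie in the intersection of three great 2-spheres $\Sigma_1,\Sigma_2,\Sigma_3\subset S^3_\eq$, each spanned by a known triple of vertices (one in $\RR(\CT,k-1)$ and two in $\RR(\CT,k)$) that, together with $w$, span one of the three ``top'' quadrilateral 2-faces meeting at $w$. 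Lemma~\ref{lem:cubecmpl} furnishes uniqueness of such a $w$; \emph{existence} in the correct hemisphere, together with combinatorial consistency of the resulting quadrilaterals, is the substance of the proof.

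For existence, the strategy is to work in the tangent sphere $\NO^1_u S^3_\eq$ at a layer-$k$ vertex $u$ of the cube, as in the proof of Proposition~\ref{prp:slmono}: transversality of $\CT$ guarantees that the local configuration around $u$ is faithfully represented by Clifford projection, and the slope hypothesis $\alpha(\CT)>\pi/2$ is a strict angular inequality that forces the three tangent planes $T_u\Sigma_i$ into general position. A short computation in spherical trigonometry, in the spirit of the second spherical law of cosines argument used in Proposition~\ref{prp:slmono}, then exhibits a unique transverse intersection point $w$ in the hemisphere containing $\mathcal{C}_0$ relative to $\RR(\CT,k)$. This is the main technical obstacle, and the restriction $k\ge 3$ enters here, since one needs $\RR(\CT,[k-2,k])$ to be a full ideal 2-CCT in order to even speak of the slope and to have a transversal history of adjacent cubes to anchor the computation.

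Having produced $w$, set $\CT':=\CT\cup\mathfrak{S}\cdot\{\text{new cubes}\}$; symmetry of $\CT'$ is immediate from the $\mathfrak{R}$-equivariance of the construction together with the fact that the reflection $\spi^{e_4}_5$ is preserved by cube completion. For transversality I apply Proposition~\ref{prp:inj3} to $\RR(\CT',[k-1,k+1])$: its six hypotheses translate, via the dictionary of Proposition~\ref{prp:dict}, into statements about the $\pi_0$- and $\pi_2$-images of $w$ and its neighbors, each of which is deduced from the corresponding statement for the already-transversal 2-CCT $\RR(\CT,[k-2,k])$ by propagation through cube completion under the slope inequality. The slope bound $\alpha(\CT')>\pi/2$ is then immediate from the monotonicity result Proposition~\ref{prp:slmono}, and orientation of $\CT'$ toward $\mathcal{C}_0$ follows because the slope inequality identifies $\mathcal{C}_0$ as the forward direction in which the new layer moves relative to $\RR(\CT,k)$, so that $\RR(\CT',k+1)$ lies in the same component of $S^3_\eq\setminus \RR(\CT,k)$ as $\mathcal{C}_0$.
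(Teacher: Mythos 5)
Your outline captures the correct high-level ingredients (cube completion via Lemma~\ref{lem:cubecmpl}, slope monotonicity via Proposition~\ref{prp:slmono}, transversality via Proposition~\ref{prp:inj3}, and symmetry from the group action), but there are two genuine gaps where the argument as sketched would not close.

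First, the existence step is misframed. You present it as producing an intersection point of three great $2$-spheres $\Sigma_1\cap\Sigma_2\cap\Sigma_3$; but in $S^3_\eq$ three great $2$-spheres are intersections of three linear hyperplanes of $\R^4$ with $S^3$, so their common intersection is always nonempty (generically an antipodal pair). The nontrivial content is not that the intersection exists, but that the resulting eight-point configuration forms a genuine combinatorial $3$-cube attached on the correct side of $\CT$. The paper proves this by anchoring at a layer-$(k-2)$ vertex $v$ (so that all three bounding quadrilaterals of the prospective cube are already in $\CT$ as $\St(v,\CT^\circ)$), then showing: (i) $\St(v,\CT^\circ)$ is non-coplanar and in convex position, with $[v,\pi_0(v)]$ pointing into $\conv\Lk(v,\CT^\circ)$ (Lemma~\ref{lem:dihang}, whose proof via dihedral angles and the Tietze/van Heijenoort argument is itself nontrivial), and (ii) the layer-$(k-1)$ vertices $r,q,u$ lie on the same side of $\SSp\{p,s,t\}$ as $v$, which comes down to the inequality $\alpha(\CT)>\pi-\gamma$ where $\gamma>\pi/2$ is computed explicitly (Corollary~\ref{cor:diffcomp} and the proof of Proposition~\ref{prp:locatt}). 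Anchoring at a layer-$k$ vertex, as you propose, is awkward because not all of the quadrilaterals meeting that vertex in the new cube lie in $\CT$; the "short computation in spherical trigonometry" you invoke is in effect the full content of Proposition~\ref{prp:locatt}, and it is not supplied.

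Second, the passage from local cube completion to a geometric polytopal complex is not automatic and is not addressed. That the new $3$-cubes are "forced by the combinatorics of $\T[k+1]$" only dictates where their vertices must be; it does not guarantee that the cubes $X(v)$ for different $v\in\RR(\CT,k-2)$ intersect one another and $\CT$ only along common faces. The paper's proof of Theorem~\ref{thm:exts} handles this by showing that $X(v)\setminus\St(v,\CT^\circ)$ lies in $K(v)=\bigcup_{x\in\mathrm{int}\St(v,\CT^\circ)}[x,\pi_0(x)]$, and that the $K(v)$ are pairwise disjoint because $\pi_1$ is injective on $\CT^\circ$; the key input is Lemma~\ref{lem:localem}/Corollary~\ref{cor:localem} ($\pi_1(\widehat X(v))=\pi_1(\St(v,\CT^\circ))$). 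This same corollary is also what upgrades the local injectivity you get from Proposition~\ref{prp:inj3} to the global injectivity of $\pi_1$ on $\RR(\CT',[k-1,k+1])$ that the definition of transversality actually requires. Without these two pieces the proposal does not yield that $\CT'$ is a well-defined CCT, nor that it is transversal.
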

\item \emph{Section~\ref{ssc:convex}:} Extensions are in convex position:
\begin{theorem}\label{thm:convp}
Let $\CT\subset S^4$ denote an ideal CCT of width $k\ge  3$ in convex position, and let $\CT'$ be an elementary extension of $\CT$. Then $\CT'$ is in convex position as well.
\end{theorem}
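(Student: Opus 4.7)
The plan is to apply the Alexandrov--van Heijenoort theorem for polytopal manifolds with boundary (Theorem~\ref{thm:locglowib}), which converts the global convex-position statement for $\CT'$ into two local assertions: that $\CT'$ is locally convex at each of its faces, and that the boundary manifold $\partial \CT'$ is in convex position. Since $\CT$ is already in convex position by hypothesis, both checks need only be carried out on the portion of $\CT'$ that was actually added during the elementary extension, namely the top ``slab'' of new $3$-cubes, each of which has exactly one vertex in the new layer $k+1$ and its remaining seven vertices distributed in layers $k-2,k-1,k$.

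Concretely, for every face $\sigma$ of $\CT'$ all of whose vertices lie in layers $0,\dots,k-3$, one has $\St(\sigma, \CT') = \St(\sigma, \CT)$, so local convexity at $\sigma$ is inherited from the hypothesis on $\CT$; similarly, the lower component of $\partial\CT'$ coincides with the corresponding component of $\partial\CT$ and is in convex position by assumption. Because $\CT$ is $\mathfrak{S}$-invariant and the elementary extension is unique (Lemma~\ref{lem:uniext}), the group $\mathfrak{S}$ acts on $\CT'$, and the remaining local convexity checks reduce to a bounded collection of orbit representatives: seam vertices in the strata $k-2$, $k-1$, $k$ where new cubes attach to $\CT$, a single new boundary vertex in layer $k+1$, and representative edges and $2$-faces through each. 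Using the orthogonal projection to the control CCT in $S^3_\eq$ (Definition~\ref{def:ctrcct}) together with the alignment data of Propositions~\ref{prp:alignsymm} and~\ref{prp:dict}, each dihedral-angle check reduces to a planar computation inside the Clifford torus $\mathcal{C}_1$ after applying $\pi_1$.

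The main obstacle is local convexity at a seam vertex $v\in\RR(\CT',k)$: here $\St(v,\CT')$ strictly enlarges $\St(v,\CT)$ by the links of the newly attached cubes, and one must verify that the resulting dihedral angles across the new $2$-faces through $v$ stay strictly below~$\pi$. This is precisely the spherical reformulation of the slope bound $\alpha(\CT')\ge\alpha(\CT)>\nicefrac{\pi}{2}$ furnished by Proposition~\ref{prp:slmono} together with the hypothesis that $\CT$ is ideal: the inequality $\alpha>\nicefrac{\pi}{2}$ forces the ridge formed when old cubes meet new ones to bend strictly into the side of $S^4$ containing $\mathcal{C}_0$, giving the required local convex picture. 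Finally, the new upper component of $\partial\CT'$ is a combinatorial $2$-torus supported on the top two layers; transversality (Proposition~\ref{prp:inj3}) injects it under $\pi_1$ as a closed curve in $\mathcal{C}_1$, and $\mathfrak{R}$-symmetry combined with the ideality condition forces this curve to bound a convex region of $\mathcal{C}_1$, which in turn implies that the upper boundary itself lies on the boundary of a convex body in $S^4$. Combining these ingredients with Theorem~\ref{thm:locglowib} then delivers that $\CT'$ is in convex position.
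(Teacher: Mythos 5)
Your proposal correctly identifies Theorem~\ref{thm:locglowib} as the governing tool and correctly observes that the work is concentrated in the slab of newly attached cubes, which is the paper's strategy as well (via Proposition~\ref{prp:convpext}). However, the verification steps contain several genuine gaps.

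First, a dimensional confusion: you describe the new upper component of $\parti\CT'$ as injecting under $\pi_1$ ``as a closed curve in $\mathcal{C}_1$.'' That component is a combinatorial $2$-torus and $\mathcal{C}_1$ is a flat $2$-torus; the image is a $2$-dimensional region, not a curve, and there is no sensible notion of a ``convex region of~$\mathcal{C}_1$'' to speak of. More importantly, what Theorem~\ref{thm:locglowib} actually requires is not convexity of the boundary surface but convex position of the \emph{fattened} boundary $\fat(B,\CT')$, which for the new end is the full $3$-dimensional subcomplex $\RR(\CT',[k-2,k+1])$. This is statement~\textbf{(I)} of Proposition~\ref{prp:convpext}, and it is a substantive $3$-dimensional claim that your sketch never actually establishes.

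Second, for $k=3$ and $k=4$ the complex $\CT'$ has width at most $5$ and is not a polytopal manifold with boundary (see Remark~\ref{rem:prp}: level-$2$ vertices in a width-$4$ CCT have links that are annuli, not disks). Hence Theorem~\ref{thm:locglowib} cannot be applied to $\CT'$ directly in the small-$k$ cases. The paper circumvents this by passing to a wider extension $\CT''$ of width $\ell=\max\{k+1,5\}$ and applying the Alexandrov--van Heijenoort argument there; your proposal silently assumes $\CT'$ is already a manifold with boundary.

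Third, the central local-convexity check is dismissed with the assertion that it ``is precisely the spherical reformulation of the slope bound $\alpha(\CT')\ge\alpha(\CT)>\nicefrac{\pi}{2}$.'' This is a heuristic, not an argument: the slope condition is used in the existence proof (Theorem~\ref{thm:exts}) to ensure the new cube is attached in the direction of $\mathcal{C}_0$, but it is not logically equivalent to local convexity of the enlarged vertex stars. The paper's actual verification of Proposition~\ref{prp:convpext}\textbf{(II)} works at layer-$(k-2)$ vertices (the bases of the new cubes, which is the structurally central case) and proceeds via a convex-cone containment argument, hemisphere selection (Observation~\ref{obs:righthalfspace}), the gluing Lemma~\ref{lem:convglue}, and the local-to-global Proposition~\ref{prp:loccrt1lay}. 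None of this machinery appears in your sketch, and the slope bound alone will not reproduce it.
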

The idea for the proof is to derive from the convex position of the $3$-complex $\RR(\CT, [k-3,k])\subset \CT$ the convex position of $\RR(\CT',[k-2,k+1])$, and then to apply the Alexandrov--van Heijenoort Theorem for polytopal manifolds with boundary, as given in Section~\ref{sec:convps}. 
\item \emph{Section~\ref{ssc:pfmthm1}:} The two previous steps provide an infinite family $\PS[n]$ of ideal CCTs in convex position in $S^4$ extending $\PS[1]$. We define the realization space of a complex, set $\mathrm{CCTP}_4[n]:=\conv (\PS[n])$, and get
\[
\dim \cR(\mathrm{CCTP}_4[n])\le  \dim \cR(\PS[n])\le  \dim \cR(\PS[1]).
\] 
A closer inspection gives the desired bound.
\end{compactitem}

\subsection{An explicit ideal cross-bedding cubical torus in convex position}\label{ssc:example}

The purpose of this section is to provide an ideal $3$-CCT $\PS[3]$ in convex position in the upper hemisphere $S^4_+$ of~$S^4$. We use homogeneous coordinates, 
so we describe points in $S^4_+$ by coordinates in $\R^4\times\{1\}$. 
The coordinates of the specific example we present are chosen in such a way that the complex can be reused later for the proof of Theorem~\ref{mthm:projun}.

\smallskip
The complex $\PS[3]$ is constructed by first giving a $1$-CCT $\PS[1]$ in~$S^4$, and then extending it twice. Instead of developing machinery that could provide the first two extensions, we will describe them directly in terms of vertex coordinates, and indicate how to verify that $\PS[3]$ is ideal and in convex position.

\medskip

We start off with a $1$-CCT $\PS[1]$ in~$S^4\subset\R^5$. Set
\[\vartheta_0=\big(\sqrt{2}-1,\,1-\sqrt{2},\,2,\,0,\, 1\big)\ \ \text{and}\ \ \vartheta_1:=\big(1,\,0,\,1,\,0,\, 1\big).\]
Let $L_0$ denote the orbit of $\vartheta_0$, and $L_1$ the orbit of $\vartheta_1$, under the group $\mathfrak{R}\subset O(\R^5)$ generated by $\rot_{3,4}^2$ and $\rot_{3,4} \rot_{1,2}$. The point configuration $L_0\cup L_1$ forms the vertex set of $\PS[1]$. 

The edges of $\PS[1]$ are given by
\begin{compactitem}[$\circ$]
\item the edge connecting $\vartheta_0$ with the vertex $\vartheta_1$, and its orbit under $\mathfrak{R}$,  
\item the edge connecting $\vartheta_0$ with the vertex $\vartheta_1^+=\rot_{1,2}^{-1}\rot_{3,4}^{-1}\vartheta_1$, and its orbit under $\mathfrak{R}$, and
\item the edge connecting $\vartheta_0$ with the vertex $\vartheta_1^-=\rot_{1,2}^{-1}\rot_{3,4}\vartheta_1$, and its orbit under $\mathfrak{R}$.
\end{compactitem}

\noindent $L_0$ is layer $0$ of $\PS[1]$, and consequently, the orbit $L_1$ corresponds to layer $1$ of $\PS[1]$. Let $\PS[2]$ denote the elementary extension of $\PS[1]$ (cf.\ Figure~\ref{fig:0transmit}), and let $\PS[3]$ denote the elementary extension of $\PS[2]$.

\begin{figure}[htbf]
\centering 
  \includegraphics[width=0.38\linewidth]{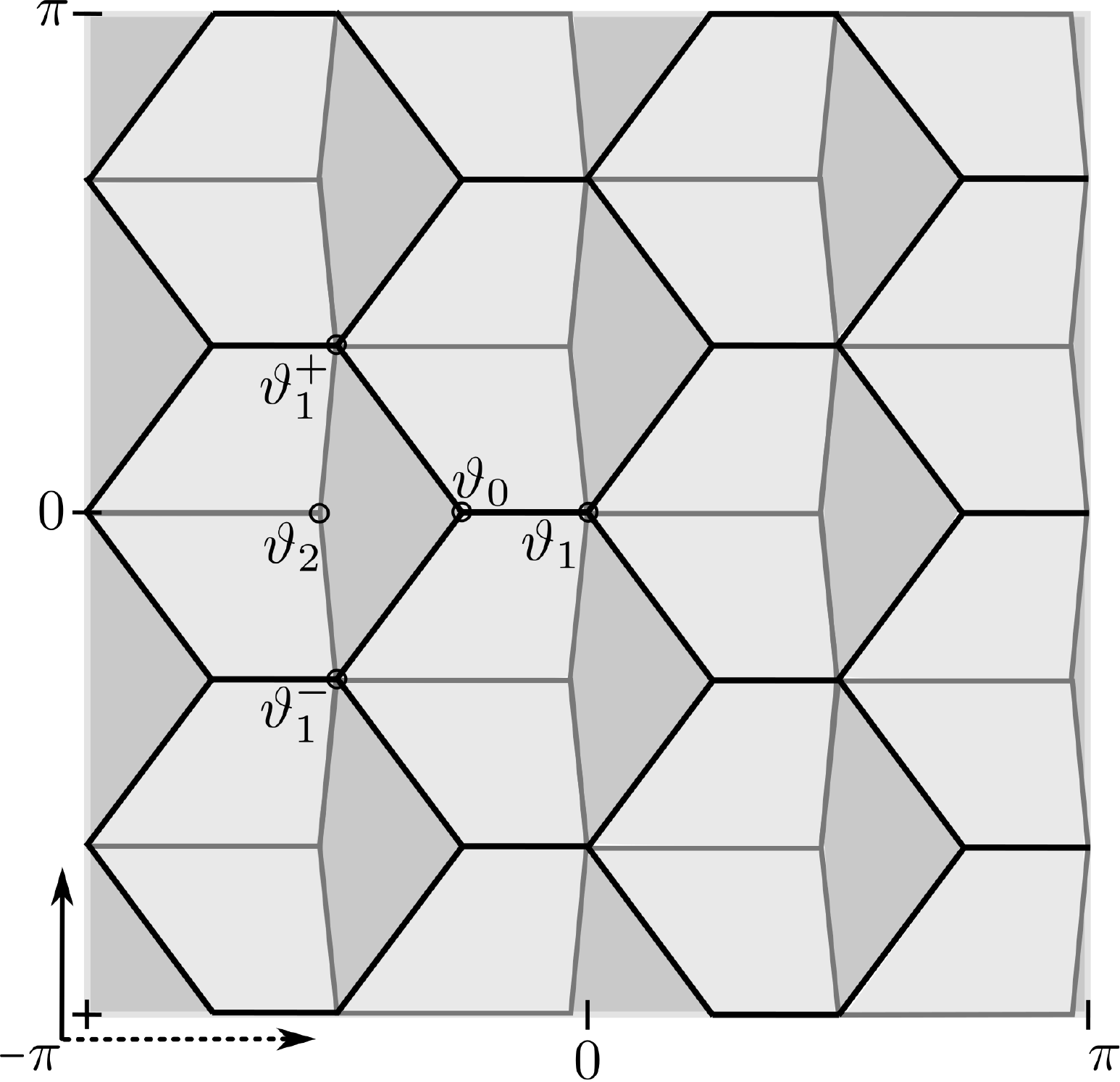} 
  \caption{\small The $2$-CCT $\PS[2]$, obtained as the elementary extension of $\PS[1]$.
     This image in $\mathcal{C}_1$ is produced by orthogonal projection to the equator $S^3_\eq$, followed by the Clifford projection~$\pi_1$.} 
  \label{fig:0transmit}
\end{figure}

We record some properties of the complexes $\PS[2]$ and $\PS[3]$ thus obtained:
\begin{compactitem}[$\circ$]
\item $\PS[2]$ is a CCT of width $2$. The coordinates of layer $2$ are obtained as \[ \vartheta_2= \big(\tfrac{1}{23}(-11+7\sqrt{2}),\, \tfrac{1}{23}(-9-11\sqrt{2}),\, \tfrac{1}{23}(16-6\sqrt{2}),\, 0,\, 1 \big)\]
and its orbit under the group of rotational symmetries $\mathfrak{R}$; see also Figure~\ref{fig:0transmit}.
\item $\PS[3]$ is a CCT of width $3$. The coordinates of layer $3$ are obtained as \[\vartheta_3=\big(\tfrac{1}{49}(37+11\sqrt{2}),\,\tfrac{1}{49}(-11+6\sqrt{2}),\tfrac{1}{49}(22-12\sqrt{2}),0,\,1\big)\]
and its orbit under $\mathfrak{R}$. Here $\vartheta_3$ is the unique vertex of $\PS[3]$ that lies in a joint facet with $\vartheta_0$.

\item $\PS[3]$ is ideal. The symmetry of $\PS[3]$ is obvious from the construction.
Transversality can be checked by straightforward computation, or by using the injectivity criterion Proposition~\ref{prp:inj3}: Using it gives directly that $\pi_1$ is injective on the star of every vertex $v$ of degree $3$ in $\RR(\PS[3],[0,2])$, and similarly $\RR(\PS[3],[1,3])$. In particular, $\pi_1$ is locally injective on the tori $\RR(\PS[3],[0,2])$ and $\RR(\PS[3],[1,3])$. It now follows from an examination of the action of the rotations that $\pi_1$ must be a trivial covering map, in particular injective. The computation of the slope and checking the orientation is again a simple calculation. 

\item $\PS[3]$ is in convex position. Outer normals of its facets are given by
\[\vv{n}=\big(7+5\sqrt{2},\,-8-5\sqrt{2},2,0, -9-5\sqrt{2}\big)\]
and its orbit under $\mathfrak{R}$. Here, $\vv{n}$ is an outer normal to the facet containing the vertex $\vartheta_0$. While verifying that $\PS[3]$ is in convex position this way is again easy, one can simplify the calculation drastically by using Proposition~\ref{prp:loccrt1lay} below.
\end{compactitem}

\subsection{Existence of the extension}\label{ssc:extension}
This section is devoted to the proof of Theorem~\ref{thm:exts}. 

\smallskip

We divide it into two parts, first proving that the elementary extension exists ``locally'', i.e.\ proving that, for every vertex $v$ of $\RR(\CT,k-2)$, there exists a $3$-cube containing $\St(v, \RR(\CT,[k-2,k]))$ as a subcomplex (Proposition~\ref{prp:locatt}), then concluding that the extension exists and is in fact an ideal CCT. For this section, we work in the equator $3$-sphere~$S^3_\eq$, and stay in the notation of Theorem~\ref{thm:exts}.

\subsubsection*{Local extension}

The goal of this section is to prove Proposition~\ref{prp:locatt}:

\begin{prp}\label{prp:locatt}
Let $\CT^\circ:=\RR(\CT,[k-2,k])$ denote the subcomplex of the ideal CCT $\CT$ induced by the vertices of the last three layers, and let $v$ be any vertex of layer $k-2$ in $\CT$. Then there exists a $3$-cube $X(v)\subset S^3_\eq$ such that $\St(v,\CT^\circ)$ is a subcomplex of $X(v)$. 
\end{prp}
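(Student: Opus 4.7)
The plan is to complete the star $\St(v,\CT^\circ)$ to a convex $3$-cube by constructing an eighth vertex $g$ and verifying non-degeneracy.

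First I exploit the symmetry of $\CT$: since the group $\mathfrak{R}$ acts simply transitively on each layer, it suffices to construct $X(v_0)$ for one representative vertex $v_0\in\RR(\CT,k-2)$ and then set $X(\rho v_0):=\rho\, X(v_0)$ for every $\rho\in\mathfrak{R}$. Choosing $v_0$ to lie on the fixed set of some reflection in $\mathfrak{S}\setminus\mathfrak{R}$ forces the (necessarily unique) eighth vertex $g$ to lie on that same fixed set, cutting down its free coordinates. By the structure of Definition~\ref{def:CCT}, $v_0$ has three upper neighbors $v_1,v_2,v_3\in\RR(\CT,k-1)$ and three second-order neighbors $v_{12},v_{13},v_{23}\in\RR(\CT,k)$, and the three $2$-faces of $\St(v_0,\CT^\circ)$ are exactly the quadrilaterals $Q_{ij}:=\{v_0,v_i,v_{ij},v_j\}$ (one per pair $\{i,j\}$ with $i<j$).

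To place these inside a $3$-cube $X(v_0)$, I seek a point $g\in S^3_\eq$ such that each of the three ``upper'' quadrilaterals $F_i:=\{v_i,v_{ij},g,v_{ik}\}$ (with $\{i,j,k\}=\{1,2,3\}$) is planar. Each planarity condition reads $g\in H_i:=\SSp\{v_i,v_{ij},v_{ik}\}$, a single linear equation in the ambient coordinates of~$\R^4$. The common zero set of three such linear equations always contains a line through the origin, so $H_1\cap H_2\cap H_3\cap S^3_\eq$ is non-empty; generically it consists of two antipodal points, and the orientation of $\CT$ towards $\mathcal{C}_0$ picks out the correct candidate as $g$ (the one on the component of $S^3_\eq\setminus\CT$ adjacent to $\RR(\CT,k)$). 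Once $g$ is defined, Lemma~\ref{lem:cubecmpl} guarantees that the eight points form a unique combinatorial cube whose three faces through $v_0$ are exactly the $Q_{ij}$; thus $\St(v_0,\CT^\circ)\subset X(v_0)$ as a subcomplex.

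The hardest step, and the actual content of the proposition, is verifying non-degeneracy: the three hyperplane normals must be linearly independent in~$\R^4$ (so that $H_1\cap H_2\cap H_3$ is truly $1$-dimensional and $g$ is isolated), and the eight points must be in convex position so that $X(v_0)$ is a genuine convex $3$-cube rather than a self-intersecting configuration. This is where the slope inequality $\alpha(\CT)>\nicefrac{\pi}{2}$ of an ideal CCT enters. I would argue along the lines of Proposition~\ref{prp:slmono}: working in the sphere $\NO^1_{v_i} S^3_\eq$ or, via transversality, in the Clifford projection to $\mathcal{C}_1$ (where Proposition~\ref{prp:inj3} governs injectivity), one controls the spherical angles between the edges emanating at each $v_i$ so as to force $H_1$, $H_2$, $H_3$ into general position and to push $g$ into the open hemisphere ``beyond'' layer $k$, away from $\RR(\CT,k-1)$. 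Applying $\mathfrak{R}$ to $X(v_0)$ yields the desired cubes $X(v)$ for every $v\in\RR(\CT,k-2)$, completing the proof.
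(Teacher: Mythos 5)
Your outline captures the right overall shape — locate the missing eighth vertex as the intersection of the three ``upper'' hyperplanes, reduce by the $\mathfrak{R}$-symmetry to one representative star, and recognize that the real work is proving non-degeneracy and convexity, which must hinge on the ideal slope inequality $\alpha(\CT)>\nicefrac{\pi}{2}$. Up to that point the proposal matches the paper in spirit (the paper also passes through the deferred Lemma~\ref{lem:dihang} to establish non-coplanarity and the ``points towards $\mathcal{C}_0$'' direction).

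However, the crux of the proposition — exactly what you yourself call ``the actual content'' — is never actually carried out, only promised. You say one should ``control the spherical angles between the edges emanating at each $v_i$ so as to force $H_1,H_2,H_3$ into general position and to push $g$ into the open hemisphere beyond layer $k$,'' but no such control is supplied, and it is not at all obvious how to run Proposition~\ref{prp:slmono}-type estimates at the three layer-$(k-1)$ vertices simultaneously. The paper instead reduces cube existence to a single, checkable separation statement: with Lemma~\ref{lem:dihang} in hand, $X(v)$ exists iff the three layer-$(k-1)$ vertices $r,q,u$ lie in the same component of $S^3_\eq\setminus\SSp\{p,s,t\}$ as $v$. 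Reflective symmetry at $\pi_2^{\SSp}(v)$ disposes of $r$ and $q$ for free. For $u$, there is a genuinely non-trivial intermediate step — Corollary~\ref{cor:diffcomp}, proved by exhibiting a convex quadrilateral $\Theta$ in the $2$-sphere $\pi_2^{\SSp}(v)$ — showing that $\pi_0(m)$ and $v$ sit on opposite sides of $\SSp\{p,s,t\}$, where $m$ is the midpoint of $[s,t]$. Only then can one reduce to comparing the slope $\alpha(\CT)$ to the angle $\gamma$ between $[m,p]$ and $[m,\pi_0(m)]$; an explicit coordinate computation gives $\cos\gamma = -p_3/\sqrt{1+p_3^2}<0$, hence $\gamma>\nicefrac{\pi}{2}$, and combined with $\alpha(\CT)>\nicefrac{\pi}{2}$ this yields $\alpha(\CT)>\pi-\gamma$, putting $u$ on the required side. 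None of this chain — the reduction to a single separating hyperplane, Corollary~\ref{cor:diffcomp}, or the $\gamma$-computation — appears in your proposal.

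A secondary inaccuracy: Lemma~\ref{lem:cubecmpl} asserts only that the missing vertex is \emph{determined} by the others once the three quadrilaterals are not coplanar; it does not ``guarantee that the eight points form a unique combinatorial cube.'' Existence of a genuine convex $3$-cube is precisely the claim you still need to prove, so invoking Lemma~\ref{lem:cubecmpl} there is circular.
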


We start with a lemma, the proof of which is postponed to the appendix, Section~\ref{ssc:lemdihang}:

\begin{lemma}\label{lem:dihang}
Let $v$, $\CT^\circ$ be chosen as in Proposition~\ref{prp:locatt}. Then $\St(v,\CT^\circ)$ is in convex position, and the tangent vector of $[v,\pi_0(v)]$ at $v$, seen as an element of $\NO_v^1 S^3_\eq$, lies in $\conv \Lk(v,\CT^\circ)$.
\end{lemma}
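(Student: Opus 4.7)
The plan is to analyze $\St(v,\CT^\circ)$ explicitly, leveraging the symmetry, transversality, and slope condition $\alpha(\CT^\circ)>\nicefrac{\pi}{2}$ built into the definition of an ideal CCT.

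First I would unpack the combinatorics: the complex $\CT^\circ=\RR(\CT,[k-2,k])$ is purely $2$-dimensional (any $3$-cube of $\CT$ spans four consecutive layers, which is incompatible with being contained in the three layers $k-2,k-1,k$), so the faces of $\CT^\circ$ through $v$ are exactly the three edges $[v,a_i]$ to the neighbours $a_1,a_2,a_3\in \RR(\CT,k-1)$ of $v$ and the three squares $Q_{ij}$ spanned by pairs $[v,a_i],[v,a_j]$ (the fourth vertex $b_{ij}$ of $Q_{ij}$ lying in $\RR(\CT,k)$). Thus $\St(v,\CT^\circ)$ is a cubical $2$-disk, and $\Lk(v,\CT^\circ)\subset\NO_v^1 S^3_\eq\cong S^2$ is a spherical triangle whose vertices are the initial tangent directions $\tau_i\in\NO_v^1 S^3_\eq$ of the edges $[v,a_i]$ at $v$ and whose edges are the arcs subtended at $v$ by the squares $Q_{ij}$.

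To establish convex position of $\St(v,\CT^\circ)$ I would verify local convexity and then invoke the Alexandrov--van Heijenoort theorem for polytopal manifolds with boundary established in Section~\ref{sec:convps}. Two local conditions must be checked: (i) that $\Lk(v,\CT^\circ)$ is contained in an open hemisphere of $\NO_v^1 S^3_\eq$, and (ii) that for each $i$ the two squares $Q_{ij},Q_{ik}$ meeting along $[v,a_i]$ form a dihedral angle strictly less than $\pi$ on the side facing $\mathcal{C}_0$. Condition~(i) is extracted from Proposition~\ref{prp:alignsymm}(e)--(g) and the transversality of $\CT^\circ$, which force the $\pi_0$-projections of $a_1,a_2,a_3$ into a common open arc of $\mathcal{C}_0$ and hence the $\tau_i$ into a common open hemisphere of $\NO_v^1 S^3_\eq$. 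Condition~(ii) is where the slope condition enters: working in the link $\NO_{[v,a_i]}^1 S^3_\eq\cong S^1$, the configuration is a spherical quadrilateral to which the angle-sum argument used in the proof of Proposition~\ref{prp:slmono} applies, and the strict inequality $\alpha(\CT^\circ)>\nicefrac{\pi}{2}$ yields the required strict dihedral inequality.

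For the tangent-direction statement, (i) already guarantees that $\conv\Lk(v,\CT^\circ)$ is a genuine spherical triangle inside an open hemisphere of $\NO_v^1 S^3_\eq$. I would then use Observation~\ref{obs:fix} together with the orientation of $\CT$ to place $\eta\in\NO_v^1 S^3_\eq$, the initial direction of the geodesic $[v,\pi_0(v)]$, into the same hemisphere, and verify $\eta\in\conv\Lk(v,\CT^\circ)$ by checking the three sidedness conditions that carve this triangle out of the hemisphere: for each $i\in\{1,2,3\}$, that $\eta$ and $\tau_i$ lie on a common side of the great circle spanning the link-arc contributed by the opposite square $Q_{jk}$, with $\{j,k\}=\{1,2,3\}\setminus\{i\}$. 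Via the Clifford-projection dictionary of Proposition~\ref{prp:dict}, each such sidedness check reduces to a statement about the positions of $\pi_0(v),\pi_0(a_i),\pi_0(a_j),\pi_0(a_k)$ on $\mathcal{C}_0$, which is an immediate consequence of Proposition~\ref{prp:alignsymm}.

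The main obstacle is the dihedral-angle estimate in~(ii): once the appropriate spherical quadrilateral in $\NO_{[v,a_i]}^1 S^3_\eq$ is set up, the estimate itself is a computation with the second spherical law of cosines closely analogous to the one at the end of the proof of Proposition~\ref{prp:slmono}, but identifying the correct side (the one facing $\mathcal{C}_0$) along which the dihedral must be measured requires careful bookkeeping with the symmetry group $\mathfrak{S}$ and the alignment dictionary. The rest of the argument is a formal consequence of the alignment statements in Propositions~\ref{prp:alignsymm} and~\ref{prp:dict}.
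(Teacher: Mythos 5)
Your reduction of the statement to two local conditions (non-degeneracy of the spherical triangle $\Lk(v,\CT^\circ)$, plus dihedral angles $<\pi$ at the edges $[v,a_i]$ measured on the side facing $\mathcal{C}_0$) is the right reformulation, but the two steps you lean on are not substantiated, and that is where the actual difficulty lies. Your dihedral estimate is only asserted ``by analogy'' with Proposition~\ref{prp:slmono}; there is no evident spherical quadrilateral at an edge of type $(k-2,k-1)$ to which that angle-sum/law-of-cosines argument applies, and the ``bookkeeping'' you defer --- identifying which side of each quadrilateral faces $\mathcal{C}_0$ --- is essentially the second assertion of the lemma itself, so your plan risks circularity. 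The paper does not use the slope condition here at all: along the edges between layers $k-1$ and $k$ the two quadrilaterals of $\CT^\circ$ are faces of a common $3$-cube of $\CT$, so the dihedral angle towards $M$ (the component of $S^3_\eq{\setminus}\CT$ containing $\mathcal{C}_0$) is reflex there; if the angles at the layer-$(k-2,k-1)$ edges were also all at least $\pi$ towards $M$, then by Tietze/van Heijenoort the closed surface $\CT^\circ$ would bound a convex body, hence be a $2$-sphere, contradicting that it is a torus; symmetry and Observation~\ref{obs:geodesictr} then force all three angles at each layer-$(k-2)$ vertex to be strictly smaller than $\pi$. In particular the non-coplanarity of the three quadrilaterals at $v$ (your condition (i)) is also a consequence of this global topological argument; it does not follow from Proposition~\ref{prp:alignsymm}(e)--(g) and transversality alone, since those only constrain Clifford projections and do not exclude a locally flat or locally concave star.

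Likewise, your verification that the direction $\eta$ of $[v,\pi_0(v)]$ lies in $\conv\Lk(v,\CT^\circ)$ via three sidedness checks cannot be discharged by Proposition~\ref{prp:dict}: the great circles in question are spanned by the quadrilaterals $Q_{jk}$, which are not fibre-spanned hyperplanes of the form $\pi_i^{\SSp}(\cdot)$, so the dictionary does not translate them into statements about $\pi_0$-projections, and nothing in Proposition~\ref{prp:alignsymm} is ``immediate'' here (compare Corollary~\ref{cor:diffcomp}, where an analogous sidedness statement needs a separate argument). Once the dihedral angles towards $M$ are known to be $<\pi$, the paper gets the second assertion for free: $\conv\Lk(v,\CT^\circ)=\NO_v^1 M$, and transversality places $\eta$ in $\NO_v^1 M$. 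A minor further point: invoking Theorem~\ref{thm:locglowib} to obtain convex position of the three-quadrilateral star is circular, since its hypotheses for this small complex are exactly what is to be proved; for a non-degenerate spherical triangle convex position is elementary.
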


Intuitively speaking, the convex position of $\St(v,\CT^\circ)$ is a necessary condition for the existence of the cube $X(v)$, and the
conclusion that the tangent direction of $[v,\pi_0(v)]$ lies in $\conv \Lk(v,\CT^\circ)$ ensures that the new cube $X(v)$ is attached in direction of $\mathcal{C}_0$, and away from the complex $\CT$ already present. This allows us to provide the following technical statement towards the proof of Proposition~\ref{prp:locatt}. We consider, for $v$ and $\CT^\circ$ as above, the complex $\St(v,\CT^\circ)$ with vertices labeled as in Figure~\ref{fig:cubeatt}(1). Let $m$ denote the midpoint of the segment $[s,t]$.

\begin{cor}\label{cor:diffcomp}
$\pi_0(m)=\pi_0(s)$ and $v$ lie in different components of $S^3_\eq{\setminus} \SSp\{p,\, s,\, t\}$.
\end{cor}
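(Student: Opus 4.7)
The plan is to reduce the claim to a 2-dimensional statement in the 2-sphere $S^2 := S^3_\eq \cap \{x_4=0\}$ via the reflection $\spi^{e_4}_{4}\in\mathfrak S$, and then to pin down the sides of the resulting great circle using the slope condition together with Lemma~\ref{lem:dihang}.

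In the labelling of $\St(v,\CT^\circ)$, the reflection $\spi^{e_4}_{4}$ fixes $v$, $u$ and $p$ and interchanges $s\leftrightarrow t$ and $r\leftrightarrow q$; in particular it fixes the midpoint $m$ of $[s,t]$ and the point $\pi_0(m)=\pi_0(s)\in\mathcal{C}_0$. Hence $\SSp\{p,s,t\}$ is $\spi^{e_4}_{4}$-invariant, and its intersection with the mirror 2-sphere $S^2$ is the great circle $C := \SSp\{p,m\}\subset S^2$. Since both $v$ and $\pi_0(m)$ lie in $S^2$, the conclusion of the corollary is equivalent to showing that these two points lie in different components of $S^2\setminus C$.

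For this 2-dimensional statement I would choose coordinates in $S^2$ so that $\pi_0(s)=e_1$ (using Proposition~\ref{prp:alignsymm}(b,c) to align $s$ and $t$ symmetrically across $\{x_4=0\}$), so that a normal to $C$ can be written as $\vec n = p\times m$. The orientation clause of Definition~\ref{def:slor} together with Proposition~\ref{prp:alignsymm}(e), which places $\pi_0(s)$ strictly between $\pi_0(u)$ and $\pi_0(p)$ on $\mathcal{C}_0$, fixes the signs of the relevant coordinates of $p$ and directly gives $\vec n\cdot\pi_0(m)>0$. The slope inequality $\alpha(\CT)>\pi/2$, translated via the first spherical law of cosines at $m$ (in the spirit of the proof of Proposition~\ref{prp:slmono}), produces an obtuse angle between the tangent directions of $[m,\pi_0(m)]$ and $[m,u]$, and hence the first piece of sign information for $\vec n\cdot u$.

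The main obstacle is transferring the required inequality from $u$ to $v$. Algebraically, the three planarity conditions at $v$ combined with the $\spi^{e_4}_{4}$-symmetry give a representation $v=\alpha\, u + \beta\, (s+t)/2 + \gamma\, (r+q)/2$; since $(s+t)/2 \in \Sp\{s,t\}\subset\SSp\{p,s,t\}$, and since the $\spi^{e_4}_{4}$-symmetric planarity of the face $\{v,r,p,q\}$ forces $(r+q)/\|r+q\|$ to lie on the great circle $\SSp\{v,p\}\subset S^2$, one obtains $\vec n\cdot v$ as a scalar multiple of $\vec n\cdot u$. The sign of this multiplier, together with the residual term in $\vec n\cdot u$ not controlled by the slope alone, is pinned down by Lemma~\ref{lem:dihang}: the convex position of $\St(v,\CT^\circ)$ and the fact that the tangent of $[v,\pi_0(v)]$ lies in $\conv\Lk(v,\CT^\circ)$ force the star to open toward $\pi_0(m)$, ruling out the folded configurations and yielding the desired inequality $\vec n\cdot v<0$.
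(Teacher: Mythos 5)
Your opening reduction is sound and matches the paper's: restricting to the mirror $2$-sphere (the fixed set of the reflection $\spi^{e_4}$, which in the paper's notation is $\pi_2^{\SSp}(v)$) is exactly where the paper's proof lives, and the identification of the trace of $\SSp\{p,s,t\}$ with the great circle $\SSp\{p,m\}$ is correct. However, from that point on the argument has a genuine gap. You want to determine the signs $\vec n\cdot\pi_0(m)>0$ and $\vec n\cdot v<0$, but neither determination is actually established. The claim that the orientation clause together with Proposition~\ref{prp:alignsymm}(e) ``directly gives'' $\vec n\cdot\pi_0(m)>0$ is asserted without an argument, and the slope inequality $\alpha(\CT)>\pi/2$ by itself only locates the direction $[m,u]$ at an obtuse angle from $[m,\pi_0(m)]$ in the tangent circle at $m$ --- it does not pin down on which side of the direction $[m,p]$ (the tangent to $C$ at $m$) the direction $[m,u]$ lies, and hence does not by itself determine the sign of $\vec n\cdot u$. (Incidentally, invoking the spherical law of cosines here is unnecessary: by Definition~\ref{def:slor} the slope \emph{is} the angle at $m$ between $[m,\pi_0(m)]$ and $[m,u]$, so no translation is needed.) Finally, the transfer from $\vec n\cdot u$ to $\vec n\cdot v$ via a decomposition $v=\alpha u+\beta m+\gamma(r+q)/2$ followed by eliminating $(r+q)/2$ using the face $\{v,r,p,q\}$ is a valid algebraic template, but the sign of the resulting multiplier is precisely what needs proving, and ``Lemma~\ref{lem:dihang} rules out the folded configurations'' is not an argument --- it is a restatement of the desired conclusion.

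For comparison, the paper takes a different and fully geometric route inside the same mirror $2$-sphere. It introduces the points $u',p'\in\mathcal{C}_0$ with $u\in[v,u']$ and $p\in[v,p']$, uses Lemma~\ref{lem:dihang} to conclude $\pi_0(v)\in[u',p']$, and then shows that $\Theta=\conv\{m,\pi_0(m),p',p\}$ is a convex quadrilateral whose edges are identified one by one from Proposition~\ref{prp:alignsymm}; in particular $[m,p]$ is an edge of $\Theta$, which immediately puts $\pi_0(m)$ and $p'$ on the same side of $\SSp\{p,m\}$, while $p\in[v,p']$ puts $p'$ and $v$ on opposite sides. This avoids any explicit sign computation with normals, precisely the step your proposal leaves unresolved. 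To repair your proof you would need to carry out the coordinate computation you sketch in full --- specify the coordinate frame, compute $\vec n\cdot\pi_0(m)$, $\vec n\cdot u$ and the coefficients in the decomposition of $v$, and verify all the sign constraints from transversality and ideality --- at which point it is likely you would be rediscovering, in coordinates, the convexity statement the paper proves synthetically.
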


\begin{proof}
Let $u'$ be the unique point in the weighted Clifford torus $\mathcal{C}_0$ so that $u$ lies in  $[v,u']$, and analogously let $p'$ denote the point of $\mathcal{C}_0$ for which $p\in [v,p']$; cf.\ Figure~\ref{fig:cubeatt}(2). Let $\Delta$ denote the triangle $\conv\{v,\,u',\,p'\}\subset \pi_2^{\SSp}(v)$. Since the tangent direction of $[v,\pi_0(v)]$ at $v$ lies in $\conv \Lk(v,\CT^\circ)$ by Lemma~\ref{lem:dihang}, we have that $\pi_0(v)$ lies in $ [u',p']$. Thus for all points $y$ in $\Delta$, the point $\pi_0(y)$ lies in $[u',p']$, and in particular $[\pi_0(u),\pi_0(p)]$ is a subset of $[u',p']$. Moreover, since $m$ lies in the interior of $\Delta$, $\pi_0(m)$ lies in the interior of $[u',p']$. Let us consider the polygon $\Theta:=\conv\{m,\, \pi_0(m),\,p',\,p\}\subset  \Delta$. We have the following:

\begin{compactitem}[$\circ$]
\item By Proposition~\ref{prp:alignsymm}(e), $\pi_0(m)$ is a point in the relative interior of $[\pi_0(u),\pi_0(p)]\subset [u',p']$. In particular, $p$ and $p'$ lie in the same component of $\pi_2^{\SSp}(v){\setminus}\SSp\{m,\, \pi_0(m)\}$. Thus the segment $[m,\pi_0(m)]$ is an edge of $\Theta$, since $m$ does not coincide with $\pi_0(m)$ by Proposition~\ref{prp:alignsymm}(a).
\item Since $m$ and $p$ lie in $\pi_2^{\mathrm{f}}(v)$ by Proposition~\ref{prp:alignsymm}(b) and (d), the segment $[p',\pi_0(m)]\in \mathcal{C}_0$ is exposed by the subspace $\mathcal{C}_0$ in $\pi_2^{\SSp}(v)$. Consequently, since $p'$ does not coincide with $\pi_0(m)$, we obtain that $[p',\pi_0(m)]$ is an edge of $\Theta$.
\item By an analogous argument, $[p,p']$ is an edge of $\Theta$: The point $m$ lies in the interior of $\Delta$, and $\pi_0(m)$ lies in the interior of $[u',p']$. Consequently, $m$ and $\pi_0(m)$ lie in the same component of $\pi_2^{\SSp}(v){\setminus}\SSp\{p,\, p'\}$. Thus the segment $[p,p']$ is an edge of $\Theta$, since we have $p\neq p'$ by Proposition~\ref{prp:alignsymm}(a).
\end{compactitem}

\noindent Thus $\Theta$ is a convex quadrilateral, the remaining edge of which is given by $[m,p]$, cf.\ Figure~\ref{fig:cubeatt}. In particular $\pi_0(m)$ and $p'$ lie in the same component of $\pi_2^{\SSp}(v){\setminus} \SSp\{p,\, m\}=\pi_2^{\SSp}(v){\setminus} \SSp\{p,\, s,\, t\}$, and since $p'$ and $v$ lie in different components of $\pi_2^{\SSp}(v){\setminus} \SSp\{p,\, m\}$, we obtain that $\pi_0(m)$ and $v$ lie in different components of $\pi_2^{\SSp}(v){\setminus} \SSp\{p,\, m\}$, as desired.
\end{proof}

As announced, we conclude this section with a proof of Proposition~\ref{prp:locatt}.

\begin{proof}[\textbf{Proof of Proposition~\ref{prp:locatt}}]

We continue to use the labeling of Figure~\ref{fig:cubeatt}. We already proved that the vertices of $\St(v,\CT^\circ)$ are not all coplanar (Lemma~\ref{lem:dihang}), so in order to prove $X(v)$ exists, it remains to be proven that the vertices $r,\, q,\, u$ lie in the same component of $S^3_\eq{\setminus} \SSp\{p,\, s,\, t\}$ as $v$. For $r$ and $q$, this follows directly from reflective symmetry of $\St(v,\CT^\circ)$ at the hyperplane $\pi^{\SSp}_2(v)$, so we only have to verify the property for $u$. Let $m$ denote the midpoint of the segment $[s,t]$, as before. Denote the component of $S^3_\eq{\setminus} \SSp\{p,\, s,\, t\}$ containing $v$ by $H^v$. Consider now the following segments:  
\begin{compactitem}[$\circ$]
\item the segment $[m,\pi_0(m)]$ from $m$ to $\pi_0(m)$,
\item the segment $[m,u]$ from $m$ to the vertex $u$, and
\item the segment $[m,p]$ from $m$ to the vertex $p$.  
\end{compactitem}
The slope $\alpha(\CT)$ of $\CT$ coincides with the angle between $[m,u]$ and $[m,\pi_0(m)]$.
The angle between $[m,p]$ and $[m,\pi_0(m)]$ is denoted by $\gamma$.
  
\begin{figure}[htbf]
\centering 
  \includegraphics[width=0.78\linewidth]{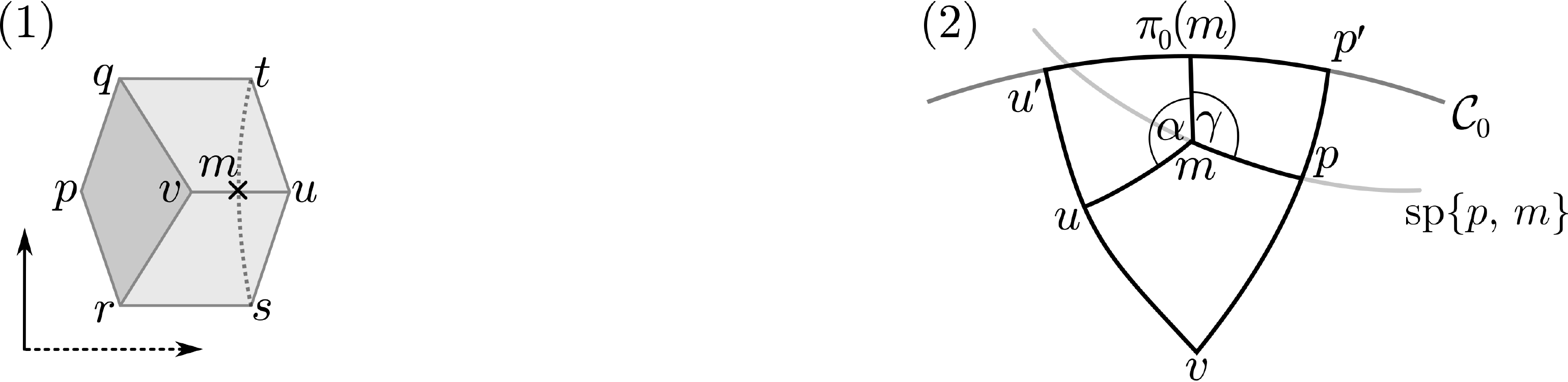} 
  \caption{\small  Illustrations for the proof of Proposition~\ref{prp:locatt}. \newline (1) The labeling of the vertices of $\St(v,\CT^\circ)$. The complex $\St(v,\CT^\circ)$ is the subcomplex of the boundary of a $3$-cube $X(v)$ if and only if the vertices $r,\, q,\, u$ lie on the same side of the hyperplane spanned by $p,\, s,\, t$ as $v$. \newline (2) The triangle $\Delta\in \pi_2^{\SSp}(v)$ on the vertices $u'$, $v$ and $p'$. By Corollary~\ref{cor:diffcomp}, $\pi_0(m)$ and $v$ are in different components of $S^3_\eq{\setminus} \SSp\{p,\, s,\, t\}$, and by Proposition~\ref{prp:alignsymm}, $u$ and $p$ lie in different components of $S^3_\eq{\setminus} \pi_2^{\SSp}(m)$. Thus $v$ and $u$ lie in the same components of $S^3_\eq{\setminus} \SSp\{p,\, s,\, t\}$ iff $\alpha(\CT)> \pi-\gamma$.} 
  \label{fig:cubeatt}
\end{figure}

As $\CT^\circ$ is transversal, $u$ and $p$ lie in different components of $S^3_\eq{\setminus} \pi_2^{\SSp}(m)=S^3_\eq{\setminus} \pi_2^{\SSp}(s)$ by Proposition~\ref{prp:alignsymm}(e). Furthermore, $\pi_0(m)$ and $v$ lie in different components of $S^3_\eq{\setminus} \SSp\{p,\, s,\, t\}$ by the previous corollary, so $u$ lies in $H^v$ if and only if $\alpha(\CT)>\pi-\gamma$. Since $\CT^\circ$ is ideal we have $\alpha(\CT)>\nicefrac{\pi}{2}$. Thus let us determine $\gamma$. After possibly applying a rotation of $\Sp\{e_1,\,e_2\}$-plane and $\Sp\{e_3,\,e_4\}$-plane, we may assume that the coordinates of $p$ are given as 
\[
	\big(p_1,\,0,\,p_3,\,0,\, 0\big),\ p_1,\, p_3 > 0,\ p_1^2+p_3^2=1.
\] 
Then 
\[
	\sqrt{(1-\tfrac{3}{4}p_3^2 )}m=\big(0,\,p_1,\,\tfrac{1}{2} p_3,\,0,\, 0\big),\ \ \pi_0(m)=\big(0,\,1,\,0,\,0,\, 0\big)
\] 
and  
\[
\cos(\gamma)=\frac{-p_3}{\sqrt{1+p_3^2}}
\]   
and, consequently, $\cos(\gamma)<0$ and $\gamma>\nicefrac{\pi}{2}$. Thus 
\[
\alpha(\CT)>\tfrac{\pi}{2}> \pi-\gamma. \qedhere
\] 
\end{proof}

\subsubsection*{The global extension exists and is ideal}

In this section, we prove Theorem~\ref{thm:exts}.

\smallskip

We need to prove that if we attach all cubes of the extension, we obtain a polytopal complex, and that the resulting complex is an ideal CCT. For this, we first prove that the attachment of a cube $X(v)$, where $v$ denotes a vertex of $\RR(\CT,k-2)$ as before, does not change the image of $\St(v,\CT^\circ),\, \CT^\circ=\RR(\CT,[k-2,k])$, under $\pi_1$ (Lemma~\ref{lem:localem}), which allows us to prove both the existence and the transversality of the extension. Theorem~\ref{thm:exts} then follows easily.

\begin{lemma}\label{lem:localem}
Let $X(v)$ denote the facet attached in Proposition~\ref{prp:locatt}, let $x(v)$ be the vertex of $X(v)$ not in $\CT$, and let $\widehat{X}(v)$ denote
the complex $\St(x(v),\parti X(v))$. Then $\pi_1$ is injective on $\widehat{X}(v)$.
\end{lemma}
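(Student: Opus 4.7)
The plan is to apply the injectivity criterion of Proposition~\ref{prp:inj3} directly to $\widehat{X}(v)$, with the apex $x(v)$ in the role of the central vertex. Using the vertex labels of Figure~\ref{fig:cubeatt}, the three $2$-faces of $X(v)$ through $x(v)$ are $\{u,s,t,x(v)\}$, $\{q,t,p,x(v)\}$, and $\{r,s,p,x(v)\}$ --- combinatorially opposite to the three $2$-faces of $\St(v,\CT^\circ)$ through $v$. Under the correspondence
\[
V=x(v),\quad U=p,\quad Q=t,\quad R=s,\quad T=q,\quad S=r,\quad P=u,
\]
these three faces match the three quadrilaterals $\{U,T,V,Q\}$, $\{U,S,V,R\}$, $\{P,Q,V,R\}$ of Proposition~\ref{prp:inj3}.

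Under this correspondence, hypotheses (c), (d), and (f) of Proposition~\ref{prp:inj3} translate into statements about $\St(v,\CT^\circ)$ that are already furnished by Proposition~\ref{prp:alignsymm}: (c) here is (c) there; (d) here reduces, via the equalities in (b) there, to (d) there; and (f) here is a special case of (g). Hypothesis (a) for the seven old vertices of $X(v)$ is again Proposition~\ref{prp:alignsymm}(a). The only outstanding content of Proposition~\ref{prp:inj3} is therefore two claims about the new vertex: (i) $\pi_2(x(v))=\pi_2(p)$, which, together with Proposition~\ref{prp:alignsymm}(b), completes hypothesis (b) and shows $x(v)\notin\mathcal{C}_0$; and (ii) $\pi_0(x(v))\in\rint[\pi_0(u),\pi_0(p)]$, which, together with Proposition~\ref{prp:alignsymm}(e), completes hypothesis (e) and shows $x(v)\notin\mathcal{C}_2$.

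For claim (i), the $2$-face $\{q,t,p,x(v)\}$ of the spherical cube $X(v)$ lies in some great $2$-sphere $E\subset S^3_\eq$; since $\pi_2(q)=\pi_2(t)$ by Proposition~\ref{prp:alignsymm}(b), the edge $[q,t]$ is contained in the great circle $E\cap\pi_2^{\SSp}(q)$. The opposite (cube-parallel) edge $[p,x(v)]$ of this $2$-face is then forced to lie in the companion great circle $E\cap\pi_2^{\SSp}(p)$, and the convex position of $X(v)$ (which follows from Lemma~\ref{lem:dihang} together with the construction in Proposition~\ref{prp:locatt}) selects the non-antipodal alternative, yielding $\pi_2(x(v))=\pi_2(p)$. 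Claim (ii) is obtained by the analogous argument applied to the $2$-face $\{u,s,t,x(v)\}$ with respect to the $\pi_0$-fibration --- using Proposition~\ref{prp:alignsymm}(c) in place of (b) --- combined with Proposition~\ref{prp:alignsymm}(e) translated through the dictionary of Proposition~\ref{prp:dict}.

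The main obstacle is the cube-parallel step: transferring a $\pi_i$-fiber relation from one edge of a spherical $2$-face to its combinatorially opposite edge. The underlying mechanism is that a great $2$-sphere $E$ meets each of the two $\pi_i^{\SSp}$-hyperplanes through a pair of opposite face vertices in a single great circle, so that once three vertices of the $2$-face are fixed the position of $x(v)$ is pinned down by Lemma~\ref{lem:cubecmpl}, and one verifies that the forced position lies on the predicted $\pi_i$-fiber rather than its antipode. Once this is established, all hypotheses of Proposition~\ref{prp:inj3} are satisfied, and we conclude that $\pi_1$ is injective on $\widehat{X}(v)$.
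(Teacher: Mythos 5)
Your overall strategy — applying Proposition~\ref{prp:inj3} to $\widehat{X}(v)$ with $x(v)$ as the central vertex, observing that the conditions involving only the seven old vertices follow from Proposition~\ref{prp:alignsymm} applied to $\parti\widehat{X}(v)=\parti\St(v,\CT^\circ)$, and reducing everything to residual claims about $x(v)$ — matches the paper's approach exactly, and your bookkeeping with the vertex correspondence is correct. The gap lies entirely in how you justify those residual claims. Your ``cube-parallel step,'' that if one edge of a flat spherical quadrilateral lies in a $\pi_i$-fiber then the combinatorially opposite edge is forced to lie in a $\pi_i$-fiber, is false: a great $2$-sphere $E$ containing the quadrilateral has no reason to meet the $\pi_i^{\SSp}$-hyperplanes in the pattern you assert, and one can easily place four points on a great $2$-sphere in convex position with $\pi_2(a)=\pi_2(b)$ on one edge and $\pi_2(c)\neq\pi_2(d)$ on the opposite edge. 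What the paper actually uses to get $\pi_2(x(v))=\pi_2(u)$ is \emph{symmetry}: the reflection in $\mathfrak{S}$ that fixes $v,u,p$ and swaps $(q,r)$ and $(s,t)$ preserves $\St(v,\CT^\circ)$ and hence its unique cube completion $X(v)$, forcing $x(v)$ onto the mirror $\pi_2^{\SSp}(u)$; combined with $x(v)\in\pi_2^{\SSp}(q)^v$ (obtained from the convexity of the three quads through $x(v)$), this yields $x(v)\in\pi_2^{\mathrm{f}}(u)$. Your sketch never invokes the symmetry group, so it cannot reproduce this.

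The treatment of the $\pi_0$-claim has a second, independent problem. Even if the cube-parallel step were valid, applying it to the face $\{u,s,t,x(v)\}$ would yield the \emph{equality} $\pi_0(x(v))=\pi_0(u)$, which contradicts what you actually need, namely that $\pi_0(x(v))$ lies in the \emph{open} interior of $[\pi_0(u),\pi_0(p)]$. This strict-interior statement is quantitative, and the paper proves it via the second spherical law of cosines applied to the triangle on $\pi_0(x(v))$, $\pi_0(m)$, $m$ (with $m$ the midpoint of $[s,t]$); the bound $\mathrm{d}(\pi_0(m),\pi_0(x(v)))<\nicefrac{\pi}{2}$ there depends crucially on the ideality hypothesis $\alpha(\CT)>\nicefrac{\pi}{2}$. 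Your argument makes no use of the slope, which signals that a key hypothesis of the lemma is going unused. Finally, note that condition (a) of Proposition~\ref{prp:inj3} requires the whole complex $\widehat X(v)$, not just the single vertex $x(v)$, to miss $\mathcal{C}_0\cup\mathcal{C}_2$; the paper gets this from the fact that every vertex of $X(v)$ except $u,s,q$ lies in the open region $\pi_2^{\SSp}(q)^v\cap\pi_0^{\SSp}(u)^v$, an additional argument your proposal does not supply.
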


The set-up for Lemma~\ref{lem:localem} is illustrated in Figure~\ref{fig:inj}.

\begin{figure}[htbf]
\centering 
  \includegraphics[width=0.76\linewidth]{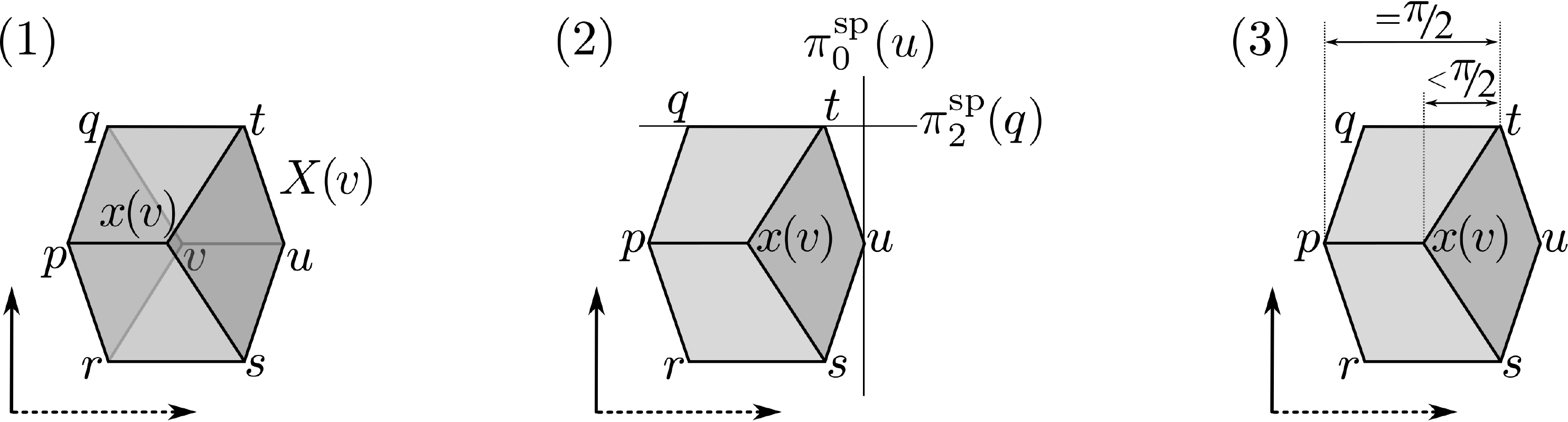} 
  \caption{\small  Illustration for Lemma~\ref{lem:localem}:  \newline 
(1) The set-up: We proved that there exists a $3$-cube $X(v)$ with $\St(v,\CT^\circ)$ as a subcomplex, and wish to prove that $\pi_1$ is injective on $\widehat{X}(v)=\St(x(v),\parti X(v))$. \newline 
(2) and (3) We collect the information needed to apply Proposition~\ref{prp:inj3}.} 
  \label{fig:inj}
\end{figure}

\begin{proof}
We wish to apply Proposition~\ref{prp:inj3} to prove that $\pi_1$ is injective on $\widehat{X}(v)$. All information on vertices not involving $x(v)$ needed for Proposition~\ref{prp:inj3} can be inferred from applying Proposition~\ref{prp:alignsymm} to $\parti\widehat{X}(v)=\parti \St(v,\CT^\circ)$. It remains to be proven that 
\[\widehat{X}(v)\cap (\mathcal{C}_0\cup \mathcal{C}_2)=\emptyset,\ \pi_2(x(v))=\pi_2(u),\ \text{and}\ \pi_0(x(v))\in \rint[\pi_0(u),\pi_0(p)]\] to satisfy the requirements of Proposition~\ref{prp:inj3}. For the proof of these statements, we use the convention that for a $2$-dimensional subspace $H$ in~$S^3_\eq$ and a point $y$ not contained in $H$, $H^y$ denotes the component of the complement of $H$ in~$S^3_\eq$ that contains $y$.

Since $t\in\pi_2^{\mathrm{f}}(q)$ by Proposition~\ref{prp:alignsymm}(b) and $p\in \pi_2^{\SSp}(q)^v$ by Proposition~\ref{prp:alignsymm}(d), the remaining vertex $x(v)$ of the quadrilateral $\conv \{x(v),\,r,\,s,\,p\}$ must lie in $\pi_2^{\SSp}(r)^v$. Analogously, $x(v)$ lies in $\pi_2^{\SSp}(q)^v$. Furthermore, by Proposition~\ref{prp:alignsymm}(e), $t,s \in\pi_0^{\SSp}(u)^v$, so the remaining vertex $x(v)$ of the quadrilateral $\conv \{x(v),\,u,\,s,\,t\}$ is contained in $\pi_0^{\SSp}(u)^v$ as well. This suffices to complete the proof of the first two statements.

\begin{compactitem}[$\circ$]
\item Since all vertices of $X(v)$ except $u$, $s$ and $q$ lie in $\pi_2^{\SSp}(q)^v\cap\pi_0^{\SSp}(u)^v$, the $3$-cube $X(v)$ can only intersect $\mathcal{C}_0\cup \mathcal{C}_2$ if either $u$ lies in $\mathcal{C}_0\cup \mathcal{C}_2$, or $[s,q]$ intersects $\mathcal{C}_0\cup \mathcal{C}_2$. This is excluded by Proposition~\ref{prp:alignsymm}(a) since $u,\, [s,q]\in \St(v,\CT^\circ)\subset \CT^\circ$. Thus $X(v)\cap (\mathcal{C}_0\cup \mathcal{C}_2)=\emptyset$, in particular, $\widehat{X}(v)\cap (\mathcal{C}_0\cup \mathcal{C}_2)=\emptyset$.
\item By reflective symmetry at $\pi_2^{\SSp}(u)$, $x(v)$ lies in $\pi_2^{\SSp}(u)$, and we can conclude that $x(v)\in \pi^{\mathrm{f}}_2(u)=\pi_2^{\SSp}(u)\cap \pi_2^{\SSp}(q)^v$.
\end{compactitem}

\noindent To prove the last statement, i.e.\ that $\pi_0(x(v))\in \rint[\pi_0(u),\pi_0(p)]$, consider the midpoint $m$ of the segment $[s,t]$, and the triangle $\conv\{\pi_0(x(v)),\,\pi_0(m),\,m\}$ in $\pi^{\SSp}_2(v)$. The angle at the vertex $\pi_0(m)$ is a right angle, the angle $\delta$ at vertex $m$ is bounded above by $\pi-\alpha(\CT)<\nicefrac{\pi}{2}$ and the angle at $\pi_0(x(v))$ shall be labeled~$\zeta$. The second spherical law of cosines implies that the distance $d$ between $\pi_0(m)$ and $\pi_0(x(v))$ satisfies
\[\cos d\sin\zeta=\cos\delta\ge \cos\big(\pi-\alpha(\CT)\big).\] 
Since $\delta\le \pi-\alpha(\CT)<\nicefrac{\pi}{2}$, we thus obtain \[d\le \delta\le \pi-\alpha(\CT)<\nicefrac{\pi}{2}.\]
By Proposition~\ref{prp:dict}, we have that $\pi_0(x(v))$ lies in $\rint[\pi_0(u),\pi_0(p)]$ if $\pi_0(x(v))$ lies in $\pi_0^{\SSp}(u)^v$
and in $\pi_0^{\SSp}(p)^v$.
\begin{compactitem}[$\circ$]
\item $\pi_0(x(v))\in\pi_0^{\SSp}(u)^v$ was already proven explicitly.
\item The distance between $\pi_0(m)$ and $\pi_0(p)$ is $\nicefrac{\pi}{2}$, and the distance between $\pi_0(x(v))$ and $\pi_0(m)$ is smaller than $\nicefrac{\pi}{2}$, and we conclude that $x(v)\in\pi_0^{\SSp}(p)^m$. By Proposition~\ref{prp:alignsymm}(e), $\pi_0^{\SSp}(p)^m$ coincides with $\pi_0^{\SSp}(p)^v$.
\end{compactitem}
\noindent Thus all conditions of Proposition~\ref{prp:inj3} are met by $\widehat{X}(v)$, and its application gives the injectivity of $\pi_1$ on $\widehat{X}(v)$.
\end{proof}

Since $\widehat{X}(v)\cap(\mathcal{C}_0\cup \mathcal{C}_2)=\emptyset$, the map $\pi_1$ is well-defined and continuous on $\widehat{X}(v)$. In particular, since $\pi_1(x(v))\in\pi_1(\St(v,\CT^\circ))$ and $\parti\widehat{X}(v)=\parti \St(v,\CT^\circ)$, we have:

\begin{cor}\label{cor:localem}
$\pi_1$ embeds $\widehat{X}(v)$ into $\pi_1(\St(v,\CT^\circ))\subset \mathcal{C}_1$. In particular, $\pi_1(\widehat{X}(v))= \pi_1(\St(v,\CT^\circ))$.
\end{cor}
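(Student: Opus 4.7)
The plan is to combine Lemma~\ref{lem:localem} with an invariance-of-domain argument in the $2$-torus $\mathcal{C}_1$, exploiting that $\widehat{X}(v)$ and $\St(v,\CT^\circ)$ are topological $2$-disks sharing a common boundary subcomplex.

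First, I would upgrade the injectivity from Lemma~\ref{lem:localem} to a topological embedding. The proof of that lemma gave $\widehat{X}(v)\cap(\mathcal{C}_0\cup\mathcal{C}_2)=\emptyset$, so $\pi_1$ is well-defined and continuous on $\widehat{X}(v)$, and since $\widehat{X}(v)$ is compact and $\mathcal{C}_1$ is Hausdorff a continuous injection is automatically a homeomorphism onto its image. The same applies to $\pi_1|_{\St(v,\CT^\circ)}$ by transversality of $\RR(\CT,[k-2,k])$. This proves the ``embeds'' half of the corollary modulo showing that the image lies in $\pi_1(\St(v,\CT^\circ))$, for which the equality $\pi_1(\widehat{X}(v))=\pi_1(\St(v,\CT^\circ))$ clearly suffices.

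To get that equality I would observe that each of $\widehat{X}(v)=\St(x(v),\partial X(v))$ and $\St(v,\CT^\circ)$ is the star of a degree-$3$ vertex in a cubical $2$-manifold, hence a closed topological $2$-disk, and that their boundary subcomplexes agree --- both being the hexagonal cycle on the six vertices of $X(v)$ other than $v$ and $x(v)$. Set $D_1:=\pi_1(\widehat{X}(v))$, $D_2:=\pi_1(\St(v,\CT^\circ))$, and $J:=\pi_1(\partial\widehat{X}(v))$, and write $U:=D_1\setminus J$, $V:=D_2\setminus J$ for their interiors. By invariance of domain applied to the embeddings from the previous paragraph, $U$ and $V$ are open connected subsets of $\mathcal{C}_1$. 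Because $x(v)$ is topologically interior to the disk $\widehat{X}(v)$, we have $\pi_1(x(v))\in U$; the paragraph preceding the corollary furnishes $\pi_1(x(v))\in D_2$, and since $\pi_1(x(v))\notin J$ this forces $\pi_1(x(v))\in V$ as well. Hence $U$ and $V$ lie in a common connected component $W$ of $\mathcal{C}_1\setminus J$. Within $W$ each of $U$ and $V$ is trivially open and is also closed, since its closure in $\mathcal{C}_1$ is the compact disk $D_1$ respectively $D_2$, and $J\cap W=\emptyset$. A nonempty clopen subset of the connected set $W$ must equal $W$, giving $U=V$, and passing to closures yields $D_1=D_2$.

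The main obstacle I anticipate is precisely this last connectedness step: since $\mathcal{C}_1$ is a torus and not a sphere, one cannot invoke a Jordan--Sch\"onflies type argument to conclude directly from the matching boundaries that two disks coincide --- a Jordan curve in $\mathcal{C}_1$ need not even separate it. The decisive ingredient that bypasses this difficulty is the common interior point $\pi_1(x(v))\in\pi_1(\St(v,\CT^\circ))$, which confines both disks to a single component of $\mathcal{C}_1\setminus J$ and lets the standard clopen argument close the gap.
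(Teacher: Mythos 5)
Your proof is correct and follows essentially the same route as the paper: the corollary is stated as an immediate consequence of Lemma~\ref{lem:localem} together with the facts that $\pi_1$ is continuous on $\widehat{X}(v)$, that $\pi_1(x(v))\in\pi_1(\St(v,\CT^\circ))$, and that $\parti\widehat{X}(v)=\parti\St(v,\CT^\circ)$, and you have simply made explicit the invariance-of-domain and clopen argument the paper leaves implicit. You also correctly identify and handle the one subtlety the paper glides over, namely that a Jordan curve in the torus $\mathcal{C}_1$ need not separate, which is why the common interior point $\pi_1(x(v))$ is needed rather than a naive Jordan--Sch\"onflies appeal.
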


We can now prove that attaching the facets $X(v)$ provides a polytopal complex, i.e.\ that the elementary extension exists, and that it is ideal as well.

\begin{proof}[\textbf{Proof of Theorem~\ref{thm:exts}}]
By combining Corollary~\ref{cor:localem} and Lemma~\ref{lem:dihang}, we see that $X(v){\setminus} \St(v,\CT^\circ)$ lies in the set \[K(v):=\bigcup_{x\in \mathrm{int}(\St(v,\CT^\circ))} [x,\pi_0(x)]\]
for all vertices $v\in \RR(\CT,k-2)$. The sets $K(\cdot)$ are disjoint subsets of the component of $S^3_\eq{\setminus} \CT$ containing $\mathcal{C}_0$ for different elements of $\RR(\CT,k-2)$ since $\pi_1$ is injective on $\CT^\circ$. Consequently, $X(v)\cap \CT$ coincides with $\St(v,\CT^\circ)$, and the complex $\CT'$ obtained as the union
\[\CT':=\CT\cup\ \bigcup_{v\in \RR(\CT,k-2)} X(v)\]
is a polytopal complex, and consequently a CCT that extends $\CT$. To finish the proof of Theorem~\ref{thm:exts}, it remains to be proven that $\CT'$ is ideal.
Symmetry is an immediate consequence of Lemma~\ref{lem:uniext} and the symmetry of $\CT$. Corollary~\ref{cor:localem}  shows that $\RR(\CT',[k-1,k+1])$ is embedded by $\pi_1$ into $\mathcal{C}_1$, thus $\CT'$ is transversal.  The fact that layer $k+1$ of $\CT'$ intersects the component containing $\mathcal{C}_0$ is immediate as well, since the segment $[x(v),\pi_0(x(v))]$ (where $x(v)$ is any layer $(k+1)$-vertex, cf.\ Lemma~\ref{lem:localem}) does not intersect $\CT'$ in any other point than $x(v)$, by transversality. Thus, to show that $\CT'$ is ideal it only remains to be proven that 
\[
\alpha(\CT')=\alpha\big(\RR(\CT',[k-1,k+1])\big)>\nicefrac{\pi}{2}.
\] This, however, is immediate from Proposition~\ref{prp:slmono}. 
\end{proof}

\subsection{Convex position of the extension}\label{ssc:convex}

Now we are concerned with the proof of Theorem~\ref{thm:convp}. We will get it from the following proposition.

\begin{prp}\label{prp:convpext}
Let $\CT$ denote an ideal CCT in $S^4$ of width $k\ge  3$ in locally convex position, and let $\CT'$ denote the elementary extension of $\CT$. Then 
\begin{compactenum}[\bf(I)] 
\item \emph{[Convex position for the fattened boundary]} the subcomplex $\RR(\CT',[k-2,k+1])$ of $\CT'$ is in convex position, and
\item \emph{[Locally convex position]} for every vertex $v$ in $\CT'$, $\St(v,\CT')$ is in convex position.
\end{compactenum}
\end{prp}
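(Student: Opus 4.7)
The plan is to first establish (II), the local convexity of $\CT'$ at every vertex, by a case analysis on the layer in which the vertex lies; part (I) then follows by viewing $\RR(\CT',[k-2,k+1])$ as a locally convex polytopal $3$-manifold with boundary in $S^4$ and invoking the Alexandrov--van Heijenoort theorem for manifolds with boundary (Theorem~\ref{thm:locglowib}).

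For (II), partition the vertices of $\CT'$ by layer. Since a $3$-cube in a CCT spans exactly four consecutive layers, any vertex $v$ in layer $\ell \le k-3$ satisfies $\St(v, \CT') = \St(v, \CT)$, so local convexity at such a $v$ is inherited from the locally convex position of $\CT$. At the opposite extreme, a new vertex $w = x(v)$ in layer $k+1$ belongs to the unique cube $X(v)$ and to no other (any further cube of the tiling containing $w$ would reach beyond width $k+1$), so $\St(w, \CT') = X(v)$ is trivially convex. For $v$ in layer $k-2$, the star equals $\St(v, \CT) \cup X(v)$, with $X(v)$ a genuine $3$-cube supplied by Proposition~\ref{prp:locatt} and $X(v) \cap \St(v,\CT) = \St(v,\CT^\circ)$ consisting of three $2$-faces of $X(v)$; local convexity then reduces to checking that $X(v)$ is attached on the correct side of these three faces, which follows from the orientation of the ideal CCT toward $\mathcal{C}_0$ combined with Lemma~\ref{lem:dihang} (which places the tangent of $[v,\pi_0(v)]$ inside $\conv \Lk(v,\CT^\circ)$). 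Finally, for $v$ in layers $k-1$ or $k$, the star in $\CT'$ is obtained from $\St(v, \CT)$ by gluing on three new cubes $X(u)$, one for each neighbor $u$ of $v$ in layer $k-2$; the compatibility of these simultaneous attachments is controlled by the $\mathfrak{R}$-symmetry together with the injectivity of $\pi_1$ on $\widehat{X}(v)$ from Lemma~\ref{lem:localem} and Corollary~\ref{cor:localem}, the latter ensuring that distinct new cubes do not overlap locally around~$v$.

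Once (II) is in hand, $\RR(\CT', [k-2,k+1])$ is a polytopal $3$-manifold in $S^4$ whose boundary consists of the tori supported on layers $k-2$ and $k+1$, and which is locally convex at every interior vertex. The Alexandrov--van Heijenoort theorem for manifolds with boundary then upgrades this local information to the global convex position claimed in (I). The main obstacle is the verification of local convexity at the seam vertices in layers $k-2$, $k-1$, and $k$, where several newly attached cubes meet the existing star simultaneously and one must translate the slope inequality $\alpha(\CT) > \nicefrac{\pi}{2}$ and the orientation condition into genuine dihedral-angle inequalities; the monotonicity from Proposition~\ref{prp:slmono} is important here, as it ensures that the slope condition persists under iterated extension so that no degeneracy of the dihedral angles appears.
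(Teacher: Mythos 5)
Your global plan (case analysis by layer for local convexity, then AvH for the fattened boundary) is in the right spirit, but the logical order is wrong and the central step is missing, in a way that cannot be repaired within your framework. The paper does \emph{not} prove (II) in full before (I); it proves (II) only at layer $k-2$, deduces (I) from that, and then uses (I) to prove (II) at layers $k-1$ and $k$. That order is not incidental: local convexity at layers $k-1$ and $k$ is genuinely no easier than (I), and the paper's argument for those layers applies Theorem~\ref{thm:locglowib} to $\Lk(v,\CT')$ and needs to already know that $\RR(\CT',[k-2,k+1])$ is in convex position in order to certify the links of edges between layers $k-1$ and $k$.

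Your replacement argument at layers $k-1$ and $k$ is where the gap sits. You appeal to the $\mathfrak{R}$-symmetry and to the injectivity of $\pi_1$ on $\widehat{X}(v)$ (Lemma~\ref{lem:localem}, Corollary~\ref{cor:localem}) and conclude that ``distinct new cubes do not overlap locally around $v$.'' Non-overlap is exactly what those statements give, and it is what is used in the proof of Theorem~\ref{thm:exts} to show that the extension is a polytopal complex at all. It does \emph{not} imply that $\Lk(v,\CT')$ is in convex position in $\NO_v^1 S^4$: an embedded $2$-sphere in $S^3$ need not be convex. Establishing convexity here requires actual halfspace/dihedral-angle estimates, and the machinery the paper marshals for this --- the ideal-$3$-CCT local-to-global criterion Proposition~\ref{prp:loccrt1lay} (via Corollary~\ref{Cor:localglobal2}), the halfspace-selection Observation~\ref{obs:righthalfspace} to pin down a single hemisphere $H(X)$ exposing the new cube across all adjacent facets, and the projection argument using that $\pp$ is injective on $\CT'$ --- does not appear in your outline. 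Even at layer $k-2$, your reduction to ``$X(v)$ is attached on the correct side of its three attaching $2$-faces'' via Lemma~\ref{lem:dihang} and the orientation condition is only the first half of the paper's Step (2); the substantial content is showing that the hemispheres $H_\sigma(X)$ exposing $X$ in each $\sigma\cup X$ all coincide, which is where Observation~\ref{obs:righthalfspace} and Corollary~\ref{Cor:localglobal2} enter. Finally, your use of Theorem~\ref{thm:locglowib} to derive (I) leaves unchecked its hypothesis that the two fattened boundary components of $\RR(\CT',[k-2,k+1])$ be in convex position; this is not free, and in the paper's scheme it is precisely Proposition~\ref{prp:loccrt1lay} applied to the $3$-CCT $\RR(\CT',[k-2,k+1])$ that delivers (I), bypassing the need to check fattened boundaries at this stage.
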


\begin{proof}[\textbf{Proposition~\ref{prp:convpext} implies Theorem~\ref{thm:convp}}]
Let $\CT$ denote an ideal $k$-CCT in convex position in $S^4$, $k\ge  3$, and choose $\ell:= \max\{k+1, 5\}$. Let $\CT''$ denote the ideal $\ell$-CCT that is an extension of $\CT$. Then
\begin{compactitem}[$\circ$]
 \item $\CT''$ is a manifold with boundary,
 \item the elementary extension $\CT'$ of $\CT$ is a subcomplex of $\CT''$,
\item  $\RR(\CT'',[0,3])=\RR(\CT,[0,3])$ is in convex position by assumption,
\item $\RR(\CT'',[\ell-2,\ell+1])$ is in convex position by Proposition~\ref{prp:convpext}{\bf (I)}, and 
\item $\CT''$ is in locally convex position by Proposition~\ref{prp:convpext}{\bf(II)}.
\end{compactitem}
Thus, by the Alexandrov--van Heijenoort Theorem for manifolds with boundary (Theorem~\ref{thm:locglowib}), 
$\CT''$ is in convex position. Since $\CT'$ is a subcomplex of $\CT''$, the elementary extension $\CT'$ of $\CT$ is in convex position as well.
\end{proof}

The proof of Proposition~\ref{prp:convpext} will occupy us for the rest of this section. We work in $S^4\subset \R^5$, and start off with a tool to check the convex position of ideal CCTs of width $3$:

\begin{prp}[Convex position of ideal $3$-CCT]\label{prp:loccrt1lay}
Let $\CT$ be an ideal $3$-CCT in~$S^4$, such that for every facet $\sigma$ of $\CT$, there exists a closed hemisphere $H(\sigma)$ containing $\sigma$ in the boundary, and such that $H(\sigma)$ contains all remaining vertices of $\RR(\CT,1)$ connected to $\sigma$ via an edge of $\CT$ in the interior. Then each $H(\sigma)$ contains all vertices of $\F_0(\CT){\setminus} \F_0(\sigma)$ in the interior.
\end{prp}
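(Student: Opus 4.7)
Plan: Fix a facet $\sigma$ of $\CT$ and write $H := \parti H(\sigma)$ for the great $3$-sphere bounding $H(\sigma)$. The hypothesis places the eight vertices of $\sigma$ on $H$ and guarantees that the three layer-$1$ vertices $y_1,\,y_2,\,y_3$ adjacent to $\sigma$ but not in $\F_0(\sigma)$ (one for each layer-$2$ vertex of $\sigma$, namely its unique $\CT$-neighbor at layer $1$ outside $\sigma$) all lie in $\intx H(\sigma)$. The goal is to show that each of the $40$ remaining vertices of $\CT$ likewise lies in $\intx H(\sigma)$.

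The main tool will be cube rigidity (Lemma~\ref{lem:cubecmpl}): any seven vertices of a $3$-cube facet determine the eighth. Because the hypothesis is imposed simultaneously at every facet $\sigma'$, each hemisphere $H(\sigma')$ is in fact uniquely pinned down by $\sigma'$ together with the sign coming from its own layer-$1$ adjacencies. I would then propagate the positional information outwards from $\sigma$ as follows: for every facet $\sigma'$ sharing an edge or vertex with $\sigma$, some vertices of $\sigma'$ either belong to $\sigma$ (and hence lie on $H$) or coincide with one of the $y_i$ (and hence lie in $\intx H(\sigma)$); cube rigidity, combined with the constraints coming from $H(\sigma')$ itself, then pins down the remaining vertices of $\sigma'$ relative to $H$. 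The action of $\mathfrak{R}$, which is simply transitive on each layer, reduces the number of genuinely distinct local configurations to a small constant, and the slope condition $\alpha(\CT)>\nicefrac{\pi}{2}$ together with transversality supply the quantitative estimates---analogous to those in the proof of Proposition~\ref{prp:locatt}---that force the newly determined vertices to the correct side of $H$.

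The main obstacle I anticipate is the layer-$0$ and layer-$3$ vertices of $\CT$ not belonging to $\sigma$: these are combinatorially farthest from $y_1,\,y_2,\,y_3$, so controlling their position requires iterating cube rigidity through a chain of facets, and each step must be shown to remain strictly on the correct side of $H$. I would treat this in two stages, first settling all layer-$1$ and layer-$2$ vertices via a single cube-rigidity step in a facet adjacent to $\sigma$, and then using these as anchors to bootstrap to the layer-$0$ and layer-$3$ vertices. An alternative, possibly cleaner route is to first upgrade the hypothesis to a full local convex position condition at every vertex and appeal to the Alexandrov--van Heijenoort theorem for manifolds with boundary (Theorem~\ref{thm:locglowib}); the proposition would then follow by a separation argument using the known positions of $y_1,\,y_2,\,y_3$ to fix the sign on each face.
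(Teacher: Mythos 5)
Your main proposed route rests on cube rigidity (Lemma~\ref{lem:cubecmpl}), but that lemma is a \emph{coordinate-determination} statement: seven vertices of a $3$-cube pin down the eighth. The proposition you are asked to prove, however, is a \emph{separation} statement: a specific vertex must lie in $\intx H(\sigma)$, i.e.\ a certain inner product with the outer normal of $H(\sigma)$ must have a prescribed sign. Knowing that a vertex is determined does not help decide which side of $\parti H(\sigma)$ it falls on; establishing the sign is exactly the content of the proposition. Indeed the paper's proof never invokes Lemma~\ref{lem:cubecmpl} here. Instead it proves an intermediate statement (Lemma~\ref{lem:localcrit}, where ``contains the adjacent layer-$1$ and layer-$2$ vertices'' implies ``contains everything'') by a contrapositive case analysis that exploits the forced form $\n(\sigma)=(\ast,\ast,\e v_3,\e v_4,\ast)$ of the outer normal under the $\mathfrak{R}$-symmetry, the Clifford-projection dictionary of Propositions~\ref{prp:alignsymm} and~\ref{prp:dict}, and the fiber subspaces $\pi_i^{\SSp}$; it then reduces the proposition itself to the single inequality $\mathrm{d}(p,q)<\nicefrac{\pi}{4}$, which follows from transversality via a small spherical-triangle computation, and from which $\rot_{1,2}^2 p\in\intx H(\sigma)$ drops out explicitly.

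Your proposed alternative via Theorem~\ref{thm:locglowib} has a structural problem: for a $3$-CCT, each of the two torus boundary components is met by \emph{every} facet in a $2$-face, so the fattened boundary $\fat(B_i,\CT)$ is the entire complex $\CT$. The hypothesis of Theorem~\ref{thm:locglowib} (``the fattening of each boundary component is in convex position'') would therefore already assert the conclusion you want, making the argument circular. This is precisely why the paper applies the boundary Alexandrov--van Heijenoort theorem only to CCTs of width at least $5$ in the proof of Theorem~\ref{thm:convp}, and why it needs Proposition~\ref{prp:loccrt1lay} as a separate, self-contained local-to-global result for width~$3$ to feed into that argument. The upshot is that a genuinely different route is required at width~$3$, and it is the explicit Clifford-geometric estimate, not cube rigidity and not Theorem~\ref{thm:locglowib}, that supplies it.
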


This is a strong local-to-global statement for the convex position of ideal CCTs, since it reduces the decision whether an ideal CCT is or is not in convex position to the evaluation of a local criterion. The proof requires a detailed and somewhat lengthy discussion of several cases, and is therefore given in the appendix, 
Section~\ref{ssc:localtoglobal}. Here is an immediate corollary. 

\begin{cor}\label{Cor:localglobal2}
Let $\CT$ be an ideal $3$-CCT in~$S^4$. Then $\CT$ is in locally convex position if and only if it is in convex position.
\end{cor}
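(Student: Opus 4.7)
The ``only if'' direction is immediate: if $\CT$ lies in the boundary of a convex polytope $P\subset S^4$, then for every vertex $v$ the star $\St(v,\CT)$ also lies in $\partial P$ and is therefore in convex position.

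For the converse, the plan is to reduce to Proposition~\ref{prp:loccrt1lay} by producing, for every facet $\sigma$ of $\CT$, a closed hemisphere $H(\sigma)$ with $\sigma\subset\partial H(\sigma)$ whose interior contains every vertex $w\in\RR(\CT,1)$ that is joined to $\sigma$ by an edge but is not a vertex of $\sigma$. Fix any vertex $v$ of $\sigma$. Local convex position at $v$ provides a convex polytope $P_v\subset S^4$ with $\St(v,\CT)\subset\partial P_v$, in which $\sigma$ appears as a facet; I take $H(\sigma)$ to be the closed hemisphere bounded by the hyperplane spanned by $\sigma$ and containing $P_v$. A brief consistency check shows that the choice of $v\in\sigma$ is immaterial: for any two vertices $v,v'\in\sigma$ the stars $\St(v,\CT)$ and $\St(v',\CT)$ share $\sigma$ together with the cubes around the common edges of $\sigma$, forcing both local convex realizations to lie on the same side of the hyperplane of $\sigma$.

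Next I verify the hypothesis of Proposition~\ref{prp:loccrt1lay}. Any vertex $w\in\RR(\CT,1)$ joined to $\sigma$ by an edge $[v',w]$ with $w\notin\F_0(\sigma)$ lies in $\St(v',\CT)\subset H(\sigma)$; the containment is strict because $w$ is a vertex of the convex polytope $P_{v'}$ that does not belong to the facet $\sigma$, hence $w$ cannot lie on its supporting hyperplane. Proposition~\ref{prp:loccrt1lay} then upgrades each $H(\sigma)$ to a global supporting hemisphere, so $\CT$ lies in the boundary of the convex polytope $\bigcap_\sigma H(\sigma)$ and is in convex position. The only delicate point in the argument is the consistency check that $H(\sigma)$ is well defined on $\sigma$; this is the step where one actually uses that the local realizations at different vertices of $\sigma$ can be glued, and it is handled by noting that two adjacent vertices of $\sigma$ always share at least one further facet (or can be compared indirectly via a third vertex of $\sigma$), which rigidly determines the side of the hyperplane of $\sigma$ on which the local stars sit.
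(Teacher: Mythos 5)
Your reduction to Proposition~\ref{prp:loccrt1lay} is exactly the route the paper intends (the corollary is filed as immediate after that proposition), and the ``only if'' direction is fine. The one flaw is in the consistency check: the claim that ``two adjacent vertices of $\sigma$ always share at least one further facet'' is false. In a $3$-CCT the unique layer-$0$ vertex $v$ of a facet $\sigma$ lies in exactly one cube of $\CT$, namely $\sigma$, and the same holds for the unique layer-$3$ vertex; so $\St(v,\CT)$ consists of $\sigma$ and its faces only, local convex position at $v$ is vacuous, and an edge of $\sigma$ joining layers $0$ and $1$ (resp.\ layers $2$ and $3$) is contained in $\sigma$ alone. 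Consequently ``fix any vertex $v$ of $\sigma$'' is unsafe --- if $v$ is the layer-$0$ or layer-$3$ vertex, the side of the hyperplane $\SSp\sigma$ is not determined --- and the shared facet your consistency check invokes does not exist for these boundary edges.

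The argument is salvageable with a small amount of bookkeeping. A vertex $w\in\RR(\CT,1){\setminus}\F_0(\sigma)$ joined to $\sigma$ by an edge $[v',w]$ forces $v'$ to be a layer-$2$ vertex of $\sigma$: the other endpoint must lie in layer $0$ or $2$, and if it were the layer-$0$ vertex then $w$ would be a vertex of $\St(v',\CT)=\sigma$, contradicting $w\notin\F_0(\sigma)$. So you only need the halfspaces $H_{v'}(\sigma)$ for the three layer-$2$ vertices $v'$ of $\sigma$. Each edge of $\sigma$ joining a layer-$1$ and a layer-$2$ vertex is contained in exactly two facets of $\CT$; the second facet has a vertex outside $\SSp\sigma$, and convex position of the star at either endpoint then forces that vertex into the interior of the exposing hemisphere, pinning the side. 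Chaining through the layer-$1$ vertices of $\sigma$ shows that $H_z(\sigma)$ agrees for all layer-$1$ and layer-$2$ vertices $z$ of $\sigma$; taking $H(\sigma)$ to be this common hemisphere yields the hypothesis of Proposition~\ref{prp:loccrt1lay}, and the conclusion follows.
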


Finally, a simple observation:

\begin{obs}[Halfspace selection]\label{obs:righthalfspace}
Let $C$ denote a polytopal complex in a closed hemisphere of $S^d$ with center $x\in S^d$ that consists of only two $(d-1)$-dimensional facets $\sigma$, ${\tau}$ such that $\sigma$ and ${\tau}$ intersect in a $(d-2)$-face, and there exists a closed hemisphere $H(\sigma)$ exposing $\sigma$ in $C$. Then $C$ is in convex position. 

If, additionally, $H(\sigma)$ contains $x\in S^d$, and the hyperplane $\SSp (x\cup(\sigma\cap {\tau}))$ separates $\sigma$ and ${\tau}$ into different components, then the hemisphere $H({\tau})$ exposing ${\tau}$ in $C$ contains $x$ as well.
\end{obs}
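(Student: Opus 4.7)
The plan is to reduce both parts to elementary arc arguments on the normal circle at the common ridge $\rho:=\sigma\cap\tau$.

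For part (I), since $\sigma$ and $\tau$ are distinct $(d-1)$-polytopes sharing the $(d-2)$-face $\rho$, I first note that $\SSp\sigma$ and $\SSp\tau$ are distinct $(d-1)$-subspheres of $S^d$ both containing $\SSp\rho$, so by dimension $\SSp\sigma\cap\SSp\tau=\SSp\rho$. Consequently the two components of $\SSp\sigma\setminus\SSp\rho$ lie in the two open hemispheres of $S^d$ bounded by $\SSp\tau$, and since $\rho$ is a proper face of $\sigma$, the set $\sigma\setminus\rho$ is contained in exactly one of them. The corresponding closed hemisphere $H(\tau)$ exposes $\tau$ and contains $\sigma$, so $H(\sigma)\cap H(\tau)$ is a convex spherical body with $C=\sigma\cup\tau$ in its boundary, which establishes convex position.

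For part (II), I pass to the normal sphere $\NO^1_p S^d$ at a point $p\in\rint\rho$. Since $\rho$ has codimension $2$ in $S^d$, this normal sphere is a circle, and by the standard link construction every hyperplane of $S^d$ containing $\SSp\rho$ meets it in an antipodal pair while every closed hemisphere bounded by such a hyperplane meets it in a closed half-circle. The polytopes $\sigma,\tau$ give rise to single points $\bar\sigma,\bar\tau\in \NO^1_p S^d$ (the inward tangent directions at $p$), and the hyperplane $\SSp(x\cup\rho)$ corresponds to the antipodal pair $\{\bar x,-\bar x\}$, where $\bar x$ is the projection of $x$.

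Under this dictionary the hypotheses read: $\bar\tau$ and $\bar x$ both lie in the closed half-circle bounded by $\{\bar\sigma,-\bar\sigma\}$ that corresponds to $H(\sigma)$ (because $\tau\subset H(\sigma)$ and $x\in H(\sigma)$), and $\bar\sigma,\bar\tau$ lie in different open halves bounded by $\{\bar x,-\bar x\}$ (the separation hypothesis). The desired conclusion $x\in H(\tau)$ becomes: $\bar x$ lies in the closed half-circle bounded by $\{\bar\tau,-\bar\tau\}$ that contains $\bar\sigma$. Parameterizing the circle by angle so that $\bar\sigma$ sits at angle $0$ and the half-circle encoding $H(\sigma)$ is $[0,\pi]$, we get $\bar\tau$ at angle $\theta\in(0,\pi)$ and $\bar x$ at some $\phi\in[0,\pi]$. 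The separation hypothesis forces $\phi<\theta$ (otherwise $\bar\sigma$ and $\bar\tau$ would lie on the same side of $\{\bar x,-\bar x\}$), whence $\phi\in(\theta-\pi,\theta)$, which is exactly the arc corresponding to $H(\tau)$.

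The main obstacle is really only the bookkeeping involved in the normal-sphere translation and checking that the three separation/containment conditions are faithfully recorded there; once that is in place the $S^1$ computation is immediate.
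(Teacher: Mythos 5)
The paper states this as an \emph{Observation} and offers no proof of its own, so there is no ``paper approach'' to compare against; your argument is a reasonable proof from first principles, and the passage to the normal circle $\NO^1_\rho S^d$ at the ridge $\rho=\sigma\cap\tau$ is the natural way to handle part (II). The arc bookkeeping in part (II) is correct: the dictionary between closed hemispheres through $\SSp\rho$ and closed half-circles, the identification of $\bar\sigma,\bar\tau,\bar x$, and the final interval computation all hold up once one checks (as you implicitly do) that for a direction $w$ at $p\in\partial H$ pointing to a point of $H$, the component of $w$ in $\NO_\rho$ has nonnegative pairing with the pole of $H$.

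There is one small but real slip at the very start of part (I). You assert that $\SSp\sigma$ and $\SSp\tau$ are distinct \emph{because} $\sigma$ and $\tau$ are distinct polytopes sharing the $(d-2)$-face $\rho$. That implication is false in general: two distinct $(d-1)$-polytopes in the same hyperplane, lying on opposite sides of $\SSp\rho$ and meeting exactly in $\rho$, form a perfectly good polytopal complex with $\SSp\sigma=\SSp\tau$. What actually forces $\SSp\sigma\neq\SSp\tau$ is the hypothesis that $H(\sigma)$ exists: since $\F_0(\sigma)$ spans $\SSp\sigma$ and lies in the great subsphere $\partial H(\sigma)$, one has $\partial H(\sigma)=\SSp\sigma$, and if this equalled $\SSp\tau$ then the vertices of $\tau$ would also lie on $\partial H(\sigma)$, contradicting the exposing property. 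With that justification substituted, the rest of your part (I) (each component of $\SSp\sigma\setminus\SSp\rho$ lies in a different open hemisphere of $\SSp\tau$, and $\sigma\setminus\rho$ picks out one of them) goes through and produces the unique exposing hemisphere $H(\tau)$, which is also what your part (II) uses.
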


\begin{proof}[\textbf{Proof of Proposition~\ref{prp:convpext}}]
Consider first the case of (II) where $v$ is a vertex of layer $k-2$. The complex $\St(v,\CT)$ is in convex position by assumption. Let us prove that $\St(v,\CT')$ is in convex position as well. For this, denote by $X=X(v)$ the facet added in the extension from $\CT$ to $\CT'$ at the vertex $v$ of $\CT$. Set $\CT^+:=\RR(\CT',[k-3,k+1])$, and $\CT^-:=\RR(\CT',[k-3,k])$.
\smallskip

\noindent\textbf{(1)} \emph{$\St(v,\CT'),\ v\in\RR(\CT',k-2)$ is in convex position if $\St(v,\CT^+)$ is in convex position.}
\smallskip

$\St(v,\CT')$ is in convex position iff $\Lk(v,\CT')$ is in convex position, and by Theorem~\ref{thm:locglowib}, $\Lk(v,\CT')$ is in convex position if and only if it is in locally convex position, i.e.\ if for every edge $e$ of $\CT'$ containing $v$, $\Lk(e,\CT')$ is in convex position. This is already known for all edges that are not edges of ${X}$ since $\CT$ is in convex position. Thus $\St(v,\CT')$ is in convex position if $\Lk(e,\CT')$ is in convex position for edges in ${X}\cap \CT$ containing $v$.

We wish to reduce this further by showing the following:  By Lemma~\ref{lem:convglue}, $\Lk(e,\CT')$ is in convex position (for any edge $e\in X\cap \CT$ containing $v$) if $\Lk(e,\CT^+)$ is in convex position. There are two cases to consider:
\begin{compactitem}[$\circ$]
 \item If $\CT$ is a $3$-CCT, $\Lk(e,\CT^+)=\Lk(e,\CT')$, so the convex position of $\Lk(e,\CT^+)$ is clearly equivalent to the convex position of $\Lk(e,\CT')$.
 \item If $\CT$ is a $k$-CCT, $k\ge  4$, $\Lk(e,\CT)$ is a $1$-ball (and so is $\Lk(e,{X})$) in convex position in the $2$-sphere $\NO^1_e S^4$. Consequently, it follows from
 Lemma~\ref{lem:convglue}, applied to the pair of complexes $\Lk(e,\CT)$ and $\Lk(e,{X})$, that if $\Lk(e,\CT^+)$ is in convex position, then so is  $\Lk(e,\CT')$.
\end{compactitem}

\noindent Thus $\St(v,\CT')$ is in convex position if $\Lk(e,\CT^+)$ is in convex position for all edges of ${X}\cap \CT$ containing $v$. Since the convex position of $\St(v,\CT^+)$ clearly implies the convex position of $\Lk(e,\CT^+)$ for every edge of ${X}\cap \CT$ containing $v$, we have the desired statement.
\smallskip

\noindent\textbf{(2)} \emph{$\St(v,\CT^+),\ v\in\RR(\CT',k-2)$ is in convex position.}
\smallskip

Let $\sigma$ be any facet of $\St(v,\CT^-)$, and let $H(\sigma)$ denote the hemisphere exposing $\sigma$ in $\St(v,\CT)$. Let us first prove that
\[\F_0({X}){\setminus}\F_0(\sigma)\subset \intx H(\sigma),\]
i.e.\ $H(\sigma)$ exposes $\sigma$ in $\sigma\cup {X}$. Since $\St(v,\CT)$ is in convex position, it suffices to prove 
\[
	\F_0({X}){\setminus}\F_0(\St(v,\CT))\subset \intx H(\sigma).
\]
Every vertex $\F_0({X}){\setminus}\F_0(\St(v,\CT))$ (there is only one) lies in the interior of the convex cone $\Gamma$ with apex $v\in H(\sigma)$ and spanned by the vectors $w-v$,
\[
	w\in \F_0({X}\cap \RR(\CT,[k-1,k]))\in H(\sigma).
\]
Thus to show $\F_0({X}){\setminus}\F_0(\St(v,\CT))\subset \intx H(\sigma)$, it suffices to prove that the interior of $\Gamma$ does lie in $\intx H(\sigma)$, or equivalently that there exists one $w\in \F_0({X}\cap \RR(\CT,[k-1,k]))$ such that $w\in \intx H(\sigma)$. This, however, follows from the locally convex position of $\CT$.

\begin{figure}[htbf]
\centering 
 \includegraphics[width=0.35\linewidth]{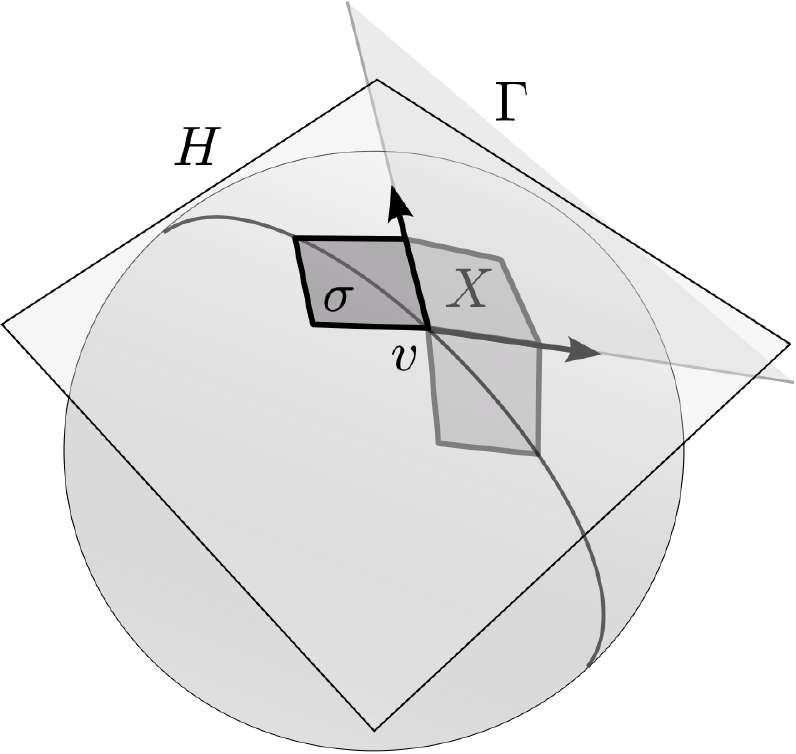} 
\caption{\small The facet ${X}$ added to the star of $v$ in the extension from $\CT$ to $\CT'$ lies $H(\sigma)$, since it is confined to a convex cone in $H(\sigma)$. The figure illustrates a $2$-dimensional analogue.} 
  \label{fig:localconv}
\end{figure}

It remains to find a hemisphere $H({X})$ exposing ${X}$ in $\St(v,\CT^+)$.  Every facet of $\St(v,\CT^-)$ intersects ${X}$ in a $2$-face, thus, by the preceding argument, for every facet $\sigma$ of $\St(v,\CT^-)$ there exists a hemisphere $H_\sigma({X})$ exposing ${X}$ in $\sigma\cup {X}$. It remains to be proven that $H_\sigma({X})$ does not depend on $\sigma$, i.e.\ it we have to prove that for every choice of facets $\sigma'$, $\sigma''$ in $\St(v,\CT^-)$ we have $H_{\sigma'}({X})=H_{\sigma''}({X}).$

For this, we can use Observation~\ref{obs:righthalfspace}: $\CT^-$ is in convex position, since it is an ideal $3$-CCT in locally convex position (Corollary~\ref{Cor:localglobal2}). We may assume, after possibly a reflection at $\Sp\{e_1,\, e_2,\, e_3,\, e_4\}\subset \R^5$, that $\CT$ is chosen in such a way that $\CT^-$ lies in the hemisphere of~$S^4$ with center $e_5$, so that by symmetry of $\CT$ every hemisphere exposing a facet of it contains the point $e_5$. Furthermore, the orthogonal projection $\pp$ of $S^4{\setminus} \{\pm e_5\}$ to~$S^3_\eq$ is injective on $\CT'$ since $\CT'$ is ideal, so if ${X}$, $\sigma$ are adjacent facets of $\CT'$, then $\SSp (e_5\cup(\sigma\cap {X}))$ separates them into different components.
By Observation~\ref{obs:righthalfspace}, the hemisphere $H_\sigma({X})$ that exposes ${X}$ in $\sigma\cup {X}$ is uniquely determined as the one containing $e_5$, independent of $\sigma$. In particular, $H({X})=H_\sigma({X})$ exposes ${X}$ in $\St(v,\CT^-)$, and consequently, $\St(v,\CT^+)$ is in convex position. 

As already observed, this proves that $\St(v,\CT')$ is in convex position. We can now complete the proofs of the claims \textbf{(I)} and \textbf{(II)} of Proposition~\ref{prp:convpext}.
\smallskip

\noindent\textbf{(I)}
If $H({X})$ is the hyperplane exposing ${X}$ in $\St(v,\CT')$ (where $v$ is the layer $(k-2)$-vertex of $\CT'$ in ${X}$, as above), then all vertices of $\St(v,\CT')$ that are not in ${X}$ lie in the interior of $H$. In particular, all vertices of layers $k-2$ of $\CT'$ being connected to ${X}$ via an edge lie in the interior of $H({X})$, since they lie in $\St(v,\CT')$. Thus $\RR(\CT',[k-2,k+1])$ lies in convex position by Proposition~\ref{prp:loccrt1lay}.
\smallskip

\noindent\textbf{(II)}
We have to prove that for every vertex $v$ of $\CT'$ the star $\St(v,\CT')$ is in convex position. For vertices $\RR(\CT',k-2)$, we already proved that $\St(v,\CT')$ is in convex position. For vertices in $\RR(\CT',[0,k-3])$, this follows from the assumptions on $\CT$. Thus it remains to consider the case in which $v$ is a vertex of layer $k-1$, $k$ or $k+1$. If $v$ is in layer $k+1$, this is trivial, since in this case the star consists of a single facet. If $v$ is in layer $k-1$ or $k$, this follows from Theorem~\ref{thm:locglowib}, applied to the complex $\Lk(v,\CT')$: 
Since $\RR(\CT',[k-2,k+1])$ is in convex position, so is $\Lk(v,\RR(\CT',[k-2,k+1]))$. Thus, by Theorem~\ref{thm:locglowib}, it remains to be proven that $\Lk(v,\CT')$ is in locally convex position, i.e., to show that $\Lk(e,\CT')$ is in convex position for every $e$ edge of $\CT'$ containing $v$. There are, again, three cases to consider:
\begin{compactitem}[$\circ$]
\item Edges $e$ between vertices of layer $k-2$ and $k-1$ (for which the convex position of $\Lk(e,\CT')$ follows from the convex position of the stars of layer $k-2$ vertices),
\item edges between vertices of layer $k-1$ and $k$ (for which the convex position of $\Lk(e,\CT')$ follows from statement~\textbf{(I)}), and
\item edges between vertices of layers $k$ and $k+1$ (which, again, is a trivial case). \qedhere
\end{compactitem}
\end{proof}

\subsection{Proof of Theorem~\ref{mthm:Lowdim}}\label{ssc:pfmthm1}

Let $C$ denote a polytopal complex in $\R^d$. A polytope $P$ in $\R^d$ induces a convex position realization of~$C$ if the boundary complex of $P$ contains a subcomplex combinatorially equivalent to $C$. We generalize the notion of the realization space of a polytope to polytopal complexes. The \Defn{(convex) realization space} of $C$ is defined as
\[ \cR(C):=
\big\{V\in\R^{d \times f_0(C)}: \conv(V)\, \text{induces a convex position realization of }\, C \big\},
\]
and just like to the realization space of a polytope, the realization space of $C$ is a semi-algebraic set in~$\R^{d \times f_0(P)}$ and its dimension is well-defined. 
We will use the following elementary lemma.

\begin{lemma}\label{lem:incl}
Let $C$ denote a polytopal complex in convex position. Then $ \cR(\conv C)\subseteq \cR(C)$. \emph{\qed}
\end{lemma}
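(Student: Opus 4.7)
The plan is to unwind the definitions and observe that a label-respecting face-lattice isomorphism between $\conv C$ and any realization of it automatically transports the distinguished subcomplex $C\subseteq \partial(\conv C)$ to a combinatorially equivalent subcomplex sitting on the boundary of the new polytope.

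First, I would fix an element $V\in \cR(\conv C)$. By Definition~\ref{def:realization_space}, this $V$ is a labeled vertex set in $\R^d$ (or in~$S^d$) whose convex hull $P':=\conv(V)$ carries a face-lattice isomorphism $\phi:\mathcal{L}(\conv C)\to \mathcal{L}(P')$ that respects the labeling of the vertex sets. Since $C$ is a polytopal subcomplex of $\partial(\conv C)$ in convex position, its face poset embeds as a subposet of $\mathcal{L}(\conv C)$. Restricting $\phi$ to this subposet produces a subcomplex $C':=\phi(C)$ of $\partial P'$, and $\phi|_C$ provides a combinatorial isomorphism $C\to C'$ respecting vertex labels.

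The second step is to read off that $C'$ is a convex-position realization of $C$. Indeed, $C'$ lies on the boundary of the convex polytope $P'$, so its vertices are in convex position; and by construction, the labeled combinatorial type of $C'$ equals that of $C$. Thus the restriction of $V$ to the vertex coordinates indexed by $\F_0(C)$ (which coincide with the labels inherited from the embedding $C\hookrightarrow \partial(\conv C)$) induces a convex-position realization of~$C$, i.e.\ lies in $\cR(C)$. This defines the desired inclusion $\cR(\conv C)\hookrightarrow \cR(C)$ by restriction to the vertices of~$C$.

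There is no genuine obstacle here: the statement is essentially a bookkeeping lemma saying that a realization of a polytope automatically realizes each of its boundary subcomplexes. The only mild point of care is to keep track of the vertex labels when passing between $\conv C$ and $P'$, and to note that although $\cR(\conv C)$ a priori lives in a larger coordinate space than $\cR(C)$ (when $C$ has fewer vertices than $\conv C$), the containment is to be read via the canonical coordinate-forgetting map that retains exactly those coordinates corresponding to vertices of $C$.
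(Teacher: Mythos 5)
Your argument is correct and is exactly the definitional unwinding the paper takes to be self-evident (the lemma is stated with an immediate \qed and no written proof). One point worth tightening: your final caveat imagines that $C$ might have \emph{fewer} vertices than $\conv C$, but this cannot happen — since $\conv C = \conv(\F_0(C))$, the vertex set of $\conv C$ is always a subset of $\F_0(C)$, and the convex-position hypothesis forces the reverse inclusion as well (a vertex of $C$ that were not an extreme point of $\conv C$ could not lie on the boundary complex of any convex polytope containing $C$). So in fact $\F_0(\conv C)=\F_0(C)$, both realization spaces live in the same ambient coordinate space, and the claimed containment is a genuine set inclusion rather than an inclusion via a coordinate-forgetting map; the phrase ``restriction to the vertices of $C$'' is vacuous. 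This is precisely what makes the lemma's ``$\subseteq$'' well-posed, so it is worth stating explicitly rather than hedging around it.
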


Furthermore, we reformulate Lemma~\ref{lem:uniext} in a slightly stronger form:

\begin{lemma}[Unique Extension]\label{lem:uniextem}
Let $\CT$ denote a CCT of width at least $1$, and let $\CT'$ denote its extension. Then $\cR(\CT')$ embeds into $\cR(\CT)$.
\end{lemma}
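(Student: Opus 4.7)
The plan is to verify that the natural restriction map $\rho:\cR(\CT')\to\cR(\CT)$, sending a convex-position realization $V'$ of $\CT'$ to the subfamily of $V'$ indexed by the vertices of $\CT\subseteq\CT'$, is a semi-algebraic embedding. Well-definedness of $\rho$ is automatic, since any subcomplex of a complex in convex position is itself in convex position; so what remains is to recover each vertex of $\CT'\setminus\CT$ as a rational (hence semi-algebraic and continuous) function of the vertices of $\CT$, which will produce an inverse of $\rho$ on its image.

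A straightforward induction reduces the problem to an elementary extension: any extension factors as a chain $\CT=\CT^{(0)}\subset\CT^{(1)}\subset\cdots\subset\CT^{(r)}=\CT'$ with $\CT^{(i)}:=\RR(\CT',[0,k+i])$, and composing the rational inverses for each link yields the general statement.

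For $k\ge 2$ the argument from Lemma \ref{lem:uniext} applies verbatim: each vertex of the new layer lies in a 3-cube facet $W$ of $\CT'$ whose intersection $W\cap\CT$ already contains three 2-faces meeting at a common vertex, and Lemma \ref{lem:cubecmpl} expresses the remaining (new) vertex of $W$ as the intersection of three affine hyperplanes (each determined by one of the three already realized 2-faces). The only genuinely new case is $k=1$, where $\CT'$ is a 2-CCT and has no 3-cube facets. A direct inspection of $\T[2]=\RC[2]/\Lambda$ shows that every new layer-$2$ vertex $v$ is contained in exactly three quadrilaterals $Q_1,Q_2,Q_3$ of $\CT'$ whose remaining six vertices (three of layer $0$, three of layer $1$) lie in $\CT$, and which meet pairwise along edges through $v$. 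In a convex-position realization each $Q_i$ is a $2$-face of the convex polytope $\conv\CT'$, and two distinct $2$-faces of a convex polytope sharing an edge cannot be coplanar (else they would merge into a single higher-dimensional face); hence the $Q_i$ do not all lie in one plane, and Lemma \ref{lem:cubecmpl} once more recovers $v$ as a rational function of the six known vertices.

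In both cases recovering a new vertex reduces to solving a $3\times 3$ affine system whose coefficients are polynomial in $V\in\cR(\CT)$, so the inverse of $\rho$ is rational on its image and $\rho$ is a semi-algebraic embedding. The main subtlety is the width-one case: in contrast to Lemma \ref{lem:uniext}, non-coplanarity of the three quadrilaterals at a new vertex is not automatic from the polytopal-complex structure alone, and it is precisely the convex-position hypothesis built into the definition of $\cR(\CT')$ that rules out the degenerate coplanar configuration.
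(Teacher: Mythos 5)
Your proposal is correct and follows essentially the same route as the paper, which disposes of the lemma in one line by noting that two distinct $2$-faces of a complex in convex position cannot be coplanar and then invoking Lemma~\ref{lem:cubecmpl} to recover each new vertex from its six neighbours. Your extra care in the width-one case is well placed: the paper's phrase about $2$-faces of $\CT$ has no literal content when $\CT$ is a $1$-CCT (which has no $2$-faces), and as you observe the required non-coplanarity must then be read off from the convex position of $\CT'$ rather than of $\CT$.
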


\begin{proof}
Any pair of distinct $2$-faces in any convex position realization of $\CT$ are not coplanar. Consequently, by Lemma~\ref{lem:cubecmpl}, the inclusion of vertex sets induces an embedding of $\cR(\CT')$ into $\cR(\CT)$.
\end{proof}

\begin{proof}[\textbf{Proof of Theorem~\ref{mthm:Lowdim}}]
Let $\PS[n]$, $n\ge4$, denote the CCT of width $n$ extending the $3$-CCT $\PS[3]$, which exists by Theorem~\ref{thm:ext} and is in convex position by Theorem~\ref{thm:convp}. 
We define
\[\mathrm{CCTP}_4[n]:=\conv \PS[n].	\]
This is the family announced in Theorem~\ref{mthm:Lowdim}:
\begin{compactitem}[$\circ$]
\item As $\PS[n]$ is in convex position, it lies in some open hemisphere of $S^4$; thus, the sets $\mathrm{CCTP}_4[n]$ are polytopes.
\item The polytopes $\mathrm{CCTP}_4[n]$ are of dimension $4$ by construction.
\item The polytopes $\mathrm{CCTP}_4[n]$ are combinatorially distinct: \[f_0(\mathrm{CCTP}_4[n])=f_0(\PS[n])=12(n+1).\]
\item By Lemma~\ref{lem:incl}, $\cR\mathrm{CCTP}_4[n])$ embeds into $\cR(\PS[n])$, which in turn embeds into the realization space $\cR(\PS[1])$ by Lemma~\ref{lem:uniextem}. It remains to estimate the dimension of the last space:
\[\dim \cR\big(\mathrm{CCTP}_4[n])\big)\le  \dim\cR\big(\PS[n]\big) \le\dim\cR(\PS[1]) \le  4f_0(\PS[1])=96.\qedhere\]
\end{compactitem}
\end{proof}

\setcounter{figure}{0}
\section{Many projectively unique polytopes in fixed dimension}\label{sec:projun}

We will now prove Theorem~\ref{mthm:projun}, where we may restrict to the case $d=69$ as in the reasoning for Theorem \ref{mthm:Lowdim}.
For the proof, we might wish to start by finding a finite set $R$ of points outside the polytope $\mathrm{CCTP}_{4}[n]$ such that the $24$~vertices in the first two layers of $\mathrm{CCTP}_{4}[n]$, combined with $R$ form a projectively unique “polytope--point configuration” $(\mathrm{CCTP}_{4}[n],R)$ for each $n>1$. We could then apply Lawrence extensions to the points in~$R$
to obtain the desired family $\mathrm{PCCTP}_{69}[n]$ of projectively unique polytopes. 

We have to deal, however, with the fact that Lawrence equivalence recognizes only those affine dependences
that are external to the polytope in question. Thus in order to establish projective uniqueness
after the Lawrence extensions, we would need intricate details about the relation between the polytopes 
involved and the external point configuration, which are hard to obtain for the infinite family of cross-bedding 
cubical torus polytopes $\mathrm{CCTP}_{4}[n]$.
We solve this problem as follows:
\begin{compactenum}[(1)]
\item Going beyond the classical set-up of projectively unique polytope--point configurations,
we introduce \Defn{weak projective triples} (Definition~\ref{def:wpt}). Such a triple consists of a polytope $P$, 
a subset $Q$ of its vertex set, and a point configuration $R$.
\item The \Defn{subdirect cone} (Definition~\ref{def:subd}) is an operation that produces a projectively unique 
polytope--point configuration from any weak projective triple (Lemma~\ref{lem:subdc}).
\item In Lemma~\ref{lem:affine}, we provide a family of weak projective triples that consists of the polytopes $\mathrm{CCTP}_{4}[n]$, the subset $\F_0(\PS[1])$ of their vertex sets, and a finite point configuration $R$ in $\R^4$. The sets $\F_0(\PS[1])$ and $R$ do not depend on $n$, and $K:= \F_0(\PS[1])\cup R$ is of cardinality $64$. Using the subdirect cone defined in the previous step, we are then able to provide an infinite family of projectively unique PP configurations $(\mathrm{CCTP}_{4}[n]^v, K)$ in $S^5$.
\item From this we then obtain, using Lawrence extensions, the desired family of polytopes $\mathrm{PCCTP}_{69}[n]$ 
in~$S^{69}$.
\end{compactenum}

\noindent In the next section, we recall the basic facts and notions about projectively unique polytopes, point configurations and polytope--point configurations, and review the classic technique of Lawrence extensions. In Section~\ref{ssc:pfmthm2} we define weak projective triples, show how to obtain projectively unique PP configurations from them, and close with the proof of Theorem~\ref{mthm:projun}. A crucial part of this proof, Lemma~\ref{lem:affine}, is stated in Section~\ref{ssc:pfmthm2}, but the details of its verification are delayed to Section~\ref{ssc:constr}.

\subsection{Projectively unique polytopes and polytope--point configurations}\label{ssc:prun}
\enlargethispage{3mm}
We follow Gr\"unbaum~\cite{Grunbaum} for the notions of projectively unique polytopes and point configurations, and Richter-Gebert~\cite{RG} for Lawrence equivalence and Lawrence extensions. Recall that we work with spherical geometry instead of using the more common euclidean set-up.

A polytope $P$ in $S^d$ is \Defn{projectively unique} if the group $\mathrm{PGL}(\R^{d+1})$ of projective transformations on~$S^d$ acts transitively on~$\cR(P)$. In particular, for projectively unique polytopes we have $\dim \cR(P) \le \dim \mathrm{PGL}(\R^{d+1}) = d(d+2)$, with equality if the vertex set of the polytope contains a projective basis. 
\begin{definition}[PP configurations, Lawrence equivalence, projective uniqueness]
A \Defn{point configuration} is a finite collection $R$ of points in some open hemisphere of~$S^d$. 
If $H$ is an oriented hyperplane in $S^d$, then we use $H_+$ resp.\ $H_-$ to denote the open hemispheres 
bounded by $H$. If $P$ is a polytope in $S^d$ such that $P\cap R=\emptyset$ and $P\cup R$ is contained
in an open hemisphere of~$S^d$, then the pair $(P,R)$ is a 
\Defn{polytope--point configuration}, or \Defn{PP configuration}. A hyperplane $H$ is \Defn{external to $P$} 
if $H\cap P$ is a face of~$P$.

Two PP configurations $(P,R)$, $(P',R')$ in $S^d$ are \Defn{Lawrence equivalent} if there is a 
bijection $\varphi$ between the vertex sets of $P$ and $P'$ and the sets of $R$ and $R'$, 
such that, if $H$ is any hyperplane for which the closure of $H_-$ contains $P$, there exists an oriented hyperplane $H'$ for which the closure of $H'_-$ contains $P'$ and  
\[
    \varphi(\F_0(P)\cap H_-)=\F_0(P')\cap H'_-, \qquad 
    \varphi(R\cap H_+)={R'}\cap H'_+, \qquad 
    \varphi(R\cap H_-)={R'}\cap H'_-.
\]
A PP configuration $(P,R)$ in~$S^d$ is \Defn{projectively unique} if for any PP configuration $(P',R')$ in~$S^d$ 
Lawrence equivalent to it, and every bijection $\varphi$ that induces the Lawrence equivalence, there is a projective transformation $T$ that realizes $\varphi$.
\end{definition}

For example, $(P,\emptyset)$ and $(P',\emptyset)$ are Lawrence equivalent if and only if the polytopes $P$ and $P'$ are combinatorially equivalent. They are projectively equivalent as PP configurations if and only if $P$ and $P'$ are projectively equivalent as polytopes.

Two point configurations $R,\, R'$ are Lawrence equivalent if the PP configurations $(\emptyset,R)$ and $(\emptyset,R')$ are Lawrence equivalent. For example, the Perles Configuration~\cite[Thm.\ 5.5.3]{Grunbaum} is projectively unique. This example is of particular interest since it gives rise to an irrational polytope via Lawrence extensions.

\begin{prp}[Lawrence extensions, cf.\ {\cite[Lem.\ 3.3.3 and 3.3.5]{RG}}, {\cite[Thm.\ 5]{ZNonr}}] \label{prp:mlwextn}
Let $(P,R)$ be a projectively unique PP configuration in $S^d$. Then there exists a $(\dim P+f_0(R))$-dimensional polytope on $f_0 (P) + 2 f_0(R)$ vertices that is projectively unique and that contains $P$ as a face.
\end{prp}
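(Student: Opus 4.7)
The plan is to iterate a single-point Lawrence extension, which converts one point of the point part $R$ into two new polytope vertices at the cost of one extra dimension, and to track that projective uniqueness is preserved at each step. After $f_0(R)$ such extensions, starting from the given projectively unique PP configuration $(P,R)\subset S^d$, the point part is empty, so we are left with a projectively unique polytope of dimension $\dim P+f_0(R)$ with $f_0(P)+2f_0(R)$ vertices; and since each step preserves the previous polytope as a face, $P$ appears as a face of the final polytope.

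For a single extension at $r\in R\subset S^d$, I embed $S^d\hookrightarrow S^{d+1}$ as an equator, fix a unit vector $\eta\in S^{d+1}$ orthogonal to this equator, and choose two non-antipodal points $r^+,r^-$ on the great circle $\SSp\{r,\eta\}$ such that $r$ lies in the relative interior of the segment $[r^+,r^-]$. I then set $P':=\conv(P\cup\{r^+,r^-\})$ and $R':=R\setminus\{r\}$. By construction $P'$ is a $(d{+}1)$-polytope with $f_0(P)+2$ vertices, the equator is a supporting hyperplane of $P'$ cutting out $P$ as a face, and $(P',R')$ is again a PP configuration. Moreover, $r^+$ and $r^-$ are the only two vertices of $P'$ not lying on the face $P$, so they are characterised combinatorially.

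The essential content is to show that $(P',R')$ is projectively unique as a PP configuration whenever $(P,R)$ is. Let $(P'',R'')\subset S^{d+1}$ be Lawrence equivalent to $(P',R')$ via a bijection $\varphi$. The combinatorial characterisation of $r^\pm$ forces $\varphi$ to send the face $P$ of $P'$ to a combinatorially equivalent face $\varphi(P)$ of $P''$ and the pair $\{r^+,r^-\}$ to the unique pair of vertices of $P''$ not lying on $\varphi(P)$. After a preliminary projective transformation I may assume that $\varphi(P)$ lies in an equator $S^d$ of $S^{d+1}$. Let $\bar r$ be the unique intersection of the line $\SSp\{\varphi(r^+),\varphi(r^-)\}$ with this equator. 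A hyperplane-by-hyperplane inspection, using that $r$ lies in the relative interior of $[r^+,r^-]$, shows that the induced PP configuration $(\varphi(P),\varphi(R')\cup\{\bar r\})$ on the equator is Lawrence equivalent to $(P,R)$, with $\bar r$ corresponding to $r$: indeed, an oriented hyperplane $H\subset S^d$ exposing a face of $P$ lifts to an oriented hyperplane of $S^{d+1}$ exposing the same face of $P'$ and separating $R'\cup\{r^+,r^-\}$ in the same way that $H$ separates $R'\cup\{r\}$. Projective uniqueness of $(P,R)$ then yields a $T\in\mathrm{PGL}(\R^{d+1})$ realising the equivalence on the equator, and $T$ extends to an element of $\mathrm{PGL}(\R^{d+2})$ fixing the direction $\eta$; the two remaining degrees of freedom in this extension (scaling along $\eta$ and the location of the $\eta$-axis over $r$) are precisely what is needed to match $\{r^+,r^-\}$ with $\{\varphi(r^+),\varphi(r^-)\}$, producing a projective transformation that realises $\varphi$.

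The main obstacle is this last step: verifying that the Lawrence-equivalence data on $(P',R')$, together with the preservation of the pair $\{r^+,r^-\}$, is rigid enough to recover the projective transformation. Once this is established, iterating the single-step extension $f_0(R)$ times and counting dimensions and vertices gives the announced result.
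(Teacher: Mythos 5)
Your iterative reduction to a single-point Lawrence extension is a valid strategy and mirrors the paper, which handles all $f_0(R)$ points simultaneously in $S^{d+f_0(R)}$ but details only the single-point case, noting the general case is analogous. The problem is in your single-step construction. You place $r^+,r^-$ on the great circle through $r$ and the pole $\eta$ so that $r$ lies in the \emph{relative interior} of $[r^+,r^-]$. On that circle this forces $r^+$ and $r^-$ onto \emph{opposite} sides of the equator $S^d$, so the equator cannot support $P'=\conv(P\cup\{r^+,r^-\})$: in fact $r\in[r^+,r^-]\subset P'$, hence $P'\cap S^d\supseteq\conv(P\cup\{r\})\supsetneq P$, so $P$ is not a face of $P'$, and some vertices of $P$ may no longer be vertices of $P'$ (take $P$ a segment on the line through $r$ with one endpoint between the other and $r$). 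The vertex count $f_0(P)+2$ and the claim that $r^\pm$ are ``the only two vertices of $P'$ not on the face $P$'' then both fail, and the rest of your argument is built on this false setup.

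The paper avoids this by placing \emph{both} new points on the \emph{same} side of the equator, collinear with $r$ but with $r$ as an \emph{endpoint}: $\underline{r}\in[r,\overline{r}]$, i.e.\ the order along the line is $r,\underline{r},\overline{r}$. Then $\SSp P$ (the equator) supports $\conv(P\cup\{\underline{r},\overline{r}\})$ and cuts out $P$ as a face, all $f_0(P)+2$ points are vertices, and --- crucially for the uniqueness step --- one recovers $r$ from any realization as the intersection of the line through the images of $\underline{r},\overline{r}$ with $\SSp$ of the face corresponding to $P$. Your plan for the uniqueness argument (apply projective uniqueness of $(P,R)$ on the equator, then use the residual projective freedom over the equator to match the pair of new vertices) is essentially the paper's three-step composition $V\circ U\circ T$, but it needs to be reconstructed on the corrected geometric setup; with $r$ on the far side of $[r^+,r^-]$ rather than inside it, the ``hyperplane-by-hyperplane inspection'' you invoke to transfer Lawrence equivalence from $(P',R')$ down to $(P,R)$ actually goes through.
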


\begin{proof}[Sketch of Proof]
Set $k=f_0(R)$, and denote the elements of $R$ by $r_i,\, 1\le  i\le  k$. To obtain the Lawrence extension of $(P,R)$, consider any embedding of $S^{d}$ (and with it $(P,R)$) into $S^{d+k} $.

\begin{figure}[htbf]
\centering 
  \includegraphics[width=0.53\linewidth]{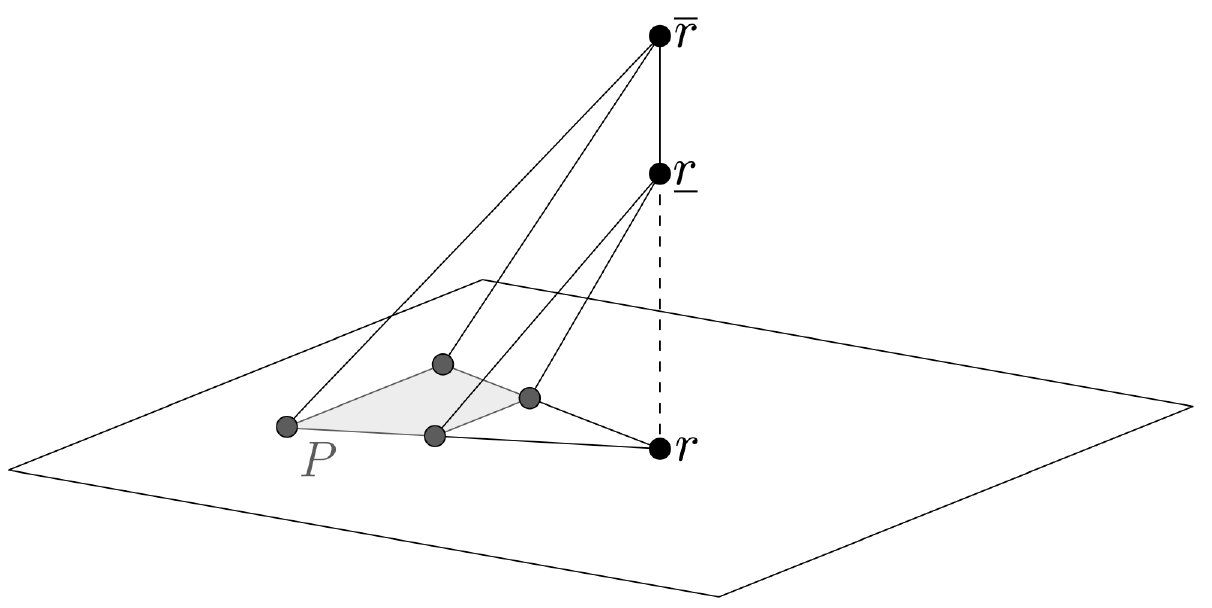} 
  \caption{\small  The Lawrence polytope associated to a polytope $P$ and a point $r\notin P$ is obtained by lifting $r$ to two new points $\underline{r}$ and $\overline{r}$.} 
  \label{fig:lawrence}
\end{figure}

\noindent The \Defn{Lawrence polytope} of a PP-configuration $(P,R)$ is defined as \[\mathrm{L}(P,R):=\conv \Big(P \cup \bigcup_{1\le i \le k} \{ \underline{r}_i,\, \overline{r}_i\} \Big),\]
where $\underline{r}$ and $\overline{r}$ are two points associated to every $r\in R$ such that $\underline{r}$ lies in the segment $[r\overline{r}]$, and such that for every $1\le  j\le  k$, the line $\SSp\{r,\, \overline{r}_j\}$ intersects the subspace 
\[\SSp\Big(P \cup \bigcup_{1\le i \le k} \{r_i,\, \overline{r}_i\} \Big)\]
transversally. This polytope has the desired number of vertices, dimension, and contains $P$ as a face. One can show that $\mathrm{L}(P,R)$ is projectively unique. Here, we discuss only the special case where $R$ is a single point $r$; the full case is analogous.

Consider a realization $\mathrm{L}'$ of $\mathrm{L}(P,R)$. Intersecting the ray from $\underline{r}'$ through $\overline{r}'$ with the affine span of the facet $P'$ gives a point $r'$, and the pair $(P',r')$ can be seen to be Lawrence equivalent to $(P,r)$~\cite[Lem.\ 3.3.5]{RG}. We construct a projective map from $\mathrm{L}$ to $\mathrm{L}'$ in three steps.
\begin{compactitem}[$\circ$]
\item Since $(P,r)$ is projectively unique, there exists a projective transformation $T$ of $S^{d}$ that maps~$(P,r)$ to~$(P',r')$.
\item The lines $\SSp\{T(\underline{r}),\, T(\overline{r})\}$ and $\SSp\{\underline{r}',\, \overline{r}'\}$ intersect in $r'$. Hence, we can use a projective transformation $U$ restricting to the identity on $\SSp P'$ to map  $\SSp\{T(\underline{r}),\, T(\overline{r})\}$ to $\SSp\{\underline{r}',\, \overline{r}'\}$, and $T(\underline{r})$ to $\underline{r}'$.
\item A projective transformation $V$ fixing $\underline{r}'$ and $\SSp P'$ can then be used to map $(UT)(\overline{r})$ to $\overline{r}'$. 
\end{compactitem}
\noindent The configuration of maps $V\circ U\circ T$ gives the desired projective transformation from $\mathrm{L}(P,R)$ to $\mathrm{L}'$.
\end{proof}

\subsection{Weak projective triples and the proof of Theorem~\ref{mthm:projun}}\label{ssc:pfmthm2}

In this section, we prove the second main theorem. Let us start by reformulating the idea of Lemma~\ref{lem:cubecmpl}.

\smallskip

\begin{definition}[Framed polytopes] Let $P$ denote a polytope in $S^d$, and let $Q$ be any subset of its vertex set, that is, \[Q=\{q_1,\, q_2,\, q_3,\, \dots\}\subseteq \F_0(P).\]
Let $P'$ be any polytope in $S^d$ combinatorially equivalent of $P$. Let $\varphi$ denote the labeled isomorphism from the faces of $P$ to the faces of $P'$. We say that the polytope $P$ \Defn{is framed by} the set of vertices $Q$ if $P=P'$ for all choices of $P'$ and $\varphi$ that satisfy $\varphi(q)=q$ for all $q\in Q$.\end{definition}

\begin{examples}\label{ex:stdet} We record some instances of framed polytopes, the last of which is important to the proof of Theorem~\ref{mthm:projun}.
\begin{compactenum}[\rm(a)]
\item If $P$ is any polytope, then $\F_0(P)$ frames $P$.
\item If $P$ is a projectively unique polytope, and $Q\subseteq \F_0(P)$ is a projective basis for its span, then $Q$ frames $P$. 
\item A $3$-cube $W$ is framed by any $7$ of its vertices, even by $6$ vertices with the property that no quadrilateral of $W$ contains all of them. This is the key idea of Lemma~\ref{lem:cubecmpl}.
\item By Lemma~\ref{lem:uniextem}, the vertex set $\F_0(\PS[1])\subset \F_0(\mathrm{CCTP}_4[n])$ frames $\mathrm{CCTP}_4[n]$ for all $n \ge  1$.
\end{compactenum}
\end{examples}

\begin{definition}[Weak projective triple in $S^d$]\label{def:wpt}
A triple $(P,Q,R)$ of a polytope $P$ in $S^d$, a subset $Q$ of $\F_0(P)$ and a point configuration $R$ in $S^d$ is a \Defn{weak projective triple} in $S^d$ if and only if

\begin{compactenum}[\rm(a)]
\item the three sets are contained in some open hemisphere in $S^d$,
\item $(\emptyset, Q \cup R)$ is a projectively unique point configuration,
\item $Q$ frames the polytope $P$, and
\item some subset of $R$ spans a hyperplane $H$, the \Defn{wedge hyperplane}, which does not intersect $P$.
\end{compactenum}
\end{definition}

\begin{figure}[htbf]
\centering 
  \includegraphics[width=0.27\linewidth]{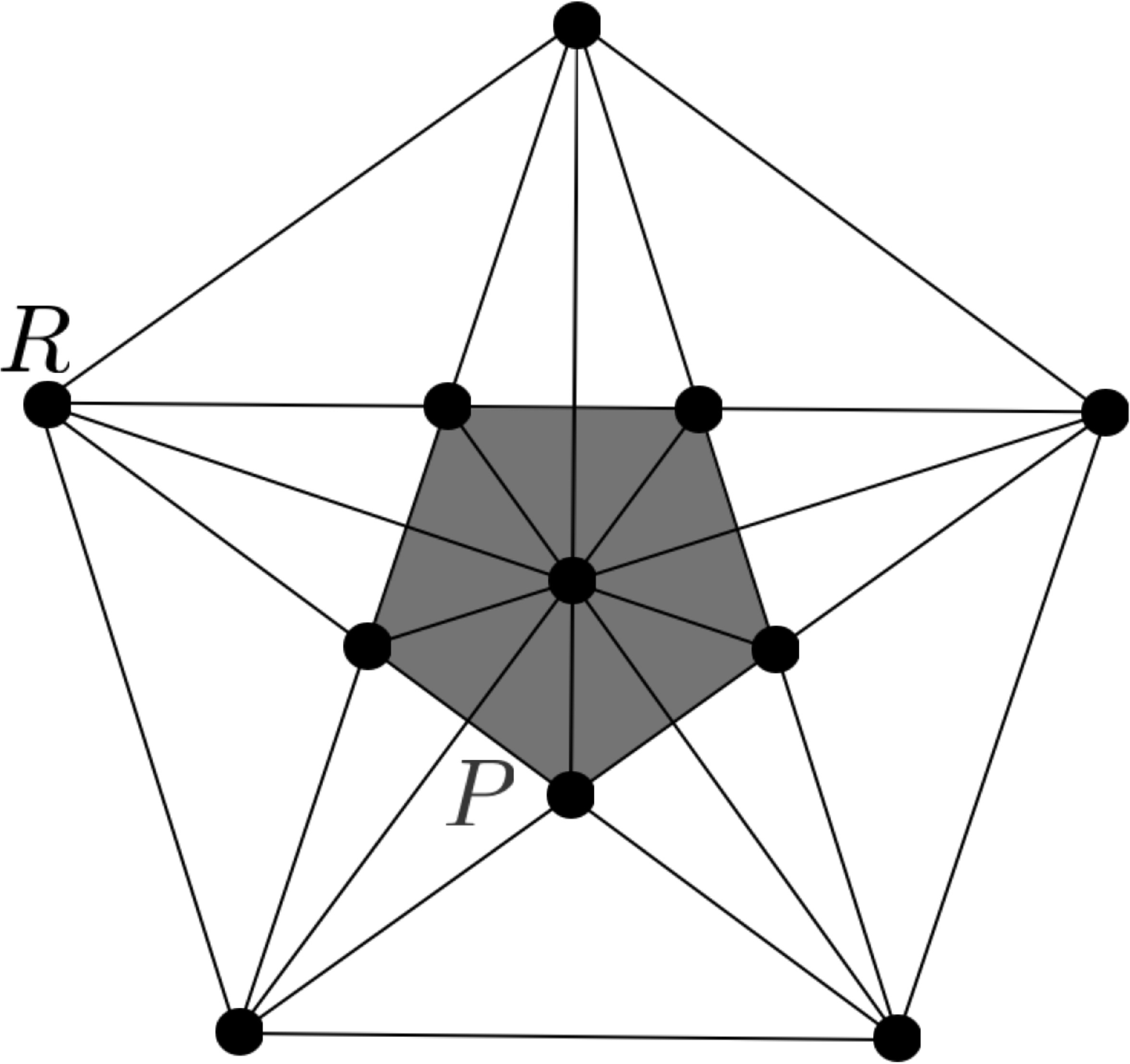} 
  \caption{\small A weak projective triple in the special case $Q=\F_0(P)$.} 
  \label{fig:pentg}
\end{figure}

\begin{definition}[Subdirect Cone]\label{def:subd}
Let $(P,Q,R)$ be a weak projective triple in $S^d$, and identify $S^d$ with the equator $S^d_\eq$ in $S^{d+1}$. Let $H$ denote the wedge hyperplane in $S^d_\eq$ spanned by vertices of $R$ with $H\cap P=\emptyset$. Let $v$ denote any point not in $S^d_\eq$, and let $\widehat{H}$ denote any hyperplane in $S^{d+1}$ such that $\widehat{H}\cap S^d_\eq=H$ and $\widehat{H}$ separates $v$ from $P$. For every vertex $p$ of $P$ consider the point $p^v=\conv\{v,\,p\}\cap \widehat{H}$. Denote by $P^v$ the pyramid
\[P^v:=\conv \Big( v\cup \bigcup_{p\in \F_0(P)} p^v \Big).\]
 The PP configuration $(P^v,Q \cup R)$ in $S^{d+1}$ is a \Defn{subdirect cone} of $(P,Q,R)$.
\end{definition}

Figure~\ref{fig:pentg} shows a weak projective triple, and Figure~\ref{fig:subdirect} shows the associated subdirect cone. 

\begin{figure}[htbf]
\centering 
  \includegraphics[width=0.68\linewidth]{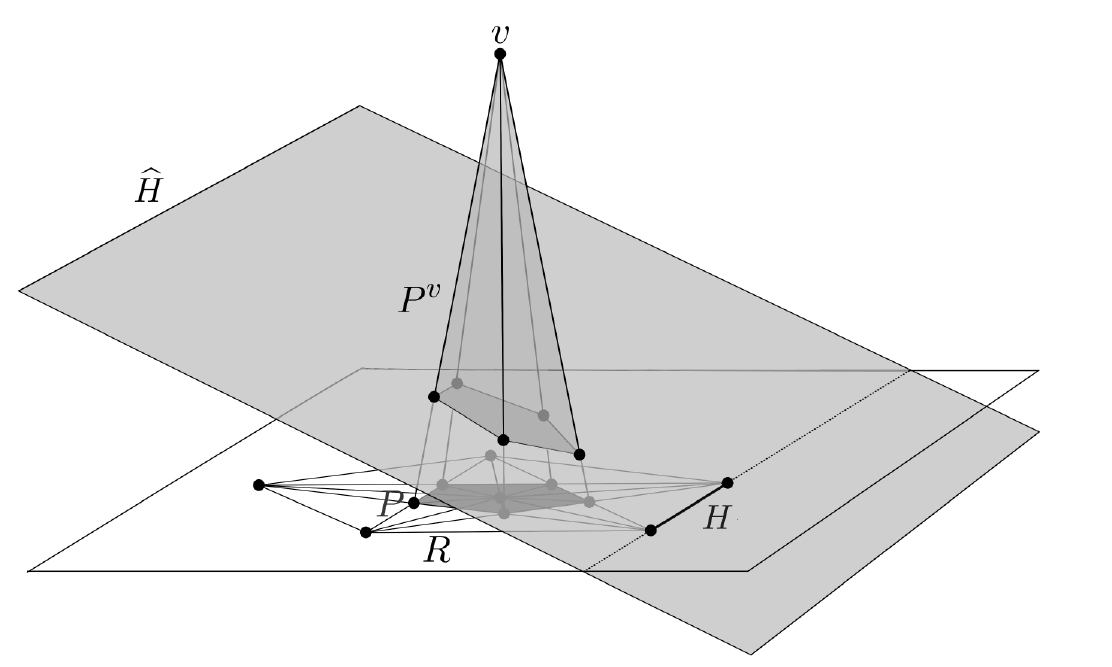} 
  \caption{\small The subdirect cone of the weak projective triple of Figure~\ref{fig:pentg}} 
  \label{fig:subdirect}
\end{figure}

\begin{lemma}\label{lem:subdc}
For any weak projective triple $(P,Q,R)$, the subdirect cone $(P^v,Q \cup R)$ is a projectively unique PP configuration.
\end{lemma}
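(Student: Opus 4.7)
The plan is to take an arbitrary PP configuration $(\tilde P,\tilde R)$ in $S^{d+1}$ Lawrence equivalent to $(P^v,Q\cup R)$ via a bijection $\varphi$, and construct a projective transformation of $S^{d+1}$ realizing $\varphi$ through a sequence of normalizations. First, since Lawrence equivalence preserves affine dependence ranks, $\tilde R$ spans a hyperplane of $S^{d+1}$, which we map to $S^d_\eq$ by a projective transformation. Restricting $\varphi$ to $R$ then yields a Lawrence equivalence of point configurations in $S^d_\eq$, so the projective uniqueness of $Q\cup R$ (Definition~\ref{def:wpt}(b)) provides a further transformation normalizing $\tilde R$ to equal $Q\cup R$ as a labeled configuration, with $\varphi$ restricting to the identity on it.

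The combinatorial equivalence of $P^v$ and $\tilde P$ (implied by Lawrence equivalence) identifies $\tilde v = \varphi(v)$ as the apex of the pyramid $\tilde P$, and its base vertices lie on some hyperplane $\tilde H\subset S^{d+1}$. Applying Lawrence equivalence to the external hyperplane $\widehat H$ shows that the wedge subset of $R$, which spans $H$, lies on $\tilde H$, forcing $\tilde H\cap S^d_\eq = H$. I then apply a projective transformation of $S^{d+1}$ fixing $S^d_\eq$ pointwise that sends $\tilde v\mapsto v$ and $\tilde H\mapsto \widehat H$. The group of such stabilizing transformations has dimension $d+2$; it acts transitively on points off $S^d_\eq$, and the stabilizer of any such point acts transitively on the pencil of hyperplanes through $H$ that separate the apex from the base (the two degenerate members of the pencil being $S^d_\eq$ itself and the hyperplane through $v$ and $H$, neither of which is eligible for $\widehat H$ or $\tilde H$).

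The technical core of the argument---and the step I expect to be the main obstacle---is the following collinearity claim: after these normalizations, $q\in R$ lies on the line $\SSp\{v,\tilde q^v\}$ for every $q\in Q$, where $\tilde q^v := \varphi(q^v)$. In the original subdirect cone, by construction, $q$ lies on $\SSp\{v,q^v\}$, hence on every external hyperplane $K$ of $P^v$ containing the edge $[v,q^v]$. Applying the inverse Lawrence bijection $\varphi^{-1}$, every external hyperplane $K'$ of $\tilde P$ containing $[\tilde v,\tilde q^v]$ corresponds to some such $K$ and therefore contains $\varphi(q)=q$. Since the face figure of the edge $[\tilde v,\tilde q^v]$ in $\tilde P$ is combinatorially a vertex figure in the base of the pyramid and therefore $(d-1)$-dimensional, the normals to the supporting hyperplanes through this edge span the whole normal space; consequently, the intersection of all such hyperplanes reduces to the line $\SSp\{\tilde v,\tilde q^v\}$ itself, and $q$ must lie on it.

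To conclude, since $\tilde q^v$ lies both on $\SSp\{v,q\}$ and on $\widehat H$, we have $\tilde q^v = \SSp\{v,q\}\cap \widehat H = q^v$ for every $q\in Q$. The base of $\tilde P$ is a realization of $P$ on $\widehat H$ combinatorially equivalent via $\varphi$, sharing with the base of $P^v$ the labeled framing vertices $\{q^v : q\in Q\}$. Since the projection from $v$ is a projective isomorphism $S^d_\eq\to\widehat H$ carrying $P$ to the base of $P^v$ and $Q$ to these framing vertices, the framing property (Definition~\ref{def:wpt}(c)) transfers to $\widehat H$ and forces the two bases to coincide. Together with $\tilde v = v$, this yields $\tilde P = P^v$, so the composition of all the projective transformations we applied realizes $\varphi$, proving that the subdirect cone is projectively unique.
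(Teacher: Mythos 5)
Your argument is correct and follows the paper's own outline: use projective uniqueness of $Q\cup R$ to normalize the external point configuration, then normalize the apex and the base hyperplane, identify the base vertices as the projections of $Q$ from the apex onto $\widehat{H}$, and finally invoke the framing property. The paper's proof asserts (but does not justify) precisely your collinearity claim---that $\varphi(q^v)=\SSp\{v,q\}\cap\widehat{H}$, stated there as ``the vertices of $B^w$ are determined as the intersections of $\SSp\{v,b\}$ with $\widehat{I}$''---and your argument intersecting all supporting hyperplanes through the edge $[\tilde v,\tilde q^v]$ is exactly the right way to extract this from Lawrence equivalence.
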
	

\begin{proof}
Define $Q^v$ as the set of all points $q^v=\conv\{v,\,q\}\cap \widehat{H}$, where $q$ ranges over all elements of $Q$. Then $Q^v$ frames the polytope $P^v\cap \widehat{H}$.

Consider any realization $(A^w,B\cup C)$ of $(P^v,Q \cup R)$. Let $H$ denote the wedge hyperplane of the triple $(P,Q,R)$, and let $I$ denote the hyperplane spanned by the points of $C$ corresponding to $H\cap R$ under the Lawrence equivalence. Let $\widehat{I}$ denote the hyperplane containing $I$ and the facet of $A^w$ not containing $w$. 

The point configuration $B\cup C$ is a realization of $Q \cup R$ since both sets lie in hyperplanes not intersecting the polytopes $A^w$ resp.\ $P^v$. 
Thus we may assume, after a projective transformation, that $B=Q$, $C=R$, $w=v$ and $\widehat{I}=\widehat{H}$. The vertices of $B^w\subset\F_0(A^w)=\F_0(A^v)$ are determined as the intersections of $\SSp\{v,\,b\},\, b\in B$
with $\widehat{I}=\widehat{H}$. In particular $B^w=B^v=Q^v$. As $Q^v$ frames the facet $P^v\cap \widehat{H}$, $A^w\cap \widehat{I}=P^v\cap \widehat{H}$, and consequently, $A^w=P^v$.\end{proof}

From here on, there are two possibilities to provide infinite families of projectively unique polytopes in fixed dimension: 
\begin{compactenum}[\rm(a)]
\item Simply exhibit a weak projective triple for the pair $(\mathrm{CCTP}_4[n],\F_0(\PS[1]))$.                                                                                                                              
\item Establish general existence results for weakly projective triples.
\end{compactenum}
Both approaches have their merit. The first possibility leads to the low-dimensional families of projectively unique polytopes, and hence to the proof of Theorem~\ref{mthm:projun} as stated. The second possibility leads to families of projectively unique polytopes that exhibit further interesting combinatorial and geometric properties but may well be of a very high (but fixed) dimension. These polytopes are interesting in their own right, and solve more problems in polytope theory. We will discuss them in detail in Section~\ref{sec:varieties}, but follow path (a) first.

\begin{lemma}\label{lem:affine}
There is a point configuration $R$ of $40$ points in $S^4$ such that for all $n\ge  1$, the triple $(\mathrm{CCTP}_4[n],\F_0(\PS[1]),R)$ is a weak projective triple.
\end{lemma}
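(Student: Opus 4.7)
The plan is to handle the four conditions of Definition~\ref{def:wpt} separately, with conditions~(a), (c), and (d) being relatively routine and condition~(b) being the substance of the lemma. For~(c), that $\F_0(\PS[1])$ frames $\mathrm{CCTP}_4[n]$ is immediate from Example~\ref{ex:stdet}(d), which is a reformulation of the Unique Extension Lemma~\ref{lem:uniextem}. For~(d), I would exploit that every ideal CCT extending $\PS[1]$ is oriented towards $\mathcal{C}_0$, so that all the polytopes $\mathrm{CCTP}_4[n]$ lie on one fixed side of the first layer of $\PS[1]$. One can then place four distinguished points of $R$ on a hyperplane $H$ chosen strictly on the \emph{opposite} side and uniformly bounded away from the region containing every $\mathrm{CCTP}_4[n]$; this $H$ serves as the wedge hyperplane, and the same $H$ works for all $n$. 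Condition~(a), that the whole triple is contained in a common open hemisphere of $S^4$, is then easily arranged by choosing the remaining points of $R$ within a small neighborhood of $\PS[1]$, since the extensions $\PS[n]$ themselves remain inside a fixed hemisphere by their construction from the control CCT in $S^3_\eq$.

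The main content is condition~(b), the projective uniqueness of the $64$-point configuration $\F_0(\PS[1]) \cup R$. I would construct $R$ as an $\mathfrak{S}$-invariant union of orbits, chosen so that each successive orbit is pinned down, relative to the already-placed points, by projective incidences --- that is, each new point is forced to lie on a prescribed flat spanned by already-placed points, and intersections of these flats single out exactly one position. Concretely, after choosing a $6$-point projective basis inside $\F_0(\PS[1])$, which absorbs the full $\dim\mathrm{PGL}(\R^5)=24$ degrees of projective freedom, I would adjoin several $\mathfrak{S}$-orbits whose sizes sum to $40$, so that the incidence pattern encoded by the Lawrence data of $\F_0(\PS[1]) \cup R$ forces each of the remaining $18$ vertices of $\PS[1]$ and each of the $40$ added points into a unique position. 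The symmetry of the setup reduces the verification to checking incidences within a fundamental domain of $\mathfrak{R}$ acting on a single layer, and the explicit coordinates provided in Section~\ref{ssc:example} make this verification finite.

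The main obstacle will be showing that the chosen incidence scheme really does cut out a single realization, i.e.\ that after fixing a projective basis, the semialgebraic system defining the positions of the remaining $58$ points has a unique solution compatible with the prescribed Lawrence signs. A point forced to lie on a single line has one degree of freedom left, while a point defined as the intersection of sufficiently many flats is rigid; the construction must be tight enough everywhere to rule out spurious alternative solutions, and care must be taken to ensure the flats in question are in general-enough position that their intersections are transverse. This is exactly the kind of explicit combinatorial-algebraic bookkeeping that is naturally deferred to Section~\ref{ssc:constr}, where the explicit point coordinates from Section~\ref{ssc:example} and the group $\mathfrak{S}$ make each case a short computation. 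Once that verification is in place, conditions~(a)--(d) hold simultaneously, and the triple $(\mathrm{CCTP}_4[n], \F_0(\PS[1]), R)$ is a weak projective triple for every $n \ge 1$.
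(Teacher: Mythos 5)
Your handling of conditions (c) and (d) is essentially the paper's: (c) is indeed immediate from Lemma~\ref{lem:uniextem} (Example~\ref{ex:stdet}(d)), and for (d) the paper likewise uses a hyperplane missing every $\mathrm{CCTP}_4[n]$ at once (four points on the equator $S^3_\eq$, all polytopes lying in a compact part of $S^4_+$). Note, however, that you cannot simply ``place'' these four points on a convenient far-away hyperplane: they belong to $R$, so they too must be forced, up to Lawrence equivalence, by the rest of the configuration if (b) is to hold; in the paper they arise as intersections of lines spanned by previously constructed points, which happen to land on the equator. Similarly, putting ``the remaining points of $R$ in a small neighborhood of $\PS[1]$'' for (a) is incompatible with the actual construction, where the auxiliary points are spread out and (a) needs a small perturbation of the closed upper hemisphere.

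The genuine gap is in (b), which is the entire content of the lemma and which your proposal defers to unspecified ``bookkeeping.'' Your scheme -- fix a $6$-point projective frame inside $\F_0(\PS[1])$ and force the remaining $18+40$ points as intersections of flats spanned by already-placed points -- cannot work as literally described: starting from a frame, pure join/meet constructions only ever produce points that are rational in the frame-normalized coordinates, so the scheme could succeed only if the other $18$ vertices of $\PS[1]$ had rational projective invariants relative to the chosen frame, which you neither verify nor address, and which the geometry of $\PS[1]$ (layer $0$ involves $\sqrt2-1$ against rationally constructible layer-$1$ data) speaks against. The paper has to work around exactly this: it seeds projective uniqueness not with a frame in $\F_0(\PS[1])$ but with the vertex set of the projectively unique polytope $\Delta_2\times\Delta_2$, constructs the layer-$1$ vertices by direct incidences, and then deliberately leaves a one-parameter family -- the square $W_1$ is determined only up to a dilation $\lambda$ -- which is killed not by further incidences but by a coplanarity that $\F_0(\PS[1])$ itself satisfies (six specified layer-$0$/layer-$1$ points lie on a common hyperplane), forcing $\lambda^2+2\lambda-1=0$, with the Lawrence sign data selecting the root $\lambda=\sqrt2-1$. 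This mechanism for eliminating the leftover modulus via an internal quadratic relation, root chosen by orientation, is the heart of Section~\ref{ssc:constr}; your proposal acknowledges that some points may retain a degree of freedom but offers no way to remove it, so the claim that your incidence pattern ``cuts out a single realization'' is not merely an unverified computation -- the construction it would have to certify is missing.
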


We defer the construction of $R$ to the next section. Using Lemma~\ref{lem:affine}, we can now prove Theorem~\ref{mthm:projun}.

\begin{proof}[\textbf{Proof of Theorem~\ref{mthm:projun}}]
We stay in the notation of this section. Consider, for any $n\ge  1$, the triple $(\mathrm{CCTP}_4[n],\F_0(\PS[1]),R)$. By Lemma~\ref{lem:subdc}, the subdirect cone
\[(\mathrm{CCTP}_4[n]^v,K),\ \ K:=\F_0(\PS[1])\cup R\]
of the triple $(\mathrm{CCTP}_4[n],\F_0(\PS[1]),R)$ is a projectively unique PP configuration. We can now apply the Lawrence extension (Proposition~\ref{prp:mlwextn}) to $(\mathrm{CCTP}_4[n]^v,K)$ to obtain the family
\[\mathrm{PCCTP}_{69}[n]:=\mathrm{L}(\mathrm{CCTP}_4[n]^v,K).\] This is the family of polytopes announced in Theorem~\ref{mthm:projun}:

\begin{compactitem}[$\circ$]
\item The polytopes $\mathrm{PCCTP}_{69}[n]$ are projectively unique by Proposition~\ref{prp:mlwextn} since the $(\mathrm{CCTP}_4[n]^v,K)$ are projectively unique PP configurations by Lemma~\ref{lem:subdc}.
\item The polytopes $\mathrm{PCCTP}_{69}[n]$ are combinatorially distinct: By Proposition~\ref{prp:mlwextn}, \[f_0(\mathrm{PCCTP}_{69}[n])=f_0(\mathrm{CCTP}_{4}[n])=12(n+1)+129.\] 
\item The dimension of $\mathrm{PCCTP}_{69}[n]$ is given (Proposition~\ref{prp:mlwextn}) as the sum of the cardinality of $K$, which is $24+40 = 64$, and the dimension of the PP configuration $(\mathrm{CCTP}_4[n]^v,K)$, which is $5$.\qedhere \end{compactitem}
\end{proof}

\subsection{Construction of a weak projective triple}\label{ssc:constr}

To prove Lemma~\ref{lem:affine}, we construct a projectively unique point configuration $K:=R\cup \F_0(\PS[1])$ in the closure of $S^4_{+}$. For this we start with a projectively unique PP configuration (the vertex set of a product of two triangles $\Delta_2 \times \Delta_2$) and extend it to the desired point configuration $K$ step by step. We do so in such a way that the ultimate point configuration constructed is determined uniquely (up to Lawrence equivalence) from the vertices of $\Delta_2 \times \Delta_2$.

\smallskip

The following three notational remarks should help the reader navigate through the construction:
\begin{compactitem}[$\circ$]
\item Coordinates for vertices in $S^4_+$ are given in homogeneous coordinates.
\item We start the construction by giving coordinates to the vertex set of $P_8=\Delta_2 \times \Delta_2$. Then we construct another copy $\widetilde{P}_8$ of $\Delta_2 \times \Delta_2$ antipodal to the first. This gives us the $6$-fold symmetry that is inherent in the vertex set $\F_0(\PS[1])$. After this, we will construct the remaining points of $K$ from the vertices of $P_8$ and of $\widetilde{P}_8$ with little interdependence between the two constructions. Thus, we mark points that correspond to $\widetilde{P}_8$ with a tilde. 
\item The symbols $+$ and $-$ will be used to signify parity in the first and second coordinate. A pre-superscript ${\circ}$ will indicate that a point has zero first coordinate function, but nonzero second coordinate function, where there exists a corresponding point with nonzero first coordinate function, but zero second coordinate function.
\item The first three construction steps are direct, in the sense that we obtain new points as intersections of subspaces spanned by points already obtained. From step four on, our construction steps are more involved; the point configurations depend on a parameter $\lambda$ whose value will be determined at the end.
\end{compactitem}

\medskip

\noindent \textbf{I. \emph{The framework, (a).}} The polytope $\Delta_2 \times \Delta_2$ in $S^4$ is projectively unique (cf.\ Shephard's List polytope $P_8$ in Section~\ref{ssc:Shphrdlist}), and so is consequently its vertex set. We choose as vertices of $\Delta_2 \times \Delta_2$
    \vskip -24pt
\begin{align*}
      \parbox{5.5cm}{\begin{align*}
        a_1^\pm&:= \big(\pm 1,\, 0 ,\,-2 ,\,0,\, 1\big),\\
        {^\circ}a_1^+&:= \big( 0,\, 1 ,\,-2 ,\,0,\, 1\big),
      \end{align*}} 
      \parbox{5.5cm}{\begin{align*}
        a_2^\pm&:=  \big(\pm 1,\, 0 ,\, 1 ,\, \sqrt{3},\, 1\big),\\
        {^\circ}a_2^+&:=  \big( 0,\, 1 ,\, 1 ,\, \sqrt{3},\, 1\big),
      \end{align*}}
        \parbox{5.5cm}{\begin{align*}
        a_3^\pm&:=  \big(\pm 1,\, 0 ,\, 1 ,\,-\sqrt{3},\, 1\big),\\
        {^\circ}a_3^+&:=  \big( 0,\, 1 ,\, 1 ,\,-\sqrt{3},\, 1\big).
      \end{align*}}
\end{align*}
    \vskip -12pt
\noindent For $1\le  i\neq j\le  3$, let us denote by $b_{ij}=b_{ji}$ the intersection of $\SSp\{a_i^+,\, a_j^-\}$ and $\SSp\{a_i^-,\, a_j^+\}$ in $S^d_+$, and let ${^\circ}a_i^-$ denote the intersection point of the affine $2$-plane $\SSp\{ a_i^+,\, a_i^-,\,{^\circ}a_i^{+}\}$  with $\SSp\{b_{ij},\, {^\circ}a_j^+\}$. This yields
\vskip -24pt
 \begin{align*}
      \parbox{5.5cm}{\begin{align*}
        b_{23}&:= \big(0,\,0 ,\,1 ,\,0,\, 1\big),\\
        {^\circ}a_1^-&:= \big(0,\,- 1  ,\,-2 ,\,0,\, 1\big),
      \end{align*}} 
      \parbox{5.5cm}{\begin{align*}
        b_{13}&:=  \big(0,\,0,\, -\tfrac{1}{2} ,\, -\tfrac{\sqrt{3}}{2},\, 1\big),\\
        {^\circ}a_2^-&:=  \big(0,\,- 1 ,\, 1 ,\, \sqrt{3},\, 1\big),
      \end{align*}}
        \parbox{5.5cm}{\begin{align*}
        b_{12}&:=  \big(0,\,0,\, -\tfrac{1}{2} ,\, \tfrac{\sqrt{3}}{2},\, 1\big),\\
        {^\circ}a_3^-&:=  \big(0,\,- 1  ,\, 1 ,\,-\sqrt{3},\, 1\big).
      \end{align*}}
    \end{align*}
        \vskip -12pt
\noindent Let us denote by $b_0$ the intersection of the $2$-planes  $\SSp\{a_i^+,a_j^-,a_k^-\}$, $i,\, j,\, k \in \{1,2,3\},\, i\neq j\neq k\neq i$ in~$S^d_+$. The coordinates of this point are determined by $b_0=(0,\,0,\,0,\,0,\,1)$.
\bigskip

\noindent \textbf{II. \emph{The framework, (b).}} For $1\le  i\neq j\le  3$, let us denote by $k$ the last remaining index $\{1,2,3\}{\setminus} \{i,j\}$, and let $\widetilde{a}_{k}^+$ denote the intersection of the $2$-plane $\SSp\{a_1^+,\,a_2^+,\,a_3^+\}$ with the line $\SSp\{b_0,\, a_{ij}^-\}$ in $S^d_+$. We obtain, 
     \vskip -24pt
    \begin{align*}
      \parbox{5.5cm}{\begin{align*}
       \widetilde{a}_1^+&:= \big( 1,\, 0 ,\,2 ,\,0,\, 1\big),
      \end{align*}} 
      \parbox{5.5cm}{\begin{align*}
       \widetilde{a}_2^+:=  \big(1,\, 0 ,\, -1 ,\, -\sqrt{3},\, 1\big),
      \end{align*}}
        \parbox{5.5cm}{\begin{align*}
        \widetilde{a}_3^+:=  \big(1,\, 0 ,\, -1 ,\,\sqrt{3},\, 1\big).
      \end{align*}}
    \end{align*}
     \vskip -12pt
\noindent Analogously, we obtain
     \vskip -24pt
    \begin{align*}
      \parbox{5.5cm}{\begin{align*}
       \widetilde{a}_1^-&:= \big( -1,\, 0 ,\,2 ,\,0,\, 1\big),\\
        {^\circ} \widetilde{a}_1^\pm&:= \big( 0,\, \pm 1  ,\,2 ,\,0,\, 1\big),
      \end{align*}} 
      \parbox{5.5cm}{\begin{align*}
        \widetilde{a}_2^-&:=  \big( -1,\, 0 ,\, -1 ,\,-\sqrt{3},\, 1\big),\\
        {^\circ} \widetilde{a}_2^\pm&:=  \big( 0,\,\pm 1  ,\, -1 ,\,-\sqrt{3},\, 1\big),
      \end{align*}}
        \parbox{5.5cm}{\begin{align*}
       \widetilde{a}_3^-&:=  \big(-1,\, 0 ,\, -1 ,\,\sqrt{3},\, 1\big),\\
        {^\circ} \widetilde{a}_3^\pm&:=  \big(0,\,\pm 1 ,\, -1 ,\,\sqrt{3},\, 1\big)
      \end{align*}}
    \end{align*}
         \vskip -12pt
\noindent uniquely from the points already constructed.
\bigskip

\noindent \textbf{III. \emph{The points of layer 1.}} Again for $1\le  i\neq j\le  3$, we denote by $\psi_{k}^+$ the intersection point of the lines $\SSp\{a_{k}^+,\, \widetilde{a}_{k}^+\}$ with $\SSp\{a_i^+,\, a_j^+\}$ in $S^d_+$. We obtain
     \vskip -24pt
    \begin{align*}
      \parbox{5.5cm}{\begin{align*}
       \psi_{1}^+&:= \big( 1,\, 0 ,\,1 ,\,0,\, 1\big),
      \end{align*}} 
      \parbox{5.5cm}{\begin{align*}
       \psi_{2}^+&:=  \big(1,\, 0 ,\, -\tfrac{1}{2} ,\, -\tfrac{\sqrt{3}}{2},\, 1\big),
      \end{align*}}
        \parbox{5.5cm}{\begin{align*}
        \psi_{3}^+&:=  \big( 1,\, 0 ,\, -\tfrac{1}{2} ,\,\tfrac{\sqrt{3}}{2},\, 1\big).
      \end{align*}}
    \end{align*}
         \vskip -12pt
\noindent Analogously, we obtain 
     \vskip -24pt
    \begin{align*}
      \parbox{5.5cm}{\begin{align*}
       \psi_{1}^-&:= \big( -1,\, 0 ,\,1 ,\,0,\, 1\big),\\
        {^\circ}\widetilde{\psi}_{1}^\pm&:= \big( 0,\, \pm 1 ,\, -1 ,\,0,\, 1\big),
      \end{align*}} 
      \parbox{5.5cm}{\begin{align*}
        \psi_{2}^-&:=  \big(-1,\, 0 ,\, -\tfrac{1}{2} ,\, -\tfrac{\sqrt{3}}{2},\, 1\big),\\
        {^\circ}\widetilde{\psi}_{2}^\pm&:=  \big(0,\, \pm 1 ,\, \tfrac{1}{2} ,\, \tfrac{\sqrt{3}}{2},\, 1\big),
      \end{align*}}
        \parbox{5.5cm}{\begin{align*}
       \psi_{3}^-&:=  \big(-1,\, 0 ,\, -\tfrac{1}{2} ,\,\tfrac{\sqrt{3}}{2},\, 1\big),\\
        {^\circ}\widetilde{\psi}_{3}^\pm &:=  \big(0,\, \pm 1 ,\, \tfrac{1}{2} ,\,-\tfrac{\sqrt{3}}{2},\, 1\big)
      \end{align*}}
    \end{align*}
     \vskip -12pt
\noindent from the points already constructed. The points ${^\circ}\widetilde{\psi}_{i}^\pm$ and $\psi_{i}^\pm$, $i\in \{1,2,3\}$, together form $\RR(\PS[1],1)$, with  $\psi_{1}^+=\vartheta_1$, cf.\ Section~\ref{ssc:example}.
\bigskip

\noindent \textbf{IV. \emph{Transition to layer 0.}} Define \[b_1:=\SSp\{a_1^+,\, a_1^-\}\cap\SSp\{{^\circ}a_1^+,\,{^\circ} a_1^-\}\cap S^d_+\]
Similarly, define \[b_{12}^{++}:=\SSp\{a_1^+,\,{^\circ} a_2^+\}\cap\SSp\{a_2^+,\,{^\circ} a_1^+\}\cap S^d_+\quad \text{and}\quad b_{12}^{+-}:=\SSp\{a_1^+,\,{^\circ} a_2^-\}\cap \SSp\{a_2^+,\,{^\circ} a_1^-\}\cap S^d_+.                                                                                                                                                                                                                 \]
We obtain
     \vskip -24pt
    \begin{align*}
      \parbox{5.5cm}{\begin{align*}
       b_1&:=\big(0,\,0,\,-2,\,0,\, 1\big),
      \end{align*}} 
      \parbox{5.5cm}{\begin{align*}
       b_{12}^{++}&:=\big(\tfrac{1}{2},\,\tfrac{1}{2},\,-\tfrac{1}{2},\,\tfrac{\sqrt{3}}{2},\, 1\big),
      \end{align*}}
        \parbox{5.5cm}{\begin{align*}
        b_{12}^{+-}&:=\big(\tfrac{1}{2},\,-\tfrac{1}{2},\,-\tfrac{1}{2},\,\tfrac{\sqrt{3}}{2},\, 1\big).
      \end{align*}}
    \end{align*}
         \vskip -12pt
\begin{figure}[htbf]
\centering 
 \includegraphics[width=0.34\linewidth]{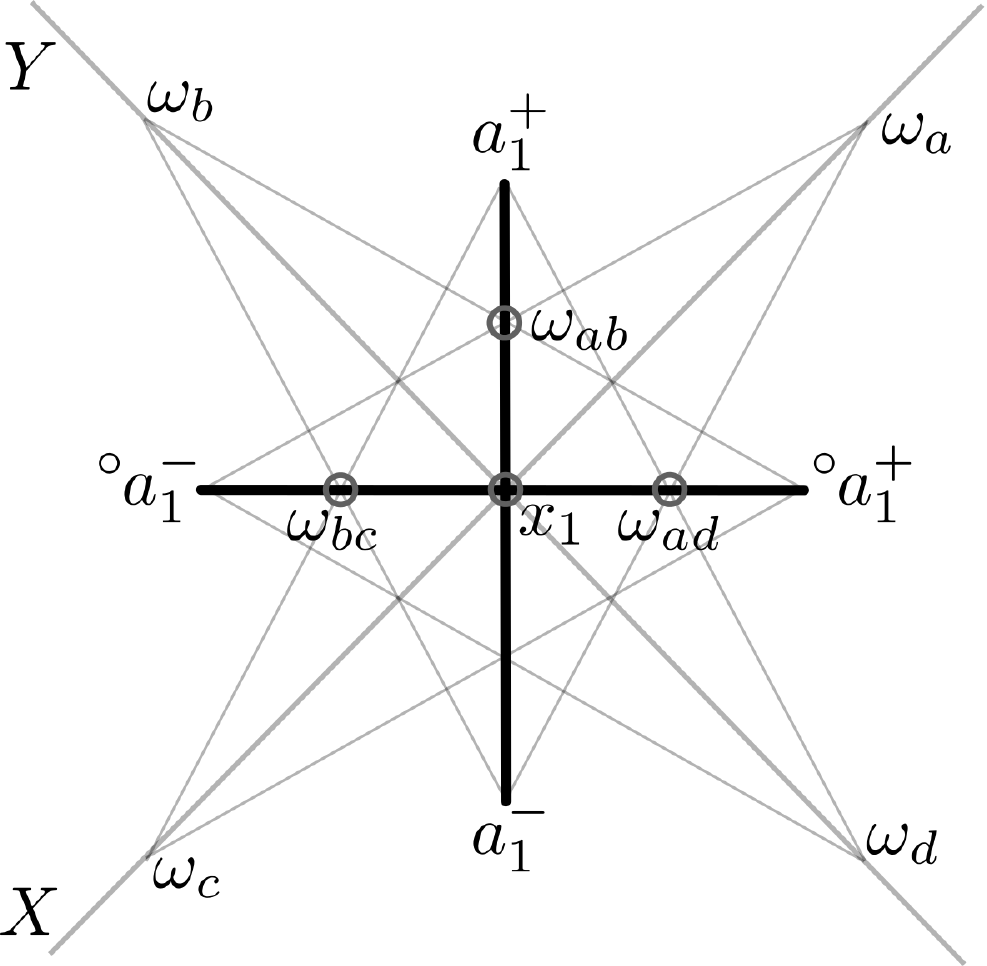} 
\caption{\small  The quadrilateral $W_1$ on vertices $\{\omega_a,\, \omega_b,\,\omega_c,\,\omega_d\}$ is, up to dilation, determined by the quadrilateral on vertices $\{a_1^+,\, a_1^-,\,{^\circ}a_1^+,\,{^\circ} a_1^-\}$ and the lines $X$ and $Y$ in $\SSp\{a_1^+,\, a_1^-,\,{^\circ}a_1^+,\,{^\circ} a_1^-\}$.} 
  \label{fig:quad}
\end{figure}

\noindent Let us now consider any set of distinct points $\{\omega_a,\, \omega_b,\,\omega_c,\,\omega_d\}$ in $S^d_+$, cf.\ Figure~\ref{fig:quad}, such that
\begin{compactitem}[$\circ$]
\item all the points lie in the affine $2$-plane $\SSp\{a_1^+,\, a_1^-,\,{^\circ}a_1^+,\,{^\circ} a_1^-\}$,
\item $\omega_a$, $\omega_c$ lie in the intersection $X$ of $\SSp\{b_1,\, b_{12},\, b_{12}^{++}\}$ with $\SSp\{a_1^+,\, a_1^-,\,{^\circ}a_1^+,\,{^\circ} a_1^-\}$,
\item $\omega_b$, $\omega_d$ lie in the intersection $Y$ of $\SSp\{b_1,\, b_{12},\, b_{12}^{+-}\}$ with $\SSp\{a_1^+,\, a_1^-,\,{^\circ}a_1^+,\,{^\circ} a_1^-\}$,
\item the intersection point $\omega_{ad}$ of $\SSp\{\omega_a, a_1^-\}$ with 
$\SSp\{\omega_d, a_1^+\}$ and the intersection point $\omega_{bc}$ of $\SSp\{\omega_b, a_1^-\}$ with $\SSp\{\omega_c, a_1^+\}$) 
 lie in $\SSp\{{^\circ}a_1^+,\,{^\circ} a_1^-\}$,
\item the intersection point $\omega_{ab}$ of $\SSp\{\omega_a,{^\circ}a_1^-\}$ with $\SSp\{\omega_b, {^\circ}a_1^+\}$ lie in $\SSp\{a_1^+,\, a_1^-\}$, and
\item $\omega_a$ and ${^\circ}a_1^+$ lie in the same component of $\SSp\{a_1^+,\, a_1^-,\,{^\circ}a_1^+, \,{^\circ} a_1^-\}{\setminus} \SSp\{a_1^+,\, a_1^-\}$.
\end{compactitem}
\noindent The quadrilateral $W_1$ on points $\{\omega_a,\, \omega_b,\,\omega_c,\,\omega_d\}$ is a square; its barycenter coincides with $b_1$, but the dilation factor $\lambda$ of $W_1$ is not determined yet. Since the vertices of $W_1$ are of the form $\big(s_3 \lambda,\, s_4 \lambda,\,-2,\,0,\, 1\big)$, where $\lambda>0$ and $s_3,s_4\in\{+,-\}$,
we relabel \[\omega_1^{+,+}:=\omega_a,\ \ \omega_1^{+,-}:=\omega_b,\ \ \omega_1^{-,-}:=\omega_c\ \ \text{and}\ \omega_1^{-,+}:=\omega_d\]
with
\[\omega_1^{s_3,s_4}:=\big(s_3 \lambda,\, s_4 \lambda,\,-2,\,0,\, 1\big),\  \lambda>0,\, s_3,s_4\in\{+,-\}.\]
\noindent Call this PP configuration $K'_\lambda$. Before we turn to the issue of fixing $\lambda$, let us construct analogous quadrilaterals $W_2$ and $W_3$ and $\widetilde{W}_1$, $\widetilde{W}_2$ and $\widetilde{W}_3$.
\bigskip

\noindent \textbf{V. \emph{The points of layer 0, (a).}} Let $i\in \{2,3\}$. Define, for any choice of signs $s_3$ and $s_4$, $s_3=s_4$, the points $\omega_i^{s_3,s_4}$ in $S^d_+$ by
\[\omega_i^{s_3,s_4}=\SSp\big\{a_i^+,\, a_i^-,\,{^\circ}a_i^+,\,{^\circ} a_i^-\big\}\cap \SSp\{a_1^+,\, a_i^+,\,\omega_1^{s_3,s_4}\}  \cap \SSp\{{^\circ}a_1^+,\, {^\circ}a_i^+,\,\omega_1^{s_3,s_4} \}\cap  S^d_+.\]
We obtain:
\[\omega_2^{s_3,s_4}:=\big(s_3 \lambda,\, s_4 \lambda,\,1,\,\sqrt{3},\, 1\big),\ \quad \omega_3^{s_3,s_4}:=\big(s_3 \lambda,\, s_4 \lambda,\,1,\,-\sqrt{3},\, 1\big),\  \lambda>0,\, s_3,s_4\in\{+,-\},\, s_3=s_4.\]

\begin{figure}[htbf]
\centering 
 \includegraphics[width=0.4\linewidth]{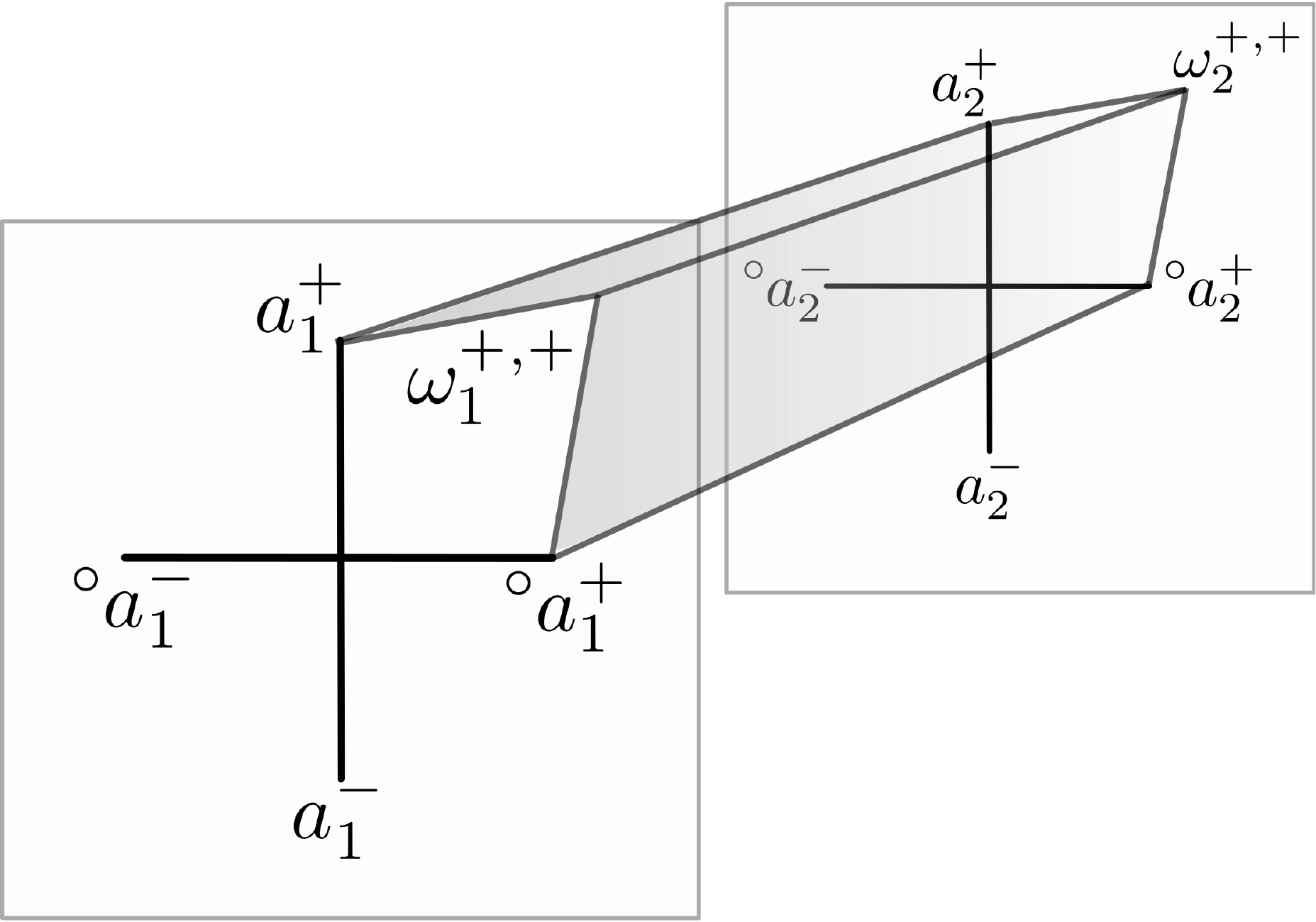} 
\caption{\small Transferring points between planes: We determine $\omega_2^{+,+}\in \SSp\{a_2^+,\, a_2^-,\,{^\circ}a_2^+,\,{^\circ} a_2^-\}$ from the points $a_1^+,\, a_1^-,\,{^\circ}a_1^+,\,{^\circ} a_1^-$, $\,a_2^+,\, a_2^-,\,{^\circ}a_2^+,\,{^\circ} a_2^-$ and $\omega_1^{+,+}\in \SSp\{a_1^+,\, a_1^-,\,{^\circ}a_1^+,\,{^\circ} a_1^-\}$.} 
  \label{fig:quadtrans}
\end{figure}

\noindent Similarly, for $i\in \{1,2,3\}$ and for any choice of signs $s_3$ and $s_4$ with $s_3\neq s_4$, define the points $\widetilde{\omega}_i^{s_3,s_4}$  in $S^d_+$ by
\[\widetilde{\omega}_i^{s_3,s_4}=\SSp\big\{\widetilde{a}_i^+,\, \widetilde{a}_i^-,\,{^\circ}\widetilde{a}_i^+,\,{^\circ} \widetilde{a}_i^-\big\}\cap \SSp\{a_1^+,\, \widetilde{a}_i^+,\,\omega_1^{s_3,s_4}\} \cap \SSp\{{^\circ}a_1^+,\, {^\circ}\widetilde{a}_i^+,\,\omega_1^{s_3,s_4} \}\cap  S^d_+ .\]
We have, for $\lambda>0$ and $s_3,s_4\in\{+,-\}$, $s_3\neq s_4$,
     \vskip -24pt
    \begin{align*}
      \parbox{5.5cm}{\begin{align*}
      \widetilde{\omega}_1^{s_3,s_4}&:=\big(s_3 \lambda,\, s_4 \lambda,\, 2,\,0 ,\, 1\big),
      \end{align*}} 
      \parbox{5.5cm}{\begin{align*}
       \widetilde{\omega}_2^{s_3,s_4}&:=\big(s_3 \lambda,\, s_4 \lambda,\,-1,\,-\sqrt{3},\, 1\big),
      \end{align*}}
       \parbox{5.5cm}{\begin{align*}
       \widetilde{\omega}_3^{s_3,s_4}&:=\big(s_3 \lambda,\, s_4 \lambda,\,-1,\,\sqrt{3},\, 1\big).
      \end{align*}}
    \end{align*}
         \vskip -12pt
\noindent The point configuration $K_{\lambda}$ constructed depends on a parameter $\lambda>0$.
\bigskip

\noindent \textbf{VI. \emph{The points of layer 0, (b).}} The point set \[\Omega(\lambda):=\big\{\widetilde{\omega}^{\pm,\mp}_i:\ i \in \{1,2,3\}\} \cup\, \{\omega^{\pm,\pm}_i:\ i \in \{1,2,3\}\big\}\]
is obtained from the point 
\[\widetilde{\omega}^{+,-}_1=\big(\lambda,\,-\lambda,\,2,\,0,\, 1\big),\	 \lambda>0\]
as its orbit under $\mathfrak{R}$. Recall that the points of $\PS[1]$ are given as the orbits of $\vartheta_0$ and $\vartheta_1$ under the group of rotational symmetries $\mathfrak{R}\subset O(\R^5)$, respectively (cf.\ Section~\ref{ssc:example}). We need to determine the values of~$\lambda$ for which the point set 
\[\Omega'(\lambda):=\big\{\psi_{i}^\pm:\ i\in \{1,2,3\}\big\}\cup \big\{{^\circ}\widetilde{\psi}_{i}^\pm:\ i\in \{1,2,3\}\big\}\cup \Omega(\lambda)\]
is Lawrence equivalent to the point configuration $\F_0(\PS[1])$, where we label the points such that
\begin{itemize}[$\circ$]
 \item $(\rot_{1,2}\rot_{3,4})^\ell (\rot_{3,4})^{2m} \widetilde{\omega}^{+,-}_1$ corresponds to $(\rot_{1,2}\rot_{3,4})^\ell (\rot_{3,4})^{2m} \vartheta_0$ for all integers $m, \ell$. In particular, the point configuration $\Omega(\lambda)\subset\Omega'(\lambda)$ corresponds to the point configuration $\RR(\PS[1], 0)\subset \F_0(\PS[1])$.
 \item $(\rot_{1,2}\rot_{3,4})^\ell (\rot_{3,4})^{2m} \psi_{1}^+$ corresponds to $(\rot_{1,2}\rot_{3,4})^\ell (\rot_{3,4})^{2m} \vartheta_1$ for all integers $m, \ell$. In particular, the point configuration $\{\psi_{i}^\pm:\ i\in \{1,2,3\}\}\cup \{{^\circ}\widetilde{\psi}_{i}^\pm:\ i\in \{1,2,3\}\}\subset\Omega'(\lambda)$ corresponds to the point configuration $\RR(\PS[1], 1)\subset \F_0(\PS[1])$.
\end{itemize}

\smallskip

\noindent This requirement on $\Omega'(\lambda)$ imposes strong conditions on $\lambda$:
The points  \[\vartheta_1,\ \rot_{1,2}\rot_{3,4}\vartheta_1,\ \rot_{1,2}\rot^{-1}_{3,4}\vartheta_1,\ \rot_{1,2}^2\vartheta_0,\ \rot_{1,2}\rot_{3,4}^{-1}\vartheta_0\ \text{and}\ \rot_{1,2}\rot_{3,4}\vartheta_0\]
of $\F_0(\PS[1])$ lie in a common hyperplane. In particular, the corresponding points \[\psi_{1}^+=\vartheta_1,\ \rot_{1,2}\rot_{3,4}\psi_{1}^+,\ \rot_{1,2}\rot^{-1}_{3,4}\psi_{1}^+,\ \rot_{1,2}^2\widetilde{\omega}^{+,-}_1=\widetilde{\omega}^{-,+}_1,\ \rot_{1,2}^{-1}\rot_{3,4}^{-1}\widetilde{\omega}^{-,+}_1\ \text{and}\ \rot_{1,2}^{-1}\rot_{3,4}\widetilde{\omega}^{-,+}_1\] 
of $\Omega'(\lambda)$ with coordinates
\[
\big(1,\,0,\,1,\,0,\, 1\big), \big(0,\,1,\,\tfrac{1}{2},\,\tfrac{\sqrt{3}}{2},\, 1\big), \big(0,\,1,\,\tfrac{1}{2},\,-\tfrac{\sqrt{3}}{2},\, 1\big),
\big(-\lambda,\,\lambda,\,2,\,0,\, 1\big), \big(\lambda,\,\lambda,\,1,\,\sqrt{3},\, 1\big)\ \text{and}\ \big(\lambda,\,\lambda,\,1,\,-\sqrt{3},\, 1\big)\]
 must lie in a common hyperplane. In particular, $\lambda$ must satisfy
\[\lambda^2+2\lambda-1=0\Longleftrightarrow \lambda=\pm \sqrt{2}-1.\]
\noindent Since $\lambda>0$, the only choice for $\lambda$, assuming $\F_0(\PS[1])'(\lambda)$ is Lawrence equivalent to $\F_0(\PS[1])$, is \[\lambda= \sqrt{2}-1.\]
\noindent In particular, $K_{\sqrt{2}-1}$ is projectively unique, and it contains $\F_0(\PS[1])$. 
\bigskip

\noindent \textbf{VII. \emph{Four points at infinity}.}
No subset of $K_{\sqrt{2}-1}$ spans a hyperplane that does not intersect the $\mathrm{CCTP}_{4}[n]$, but there is an easy way to obtain such a point set from $K_{\sqrt{2}-1}$:
Denote the intersection of $\SSp\{a_1^+,\,a_2^+\}$ with $\SSp\{a_1^-,\,a_2^-\}$ (in our case, this intersection point lies in the equator $S^3_\eq$) by $\infty_1$. Similarly, denote the intersection of $\SSp\{a_3^+,\,a_2^+\}$ with $\SSp\{a_3^-,\,a_2^-\}$ by $\infty_2$, the intersection of $\SSp\{a_1^+,\,a_1^-\}$ with the ray $\SSp\{a_2^+,\,a_2^-\}$ by $\infty_3$, and the intersection of $\SSp\{{^\circ}a_1^+,\,{^\circ}a_1^-\}$ with  $\SSp\{{^\circ}a_2^+,\,{^\circ}a_2^-\}$ by $\infty_4$. The points $\infty_1,\,\dots,\ \infty_4$ span $S^3_\eq$. Denote the union of these four points with $K_{\sqrt{2}-1}$ by $K$, and set $R=K{\setminus} \F_0(\PS[1])$. For the proof of Lemma~\ref{lem:subdc}, we now only have to examine the point configuration $K$ carefully.

\begin{proof}[\textbf{Proof of Lemma~\ref{lem:affine}}]
First, we note that $f_0(K)=64$ and $f_0(R)=40$. It remains to verify that $(\mathrm{CCTP}_4[n],\F_0(\PS[1]),R)$ is a weak projective triple:
\begin{compactenum}[\rm(a)]
\item All points of $K_{\sqrt{2}-1}\subset K$ lie in $S^4_+$, as well as the polytopes $\mathrm{CCTP}_4[n]$. The $4$ points $\infty_1,\,\infty_2,\, \infty_3,\ \infty_4$ form the vertices of a tetrahedron in $S^3_\eq$. By perturbing the closure of $S^4_{+}$ a little, we can find an open hemisphere that contains the points of $K$ and the polytopes $\mathrm{CCTP}_4[n]$ in the interior.
\item $K=R\cup \F_0(\PS[1])$ is projectively unique, since it is uniquely determined from the projectively unique PP configuration $\F_0(\Delta_2\times \Delta_2)$ up to Lawrence equivalence.
\item $\F_0(\PS[1])$ determines $\mathrm{CCTP}_{4}[n]$ for all $n$. See also Example~\ref{ex:stdet}(d).
\item Since the polytopes $\mathrm{CCTP}_4[n]$ lie in the interior of the upper hemisphere, we obtain that the subspace $\SSp\{\infty_1,\,\dots,\ \infty_4\}$ does not intersect any of the cross-bedding cubical torus polytopes $\mathrm{CCTP}_4[n]$. \qedhere
\end{compactenum}
\end{proof}

\appendix
\setcounter{figure}{0}
\section{Appendix}\label{sec:app}
\renewcommand{\thetable}{A.\arabic{table}}
\renewcommand{\thefigure}{A.\arabic{figure}}
\subsection{Convex position of polytopal complexes}\label{sec:convps}

In this section, we establish polyhedral analogues of the Alexandrov--van Heijenoort theorem, a classical result in the theory of convex hypersurfaces. 

\begin{theorem}[{\cite[Main Theorem]{Heij}}]\label{thm:heijor}
Let $M$ be an immersed topological $(d-1)$-manifold without boundary in $\R^d,\, d\ge  3$, such that
\begin{compactenum}[\rm(a)]
\item $M$ is complete with respect to the metric induced on $M$ by the immersion,
\item $M$ is connected,
\item $M$ is locally convex at each point (that is, every point $x$ of $M$ has a neighborhood, w.r.t.\ the topology induced by the immersion, in which $M$ coincides with the boundary of some convex body~$K_x$), and
\item $M$ is strictly convex in at least one point (that is, for at least one $x\in M$, there exists a hyperplane $H$ intersecting the convex body~$K_x$ only in $x$).
\end{compactenum}
\noindent Then $M$ is embedded, and it is the boundary of a convex body. 
\end{theorem}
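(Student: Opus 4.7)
The plan is to recover a global convex body $K$ as the intersection of one-sided supporting half-spaces along $M$, using the strictly convex point as a seed that resolves the sign ambiguity in the choice of ``inward'' half-space. For each $x\in M$, local convexity provides a neighborhood $U_x\subset M$ and a convex body $K_x$ with $U_x\subset \partial K_x$; pick a closed supporting half-space $H_x^+\supset K_x$ at $x$. At the strictly convex point $x_0$, condition (d) of the theorem singles out $H_{x_0}^+$ uniquely as the half-space bounded by the distinguished hyperplane. The whole proof then consists of propagating this choice consistently across all of $M$, and verifying that the resulting intersection of half-spaces behaves correctly.

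To propagate, let $N\subset M$ consist of those $x$ admitting a choice of $H_x^+$ compatible with the seed choice at $x_0$, where compatibility means continuation along a path from $x_0$ through the local caps $U_y\subset M$. Openness of $N$ is immediate from the local cap structure, since the choice at a point extends to a neighborhood. Closedness uses both local convexity at the limit point and completeness of $M$: if $x_n\to x$ in $M$ with $x_n\in N$, then $x$ itself has a local cap $U_x$ whose interior meets some $U_{x_n}$ for $n$ large, and the inward half-space at $x$ is forced by agreement on this overlap. Connectedness of $M$ then gives $N=M$, producing a global assignment $x\mapsto H_x^+$, and we set
\[
K \ :=\ \bigcap_{x\in M} H_x^+.
\]

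By construction $K$ is convex, and strict convexity at $x_0$ ensures $K$ has nonempty interior. I would next verify that the immersion of $M$ lands in $\partial K$ (each $x$ lies on $\partial H_x^+$ and inside every other $H_y^+$), that it surjects onto $\partial K$ (otherwise one could produce a support hyperplane of $K$ not occurring among the $\partial H_x^+$, contradicting the very definition of $K$), and that the induced continuous map $M\to \partial K$ is a local homeomorphism of $(d-1)$-manifolds. Since for $d\ge 3$ the boundary of a convex body in $\R^d$ with nonempty interior is simply connected, this local homeomorphism is a covering of degree one, i.e., a homeomorphism; embeddedness of $M$ follows at once.

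The main obstacle will be the closedness step for $N$: one must guarantee that the limiting half-spaces $H_{x_n}^+$ cannot flip to the opposite side at a limit point $x$. This is where the hypothesis $d\ge 3$ is essential, since in the plane a locally convex immersed curve can re-enter a region through a cusp on the opposite side, whereas in higher dimensions the $(d-1)$-dimensional tangent space of $M$ at $x$ leaves no room for such a reversal. A related subtlety is that, a priori, two distinct points of the abstract manifold might immerse to the same point of $\R^d$ with opposite inward normals; completeness rules this out along convergent sequences, and local convexity at every point globalises the control via the open-and-closed argument. Once this is addressed, the remaining arguments are essentially point-set topological.
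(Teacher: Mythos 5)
The paper does not prove this theorem; it imports it verbatim from van Heijenoort with pointers to Trudinger--Wang and Rybnikov for expositions, so your attempt must be judged on its own. There is a genuine gap, and it is not where you locate it.

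You identify the ``main obstacle'' as the closedness of $N$ --- preventing the half-space from flipping along a convergent sequence. That step is comparatively easy. The decisive difficulty is the claim, which you defer to ``I would next verify,'' that every $x\in M$ lies inside every \emph{other} $H_y^+$. The open-and-closed propagation controls only the local \emph{orientation} of the chosen half-space $H_x^+$ (a coherent inward normal field); it says nothing about whether a point $x$ of $M$ far from $y$ actually sits in $H_y^+$. For a general locally convex immersion that containment is exactly what can fail a priori: $M$ could spiral so that one sheet lies on the wrong side of the support hyperplanes of another, with the coherent inward normal field intact. Without this containment, $K:=\bigcap_y H_y^+$ need not contain $M$ and could even be empty, so ``strict convexity at $x_0$ ensures $K$ has nonempty interior'' is unjustified --- distant $H_y^+$ might cut away everything near $x_0$. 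The inclusion $M\subseteq\bigcap_y H_y^+$ is not a verification; it \emph{is} the theorem.

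Van Heijenoort's proof, which your route does not reproduce, avoids the circularity by a sweep argument: take the strict supporting hyperplane $H_0$ at $x_0$, push it inward to a family $H_t$, and consider the connected cap $\mathcal{M}_t\subseteq M$ on the $x_0$-side of $H_t$. One shows that the set of $t$ for which $\mathcal{M}_t$ is an embedded disk bounding, together with a flat lid in $H_t$, a convex body is nonempty (strict convexity), open (local convexity), and closed (completeness); global convexity is thus maintained as an invariant of the growing cap rather than asserted of the final $K$. Your intersection of half-spaces would be a clean description of the resulting body, but the construction has to carry global convexity along, and that is the idea the shortcut skips.
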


This remarkable theorem is due to Alexandrov~\cite{Aleks} in the case of surfaces. Alexandrov did not state it explicitly (his motivation was to prove far stronger results on intrinsic metrics of surfaces), and his proof does not extend to higher dimensions. Van Heijenoort also proved Theorem~\ref{thm:heijor} only for surfaces, but his proof extends to higher dimensions. Expositions of the general case of this theorem and further generalizations are available in~\cite{TrudingerWang} and~\cite{Rybnikov}. In this section, we adapt Theorem~\ref{thm:heijor} to polytopal manifolds with boundary. We start off by introducing the notion of immersed polytopal complexes.

\begin{definition}[Precomplexes] Let $C$ denote an abstract polytopal complex, and let $f$ denote an immersion of $C$ into $\R^d$ (resp.\ $S^d$) with the property that $f$ is an isometry on every face $\sigma$ of $C$. Then $f(C)$ is called a \Defn{precomplex} in $\R^d$ (resp.\ $S^d$). While $f(C)$ is not necessarily a polytopal complex, the subset $f(\St(\sigma,C))$ is a polytopal complex combinatorially equivalent to $\St(\sigma,C)$ for every face $\sigma$ of~$C$. 

A polytopal complex $C$, abstract or geometric, is a \Defn{$d$-manifold} if for every vertex $v$ of $C$, $\St(v,C)$ is PL-homeomorphic to a $d$-simplex~\cite{RourkeSanders}. If $C$ is a manifold, then $f(C)$ is called a \Defn{premanifold}. If $\sigma$ is a face of $C$, then $f(\sigma)$ is a \Defn{face} of the polytopal precomplex $f(C)$. The complexes $\St(f(\sigma),f(C))$ and $\Lk(f(\sigma),f(C))$ are defined to be the polytopal complexes $f(\St(\sigma,C))$ and $\Lk(f(\sigma), f(\St(\sigma,C)))$ respectively. 
\end{definition}

\begin{definition}[Gluing two precomplexes along a common subcomplex]
Consider two precomplexes $f(C)$, $g(C')$, and assume there are subcomplexes $D$, $D'$ of $C$, $C'$ respectively with $f(D)=g(D')$ such that for every vertex $v$ of $f(D)=g(D')$, $\St(v,f(C))\cup\St(v,g(C'))$ is a polytopal complex. Let $\Sigma$ be the abstract polytopal complex given by identifying $C$ and $C'$ along the map $g^{-1}\circ f:D\rightarrow D'$, and let $s$ denote the immersion of $\Sigma$ defined as 
\[s(x):=\Big\{\begin{array} {cl} 
f(x) &\ \text{for }x\in C,    \\
g(x)&\ \text{for }x\in C'. \end{array}\]
Then the \Defn{gluing} of $f(C)$ and $g(C')$ at $f(D)=g(D')$, denoted by $f(C)\sqcup_{f(D)} g(C')$, is the polytopal precomplex obtained as the image of $\Sigma$ under $s$. 
\end{definition}

For the rest of the section, we will use the term \Defn{halfspace} in $S^d$ synonymously for a hemisphere in~$S^d$. 

\begin{definition}[Locally convex position and convex position for polytopal (pre-)complexes]
A pure polytopal (pre-)complex $C$ in $S^d$ or in $\R^{d}$ is \Defn{in convex position} if one of the following three equivalent conditions is satisfied:
\begin{compactitem}[$\circ$]
\item For every facet $\sigma$ of $C$, there exists a closed halfspace $H(\sigma)$ containing $C$ such that $\parti H(\sigma)$ contains the vertices of $\sigma$ but no other vertices of $C$. We say that such a halfspace $H(\sigma)$ \Defn{exposes} $\sigma$ in $C$.
\item Every facet is exposed by some linear functional, i.e.\ there exists, for every facet $\sigma$ of $C$, a vector $\n(\sigma)$ such that the points of $\sigma$ maximize the linear functional $\langle\n,x\rangle$ among all points $x\in C$. For $C\subset S^d$, we additionally demand $\langle\n(\sigma),x\rangle =0$ for all $x$ in $\sigma$.
\item $C$ is a subcomplex of the boundary complex of a convex polytope. 
\end{compactitem}

\noindent Likewise, a polytopal (pre-)complex $C$ in $S^d$ or $\R^{d}$ is \Defn{in locally convex position} if for every vertex $v$ of $C$, 
the link $\Lk(v,C)$, seen as a subcomplex in the $(d-1)$-sphere $\NO^1_v \R^d$ resp.\ $\NO^1_v S^d$, is in convex position. As $\Lk(v,C)$ is in convex position if and only if $\St(v,C)$ is in convex position, $C$ is in locally convex position if and only if $\St(v,C)$ is in convex position for every vertex $v$ of $C$.
\end{definition}

It is obvious that ``convex position'' implies ``locally convex position.'' The Alexandrov--van Heijenoort theorem describes conditions under which this observation can be reversed. We start with a direct analogue of the Theorem~\ref{thm:heijor} for precomplexes. Notice that a precomplex without boundary in locally convex position is locally convex at every point and strictly convex at every vertex in the sense of Theorem~\ref{thm:heijor}.

\begin{theorem}[AvH for closed precomplexes]\label{thm:heij}
Let $C$ be a $(d-1)$-dimensional connected closed polytopal premanifold in $\R^d$ or $S^d$, $d\ge  3$, in locally convex position. Then $C$ is in convex position.
\end{theorem}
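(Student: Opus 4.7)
The plan is to reduce Theorem~\ref{thm:heij} to the classical topological Alexandrov--van Heijenoort Theorem~\ref{thm:heijor}, by verifying its four hypotheses for the underlying immersed topological $(d-1)$-manifold $|C|$. I will first sketch this in the Euclidean case $C\subset\R^d$, and then indicate the spherical reduction.

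In the Euclidean setting the verification of the hypotheses is essentially a dictionary translation. Connectedness is assumed. Completeness of the intrinsic metric induced by the immersion is automatic because each $\St(v,C)$ is a compact $(d-1)$-dimensional polytopal complex, so closed balls in the intrinsic length metric are compact and Hopf--Rinow delivers completeness. Local convexity at every point is where the polyhedral hypothesis enters directly: for every vertex $v$, the local convex position of $\St(v,C)$ produces a convex polytope $K_v\subset\R^d$ whose boundary contains $\St(v,C)$ as a subcomplex, so a neighborhood of $v$ in $|C|$ agrees with a neighborhood of $v$ in $\partial K_v$; the same polytope $K_v$ witnesses local convexity at every point of $\St(v,C)$. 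Strict convexity at a single point is furnished by any vertex $v$: a halfspace exposing some facet of $\St(v,C)$ incident to $v$ can be tilted about an affine axis through $v$ to obtain a hyperplane meeting $K_v$ only at $v$.

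With the four hypotheses in place, Theorem~\ref{thm:heijor} yields that the immersion is an embedding and that $|C|$ is the boundary of a convex body $K\subset\R^d$. Since each facet $\tau$ of $C$ is then a subset of $\partial K$ lying in its own affine hyperplane, that hyperplane is supporting for $K$ and exposes $\tau$ as a face of $K$; hence $C$ is a subcomplex of the boundary complex of the convex polytope $K$, which is exactly the convex position asserted. I would formulate a short lemma asserting the polyhedral upgrade ``$|C|=\partial K$ implies $C$ is a subcomplex of $\partial K$'' separately, since it amounts to checking that each face of $C$, being both convex and a union of faces of $\partial K$, is itself a face of $\partial K$.

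For $C\subset S^d$, the strategy is to show first that $|C|$ is contained in some open hemisphere, after which central projection from a point of the complementary open hemisphere converts the problem to a closed locally convex Euclidean premanifold in $\R^d$, and the Euclidean case applies verbatim. I expect this hemisphere containment to be the one genuinely non-routine ingredient: it is the polyhedral analogue of the statement that a closed locally convex hypersurface in $S^d$ bounds a proper convex region on one side, and it requires a global argument that propagates the locally exposing halfspaces consistently along $C$, using connectedness and completeness together with the fact that along any edge of $C$ two adjacent stars agree on the choice of ``inside.'' Once this hemisphere containment is secured, the rest of the proof is the direct transcription between polyhedral and topological local convexity outlined above.
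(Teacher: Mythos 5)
Your Euclidean case matches the paper's own argument essentially verbatim: verify the four hypotheses of Theorem~\ref{thm:heijor}, conclude $|C|=\partial K$, then observe that locally convex position forces each facet of $C$ to be a facet of $K$ (adjacent facets cannot be coplanar, so no facet of $C$ can sit strictly inside a facet of $K$). That part is fine.

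The spherical case, however, has a genuine gap. You reduce to the Euclidean case by first asserting that $|C|$ is contained in an open hemisphere, and you correctly flag this hemisphere-containment claim as ``the one genuinely non-routine ingredient'' --- but you then leave it at the level of a plan (``propagate the locally exposing halfspaces consistently along $C$''). That plan is not a proof, and it is not obvious it can be carried out cheaply: propagating local exposing halfspaces around a closed manifold and establishing global consistency (in particular around non-contractible cycles, which you cannot a priori rule out before you know $C$ is a sphere) is essentially the content of a global convexity theorem, i.e.\ it would amount to re-proving a spherical van Heijenoort rather than reducing to the Euclidean one. So the proposal stops precisely at the step that required a new idea.

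The paper resolves this by reversing the order of operations, which is the genuine content of the spherical case. It picks a vertex $v$, forms the polyhedron $P=\bigcap_\sigma H(\sigma)$ from the halfspaces exposing the facets of $\St(v,C)$, chooses a closed hemisphere $H\supseteq P$ with $\partial H\cap\St(v,C)=\{v\}$, and applies a central projection $\zeta:\intx H\to\R^d$. It then invokes Theorem~\ref{thm:heijor} on the projected complex to obtain a convex (a priori unbounded) polyhedron $K\subseteq\zeta(P)$ in $\R^d$; since $K$ is pointed at $\zeta(w)$ for an interior vertex $w$, it contains no line, whence $\cl\,\zeta^{-1}(K)$ contains no antipodal pair and is therefore a spherical polytope with boundary complex $C$. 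In other words, instead of proving boundedness (hemisphere containment) first and then applying the Euclidean theorem, the paper applies the Euclidean theorem first and derives boundedness from the pointedness of the resulting polyhedron. Your proposal needs either to supply a self-contained proof of the hemisphere-containment lemma or to adopt this projection-then-pointedness route; as written, the spherical half of the argument is missing.
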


\begin{proof}
In the euclidean case, the metric induced on $C$ is complete because $C$ is finite, and $C$ is locally convex at each point since $C$ is in locally convex position. Furthermore, $C$ is strictly convex at every vertex of $C$. Thus by Theorem~\ref{thm:heijor}, $C$ is the boundary of a convex polytope. Since every facet of $C$ is exposed by a linear functional, the boundary complex of this polytope coincides with $C$.

For the spherical case, let $v$ be a vertex of $C$, and let $P$ be the polyhedron in $S^d$ that is obtained by intersecting the halfspaces exposing the facets of $\St(v,C)$. Let $H$ denote a closed halfspace containing $P$, chosen so that $\parti H\cap  \St(v,C)=v$. As $C$ is polytopal, the complex $C$ contains at least one vertex $w$ in the interior of $H$. Consider any central projection $\zeta$ mapping $\intx H$ to $\R^d$. Then $|\zeta(C)|$ is the boundary of a convex polyhedron $K$ in $\R^d$ by Theorem~\ref{thm:heijor}. In particular $K\subseteq \zeta(P)$. Since $K$ is pointed at the vertex $\zeta(w)$, $K$ contains no line, and consequently, $\cl\zeta^{-1}(K)$ contains no antipodal points. Thus \[\cl\zeta^{-1}(K)=\zeta^{-1}(K)\cup v\subsetneq P\] is a polytope in $S^d$. Since every facet of $C$ is exposed by a halfspace, the boundary complex of $\zeta^{-1}(K)$ is~$C$, as desired.
\end{proof}

\begin{example}
A polytopal premanifold $C$ is called \Defn{simple} if for every vertex $v$ of $C$, $\Lk(v,C)$ is a simplex. Consider now any simple, closed and connected $k$-premanifold $C$ in $\R^d$, where $d> k\ge  2$. Since $C$ is simple and connected, it is contained in some affine $(k+1)$-dimensional subspace of $\R^d$. Since $C$ is simple, it is either in locally convex position, or locally flat (i.e.\ locally isometric to $\R^k$). Since $C$ is furthermore compact, only the former is possible. To sum up, $C$ is a $k$-dimensional premanifold that is closed, connected and in locally convex position in some $(k+1)$-dimensional affine subspace. Hence $C$ is in convex position by Theorem~\ref{thm:heij} (cf.~\cite[Sec.\ 11.1, Pr.\ 7]{Grunbaum}).
\end{example}

\begin{definition}[Fattened boundary] Let $C$ be a polytopal $d$-manifold, and let $B$ be a connected component of its boundary. The \Defn{fattened boundary} $\fat(B,C)$ of $C$ at $B$ is the minimal subcomplex of $C$ containing all facets of $C$ that intersect $B$ in a $(d-1)$-face.
\end{definition}

\begin{lemma}[Gluing lemma]\label{lem:convglue}
Let $C$, $C'$ denote two connected polytopal $(d-1)$-manifolds with boundary in $S^d$ or $\R^d,\, d\ge  2 $ with $B:=\parti C= \parti C'$. Assume that $C$ and $C'\cup \fat(B,C)$ are in convex position.
Then $C\cup C'$ is the boundary complex of a convex polytope.
\end{lemma}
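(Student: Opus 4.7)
The plan is to apply the Alexandrov--van Heijenoort theorem for closed precomplexes (Theorem~\ref{thm:heij}) to the closed $(d-1)$-premanifold $M := C\cup C'$ in $S^d$ or $\R^d$, and then to note that since every facet of $M$ lies in $C$ or $C'\cup \fat(B,C)$ and is therefore exposed by a linear functional, $M$ is forced to be the entire boundary complex of $\conv M$. First one verifies the premanifold structure: $M$ is closed because $\parti C=B=\parti C'$ cancel out, and it is connected because both pieces are connected and share the non-empty $B$. Local convexity of $M$ at an interior vertex $v\notin B$ is immediate: $\St(v,M)$ equals $\St(v,C)$ (in convex position by hypothesis) or $\St(v,C')$ (a subcomplex of the convex-position complex $\St(v,C'\cup \fat(B,C))$).

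The core case is $v\in B$, where local convexity amounts to showing that $\Lk(v,M)=\Lk(v,C)\cup \Lk(v,C')$ lies in convex position inside the sphere $\NO^1_v\cong S^{d-1}$. I would proceed by induction on $d$. The link $\Lk(v,M)$ is a closed connected $(d-2)$-premanifold obtained by gluing the two $(d-2)$-balls $\Lk(v,C)$ and $\Lk(v,C')$ along their common boundary $\Lk(v,B)$. Here $\Lk(v,C)$ is in convex position since $\St(v,C)$ is, and $\Lk(v,C')\cup \Lk(v,\fat(B,C))$ is in convex position since $\St(v,C'\cup \fat(B,C))$ is. Thus the inductive hypothesis, applied to the pair $(\Lk(v,C),\Lk(v,C'))$ inside the ambient $(d-1)$-sphere $\NO^1_v$, delivers the desired convex position of $\Lk(v,M)$. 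The base case $d=2$ does not invoke the link recursion at all: there $M$ is a closed polygonal curve that is locally convex at every vertex (the vertices in $B$ are handled using the convex position of $\fat(B,C)\cup C'$), and such a curve bounds a convex region in $\R^2$ or $S^2$ by an elementary classical argument.

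The main obstacle is the compatibility between the fattening operation and taking links, namely to confirm that $\Lk(v,\fat(B,C))$ indeed contains the fattened boundary of $\Lk(v,C)$ along $\Lk(v,B)$ so that the inductive hypothesis applies with the right data. This reduces to a combinatorial check from the definition of $\fat$: a facet of $\Lk(v,C)$ incident to $\Lk(v,B)$ corresponds to a facet of $C$ at $v$ containing a face of $B$ of one dimension lower, which by the manifold structure of $C$ shares a codimension-one face with $B$ and so lies in $\fat(B,C)$. Once local convexity has been established everywhere, Theorem~\ref{thm:heij} (valid in the ambient dimension $d\geq 3$ required by the recursion step; the base case $d=2$ was handled directly) produces a convex polytope whose boundary complex is exactly $M=C\cup C'$.
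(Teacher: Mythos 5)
Your overall strategy—reduce boundary vertices to links, apply the Gluing Lemma inductively in one dimension lower, and finish with the Alexandrov--van Heijenoort theorem for closed precomplexes—is exactly the paper's, and your treatment of $d\ge 3$ (including the check that $\fat(\Lk(v,B),\Lk(v,C))\subseteq\Lk(v,\fat(B,C))$) is sound.

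The genuine gap is in the base case $d=2$. You assert that a closed polygonal curve that is locally convex at every vertex ``bounds a convex region in $\R^2$ or $S^2$ by an elementary classical argument.'' This is false: local convexity of a closed (possibly only immersed) curve does not force convexity, since the turning number can be $2$ or more—think of a locally convex curve winding twice around its center. Nothing in the setup guarantees a priori that $C\cup C'$ is even embedded in the plane, since $C$ and $C'$ are each embedded but could cross one another away from $\fat(B,C)$. This is precisely the failure mode that makes Theorem~\ref{thm:heijor} unavailable for $d=2$ and forces a separate argument. The paper closes the gap by controlling total curvature: since each of $C$ and $C'\cup\fat(B,C)$ is in convex position, each has total curvature at most $2\pi$, so the glued curve has total curvature strictly less than $4\pi$; being a positive integer multiple of $2\pi$, it must equal $2\pi$, and Fenchel's theorem then yields embeddedness and convexity. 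You need to supply this (or an equivalent) total-curvature argument; without it, the induction has no valid base.
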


\begin{proof}
We proceed by induction on the dimension. First, consider the case $d=2$, whose treatment differs from the case $d> 2$ since Theorem~\ref{thm:heijor} is not applicable. We use the language of curvature of polygonal curves, cf.~\cite{Sullivan}. If $C$, $C'$ are in $S^2$, use a central projection to transfer $C$ and $C'$ to complexes in convex position in $\R^2$. If there are two curves $C$ and $C'$ in convex position in $\R^2$ such that $C'\cup \fat(B,C)$ is in convex position, then $C\sqcup_{\fat(B,C)} C'$ is a 1-dimensional premanifold whose curvature never changes sign, and which is of total curvature less than $4\pi$ since the total curvature of $C'\cup \fat(B,C)$ is smaller or equal to $2\pi$, and $C$ has total curvature less than $2\pi$. Since the turning number of a closed planar curve is a positive integer multiple of $2\pi$, the total curvature of $C\sqcup_{\fat(B,C)} C'$ is $2\pi$. By Fenchel's Theorem~\cite{Fenchel}, $C\sqcup_{\fat(B,C)} C'$ is the boundary of a planar convex body. Since every facet is exposed, the boundary complex of this convex body must coincide with $C\sqcup_{\fat(B,C)} C'=C\cup C'$.

We proceed to prove the lemma for dimension $d> 2$. If $v$ is a vertex of $B$, then $\Lk(v,C\sqcup_{\fat(B,C)} C')$ is obtained by gluing the two complexes $\Lk(v,C)$ and $\Lk(v,C')\cup \Lk(v,\fat(B,C))$ along $\fat(B,C)$.  Each of these is of codimension $1$ and in convex position, so the resulting complex is a polytopal sphere in convex position by induction on the dimension. In particular, $C\sqcup_{\fat(B,C)} C'$ is a premanifold in locally convex position. Thus by Theorem~\ref{thm:heij}, $C\sqcup_{\fat(B,C)} C'=C\cup C'$ is in convex position.
\end{proof}

We will apply Theorem~\ref{thm:heij} in the following version for manifolds with boundary:
\begin{theorem}\label{thm:locglowib}
Let $C$ be a polytopal connected $(d-1)$-dimensional (pre-)manifold in locally convex position in $\R^d$ or in $S^d$ with $d\ge  3$, and assume that for all boundary components $B_i$ of $\parti C$, their fattenings $\fat(B_i,C)$ are (each on its own) in convex position. Then $C$ is in convex position. 
\end{theorem}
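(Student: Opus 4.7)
The plan is to reduce Theorem~\ref{thm:locglowib} to the closed case settled by Theorem~\ref{thm:heij} by capping off each boundary component of $C$ with a suitable piece of the boundary of a convex polytope, and then verifying that the resulting closed premanifold is still in locally convex position.

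Concretely, for each boundary component $B_i$ of $\parti C$, the fattening $\fat(B_i,C)$ is in convex position by hypothesis, so it is a subcomplex of the boundary complex of some convex polytope $P_i$. Let $D_i$ be the closure of $\parti P_i\setminus \fat(B_i,C)$, so that $\parti P_i=\fat(B_i,C)\cup D_i$ and $D_i\cap\fat(B_i,C)=B_i$. I would form the abstract polytopal complex
\[
\widetilde C \ :=\ C\,\sqcup_{B_1} D_1\,\sqcup_{B_2} D_2\,\sqcup\cdots
\]
equipped with the immersion extending the given immersion of $C$ and mapping each $D_i$ into $\parti P_i$. By construction, $\widetilde C$ is a connected, closed $(d-1)$-dimensional polytopal premanifold containing $C$ as a subcomplex.

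The decisive step is to verify that $\widetilde C$ is in locally convex position. At vertices $v$ not lying on any $B_i$, this is inherited from $C$. At a vertex $v\in B_i$, the link $\Lk(v,\widetilde C)$ in the unit sphere $\NO^1_v\R^d$ (resp.\ $\NO^1_v S^d$) is obtained by gluing the two $(d-2)$-dimensional polytopal disks $\Lk(v,C)$ and $\Lk(v,D_i)$ along their common boundary $\Lk(v,B_i)$. The first disk is in convex position by the local convexity hypothesis on $C$, while the union $\Lk(v,D_i)\cup\Lk(v,\fat(B_i,C))=\Lk(v,\parti P_i)$ is the link of a vertex in the boundary of the convex polytope $P_i$, hence in convex position. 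Since $\Lk(v,\fat(B_i,C))=\fat(\Lk(v,B_i),\Lk(v,C))$, the Gluing Lemma (Lemma~\ref{lem:convglue}), applied in the $(d-1)$-sphere (using $d-1\ge 2$), yields that $\Lk(v,\widetilde C)$ is the boundary complex of a convex polytope, hence in convex position. This establishes local convexity of $\widetilde C$ at~$v$.

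With $\widetilde C$ a closed connected locally convex $(d-1)$-premanifold and $d\ge 3$, Theorem~\ref{thm:heij} implies that $\widetilde C$ is the boundary complex of a convex polytope; as $C$ is a subcomplex of $\widetilde C$, the theorem follows. The main obstacle I anticipate is bookkeeping rather than conceptual: one must verify that the capping is a bona fide precomplex (each face is isometrically immersed and the local structure near $B_i$ is that of a premanifold), and that the Gluing Lemma applies at every vertex of $\parti C$ with exactly the convex-position input supplied by the hypothesis on the $\fat(B_i,C)$. Both reduce to an inspection of the combinatorics at boundary vertices and require no new geometric input.
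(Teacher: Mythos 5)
Your argument follows the same strategy as the paper: cap off each boundary component with a piece of the boundary sphere of a convex polytope realizing $\fat(B_i,C)$, verify that the resulting closed premanifold is in locally convex position by applying the Gluing Lemma~\ref{lem:convglue} to the links of boundary vertices, and then conclude via Theorem~\ref{thm:heij}. You are in fact more explicit than the paper at the local step (the paper's one-line invocation of Lemma~\ref{lem:convglue} is shorthand for exactly the link-level argument you spell out).

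Two small points of bookkeeping to repair. First, since $\fat(B_i,C)$ is a collar and in general has boundary components other than $B_i$, the closure of $\parti P_i\setminus\fat(B_i,C)$ meets $\fat(B_i,C)$ in more than $B_i$ and contains extraneous pieces that should not be glued to $C$; instead, as the paper does, apply the polyhedral Jordan--Brouwer theorem to $B_i\subset\parti P_i$ and take the closure of the component of $\parti P_i\setminus B_i$ disjoint from $\fat(B_i,C)$ as your cap $D_i$, glued to $C$ along $B_i$. Second, your identity $\Lk(v,\fat(B_i,C))=\fat(\Lk(v,B_i),\Lk(v,C))$ is in general only the inclusion $\supseteq$ (a facet of $C$ at $v$ can meet $B_i$ in a $(d-2)$-face that does not contain $v$), but this is the direction you actually need: it shows that $\Lk(v,D_i)\cup\fat(\Lk(v,B_i),\Lk(v,C))$ is a subcomplex of $\Lk(v,\parti P_i)$ and hence in convex position, which is the hypothesis required to invoke Lemma~\ref{lem:convglue} one dimension down.
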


\begin{proof}
Consider any boundary component $B$ of $C$ and the boundary complex $\parti \conv \fat(B,C)$ of the convex hull of $\fat(B,C)$, the fattened boundary of $C$ at $B$. The subcomplex $B$ decomposes $\parti \conv \fat(B,C)$ into two components, by the (polyhedral) Jordan--Brouwer Theorem. Consider the component $A$ that does not contain the fattened boundary $\fat(B,C)$ of $C$. The $(d-1)$-complex $A\cup \fat(B,C)\subseteq \parti \conv \fat(B,C)$ is in convex position, and by Lemma~\ref{lem:convglue}, the result $A\sqcup_{\fat(B,C)} C$ of gluing $C$ and $A$ at $B$ is a premanifold in locally convex position.

Repeating this with all boundary components yields a polytopal premanifold without boundary in locally convex position. Thus it is the boundary of a convex polytope, by Theorem~\ref{thm:heij}. Since $C$ is still a subcomplex of the boundary of the constructed convex polytope, $C$ is in convex position.
\end{proof}

\subsection{Duality, reciprocals, convex liftings and cross-bedding cubical tori}\label{ssc:altproof}

In this section, we outline an elegant alternative proof of Main Theorem~\ref{mthm:Lowdim}. The punchline is that convex position of the extension (Theorem~\ref{thm:convp}) \emph{is an automatic corollary of the existence of the extension}. To show this we make use of a relation between \emph{reciprocals} (or \emph{orthogonal duals}) and \emph{convex liftings} of polytopal complexes based on the Maxwell--Cremona correspondence \cite{Cremona2} \cite{Maxwell2}. The arguments in this section are only sketched, and there are some substantial disadvantages compared to the approach detailed in the main part of this paper (based on the Alexandrov--van Heijenoort Theorem) that we will detail on at the end.

The section has two parts: In Section~\ref{ssc:od}, we sketch the necessary notions and methods for duals, reciprocals and convex liftings. In Section~\ref{ssc:odcct}, we apply these ideas to our cross-bedding cubical tori.

\subsubsection{Duals, reciprocals and liftings to convex position}\label{ssc:od}

\enlargethispage{4mm}

The following summary of basic notions and results concerning reciprocal complexes and their convex liftings loosely follows Rybnikov~\cite{Rybnikovthesis}. For details and an intuitive explanations of the following results, we refer the reader to \cite{CW}~\cite{Aurenhammer}~\cite{Rybnikovthesis}; more detailed references are collected at the end of this section.
All polytopal manifolds in this section are manifolds with~boundary.
\begin{definition}[Duality]
Let $C$ be a polytopal $d$-manifold. A complex $D$ is \Defn{dual} to $C$ if there is an injective map $\DL:D\rightarrow C$ such that
\begin{compactitem}[$\circ$]
\item $k$-dimensional faces of $D$ map to $(d-k)$-dimensional faces of $C$,
\item $\DL$ is a bijection between $D$ and those faces of $C$ not in $\partial C$, and
\item if $\tau, \sigma$ are faces of $D$, $\tau \subset \sigma$, then $\DL(\sigma)\subset \DL(\tau)$.
\end{compactitem}
For a manifold in convex position, there is a natural dual: If $C$ is a polytopal $d$-manifold in convex position in $S^{d+1}$, denote by $(\conv C)^\ast$ the \Defn{polar dual} of the convex polytope $\conv C$, i.e.\ 
\[(\conv C)^\ast:=\{x\in S^{d+1}: \langle x,y\rangle \le 0\ \text{for all}\ y\in \conv C\}.\]
Then the \Defn{polar} $C^\ast$ to $C$ is the subcomplex of $\parti(\conv C)^\ast$ consisting of faces of $\parti(\conv C)^\ast$ corresponding to faces of $C$ not in $\partial C$.
\end{definition}

\begin{definition}[Reciprocity]
Two subspaces $V, W\subset S^d$ with $ W \cap V=\{x, -x\}$ for some $x\in S^d$ are \Defn{reciprocal} if the linear subspaces $\TT_x V$ and $\TT_x W$ are orthogonal complements in  $\TT_x S^d\cong \R^{d}$ (In particular, this implies that $\dim V+\dim W=d$).
 A dual $D\subset S^d$ to a polytopal $d$-manifold $C$ in $S^d$ is a \Defn{reciprocal} of $C$ if for every face $\sigma$ of~$D$, the subspaces $\SSp(\sigma)$ and $\SSp(\DL(\sigma))$ in $S^d$ are reciprocal.
\end{definition}

\begin{obs}\label{obs:os}
Let $V, W, V'$ and $W'$ be subspaces of $S^d$. Assume that $V$ is reciprocal to $W$, that $V'$ is reciprocal to $W'$, and that $\dim \SSp (V\cup V')= \dim V +\dim V'=d-\dim W\cap W'$. Then the subspaces $\SSp(V\cup V')$  and $W\cap W'$ are reciprocal. 
\end{obs}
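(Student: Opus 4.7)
The plan is to reduce the claim to linear algebra in the ambient $\R^{d+1}$ by first recasting reciprocity purely in terms of orthogonal complements. Concretely, I would prove the following reformulation as a preliminary lemma: if $V, W \subset S^d$ are subspaces meeting in a line $\R x$ of $\R^{d+1}$, then $V$ is reciprocal to $W$ if and only if $W = \R x \oplus V^\perp$, where $V^\perp$ denotes the ordinary orthogonal complement in $\R^{d+1}$. The verification is short: $\TT_x V = V \cap x^\perp$ and $\TT_x W = W \cap x^\perp$, so the orthogonality $\TT_x W \perp \TT_x V$ together with $\TT_x W \perp x$ forces $\TT_x W \subset V^\perp$, and the dimensions match exactly.

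Applying this to the hypotheses, and writing $\ell_1 := V \cap W$ and $\ell_2 := V' \cap W'$, reciprocity gives $W = \ell_1 \oplus V^\perp$ and $W' = \ell_2 \oplus V'^\perp$. The equality $\dim \SSp(V \cup V') = \dim V + \dim V'$ translates in linear dimensions to $\dim_{\R}(V + V') = \dim_{\R} V + \dim_{\R} V' - 1$ (so $V \cap V'$ is itself a line), and the third hypothesis becomes $\dim_{\R}(V + V') + \dim_{\R}(W \cap W') = d + 2$.

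Set $U := \SSp(V \cup V')$, which linearly equals $V + V'$, and $Z := W \cap W'$. I would then verify reciprocity of $U$ and $Z$ directly via the linear criterion. First, $V^\perp \subset W$ and $V'^\perp \subset W'$ give $U^\perp = V^\perp \cap V'^\perp \subset Z$. The Grassmann formula yields $\dim_{\R}(U \cap Z) \geq \dim_{\R} U + \dim_{\R} Z - (d+1) = 1$, while $U^\perp \subset Z$ combined with $U + U^\perp = \R^{d+1}$ forces $U + Z = \R^{d+1}$, so the Grassmann bound is tight and $\ell_3 := U \cap Z$ is exactly a line. Then $\ell_3 \oplus U^\perp \subset Z$, and a dimension check $1 + (d+1 - \dim_{\R} U) = \dim_{\R} Z$ upgrades this inclusion to equality $Z = \ell_3 \oplus U^\perp$. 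By the preliminary criterion, $U$ is reciprocal to $Z$ through $\ell_3$, which is what was to be shown.

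There is no genuine obstacle in this argument once the linear reformulation is in place; the proof becomes a two-step dimension count. The only subtlety to track is the consistent translation between spherical dimensions (as in the hypothesis) and linear dimensions of subspaces of $\R^{d+1}$, which differ by one.
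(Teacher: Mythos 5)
Your proof is correct. The paper states this observation without proof, so there is no internal argument to compare against; the reformulation you give---that reciprocity of two great subspheres through $\pm x$ is equivalent to $\widetilde{W}=\R x\oplus\widetilde{V}^\perp$ for the corresponding linear subspaces $\widetilde{V},\widetilde{W}\subset\R^{d+1}$---is the natural linearization of the definition, and your two Grassmann-formula dimension counts (first to show $\widetilde{U}\cap\widetilde{Z}$ is exactly a line, then to upgrade $\ell_3\oplus\widetilde{U}^\perp\subseteq\widetilde{Z}$ to equality) are all valid. Your translation between spherical and linear dimensions is also handled consistently.

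One presentational remark: you phrase the preliminary lemma as an ``if and only if'' but only sketch the forward direction (reciprocity implies $\widetilde{W}=\R x\oplus\widetilde{V}^\perp$). In the main argument you invoke the \emph{reverse} direction to conclude that $\SSp(V\cup V')$ and $W\cap W'$ are reciprocal once you have $\widetilde{Z}=\ell_3\oplus\widetilde{U}^\perp$. That direction is equally quick---from $\widetilde{W}=\R x\oplus\widetilde{V}^\perp$ one reads off $\widetilde{V}\cap\widetilde{W}=\R x$ and $\TT_x W=\widetilde{W}\cap x^\perp=\widetilde{V}^\perp$, which is orthogonal and complementary to $\TT_x V=\widetilde{V}\cap x^\perp$ inside $x^\perp$---but it should be stated, since it is the direction actually used at the end. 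With that sentence added the argument is complete.
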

Hence, checking reciprocity may be restricted to edges of~$D$:

\begin{lemma}\label{lem:red}
Let $C$ be a polytopal $d$-manifold in $S^d$, and let $D$ in $S^d$ be a dual to $C$. Then $D$ is a reciprocal of $C$ if and only if for every edge $e$ of $D$, the subspaces $\SSp(e)$ and $\SSp(\DL(e))$ are reciprocal. \emph{\qed}
\end{lemma}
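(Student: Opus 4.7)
The ``only if'' direction is immediate since every edge is a face of $D$. For the converse, I would fix a face $\sigma$ of $D$ and establish the reciprocity of $\SSp(\sigma)$ with $\SSp(\DL(\sigma))$ by building it up along a flag of edges.

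If $\dim \sigma = 0$ then $\sigma$ is a vertex $v$ with $\SSp(v) = \{\pm v\}$ of dimension $0$, while $\DL(v)$ is a facet of the $d$-manifold $C$, so $\SSp(\DL(v)) = S^d$; reciprocity is then automatic. If $\dim \sigma = k \ge 1$, I would pick a vertex $v_0$ of $\sigma$ and edges $e_1,\dots,e_k \subset \sigma$ through $v_0$ whose tangent directions at $v_0$ form a basis of $T_{v_0}\SSp(\sigma)$, so that $\SSp(\sigma) = \SSp(e_1\cup\cdots\cup e_k)$. Setting $V_j := \SSp(e_1\cup\cdots\cup e_j)$ and $W_j := \bigcap_{i\le j}\SSp(\DL(e_i))$, I would prove by induction on $j$ that $(V_j,W_j)$ is a reciprocal pair: the case $j=1$ is the hypothesis of the lemma, and the step $j-1 \to j$ follows from Observation~\ref{obs:os} applied to the reciprocal pairs $(V_{j-1},W_{j-1})$ and $(\SSp(e_j),\SSp(\DL(e_j)))$. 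Taking $j=k$ gives reciprocity of $V_k = \SSp(\sigma)$ with $W_k$; I would close by identifying $W_k$ with $\SSp(\DL(\sigma))$, which follows from the inclusion $\SSp(\DL(\sigma)) \subseteq W_k$ (immediate since $\DL$ reverses inclusion and $\DL(\sigma) \subseteq \DL(e_i)$ for each $i$) together with the matching dimension $d-k$ on both sides.

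Applying Observation~\ref{obs:os} at each step hinges on two dimension identities: $\dim\SSp(V_{j-1}\cup\SSp(e_j)) = j$, which is clear from the linear independence of the tangent vectors of $e_1,\dots,e_j$ at $v_0$; and $\dim W_j = d-j$, which is the main technical point. I would handle the latter by exploiting edge reciprocity: each hyperplane $\SSp(\DL(e_i))$ corresponds in $\R^{d+1}$ to a codimension-$1$ linear subspace whose $1$-dimensional orthogonal complement is pinned down by the tangent direction of $e_i$ at $v_0$ via the reciprocity of $(\SSp(e_i),\SSp(\DL(e_i)))$, and linear independence of those tangent directions then forces linear independence of the orthogonal complements, yielding $\dim W_j = d-j$. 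This dimension collapse --- converting reciprocity of each edge into an independence statement for normals in $\R^{d+1}$ --- is the only place where the geometry of reciprocity enters in a nontrivial way, and is the main obstacle in the plan.
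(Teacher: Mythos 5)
The paper states this lemma without proof (it carries a \qed in the statement), so there is no argument in the text to compare against; on its own terms your plan has the right skeleton: the ``only if'' direction is trivial, and building up along edges $e_1,\dots,e_k$ through a fixed vertex $v_0$ of $\sigma$, applying Observation~\ref{obs:os} at each step, is the natural route. You also correctly identify $\dim W_j = d-j$ as the crux. But your justification of that crux has a gap. Reciprocity of $(\SSp(e_i),\SSp(\DL(e_i)))$ does \emph{not} pin the normal direction $n_i := \widetilde{\SSp(\DL(e_i))}^\perp \subset \R^{d+1}$ along the tangent $t_i$ of $e_i$ at $v_0$; it only forces $n_i$ to lie somewhere in the $2$-plane $\widetilde{\SSp(e_i)} = \R v_0 \oplus \R t_i$. (Concretely, $n_i$ is tangent to the great circle $\SSp(e_i)$ at the intersection point $\SSp(e_i)\cap\SSp(\DL(e_i))$, and nothing forces that point to be $v_0$.) Writing $n_i = a_i v_0 + b_i t_i$, linear independence of $t_1,\dots,t_j$ does not by itself force linear independence of $n_1,\dots,n_j$: the argument fails exactly when $b_i = b_{i'} = 0$ for two distinct indices, i.e.\ when two of the normals lie along $\R v_0$.

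To close the gap one more observation is needed: $b_i = 0$ means $\SSp(\DL(e_i))$ is the equatorial hyperplane $(\R v_0)^\perp \cap S^d$, and since $\DL(e_i)$ and $\DL(e_{i'})$ are \emph{distinct} facets of the convex $d$-polytope $\DL(v_0)$ (distinct because $\DL$ is a bijection), they span distinct hyperplanes of $S^d$; hence at most one index $i$ can have $b_i = 0$. With that, the $n_i$ are linearly independent (argue modulo $\R v_0$ for the indices with $b_i\ne 0$, and handle the single possible $v_0$-direction separately), so $\dim W_j = d-j$ and the induction via Observation~\ref{obs:os} goes through, after which the identification $W_k = \SSp(\DL(\sigma))$ by inclusion plus equal dimension is fine. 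As written, though, the claimed ``linear independence of those tangent directions then forces linear independence of the orthogonal complements'' is not justified, and this is a real, if small, gap.
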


If $\sigma$, $\tau$ are adjacent facets of a polytopal $d$-manifold $C$ in $S^d$, then let us denote by $\mathrm{n}^\sigma_\tau$ the normal to the face $\sigma\cap \tau$ directed towards $\sigma$, i.e.\ $\mathrm{n}^\sigma_\tau$ is the midpoint of the hemisphere that contains $\sigma$, but does not intersect $\intx \tau$. With this, a reciprocal $D$ to a polytopal manifold $C$ in $S^d$ is \Defn{orientation preserving} if and only if for every pair of vertices $a$, $b$ of $D$ that are connected by an edge, we have $\langle \mathrm{n}^{\DL(a)}_{\DL(b)}, a-b \rangle>0.$

\begin{prp}\label{prp:prj}
Let $C$ be a polytopal $d$-manifold in convex position in $S^{d+1}$. If the orthogonal projection $\pp:S^{d+1}\rightarrow S^d_\eq$ is well-defined and injective on $C$ and on its polar $C^\ast$, then $\pp(C^\ast)$ is an orientation preserving reciprocal for~$\pp(C)$. \emph{\qed}
\end{prp}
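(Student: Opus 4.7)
The plan is to verify in turn the three requirements of being an orientation preserving reciprocal. First, polar duality in $S^{d+1}$ yields a natural duality $\DL:\,C^\ast\to C$ that is dimension-reversing, inclusion-reversing, and bijective onto the non-boundary faces of~$C$. Since $\pp$ is injective (hence face- and dimension-preserving) on both $C$ and $C^\ast$, conjugation gives a duality $\pp(C^\ast)\to\pp(C)$, which we continue to denote by~$\DL$ to lighten notation; this establishes the abstract duality part of the claim.

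For the reciprocity of spans, Lemma~\ref{lem:red} reduces the task to edges. Let $e_0$ be an edge of $C^\ast$ and set $V:=\Sp e_0$, $W:=\Sp\DL(e_0)$ in $\R^{d+2}$; polar duality gives $W=V^\perp$. Decompose $e_{d+2}=p+q$ with $p\in V$ and $q\in W$; the hypothesis that $\pp$ is injective on $e_0$ (respectively on $\DL(e_0)$) translates to $e_{d+2}\notin V$ (respectively $e_{d+2}\notin W$), so both $p$ and $q$ are nonzero and $\langle p,e_{d+2}\rangle=\|p\|^2>0$. Let $\pi\colon\R^{d+2}\to\R^{d+1}$ be the orthogonal projection onto $e_{d+2}^\perp$, so that $\pp(z)=\pi(z)/\|\pi(z)\|$ on the relevant domain. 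The identity $\pi(p)+\pi(q)=\pi(e_{d+2})=0$, together with the injectivity of $\pi|_V$ and $\pi|_W$, forces $\pi(V)\cap\pi(W)=\Sp\pi(p)$, so $\SSp\pp(e_0)$ and $\SSp\pp(\DL(e_0))$ meet in exactly the antipodal pair $\{x,-x\}$ with $x:=\pi(p)/\|\pi(p)\|$.

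The core linear-algebra step is the orthogonality of tangent spaces at~$x$. Take $u=\pi(v)\in\pi(V)\cap x^\perp$ and $y=\pi(w)\in\pi(W)\cap x^\perp$. Unfolding $y\perp x$ gives
\[
0\,=\,\langle\pi(w),\pi(p)\rangle\,=\,\langle w,p\rangle-\langle w,e_{d+2}\rangle\langle p,e_{d+2}\rangle;
\]
since $V\perp W$ forces $\langle w,p\rangle=0$ while $\langle p,e_{d+2}\rangle>0$, one deduces $\langle w,e_{d+2}\rangle=0$, i.e.\ $\pi(w)=w$. Then
\[
\langle u,y\rangle\,=\,\langle\pi(v),w\rangle\,=\,\langle v,w\rangle-\langle v,e_{d+2}\rangle\langle e_{d+2},w\rangle\,=\,0,
\]
using $V\perp W$ and $w\perp e_{d+2}$. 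Combined with the dimension count $1+(d-1)=d=\dim\TT_x S^d_\eq$, this upgrades orthogonality to the desired orthogonal-complement property, completing reciprocity.

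Orientation preservation is the expected final obstacle. In $S^{d+1}$ itself, polar duality identifies each vertex $a\in C^\ast$ with the outer unit normal of the facet $\DL(a)\subset C$, so a direct calculation yields $\mathrm{n}^{\DL(a)}_{\DL(b)}$ as a positive multiple of $\langle b,a\rangle a-b$, whence $\langle\mathrm{n}^{\DL(a)}_{\DL(b)},\,a-b\rangle$ equals a positive multiple of $1-\langle a,b\rangle^2>0$. The main work is to transfer this positivity through~$\pp$: I would describe $\mathrm{n}^{\DL(\pp(a))}_{\DL(\pp(b))}$ explicitly inside the ambient hyperplane $\SSp\pp(\DL(a))$, whose span in $\R^{d+1}$ is $\pi(a^\perp)$, and recompute the relevant inner product using the same bilinear identity $\langle\pi(\cdot),\pi(\cdot)\rangle=\langle\cdot,\cdot\rangle-\langle\cdot,e_{d+2}\rangle\langle\cdot,e_{d+2}\rangle$ from the reciprocity step, extracting again a positive multiple of $1-\langle a,b\rangle^2$ and thereby concluding the proof.
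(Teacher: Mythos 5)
Your argument for reciprocity is solid and self-contained. The reduction to edges via Lemma~\ref{lem:red}, the orthogonal decomposition $e_{d+2}=p+q$ with $p\in V$, $q\in W=V^\perp$, the translation of injectivity into $e_{d+2}\notin V$ and $e_{d+2}\notin W$, the identification of $\pi(V)\cap\pi(W)$ with $\Sp\pi(p)$ (where the equality, not just inclusion, follows from $V+W=\R^{d+2}$, hence $\pi(V)+\pi(W)=\R^{d+1}$), and the bilinear identity used to verify orthogonality of the two tangent spaces at $x=\pi(p)/\|\pi(p)\|$ are all correct. The paper itself states this proposition without proof, citing it to the literature, so there is no argument of the paper to compare against; on the reciprocity portion your proof does the job.

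The orientation-preservation paragraph, however, has a genuine gap. The warm-up computation in $S^{d+1}$ is fine as far as it goes (though note that the paper's $\mathrm{n}^\sigma_\tau$ is defined only for full-dimensional adjacent facets, and $\DL(a),\DL(b)$ are codimension~one in $S^{d+1}$, so you are really computing within the tangent hyperplane $a^\perp\cap S^{d+1}$). The real issue is the claim that projecting and ``recomputing'' yields ``again a positive multiple of $1-\langle a,b\rangle^2$''. If you carry the computation out, writing $a_5:=\langle a,e_{d+2}\rangle$, $b_5:=\langle b,e_{d+2}\rangle$, one finds that the normal line to $\SSp\big(\pp(\DL(a))\cap\pp(\DL(b))\big)$ in $\R^{d+1}$ is $\Sp\{a,b\}\cap e_{d+2}^\perp=\Sp\{-b_5a+a_5b\}$, and that, after choosing the sign so that the normal points towards $\pp(\DL(a))$, the inner product with $\pp(a)-\pp(b)$ works out to a \emph{positive} multiple of $-\mathrm{sign}(a_5)\cdot\sin(\phi_a+\phi_b)\cdot\big(1-\langle\pp(a),\pp(b)\rangle\big)$ with $a_5=-\sin\phi_a$, $b_5=-\sin\phi_b$. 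This is not of the form $1-\langle a,b\rangle^2$, and, more importantly, its sign is only guaranteed positive when $a_5$ and $b_5$ have the \emph{same} sign, i.e.\ when $C^\ast$ lies entirely in one open hemisphere bounded by $S^d_\eq$. That constraint is not in the hypotheses verbatim, so you must derive it. It does follow: if $\mathrm{sign}(a_5)\neq\mathrm{sign}(b_5)$, then the two facets $\pp(\DL(a))$ and $\pp(\DL(b))$ lie on the \emph{same} side of the hyperplane $\SSp\big(\pp(\DL(a))\cap\pp(\DL(b))\big)$ (check this via the same inner products with interior points), so they overlap near the ridge, contradicting the injectivity of $\pp$ on $C$; combined with connectivity of $C^\ast$, all vertex heights $a_5$ then share one sign. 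Without this step your sketch does not establish orientation preservation, so you should add this argument and replace the incorrect ``$1-\langle a,b\rangle^2$'' form with the actual projected expression.
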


Proposition \ref{prp:prj} motivates the notion of liftings of polytopal complexes.

\begin{definition}[Liftings and convex liftings]
Let $C$ be a polytopal complex in $S^d_\eq$. A complex $\widehat{C}$ in $S^{d+1}$ is a \Defn{lifting} of $C$ if the orthogonal projection $\pp$ is well-defined and injective on $\widehat{C}$ and $\pp(\widehat{C})=C$. The complex $\widehat{C}$ is a \Defn{convex lifting}, or \Defn{lifting to convex position}, of $C$ if the lifting $\widehat{C}$ of $C$ is in convex~position.
\end{definition}

\begin{theorem}\label{thm:liftsc}
Let $C$ be a polytopal $d$-manifold in $S^d_\eq$ with $H_1(C,\mathbb{Z}_2)=0$. Then $C$ admits an orientation preserving reciprocal if and only if it admits a convex lifting to $S^{d+1}$.
\end{theorem}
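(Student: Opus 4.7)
The result is a spherical version of the Maxwell--Cremona correspondence between reciprocals and liftings. One direction is essentially immediate from Proposition~\ref{prp:prj}: if $\widehat C\subset S^{d+1}$ is a convex lifting of $C$, its polar $\widehat C^\ast$ is a dual polytopal complex in $S^{d+1}$. After possibly applying a rotation of $S^{d+1}$ that fixes the equator $S^d_\eq$ pointwise and tilts $\widehat C$ closer to the equator, both $\widehat C$ and $\widehat C^\ast$ will lie in the open hemispheres on which the projection $\pp$ is well-defined and injective. Proposition~\ref{prp:prj} then yields that $\pp(\widehat C^\ast)$ is the desired orientation-preserving reciprocal of $\pp(\widehat C)=C$.

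For the converse, suppose $D\subset S^d_\eq$ is an orientation-preserving reciprocal of $C$. I plan to construct $\widehat C$ facet by facet. Each facet $\sigma$ of $C$ admits a one-parameter family of candidate lifting hyperplanes $H\subset S^{d+1}$ satisfying $H\cap S^d_\eq=\SSp(\sigma)$, parameterized by a tilt angle $\theta_\sigma\in(-\nicefrac{\pi}{2},\nicefrac{\pi}{2})$. For each pair of adjacent facets $\sigma,\tau$ of $C$ sharing a $(d-1)$-face $\rho$, the reciprocal edge $\DL^{-1}(\rho)$ of $D$ has a well-defined spherical length, and the orientation-preservation condition assigns it a sign so as to prescribe a required difference $\theta_\sigma-\theta_\tau\in\R$. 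This defines a real $1$-cochain $\alpha$ on the dual cell complex of $C$ (whose vertices are facets of $C$ and whose edges are $(d-1)$-faces).

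Next I would check that $\alpha$ is a cocycle: for each $(d-2)$-face $\pi$ of $C$, the facets of $C$ containing $\pi$ form a cyclic sequence about $\pi$, and the prescribed differences must sum to zero around this cycle. By Lemma~\ref{lem:red} and the definition of reciprocity, this is exactly the statement that the edges of $D$ incident to the vertex $\DL^{-1}(\pi)$ close up, which is built into~$D$. Having verified $\delta\alpha=0$, to integrate $\alpha$ to absolute tilts one needs $[\alpha]=0$ in $H^1$ of the dual complex; since the dual complex has the homotopy type of $C$, it suffices that $H^1(C,\R)=0$. The hypothesis $H_1(C,\mathbb{Z}_2)=0$ delivers this via the universal coefficient theorem: it forces $H_1(C,\mathbb{Z})$ to be finite of odd order, and hence $H_1(C,\R)=H_1(C,\mathbb{Z})\otimes\R=0$. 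This cohomological step is the main technical juncture of the argument and is precisely where the hypothesis on $H_1$ is used.

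Once consistent tilts $\{\theta_\sigma\}$ have been chosen, each vertex $v$ of $C$ acquires a unique lift $\hat v\in H_\sigma$ (for an arbitrary facet $\sigma\ni v$; the cocycle condition makes this choice immaterial), and the face structure of $C$ lifts to a polytopal premanifold $\widehat C$ projecting bijectively onto $C$. Local convexity of $\widehat C$ at each interior $(d-1)$-face is precisely the orientation-preservation condition for the corresponding reciprocal edge, so $\widehat C$ is in locally convex position; fattenings of boundary components can be arranged in convex position by exploiting the remaining freedom in the tilts at boundary faces. An application of Theorem~\ref{thm:locglowib} then promotes $\widehat C$ from locally convex position to globally convex position, completing the construction.
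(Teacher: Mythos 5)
You should first note that the paper itself does not prove Theorem~\ref{thm:liftsc}: it points to Maxwell's planar result, to Aurenhammer for the euclidean case, to Crapo--Whiteley and to Rybnikov's thesis for the general euclidean statement, and to McMullen for the spherical case $C\cong S^d_\eq$, and then declares that the translation of Rybnikov's Theorem~2.6.3 to the spherical setting ``is quite straightforward; we omit the details.'' Your sketch is therefore filling a gap the paper leaves open, and its overall architecture is the right one: recover a reciprocal from a lifting via the polar and Proposition~\ref{prp:prj}, and in the converse direction treat the reciprocal data as a $1$-cochain on the dual block complex, integrate it using the vanishing of first cohomology, and finish with the Alexandrov--van Heijenoort Theorem~\ref{thm:locglowib}. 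Your derivation that $H_1(C,\mathbb{Z}_2)=0$ forces $H_1(C,\mathbb{Z})$ to be finite of odd order and hence $H^1(C,\R)=0$ is correct, and this is indeed where the hypothesis enters.

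Two concrete steps fail as written. First, in the direction ``lifting~$\Rightarrow$~reciprocal'' you propose to arrange that $\pp$ is injective on $\widehat C^\ast$ by ``applying a rotation of $S^{d+1}$ that fixes the equator $S^d_\eq$ pointwise and tilts $\widehat C$ closer to the equator.'' No such rotation exists: $S^d_\eq$ has codimension one in $S^{d+1}$, so the only isometries fixing it pointwise are the identity and the reflection in it, and neither tilts anything. What you need instead is a non-isometric projective transformation of $S^{d+1}$ preserving $S^d_\eq$ setwise and contracting toward it, or a small perturbation of the heights of $\widehat C$ (keeping $\pp(\widehat C)=C$) to put the facet normals in general position, or---most cleanly---to bypass the polar and read off the reciprocal directly from the outward facet normals. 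Second, the parametrization of candidate lifting hyperplanes by ``$H\subset S^{d+1}$ with $H\cap S^d_\eq=\SSp(\sigma)$'' does not parse: $\sigma$ is a facet of a $d$-manifold in the $d$-sphere $S^d_\eq$, so it is full-dimensional there and $\SSp(\sigma)=S^d_\eq$, which would force $H=S^d_\eq$. The correct count is that the reciprocal vertex $\DL^{-1}(\sigma)$ fixes the $d$ ``gradient'' parameters of the lifting hyperplane for $\sigma$, leaving a single ``intercept'' parameter free; it is this intercept whose difference across a shared $(d-1)$-face $\rho$ is forced, and that difference depends on the position of $\rho$ as well as on the gradient jump, not only on the length and sign of the reciprocal edge as you state. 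Both defects are repairable and do not change the structure of your argument, but as written they are errors rather than elisions, and the cocycle step needs the corrected parametrization to be stated honestly.
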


For our intended application, we need a generalization applicable to manifolds with general topology:

\begin{theorem}\label{thm:liftnsc}
Let $B$ be a polytopal $d$-manifold in convex position in $S^{d+1}$ on which $\pp$ is well-defined and injective, and let $C$ be a polytopal $d$-manifold in $S^d_\eq$ so that $\pp (B)$ is a subcomplex of $C$. Assume that 
\begin{compactenum}[\rm (a)]
\item the inclusion $\pp(B)\rightarrow C$ induces a surjection $H_1(\pp(B),\mathbb{Z}_2)\rightarrow H_1(C,\mathbb{Z}_2)$,
\item $C$ admits a reciprocal $D$, and
\item the natural combinatorial isomorphism $\pp(B^\ast)\rightarrow\DL^{-1}(\pp (B))$ is geometrically realized by the identity on~$S^d_\eq$. 
\end{compactenum}
Then $C$ admits a convex lifting $\widehat{C}$ such that the subcomplex of $\widehat{C}$ that projects to $\pp(B)$ coincides with $B$.
\end{theorem}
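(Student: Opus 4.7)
My plan is to extend the Maxwell--Cremona correspondence (the content of Theorem~\ref{thm:liftsc}) to the case where $C$ has nontrivial first homology, using $B$ as a seed lifting whose mere existence annihilates the monodromy obstruction.

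First, I would translate the reciprocal $D$ into local slope-jump data on the $(d-1)$-faces of $C$: for each pair of facets $\sigma,\tau$ of $C$ sharing a $(d-1)$-face $e$, the reciprocal arc $\DL(e)$ (lying on the great circle reciprocal to $\SSp(e)$) prescribes the change of supporting hyperplane between the prospective lifts of $\sigma$ and $\tau$. By Proposition~\ref{prp:prj} together with hypothesis~(c), the prescription extracted from $D$ on edges of $\pp(B)$ is precisely the slope-jump realized by the convex lifting $B$ itself. This identifies the reciprocal $D$ as an "extension" of the reciprocal data of $B$ to the larger complex $C$.

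Second, I would construct the lifting of $C$ by propagating out from $B$: for a facet $\sigma$ of $C$ not in $\pp(B)$, pick a path of facet-adjacencies from a facet of $\pp(B)$ to $\sigma$ in the dual $1$-skeleton of $C$, and cumulate the prescribed slope-jumps to determine the supporting hyperplane of $\widehat{\sigma}$. Well-definedness of this procedure amounts to the vanishing of the accumulated jump around any $1$-cycle in $C$. This cumulative jump defines a homomorphism $\mu$ from $H_1(C)$ into the additive group of height-discrepancies; on cycles supported in $\pp(B)$, $\mu$ vanishes identically, because $B$ is already a genuine geometric lifting. Hypothesis~(a) then says every class in $H_1(C;\mathbb{Z}_2)$ is represented by such a cycle, and this suffices: the obstruction is combinatorially discrete because the sign of each slope-jump is already fixed by the choice of reciprocal arc $\DL(e)$, so the only residual ambiguity is an overall orientation flip recorded mod~$2$.

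Third, convexity of $\widehat{C}$ would follow by noting that local convexity at each $(d-1)$-face is exactly the orientation-preserving condition on the corresponding reciprocal edge. This orientation is inherited from $B$ on $\pp(B)$ (by Proposition~\ref{prp:prj}) and, because it is a combinatorial sign condition on the arcs of $D$, it propagates unchanged to all of $C$. Thus $\widehat{C}$ is a connected polytopal premanifold in locally convex position whose fattened boundary components already sit in convex position (they are either built into $B$, or determined by the lift of the boundary of $C$). Theorem~\ref{thm:locglowib} then upgrades local to global convex position, and by construction the subcomplex of $\widehat{C}$ over $\pp(B)$ coincides with $B$. The step I expect to be hardest is the monodromy argument in paragraph two: making precise that the obstruction truly lives in $H^1(C;\mathbb{Z}_2)$, so that the $\mathbb{Z}_2$-hypothesis in~(a) is exactly what one needs.
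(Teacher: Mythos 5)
The paper does not actually supply a proof of Theorem~\ref{thm:liftnsc}: it defers to Rybnikov's thesis and to McMullen for the Euclidean and special-case arguments, asserting the spherical translation is ``quite straightforward.'' So there is nothing in the paper to compare against; but your strategy --- propagate a lifting outward from the seed $B$ via the slope-jumps prescribed by $D$, and treat well-definedness as a monodromy obstruction --- is the right kind of Maxwell--Cremona argument, in the spirit of the cited sources.

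The gap you flagged in your second paragraph is a genuine gap, and the fix is not the one you suggest. The cumulated slope-jump around a $1$-cycle of dual edges is \emph{real-valued}, not $\mathbb{Z}_2$-valued: it is the angle by which the supporting hyperplane (equivalently, the height function of the would-be lift) fails to close up. Your claim that ``the sign of each slope-jump is already fixed by $\DL(e)$, so the only residual ambiguity is an overall orientation flip recorded mod~$2$'' conflates the sign of a reciprocal arc (which controls local convexity) with its magnitude (which controls whether the heights close up); the obstruction therefore naturally lives in $H^1(C;\mathbb{R})$, not $H^1(C;\mathbb{Z}_2)$, and is not ``combinatorially discrete.'' The correct reason hypothesis~(a) suffices is a rank argument: mod-$2$ surjectivity of $H_1(\pp(B))\to H_1(C)$ gives $M+2H_1(C;\mathbb{Z})=H_1(C;\mathbb{Z})$ where $M$ is the image of $H_1(\pp(B);\mathbb{Z})$, so the finitely generated quotient $H_1(C;\mathbb{Z})/M$ is $2$-divisible and hence finite; the real-valued holonomy homomorphism vanishes on $M$ because $B$ is an honest lifting, and vanishing on a finite-index subgroup of $H_1(C;\mathbb{Z})$ forces it to vanish identically, since $\mathbb{R}$ is torsion-free. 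You need this replacement argument. Two secondary loose ends: the statement only assumes $C$ ``admits a reciprocal $D$,'' not an orientation-preserving one, so your assertion that local convexity ``propagates unchanged to all of $C$'' needs justification rather than citation of $B$; and your appeal to Theorem~\ref{thm:locglowib} requires each fattened boundary component of $\widehat{C}$ to be individually in convex position, which you assert but do not establish for boundary components not lying over $\pp(B)$.
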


A few words on the proof: Classically, the relation between reciprocal complexes and liftings was formulated for complexes in euclidean spaces; for the euclidean plane, a version of Theorem \ref{thm:liftsc} was noticed already by Maxwell \cite{Maxwell3, Maxwell2}. For expositions of the general case and Proposition \ref{prp:prj}, compare in particular \cite[Thm.\ 1]{Aurenhammer}. The analogue of Theorem~\ref{thm:liftnsc} for the euclidean setting follows from work of Crapo \& Whiteley \cite{CW, CrapoWhiteley}; compare in particular Theorem 2.6.3 in Rybnikov's PhD thesis \cite{Rybnikovthesis}. A few authors also treated the spherical case directly: in particular, McMullen \cite{McMullenDiag} provided a proof for the special case $C\cong S^d_\eq$ of Theorem \ref{thm:liftnsc} in the spherical setting. The translation of \cite[Thm.\ 2.6.3]{Rybnikovthesis} to the spherical case, and hence the proof of Theorems \ref{thm:liftnsc} and \ref{thm:liftsc}, is quite straightforward; we omit the details. 

\subsubsection{Reciprocals and cross-bedding cubical tori}\label{ssc:odcct}
In this section we prove that reciprocity is a property that is naturally preserved when extending CCTs. The main theorem is the following.

\begin{theorem}\label{thm:natcon}
Assume that $\CT$ and $\mathrm{S}$ are CCTs in $S^3$ such that
\begin{compactenum}[\rm(a)]
\item $\CT$ is a polytopal manifold, or equivalently, $\CT$ is of width at least $5$, 
\item $\mathrm{S}$ is an orientation preserving reciprocal for $\CT$, and
\item both $\CT$ and  $\mathrm{S}$ admit elementary extensions, say $\CT'$ and  $\mathrm{S}'$.
\end{compactenum} 
Then $\mathrm{S}'$ is an orientation preserving reciprocal for $\CT'$.
\end{theorem}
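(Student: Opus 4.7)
By Lemma~\ref{lem:red}, the task reduces to showing that for every edge $e$ of $\mathrm{S}'$ the subspaces $\SSp(e)$ and $\SSp(\DL(e))$ are reciprocal, together with the orientation sign condition $\langle \mathrm{n}^{\DL(a)}_{\DL(b)}, a-b \rangle > 0$ for the endpoints $a,b$ of $e$. Edges already in $\mathrm{S}$ inherit both conditions from the hypothesis, so only the new edges introduced by the elementary extension $\mathrm{S}\hookrightarrow\mathrm{S}'$ require attention.

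The key input is the rigidity of CCT extensions: by Lemma~\ref{lem:uniext} and Lemma~\ref{lem:cubecmpl}, the complex $\mathrm{S}'$ is determined by $\mathrm{S}$, and every new vertex $x$ of $\mathrm{S}'$ is the eighth vertex of a unique $3$-cube $W(x)\subseteq \mathrm{S}'$ whose remaining seven vertices belong to $\mathrm{S}$. Thus every new edge $e$ lies in some $W(x)$ and is flanked in $\mathrm{S}'$ by two $2$-faces $f_1,f_2$, each of which meets $\mathrm{S}$ in at least one edge whose reciprocity data is inherited from the pair $(\CT,\mathrm{S})$. My plan is to reduce reciprocity at $e$ to reciprocities already present in $(\CT,\mathrm{S})$ through iterated application of Observation~\ref{obs:os}: writing $\SSp(e)=\SSp(f_1)\cap\SSp(f_2)$ and $\SSp(\DL(e))=\SSp\bigl(\DL(f_1)\cup\DL(f_2)\bigr)$, one first establishes reciprocity of each pair $(\SSp(f_i),\SSp(\DL(f_i)))$ from reciprocities of their bounding edges (which in turn are either in $(\CT,\mathrm{S})$ or fall under an earlier step of this reduction), and then combines them via the observation. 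The orientation sign on new edges is handled by a continuity argument: the coordinates of each new vertex are rational functions of those of $\mathrm{S}$ via the cube-completion formula underlying Lemma~\ref{lem:cubecmpl}, so the sign $\langle \mathrm{n}^{\DL(a)}_{\DL(b)}, a-b\rangle$ depends continuously on $\mathrm{S}$; a sign flip along a continuous deformation of $\mathrm{S}$ through reciprocals of $\CT$ would force the extension $\mathrm{S}'$ to degenerate, contradicting its assumed existence as a CCT.

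The main obstacle is the combinatorial case analysis that underpins the middle paragraph: new edges of $\mathrm{S}'$ split into several classes according to how many of their endpoints are new and to where the dual $2$-face $\DL(e)$ sits in the layered structure of $\CT'$, and each class calls for a tailored choice of the auxiliary $2$-faces $f_1,f_2$ and of inherited reciprocities before Observation~\ref{obs:os} may be invoked. One must also verify that the recursive reduction to smaller cells terminates, which follows from the observation that the elementary extension adds only a single new layer, so every new cube of $\mathrm{S}'$ meets $\mathrm{S}$ in a subcomplex of dimension $\ge 2$. Once this finite bookkeeping is completed, the edge reciprocities and the orientation conditions combine into the statement of the theorem.
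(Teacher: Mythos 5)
Your plan is, up to duality, the paper's own proof. The paper reasons inside $\CT'$: for a new facet $W$ and an adjacent old facet $A$ it verifies reciprocity of the pair $\SSp(W\cap A)$ and $\SSp\DL^{-1}(W\cap A)$ by two applications of Observation~\ref{obs:os}, first through the two edges $e_i=W\cap A\cap B_i\cap F_i$ of the quadrilateral $W\cap A$ and the old interior $2$-faces $A\cap F_i$, $B_i\cap F_i$, then combining $e_1,e_2$. Unwinding the duality, your new edge $e$ of $\mathrm{S}'$ is $\DL^{-1}(W\cap A)$, your $f_i$ are exactly $\DL^{-1}(e_i)$, and the old bounding edges of $f_i$ that you need are $\DL^{-1}(A\cap F_i)$ and $\DL^{-1}(B_i\cap F_i)$; the two invocations of Observation~\ref{obs:os} then coincide with the paper's.

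Two places in your write-up need tightening. First, the combinatorics you flag as the main obstacle is in fact forced and trivial: in the underlying cube tiling no edge joins two vertices of the same layer, so every new edge of $\mathrm{S}'$ has exactly one new endpoint; such an edge lies in the unique new cube $W(x)$ and in exactly two $2$-faces $f_1,f_2$ of $\mathrm{S}'$ (the remaining two edges of $W\cap A$ are boundary edges of $\CT'$ and have no $\mathrm{S}'$-dual). Each $f_i$ is a quadrilateral with one layer-$(m+1)$ vertex, two layer-$m$ vertices and one layer-$(m-1)$ vertex, hence has exactly two edges already in $\mathrm{S}$, and those are the ones you must use; your hedge ``or fall under an earlier step of this reduction'' is dangerous, since $e$ itself is one of the other two bounding edges of $f_i$ and using it would be circular. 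There is therefore no recursion, no case split, and nothing to check about termination. Second, the continuity argument for the orientation sign is not a proof as stated: no deformation of $\mathrm{S}$ is given, no base configuration is identified, and it is not argued why a sign flip would force $\mathrm{S}'$ to degenerate. The paper explicitly leaves orientation to the reader, so the omission is shared, but the sketch you give would not close that gap.
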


\begin{rem}
CCTs are special instances of Q-nets, which are discrete analogues of conjugate nets, cf.~\cite[Sec.\ 1 \& 2]{BobSur}. By following the proof we will sketch below, it is not hard to see that Theorem~\ref{thm:natcon} holds in an analogous form for Q-nets of dimension at least $3$. It might be interesting to further explore of the connection between Q-nets and reciprocals.
\end{rem}

\begin{proof}[Sketch of proof]
We only treat reciprocity of the extension, orientation preservation is left to the reader. By Lemma~\ref{lem:red}, we have to prove that for any $3$-cube $W$ of $\CT'$ not in $\CT$ and for any facet $A$ of $\CT$ adjacent to $W$, the subspaces $\SSp(W\cap A)$ and $\SSp \DL^{-1}(W\cap A)$ are reciprocal. For this, let $B_1$, $B_2$ denote the remaining facets of $\CT$ adjacent to $W$, and let $F_{i}$ denote the facet of $\CT$ adjacent to both $A$ and $B_i$, $i=1,2$. Moreover, we set $e_i:=W\cap A\cap B_i \cap F_i,\ i=1,2$. 

The proof is now simple: Since $\mathrm{S}$ is a reciprocal for $\CT$, the subspaces $\SSp(A\cap F_i)$ and $\SSp \DL^{-1}(A\cap F_i)$ are reciprocal, and so are the subspaces $\SSp(B_i\cap F_i)$ and $\SSp \DL^{-1}(B_i\cap F_i)$. Hence, the subspaces 
\[\SSp \left(\DL^{-1}(A\cap F_i)\cup \DL^{-1}(B_i\cap F_i)\right)= \SSp \DL^{-1}(e_i)  \quad \text{and}\quad\SSp(A\cap F_i)\cap\SSp(B_i\cap F_i) = \SSp(e_i)\]
are reciprocal by Observation~\ref{obs:os}. Finally, invoking Observation~\ref{obs:os} again shows reciprocity of the subspaces 
\[\SSp(e_1\cup e_2)=\SSp(W\cap A)\quad \text{and}\quad\SSp \DL^{-1}(e_1)\cap \SSp \DL^{-1}(e_2) = \SSp \DL^{-1}(W\cap A). \qedhere\]
\end{proof}

If we combine Theorem~\ref{thm:liftnsc},~\ref{thm:natcon} and~\ref{thm:exts}, we obtain the following theorem that can replace both Theorem~\ref{thm:ext} \emph{and} Theorem~\ref{thm:convp} for the proof of Main Theorem~\ref{mthm:Lowdim}.

\begin{theorem}\label{thm:repl}
Let $\CT$ be an ideal CCT of width $k\ge  6$ in convex position in $S^4$. Assume that its polar $\mathrm{S}=\CT^\ast$ is ideal and in convex position as well. Then there are ideal CCTs $\CT'$ and $\mathrm{S}'$, of width $k+1$ and $k-2$ respectively, such that 
\begin{compactenum}[\rm(a)]
\item both $\CT'$ and $\mathrm{S}'$ are ideal,
\item both $\CT'$ and $\mathrm{S}'$ are in convex position,  
\item $\mathrm{S}'$ is the polar dual to $\CT'$, and
\item $\RR(\CT',[0,k])=\CT$ and $\RR(\mathrm{S}',[0,k-3])=\mathrm{S}$.
\end{compactenum}
\end{theorem}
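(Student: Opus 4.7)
The plan is to construct $\CT'$ and $\mathrm{S}'$ first in the equator $S^3_\eq$ using Theorem~\ref{thm:exts}, and then lift them back up to~$S^4$ via the Maxwell--Cremona correspondence of Theorem~\ref{thm:liftnsc}. Set $\CT_\eq:=\pp(\CT)$ and $\mathrm{S}_\eq:=\pp(\mathrm{S})$, the control CCTs in~$S^3_\eq$; by the definition of ``ideal'', they are ideal of widths $k$ and $k-3$ respectively, and by Proposition~\ref{prp:prj}, $\mathrm{S}_\eq$ is an orientation-preserving reciprocal for $\CT_\eq$. Since $k\ge 6$, both widths are at least~$3$, so Theorem~\ref{thm:exts} produces ideal elementary extensions $\CT'_\eq$ (of width~$k+1$) and $\mathrm{S}'_\eq$ (of width~$k-2$) in~$S^3_\eq$. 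The crucial input is then Theorem~\ref{thm:natcon}: since $\CT_\eq$ is a polytopal manifold (its width is $\ge 5$) and $\mathrm{S}_\eq$ is an orientation-preserving reciprocal admitting an elementary extension, $\mathrm{S}'_\eq$ is automatically an orientation-preserving reciprocal for $\CT'_\eq$. This is what makes the approach attractive: reciprocity propagates across the Cauchy-type extension without any direct convexity argument.

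Next I would lift $\CT'_\eq$ to $S^4$ by applying Theorem~\ref{thm:liftnsc} with $B=\CT$, $C=\CT'_\eq$ and reciprocal $D=\mathrm{S}'_\eq$. Hypothesis~(a) holds because $\CT'_\eq$ deformation retracts onto $\CT_\eq$ by collapsing the new top layer inward, so the inclusion induces an isomorphism (in particular a surjection) on $H_1(-,\Z_2)$; hypothesis~(b) follows from the previous paragraph; and hypothesis~(c) reduces to observing that under the duality between $\mathrm{S}'_\eq$ and $\CT'_\eq$, the subcomplex $\DL^{-1}(\CT_\eq)\subset\mathrm{S}'_\eq$ coincides with $\pp(\CT^\ast)=\mathrm{S}_\eq$ as subsets of~$S^3_\eq$. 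The conclusion is a convex lifting $\CT'$ of $\CT'_\eq$ in~$S^4$ whose restriction above $\CT_\eq$ equals $\CT$. The symmetric argument, applied with $(B,C,D)=(\mathrm{S},\mathrm{S}'_\eq,\CT'_\eq)$, yields the convex lifting $\mathrm{S}'$ of $\mathrm{S}'_\eq$ extending $\mathrm{S}$. This establishes parts~(b) and~(d) of the theorem, and~(a) follows from the idealness of $\CT'_\eq$ and $\mathrm{S}'_\eq$.

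To conclude with part~(c), that $\mathrm{S}'=(\CT')^\ast$, I would argue by uniqueness. First, $(\CT')^\ast$ genuinely extends $\mathrm{S}=\CT^\ast$, because the facets of $\conv\CT$ meeting $\CT$ remain facets of the larger polytope $\conv\CT'$, so the corresponding polar vertices coincide. Proposition~\ref{prp:prj} thus gives that $\pp((\CT')^\ast)$ is an orientation-preserving reciprocal for $\CT'_\eq$ extending $\mathrm{S}_\eq$. But by Lemma~\ref{lem:uniext}, the elementary extension of the CCT $\mathrm{S}_\eq$ is unique, so $\pp((\CT')^\ast)=\mathrm{S}'_\eq=\pp(\mathrm{S}')$. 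Since both $(\CT')^\ast$ and $\mathrm{S}'$ are convex liftings of $\mathrm{S}'_\eq$ to~$S^4$ extending $\mathrm{S}$, the same uniqueness (Lemma~\ref{lem:uniext} applied upstairs) gives $(\CT')^\ast=\mathrm{S}'$. The main obstacle will be hypothesis~(c) of Theorem~\ref{thm:liftnsc}: one must carefully trace the combinatorial--geometric identifications induced by the duality~$\DL$ across the passage from $\CT_\eq$ to $\CT'_\eq$, and keep proper track of orientations of the new reciprocal hyperplanes, to confirm that $\mathrm{S}_\eq$ really sits inside $\mathrm{S}'_\eq$ in the correct geometric way.
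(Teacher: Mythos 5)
Your proof follows exactly the route the paper indicates: the paper does not give a proof of Theorem~\ref{thm:repl} at all, simply asserting that it follows ``if we combine Theorem~\ref{thm:liftnsc},~\ref{thm:natcon} and~\ref{thm:exts},'' and your argument is a faithful unpacking of precisely that combination, supplemented with Proposition~\ref{prp:prj} for the initial reciprocity and Lemma~\ref{lem:uniext} for the uniqueness step in part~(c). You also correctly pinpoint hypothesis~(c) of Theorem~\ref{thm:liftnsc} as the one place requiring careful bookkeeping (tracking which faces of $\CT_\eq$ are interior to $\CT'_\eq$ but not to $\CT_\eq$, so that $\DL^{-1}(\CT_\eq)$ is interpreted correctly and matches $\mathrm{S}_\eq$); the paper, which states Theorem~\ref{thm:liftnsc} without proof (``we omit the details''), is no more explicit than you are on this point.
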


\paragraph*{Conclusion.} \hspace{-2mm} We now have two proof strategies for Main Theorem~\ref{mthm:Lowdim} that can be summarized as follows: 
\begin{compactenum}[(A)]
\item Start with an ideal $3$-CCT in convex position in $S^4$, for instance $\PS[3]$. Now, use Theorem~\ref{thm:ext} to prove that the extensions of $\PS[3]$ exist and are ideal, and use Theorem~\ref{thm:convp} to prove that these extensions are in convex position. 
\item Start with an ideal $6$-CCT in convex position in $S^4$ whose polar is ideal as well. For instance, one can verify that $\PS[6]$ is a CCT as desired. Since the polar is automatically in convex position, we can now use Theorem~\ref{thm:repl} to prove that CCTs in convex position of arbitrary width exist. 
\end{compactenum}
Approach {(B)} is arguably more intuitive and straightforward, and it avoids several tedious arguments when checking the conditions of the Alexandrov--van Heijenoort Theorem~\ref{thm:locglowib}. However, to use it we have to start with a CCT in convex position of considerable \emph{higher width} (width $6$, compared to width $3$ for approach {(A)}) and whose \emph{polar is ideal as well}. This has to be verified by hand, and is much more demanding than verifying that a $3$-CCT is ideal and in convex position. This is in particular relevant if one wants to construct CCTs based on different initial layers, as we will do in Section~\ref{sec:varieties}.

\subsection{Shephard's list}\label{ssc:Shphrdlist}

Construction methods for projectively unique $d$-polytopes
were developed by Peter McMullen in his doctoral thesis (Birmingham 1968) 
directed by G.\ C.\ Shephard; see~\cite{McMullen}, where
McMullen writes: 
\begin{quote}
 ``Shephard (private communication) has independently made a list, believed to be complete,
	  of the projectively unique $4$-polytopes. All of these polytopes can be constructed by the methods
	  described here.''
\end{quote}	  
If the conjecture is correct, the following list of eleven projectively unique $4$-polytopes
(all of them generated by McMullen's techniques, duplicates removed) should be complete:

\begin{table}[htbf]
\centering
{\small
\begin{tabular}{|l||l|l|l|c|l|l|}
	  \hline
  & Construction	& dual&	type		& $(f_0,f_1,f_2,f_3)$  & facets\\
	  \hline
	  \hline
$P_1$ &  $\Delta_4$     & $P_1$     & simplicial & (5,10,10,5) & 5\,tetrahedra					\\
$P_2$ &  $\Box * \Delta_1$ & $P_2$ & & (6,11,11,6) & 4\,tetrahedra, 2\,square\,pyramids			\\
$P_3$ &  $(\Delta_2\oplus\Delta_1)*\Delta_0$  & $P_4$  &    &  (6,14,15,7) & 6\,tetrahedra, 1\,bipyramid 	\\
$P_4$ &  $(\Delta_2\times\Delta_1) * \Delta_0$ & $P_3$  &  & (7,15,14,6) & 2\,tetrahedra, 3\,square\,pyramids, 1\,prism  \\
$P_5$  &  $\Delta_3\oplus\Delta_1$ & $P_6$ & {simplicial} & (6,14,16,8)  & 8\,tetrahedra			\\
$P_6$  &  $\Delta_3\times\Delta_1$ & $P_5$ & {simple} & (8,16,14,6)  & 2\,tetrahedra, 4\,prisms		\\
$P_7$  &  $\Delta_2\oplus\Delta_2$ & $P_8$ & {simplicial} & (6,15,18,9)  & 9\,tetrahedra			\\
 $P_8$ &  $\Delta_2\times\Delta_2$ & $P_7$ & {simple} & (9,18,15,6) & 6\,prisms				\\
$P_9$  &  $(\Box,v)\oplus(\Box,v)$ & $P_{10}$ & &(7,17,18,8)   & 4\,square\,pyramids, 4\,tetrahedra\\
$P_{10}$  &  & $P_9$     &  & (8,18,17,7)  & 2\,prisms, 4\,square\,pyramids, 1\,tetrahedron\\
$P_{11}$ & ${\rm v.split}(\Delta_2\times\Delta_1)$  & $P_{11}$  & & (7,17,17,7)  &  3\,tetrahedra, 2\,square\,pyramids, 2\,bipyramids\\
  \hline
  \end{tabular}
\caption{Shephard's list of $4$-dimensional projectively unique polytopes.}\label{tab:shlist}
}
\end{table}

\subsection{Iterative construction of CCTs}\label{ssc:expformula}

The main results of this paper were based on an iterative construction of ideal CCTs (Section~\ref{sec:bblocks}). It is natural to ask whether one can provide explicit formulas for this iteration, and indeed, a first attempt to prove Theorem~\ref{mthm:Lowdim} and Theorem~\ref{mthm:projun} would try to understand these iterations in terms of explicit formulas. Since the building block of our construction is Lemma~\ref{lem:cubecmpl}, this amounts to understanding the following problem.

\begin{problemm}\label{prb:iteration}
 Let $Q_1$, $Q_2$, $Q_3$ be three quadrilaterals in some euclidean space (or in some sphere) on vertices $\{a_1,\, a_2,\, a_3,\, a_4\}$, $\{a_1,\, a_4,\, a_5,\, a_6\}$ and $\{a_1,\, a_2,\, a_7,\, a_6\}$, respectively, such that the quadrilaterals do not lie in a common $2$-plane. Give a formula for $a_1$ in terms of the coordinates of the vertices $a_i$, $i\in \{2,\, \dots,\, 7\}$.
\end{problemm}

It is known and not hard to see that this formula is rational~\cite[Sec.\ 2.1]{BobSur}. The formula is, however, rather complicated, so that it is much easier to follow an implicit approach for the iterative construction of ideal CCTs. 

\medskip

In this section we nevertheless give, without proof, an explicit formula (Formula~\ref{fml:explicit}) to compute, given an ideal $1$-CCT $\CT$ in $S^4_+$, its elementary extension $\CT'$ by solving Problem~\ref{prb:iteration} in $S^4_+$ for cases with a certain inherent symmetry coming from the symmetry of ideal CCTs. More accurately, we provide a rational formula for a map $\mathrm{i}$ that, given two special vertices $a,\, b$ of $\CT$ in layers $0$ and $1$ respectively, obtains a vertex $c:=\mathrm{i}(a,b)$ of layer $2$ of $\CT'$. The map $\mathrm{i}$ is chosen in such a way that we can easily iterate it, i.e.\ in order to obtain a vertex $d$ of the elementary extension $\CT''$ of $\CT'$, we simply compute $\mathrm{i}(b,\rot^{2}_{1,2} c)$ (cf.\ Proposition~\ref{prp:it}).\footnote{Explicit calculations of this and the following section were performed using \href{http://www.sagemath.org/}{\textbf{sagemath}}, Ver. 5.10. }

\begin{rem}
A word of caution: Formula~\ref{fml:explicit} for $\mathrm{i}(a,b)$ is also well-defined for some values of $a,\, b$ for which the extension $\CT'$ of $\CT$ does not exist. In particular, one should be careful not to interpret the well-definedness of $\mathrm{i}(a,b)$ as a direct proof of Theorem~\ref{thm:ext}, rather the opposite: Theorem~\ref{thm:ext} proves that the extensions of ideal CCTs exist, which allows us, if we are so inclined, to use the explicit formula for $\mathrm{i}$ to compute them. For the rest of this section we will simply ignore this problem; we shall assume the extension exists whenever we speak of an extension of a CCT.
\end{rem}

\paragraph*{Explicit formula for the iteration:} To define $\mathrm{i}$, choose vertices $a\in \RR(\CT,0)$ and $b\in \RR(\CT,1)$ of the ideal $1$-CCT $\CT$ as in Figure~\ref{fig:setupit}, and, to simplify the formula, such that $\langle a,e_4 \rangle$ and $\langle b,e_4 \rangle$ vanish.

\begin{figure}[htbf]
\centering 
  \includegraphics[width=0.6\linewidth]{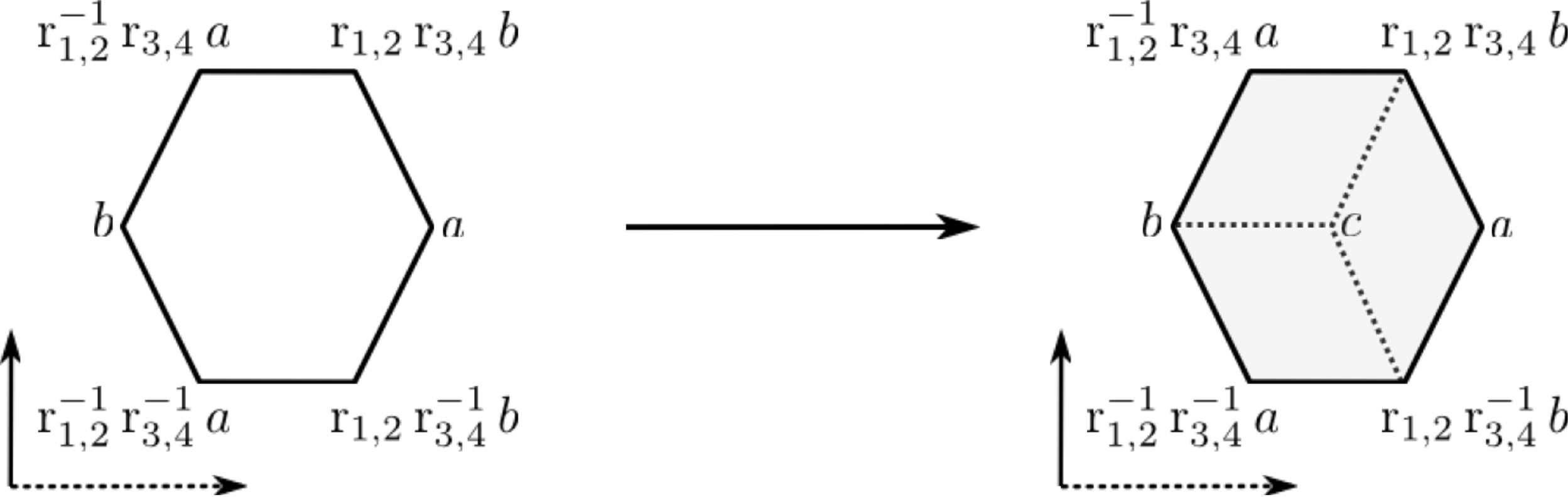} 
  \caption{\small Set-up for an explicit iterative formula; the boundary vertices and edges of the disk are in $\CT$, the interior faces (those containing $c$) are added in the extension to $\CT'$.} 
  \label{fig:setupit}
\end{figure}
We are going to give the formula for the vertex $c:=\mathrm{i}(a,b) \in \RR(\CT',2)$ as indicated in Figure~\ref{fig:setupit}. Then, it is easy to compute all extensions of an ideal CCTs in~$S^4$ explicitly using iterations of $\mathrm{i}$.

\begin{prp}\label{prp:it}
Let $\CT$ be an ideal $1$-CCT in $S^4_+$, and assume that $a\in \RR(\CT,0)$ and $b\in \RR(\CT,1)$ are chosen as before. Let us denote by $\CT^{[k]},\ k\ge 0$, the $k$-th layer of the $k$-CCT extending $\CT$. Set $\kappa_0:=a$, $\kappa_1:=b$ and define
\[\kappa_{k+1}:=\rot^{2}_{1,2}\mathrm{i}(\kappa_{k-1},\kappa_{k}).\]
Then $\kappa_{k}\in \CT^{[k]}$ for all $k$. \emph{\qed}
\end{prp}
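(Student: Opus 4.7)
The plan is to proceed by induction on $k$, using the combinatorial--geometric structure of ideal CCTs together with Lemma~\ref{lem:cubecmpl}. The base cases $k=0,1$ are immediate from the choice $\kappa_0=a\in\RR(\CT,0)$ and $\kappa_1=b\in\RR(\CT,1)$. For the inductive step, one needs to show two things simultaneously: (i) $\kappa_{k+1}$ is a vertex of $\CT^{[k+1]}$, and (ii) the pair $(\kappa_k,\kappa_{k+1})$ occupies the same ``canonical'' position (i.e., satisfies $\langle\kappa_k,e_4\rangle=\langle\kappa_{k+1},e_4\rangle=0$ and is related by the same local pattern) as the pair $(\kappa_{k-1},\kappa_k)$. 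Condition (ii) is indispensable, since without it one cannot feed the output back into $\mathrm{i}$.

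The first step is to set up, for each $k$, a distinguished edge $[\kappa_{k-1},\kappa_k]$ that plays the role of the base edge in layer $k$. By Proposition~\ref{prp:alignsymm} and the transitive action of $\mathfrak{R}$ on each layer, every vertex and every cube of $\CT^{[k]}\cup\CT^{[k-1]}$ can be recovered from $\{\kappa_{k-1},\kappa_k\}$ by applying elements of $\mathfrak{R}$. In particular, the three quadrilaterals meeting at the layer-$(k-1)$ vertex $\kappa_{k-1}$ whose remaining vertices lie in layers $k$ and $k+1$ are determined by $\kappa_{k-1}$, $\kappa_k$ and their $\mathfrak{R}$-orbits. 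Lemma~\ref{lem:cubecmpl} says that the single missing vertex of the $3$-cube closing up these quadrilaterals is then determined as a rational function of the known coordinates; this is precisely what $\mathrm{i}(\kappa_{k-1},\kappa_k)$ computes.

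The second step is to identify the output $\mathrm{i}(\kappa_{k-1},\kappa_k)$ inside $\CT^{[k+1]}$. By construction it is the layer-$(k+1)$ vertex of the cube $X(\kappa_{k-1})$ of Proposition~\ref{prp:locatt}. This vertex does not lie in the canonical section $\{x:\langle x,e_4\rangle=0\}$; however, by Definition~\ref{def:symrig} the translation by $(-1,1,0)$ of the model complex $\T$ corresponds to the rotation $\rot_{3,4}^2$ of the CCT, and an analogous inspection shows that the ``next canonical'' layer-$(k+1)$ vertex is obtained from $\mathrm{i}(\kappa_{k-1},\kappa_k)$ precisely by applying $\rot^2_{1,2}$. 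Thus setting $\kappa_{k+1}:=\rot^2_{1,2}\,\mathrm{i}(\kappa_{k-1},\kappa_k)$ both places $\kappa_{k+1}$ inside $\CT^{[k+1]}$ and restores the canonical position, closing the induction.

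The main obstacle is the second step: verifying rigorously that $\rot^2_{1,2}$ is the correct symmetry to restore canonicity. This amounts to matching the lattice quotient $\T[k]=\RC[k]/((3,-3,0)\Z\times(-2,-2,4)\Z)$ against the action of $\mathfrak{R}$ on the geometric CCT layer-by-layer; concretely, one has to check that the edge $[\kappa_k,\rot_{1,2}^{-2}\,\mathrm{i}(\kappa_{k-1},\kappa_k)]$ in layer $k$ to layer $k+1$ is the image under $\rot^2_{1,2}$ of the edge $[\kappa_{k-1},\kappa_k]$, so that successive base edges are genuinely related by the same symmetry. Once this bookkeeping is done, the proposition follows formally by induction, with all nontrivial geometric content absorbed into the rational formula for $\mathrm{i}$ supplied by Lemma~\ref{lem:cubecmpl}.
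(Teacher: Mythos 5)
Note first that the paper offers no proof to compare against: Section~\ref{ssc:expformula} states explicitly that Formula~\ref{fml:explicit} and Proposition~\ref{prp:it} are given \emph{without proof} (the computations were done in computer algebra), so your argument has to stand on its own. Your overall skeleton is the natural one — a strengthened induction keeping track not only of $\kappa_{k}\in\CT^{[k]}$ but also of the fact that the pair $(\kappa_k,\kappa_{k+1})$ is again a normalized pair to which $\mathrm{i}$ may be applied — and the easy half of the normalization (the vanishing of $\langle\kappa_k,e_4\rangle$, which propagates through Formula~\ref{fml:explicit} because the fourth coordinate of $\tfrac12(\rot_{1,2}\rot_{3,4}b+\rot_{1,2}\rot_{3,4}^{-1}b)$ is $\tfrac12 b_4$) you could have verified in one line but did not.

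The genuine gap is that the decisive step is both deferred and mis-specified. You reduce everything to "checking that $\rot^2_{1,2}$ restores canonicity," assert it via "an analogous inspection shows," and then describe the check as: the edge $[\kappa_k,\rot^{-2}_{1,2}\,\mathrm{i}(\kappa_{k-1},\kappa_k)]$ should be the $\rot^2_{1,2}$-image of the edge $[\kappa_{k-1},\kappa_k]$. Since $\rot_{1,2}$ has order four, $\rot^{-2}_{1,2}\,\mathrm{i}(\kappa_{k-1},\kappa_k)=\kappa_{k+1}$, so this condition would force $\rot^2_{1,2}\kappa_{k-1}=\kappa_k$, which is false already for $k=1$ in the paper's own Example~\ref{ex:expc} (there $\kappa_0=\vartheta_0$, $\kappa_1=\rot^2_{1,2}\vartheta_1$, and $\rot^2_{1,2}\vartheta_0\neq\rot^2_{1,2}\vartheta_1$). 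Moreover $[\kappa_{k-1},\kappa_k]$ need not be an edge of the complex at all: with the canonical choice of Section~\ref{ssc:example}, the neighbours of $\vartheta_0$ in layer $1$ are $\vartheta_1$ and $\rot^{-1}_{1,2}\rot^{\pm 1}_{3,4}\vartheta_1$, and $\rot^2_{1,2}\vartheta_1$ is none of these, so the picture of "successive base edges" on which your bookkeeping rests is not the configuration of Figure~\ref{fig:setupit}. What actually has to be proved — and this is the entire content of the proposition beyond the definition of $\mathrm{i}$ — is that $(\kappa_k,\kappa_{k+1})$ occupies, inside the $1$-CCT formed by layers $k$ and $k+1$, the same combinatorial position relative to the $\mathfrak{R}$-action and the same normalization as $(a,b)$ does in Figure~\ref{fig:setupit}; this is a statement about where the completed vertex of the three quadrilaterals sits in the abstract complex $\T$, not the claim that the new pair is the $\rot^2_{1,2}$-image of the old one. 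As written, the induction would stall at exactly this point (and a reader following your recipe would find the proposed identity false), so the proof is incomplete. A minor additional inaccuracy: Lemma~\ref{lem:cubecmpl} only gives \emph{uniqueness} of the reconstructed vertex, not the rational formula; the formula itself is supplied by Formula~\ref{fml:explicit}, and, as the paper's own remark before Proposition~\ref{prp:it} cautions, its mere well-definedness carries no geometric content about the extension.
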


\begin{fml}\label{fml:explicit} Consider an ideal $1$-CCT $\CT$ in $S^4_+$. We use homogeneous coordinates for the vertices of~$\CT$. Then, we have the desired formula for $\mathrm{i}$:
\[\mathrm{i}(a,b)=\mu(a,b)a+(1-\mu(a,b)) \frac{\rot_{1,2}\rot_{3,4} b + \rot_{1,2}\rot_{3,4}^{-1} b}{2},\] 
where the parameter $\mu(a,b)$ is given by
\[\mu(a,b)=\frac{\big(\mathrm{S}+\mathrm{L}_{\hat{b}}+\mathrm{D}\big)\big(3\mathrm{S}+\mathrm{L}_{\hat{b}}+\mathrm{D}\big)}{\mathrm{L}_{\hat{b}}^2+4\mathrm{L}_{\hat{a}}\mathrm{S}+2\mathrm{L}_{\hat{b}}\mathrm{D} + 2\mathrm{S}^2+4\mathrm{D}\mathrm{S}}\ \ \text{and therefore}\ \ 
1-\mu(a,b)=\frac{4(\mathrm{L}_{\hat{a}}-\mathrm{L}_{\hat{b}})\mathrm{S}-\mathrm{S}^2-\mathrm{D}^2}{\mathrm{L}_{\hat{b}}^2+4\mathrm{L}_{\hat{a}}\mathrm{S}+2\mathrm{L}_{\hat{b}}\mathrm{D} + 2\mathrm{S}^2+4\mathrm{D}\mathrm{S}}.\]
Here $\mathrm{S}= \langle\hat{a},\hat{b}\rangle$ denotes the scalar product of the vectors $\hat{a}=(a_1,a_2)$ and $\hat{b}=(b_1,b_2)$, while $\mathrm{L}_{\hat{a}}=||\hat{a}||_2^2=a_1^2+a_2^2$ and $\mathrm{L}_{\hat{b}}=||\hat{b}||_2^2=b_1^2+b_2^2$ denote their respective square lengths. Finally, $\mathrm{D}$ is the signed volume of the parallelepiped spanned by $\hat{a}$ and $\hat{b}$, i.e.,\ $\mathrm{D}$ is the determinant of the matrix $\left( \begin{smallmatrix}
a_1 & b_1\\
a_2 & b_2\end{smallmatrix}\right)$.\qed
\end{fml}

\begin{rem}\label{rem:observations}
One can conclude several interesting facts about our construction from Formula~\ref{fml:explicit}:
\begin{compactenum}[(a)]
\item It follows directly from the existence of the extension that the term $\mu(a,b)$ is negative if the CCT it is applied to is ideal.
\item We have $\langle\hat{b},\hat{c}\rangle=\mu(a,b)\langle\hat{a},\hat{b}\rangle$, where $\hat{c}=(c_1,c_2)$. Hence, the term $\mathrm{S}$ in Formula~\ref{fml:explicit} converges to $0$ at an exponential rate as ideal CCTs are extended, and does not change sign since $\mu(a,b)<0$.
\item If we consider iterative extensions of ideal CCTs in convex position, then as the construction progresses, the square length of vectors in the newest layer converges (by a simple compactness argument). Hence,  in the notation of Formula~\ref{fml:explicit}, $\mathrm{L}_{\hat{a}}-\mathrm{L}_{\hat{b}}$ tends to $0$.
\item Combining the previous two observation, we see that $\mu(a,b)$ tends to $0$ as the extension progresses.
\item We can conclude from this that as we iteratively build an ideal CCT, the squared norm of the last two coordinates of the vertices in the highest layer $i$ can be bounded above by a constant multiple~of~$(\nicefrac{\sqrt{3}}{{2}})^{i}$.
\end{compactenum}
\end{rem}

\begin{example}\label{ex:expc}
If $\PS[1]$, as given in Section~\ref{ssc:example}, is our starting CCT for the proof of Theorem~\ref{mthm:Lowdim}, then we can choose \[a=(\sqrt{2}-1,\,1-\sqrt{2},\, 2,\, 0,\, 1)= \vartheta_0\in \RR(\PS[1],0)\quad \text{and}\quad b=(-1,\,0,\, 1,\, 0,\, 1)=\rot^2_{1,2}\vartheta_1\in \RR(\PS[1],1).\]  Then, $\mu(a,b)=\tfrac{1}{23}(3-4\sqrt{2})$ and \[\mathrm{i}(a,b)= \big(\tfrac{1}{23}(-11+7\sqrt{2}),\,\tfrac{1}{23}(-9-11\sqrt{2}),\, \tfrac{1}{23}(16-6\sqrt{2}),\, 0 ,\, 1\big)=\vartheta_2\in \RR(\PS[2],2).\]
More generally, by setting $\kappa_0:=\vartheta_0$, $\kappa_1:=\rot^2_{1,2}\vartheta_1$, we can use the iteration procedure of Proposition~\ref{prp:it} to inductively construct the complexes $\PS[n]$ (Table~\ref{tab:list1}). We compute $\kappa_i$ explicitly for $i\le  10$; the fourth coordinate is always $0$ and the fifth is always $1$, so we omit them from the list. Furthermore we compute the value $\lambda(\kappa_i)$ such that $\pp(\kappa_i)$ lies in $\mathcal{C}_{\lambda(\kappa_i)}$. We constructed the complexes $\PS[n]$ towards $\mathcal{C}_0$, so these values should decrease. In fact, by Remark \ref{rem:observations}(c) and (e), we obtain that $\lambda(\kappa_i)=O \big((\nicefrac{3}{4})^{i}\big)\xrightarrow{i\rightarrow \infty} 0$.
\renewcommand{\arraystretch}{1.2}

\begin{table}[h!tbf]
\centering
$\begin{array}{|c||c|c|c|c|}
\hline
\text{Vertex}&\text{First coordinate} & \text{Second coordinate} &\text{Third coordinate} & \lambda(\kappa_i)\ \text{(float)} \\
\hline
\hline
\kappa_0 & \sqrt{2}-1&-\sqrt{2}+1 & 2&1.8419\\
\hline
\kappa_1 & -1 & 0& 1&1\\
\hline
\kappa_2 & \frac{-7\sqrt{2}+11}{23} &\frac{11\sqrt{2}+9}{23}& \frac{16-6\sqrt{2}}{23}&0.1709\\ 
\hline
\kappa_3 &\frac{11\sqrt{2}+37}{49} &\frac{6\sqrt{2}-11}{49}& \frac{-12\sqrt{2} + 22}{49}&0.0181\\
\hline
\kappa_4 &\frac{145\sqrt{2} - 241}{697} &\frac{-241\sqrt{2} - 407}{697} &\frac{-168\sqrt{2} + 260}{697}&  1.7906{\cdot}10^{-3} \\ 
\hline
\kappa_5 &\frac{ -192\sqrt{2} - 457 }{679}& \frac{ -111\sqrt{2} + 192 }{679} &\frac{ -138\sqrt{2} + 202}{679}  & 1.7580{\cdot}10^{-4}\\
\hline
\kappa_6 & \frac{-341\sqrt{2} + 577}{1837}& \frac{577\sqrt{2} + 1155}{1837} &  \frac{-324\sqrt{2} + 464}{1837}&1.7247{\cdot}10^{-5}\\
\hline
\kappa_7 &\frac{11471\sqrt{2} + 25057}{38473}& \frac{6708\sqrt{2} - 11471}{38473} & \frac{ -5712\sqrt{2} + 8116}{38473}& 1.6920{\cdot}10^{-6}\\ 
\hline
\kappa_8 & \frac{137\sqrt{2} - 233}{761}&\frac{ -233\sqrt{2} - 487}{761} & \frac{ -96\sqrt{2} + 136}{761}& 1.6598{\cdot}10^{-7}\\
\hline
\kappa_9 & \frac{-165588\sqrt{2} - 353893}{548089}&\frac{-97098\sqrt{2} + 165588}{548089} & \frac{-58344\sqrt{2} + 82564}{548089}&1.6283{\cdot}10^{-8}\\ \hline
\kappa_{10} & \frac{-2955751\sqrt{2} + 5033675}{16549127}&\frac{5033675\sqrt{2} + 10637625}{16549127}&   \frac{-1490520\sqrt{2} + 2108416}{16549127}&1.5974{\cdot}10^{-9}\\
\hline \end{array}$
\caption{Coordinates for the polytopes $\mathrm{CCTP}_4[n]$.}
\label{tab:list1}
\end{table}

\end{example}

\subsection{Many more projectively unique polytopes}\label{sec:varieties}
In this section we construct, for \emph{any finite field extension} $F\subset \R$ over $\mathbb{Q}$, infinitely many projectively unique polytopes in fixed dimension that are characteristic to $F$ (Section~\ref{ssc:rat}). Moreover, we construct infinitely many \emph{inscribed} projectively unique polytopes in fixed dimension (Section~\ref{ssc:ins}). 

Compared to Theorem \ref{mthm:projun}, we shall not use explicit construction methods but rather rely on general results of \cite{AP} to obtain projectively unique polytopes from polytopes with low-dimensional realization space. Hence, the ``fixed dimension'' of the projetively unique polytopes constructed here is only implicit (although in principle computable) and in general much larger than $69$.

We work with polytopes and point configurations in $S^d$; coordinates of points and vertices in the upper hemisphere $S^d_+\subset S^d$ referred to are always homogeneous coordinates.

\subsubsection{Many projectively unique polytopes over any field}\label{ssc:rat}
Perles not only constructed exponentially many projectively unique polytopes~\cite{McMullen}~\cite{PerlesShephard}, he also famously established the existence of a projectively unique polytope not realizable in any rational vector space~\cite[Sec.\ 5.5, Thm.\ 4]{Grunbaum}. In~\cite{AP}, the first named author and Padrol remark that his results extend to any finite field extension: For any finite field extension $F\subset \R$ over $\mathbb{Q}$, there is a projectively unique polytope $P$ that is realizable in a vector space over $F$, but not in any strict subfield $G$ of $F$. In this section, we prove that there is not only one such polytope, but there even are infinitely many in some fixed dimension that depends only on $F$. The main result is the following analogue of Main Theorem \ref{mthm:projun}.

\begin{theorem}\label{thm:anyfield}
Let $F\subset \R$ be any finite field extension over $\mathbb{Q}$. For any $d \ge D=D(F)$, there is an infinite family of projectively unique $d$-dimensional polytopes $\mathrm{PCCTP}^F_d[n]\subset S^d_+,\ n\ge 1,$ on $12(n+1)+d+D(F)-9$ vertices with coordinates in $F$, but not realizable (in $S^d_+$) with coordinates in $G$, where $G$ is any strict subfield of $F$.
\end{theorem}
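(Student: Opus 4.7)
The plan is to parametrize the initial ideal $1$-CCT by elements of $F$ and propagate the field-dependence through both the extension construction and the Lawrence-extension pipeline. The crucial observation is that the iteration formula of Formula~\ref{fml:explicit} is a \emph{rational} function of the coordinates of the two preceding layers. Hence any ideal $1$-CCT $\PS^F[1] \subset S^4_+$ whose vertex coordinates lie in $F$ produces, by iterated application of Theorem~\ref{thm:ext}, an infinite family of extensions $\PS^F[n]$ all of whose vertex coordinates remain in $F$. Ideality and convex position are defined by open inequalities (on the slope, on injectivity of the Clifford projection, on exposedness of facets), so a sufficiently small deformation of the standard $\PS[1]$ continues to satisfy all of them, and Theorems~\ref{thm:ext} and~\ref{thm:convp} still apply.

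Choosing a primitive element $\alpha$ of $F$ over $\mathbb{Q}$ and plugging it into a one-parameter real deformation of $\PS[1]$ (for instance, perturbing the $\sqrt{2}$ entries of $\vartheta_0$ by a small $\alpha$-dependent correction) yields an ideal $1$-CCT $\PS^F[1]$ with vertex coordinates in $F$ that actually generate $F$ over $\mathbb{Q}$. I would then apply the weak-projective-triple / subdirect-cone / Lawrence-extension machinery of Section~\ref{sec:projun}: construct an $F$-analog $K^F \supseteq \F_0(\PS^F[1])$ of the projectively unique point configuration of Section~\ref{ssc:constr}, so that $(\mathrm{CCTP}^F_4[n], \F_0(\PS^F[1]), K^F \setminus \F_0(\PS^F[1]))$ is a weak projective triple for every $n$. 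Its subdirect cone is a projectively unique PP-configuration by Lemma~\ref{lem:subdc}, and Proposition~\ref{prp:mlwextn} then delivers projectively unique polytopes $\mathrm{PCCTP}^F_d[n]$ with vertex coordinates in $F$, for some fixed $d = D(F)$ depending only on the number of auxiliary points needed to rigidify the initial data.

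To rule out realizations in any strict subfield $G \subsetneq F$, I would exploit projective uniqueness as follows. The field $F_P$ generated over $\mathbb{Q}$ by the projective invariants (e.g., cross-ratios of combinatorially distinguished collinear quadruples of vertices, such as those produced by the Lawrence lifts) of the combinatorial type of $\mathrm{PCCTP}^F_d[n]$ is an intrinsic invariant, the same for every realization. By construction, the canonical realization has $F_P = F$: one identifies a specific collinear quadruple (arising, for instance, from a Lawrence extension lifting a point of $R^F$ with $\alpha$-dependent coordinates) whose cross-ratio is a primitive element of $F$. Any realization, having its projective invariants in its coordinate field, must therefore have coordinate field containing $F$, so none exists over $G$.

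The main obstacle is the construction of the $F$-analog $K^F$ in step two. In Section~\ref{ssc:constr} the specific value $\lambda = \sqrt{2}-1$ emerged as the unique positive root of a particular quadratic forced by $\mathfrak{R}$-symmetry and the chosen incidences with $\Delta_2 \times \Delta_2$. Adapting the construction to arbitrary $F$ means replacing this quadratic by a polynomial with a primitive element of $F$ as a root, which in turn forces the ``anchor'' polytope $\Delta_2 \times \Delta_2$ to be replaced by a larger projectively unique configuration whose incidence pattern with $\PS^F[1]$ encodes the desired minimal polynomial. A cleaner alternative, at the price of a larger but still fixed dimension $D(F)$, is to invoke the universality-type results of~\cite{AP}, which directly convert any polytope family with low-dimensional realization space and a unique projective class realized over $F$ into a family of projectively unique polytopes with the same field characteristic; the verification of low-dimensional realization space for $\mathrm{CCTP}^F_4[n]$ is identical to the proof of Theorem~\ref{mthm:Lowdim} after substituting $\PS^F[1]$ for $\PS[1]$.
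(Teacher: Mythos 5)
Your proposal identifies the correct high-level machinery (weak projective triples, subdirect cone, Lawrence extension) but puts the field-dependence in the wrong place, and the gap you acknowledge in your fourth paragraph is in fact the crux: the paper circumvents it entirely by a different division of labor.

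The paper keeps the cross-bedding cubical torus polytopes \emph{rational}. Concretely, one starts from $\vartheta^{\mathbb{Q}}_0=(\tfrac13,-\tfrac13,2,0,1)$ and $\vartheta^{\mathbb{Q}}_1=(1,0,\tfrac35,0,1)$, verifies that the resulting $\CT^{\mathbb{Q}}[3]$ is ideal and in convex position, and notes that since the extension step is rational (Formula~\ref{fml:explicit}) and commutes with the rational linear map $\varTheta=\mathrm{diag}(1,1,1,\sqrt3,1)$, all $\mathrm{CCTP}^{\mathbb{Q}}_4[n]:=\conv\varTheta(\CT^{\mathbb{Q}}[n])$ have rational coordinates. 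The $F$-specific content lives \emph{entirely} in the external point configuration: with $Q:=\F_0(\mathrm{CCTP}^{\mathbb{Q}}_4[1])$ and a suitable hyperplane-spanning $V$, Corollary~\ref{cor:AP} (von Staudt coordinatization, from~\cite{AP}) supplies a projectively unique configuration $\mathrm{COOR}^F[Q\cup V]$ containing $Q\cup V$ which is realizable over $F$ but over no strict subfield. Setting $R:=\mathrm{COOR}^F[Q\cup V]\setminus Q$ gives the weak projective triple, and since subdirect cone and Lawrence extension do not change the field of realizability, the result follows with no need to re-derive any $\lambda$-type algebraic relation. Your approach instead tries to deform $\PS[1]$ itself into $\PS^F[1]$ with coordinates generating $F$, which is exactly what forces you to redo the entire Section~\ref{ssc:constr} analysis (the $\lambda=\sqrt2-1$ computation is rigidly tied to the $\Delta_2\times\Delta_2$ anchor and the symmetry group $\mathfrak{R}$), an obstruction you correctly note but do not resolve. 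Your proposed fallback---invoking universality-type results of~\cite{AP}---is pointing at the right tool, but the decisive move is to decouple: rational CCT, $F$-structured $\mathrm{COOR}^F$, and let the preservation of the field of realizability under the two constructions carry the conclusion. Your cross-ratio argument for non-realizability over $G$ is a reasonable alternative route to the same conclusion, but it is unnecessary and also incomplete as stated: you would need to exhibit an explicit combinatorially distinguished collinear quadruple whose cross-ratio is a primitive element of $F$, whereas the paper gets this for free because $\mathrm{COOR}^F$ is already certified to be unrealizable over strict subfields and the pipeline is field-preserving in both directions.
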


A point configuration or polytopal complex is \Defn{rational} if the coordinates of all of its points (resp.\ its vertices) are rational numbers. 
\begin{cor}
 There is a $D=D(\mathbb{Q})$ such that for any $d \ge D$, there is an infinite family of \emph{rational} projectively unique $d$-dimensional polytopes $\mathrm{PCCTP}^{\mathbb{Q}}_d[n],\ n\ge 1,$ on $12(n+1)+d+D-9$ vertices.
\end{cor}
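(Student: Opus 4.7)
The proof plan is to specialize Theorem~\ref{thm:anyfield} to the trivial finite extension $F=\mathbb{Q}$ of $\mathbb{Q}$ itself, which is a finite (degree one) extension and hence an admissible choice for the theorem. Concretely, I would set $D := D(\mathbb{Q})$, where $D(\mathbb{Q})$ is the dimension bound provided by the theorem for this particular $F$. For each $d \ge D$ and each $n \ge 1$, the theorem produces a projectively unique $d$-polytope $\mathrm{PCCTP}^{\mathbb{Q}}_d[n] \subset S^d_+$ on precisely $12(n+1)+d+D(\mathbb{Q})-9 = 12(n+1)+d+D-9$ vertices, and these polytopes are combinatorially distinct for distinct $n$. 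This matches the vertex count and the dimension hypothesis of the corollary verbatim.

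It remains only to observe that the polytopes produced are rational and that no residual clause of the theorem needs additional verification. By the conclusion of Theorem~\ref{thm:anyfield}, the vertex coordinates of $\mathrm{PCCTP}^{\mathbb{Q}}_d[n]$ lie in $F=\mathbb{Q}$, which is exactly the definition of rationality in the sense of the preceding subsection (using homogeneous coordinates in $S^d_+$). The second, stronger conclusion of the theorem, namely that there is no realization with coordinates in any strict subfield $G \subsetneq F$, becomes vacuous here because $\mathbb{Q}$ is the prime field of characteristic zero and therefore admits no proper subfields. Hence nothing further needs to be checked.

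There is no genuine obstacle at this stage of the argument: the substantive content is already packaged inside Theorem~\ref{thm:anyfield}, whose proof must engineer the weak projective triples and the subdirect cone/Lawrence extension machinery of Section~\ref{sec:projun} so that the prescribed coordinate field $F$ is realized while strict subfields are excluded. The corollary merely extracts the cleanest case, $F=\mathbb{Q}$, in which the exclusion clause carries no information and the coordinate clause reduces to rationality. One small sanity check I would flag is that the constant $D=D(\mathbb{Q})$ in the corollary is the same constant that appears in the vertex count, so the statement is internally consistent; this is automatic from the theorem once $F=\mathbb{Q}$ is fixed throughout.
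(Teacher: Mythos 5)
Your proof is correct and is exactly the intended argument: the paper states the corollary without proof as an immediate specialization of Theorem~\ref{thm:anyfield} to $F=\mathbb{Q}$, and your observations that coordinates in $\mathbb{Q}$ means rational and that the nonrealizability-over-strict-subfields clause is vacuous for the prime field are the only points worth recording.
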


For the proof, let us first recall a fundamental result going back to von Staudt~\cite{Staudt}.

\begin{prp}[cf.\ {\cite[Cor.\ U.17]{AP}}]\label{prp:AP}
Let $Q$ denote any point configuration in $S^d_+\subset S^d$ whose elements are described by algebraic coordinates. Then there is a projectively unique point configuration $\mathrm{COOR}[Q]$ in $S^d$ that contains $Q$.
\end{prp}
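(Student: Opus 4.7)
The plan is to lift $Q$, via a carefully chosen sequence of incidence constructions, to a larger projectively unique point configuration. The classical tool behind this is von Staudt's \emph{algebra of throws} (\emph{Wurfrechnung}), which realizes field operations purely through projective incidences. I would begin by fixing a projective frame in $S^d$: a projective basis $p_0,\dots,p_d$ together with a unit point $u$ so that $p_0,\dots,p_d,u$ forms a standard projective basis. Since $\mathrm{PGL}(\mathbb{R}^{d+1})$ acts simply transitively on projective frames of $S^d$, this $(d+2)$-tuple is, on its own, a projectively unique point configuration; moreover every realization of it in $S^d$ is Lawrence equivalent to our initial one precisely because the order type of a projective frame is rigid.

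Next, on each reference line through two points of the frame, von Staudt's construction provides incidence-geometric recipes for addition, multiplication, and inversion of coordinates measured with respect to the chosen zero, one, and infinity on that line. Iterating these recipes a finite number of times, I can place a point on any reference line with prescribed rational coordinate, using only intersections of spans of previously constructed points. Combining the coordinate directions, every point in $S^d_+$ with rational homogeneous coordinates becomes realizable as the unique intersection of flats spanned by already-available points.

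To handle the general algebraic case, I would reduce coordinates to a common primitive element: the finite set of coordinates of all points in $Q$ lies in a finitely generated algebraic extension $\mathbb{Q}(\theta)$, so each coordinate is a polynomial in $\theta$ with rational coefficients. It therefore suffices to \emph{pin down} a single reference point whose coordinate along one line of the frame equals $\theta$, after which all remaining points of $Q$ are constructible from $\theta$ by von Staudt moves. To pin down $\theta$, I would encode its minimal polynomial relation $p(\theta)=0$ as an incidence of the candidate point with an auxiliary configuration of rational-coordinate points: a degree two relation $\theta^2=c$ can be written as a cross-ratio condition on four suitably placed points, and higher-degree relations are reduced to chains of quadratic ones by adjoining further auxiliary algebraic numbers (which themselves are handled inductively). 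Together with a finite number of additional auxiliary points placed near $\theta$ to select the correct real root, this determines $\theta$ uniquely among the Galois conjugates. Adjoin to $Q$ the projective frame, the points realizing $\theta$ and its quadratic witnesses, and every intermediate van Staudt construction point used to place the coordinates of points in $Q$; call the resulting finite configuration $\mathrm{COOR}[Q]$. Projective uniqueness is now automatic: in any Lawrence equivalent realization a projective transformation normalizes the reference frame, after which each newly constructed point — being the unique intersection of flats spanned by points already fixed by the induction — is forced to coincide with the original.

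The main obstacle lies in the third step, namely the translation of polynomial equations in $\theta$ into incidence conditions that are genuinely rigid under Lawrence equivalence over $S^d_+$, as opposed to being determined only up to Galois conjugation or degenerating to points at infinity. This is handled by adding a sufficient set of auxiliary rational or lower-degree algebraic ``separator'' points whose order type with $\theta$ singles out the correct real root; verifying that such separators can always be placed in $S^d_+$ while preserving the recursive structure of the construction is the main bookkeeping hurdle, but follows the standard pattern of \cite[Cor.\ U.17]{AP}.
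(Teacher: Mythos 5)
The paper itself gives no proof of Proposition~\ref{prp:AP}; it simply attributes the result to von Staudt and cites~\cite[Cor.\ U.17]{AP}. Your proposal reconstructs exactly the expected argument: fix a projective frame (projectively unique, since $\mathrm{PGL}(\R^{d+1})$ acts simply transitively on ordered frames), then use von Staudt's incidence constructions for $+$, $\cdot$ and inversion to pin down rational coordinates, and finally pin down a primitive element $\theta$ for the finite extension by encoding its minimal polynomial as an incidence and selecting the desired real root via order-type (``separator'') constraints, which Lawrence equivalence preserves. This is the standard Mn\"ev/Richter-Gebert-style algebraization that the cited reference carries out, so in spirit the proposal and the paper's intended proof coincide.

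Two small points deserve tightening. First, the phrase ``higher-degree relations are reduced to chains of quadratic ones'' is correct only if read as ``the polynomial relation $p(\theta)=0$ is linearized by introducing the auxiliary quantities $\theta^2,\theta^3,\dots$ with each step $\theta^{k+1}=\theta\cdot\theta^k$ being a single von Staudt multiplication (a bilinear, hence degree-two, incidence), plus one final linear relation''; it must not be read as reducing the field extension to a tower of quadratic extensions, which is false in general (e.g.\ $2^{1/3}$ lies in no quadratic tower over $\Q$). Second, the separators need only distinguish $\theta$ from the \emph{other real roots} of its minimal polynomial, not from all Galois conjugates, since non-real conjugates never arise in a realization over $\R$; this is worth saying because it is precisely why a finite set of rational separators suffices. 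Also note that the proposition only demands that $\mathrm{COOR}[Q]$ lie in $S^d$ (hence, by the paper's convention, in \emph{some} open hemisphere), not in $S^d_+$; this relaxes your final bookkeeping concern, since any finite configuration in $\mathbb{RP}^d$ can be lifted into a single open hemisphere by choosing a hyperplane missing all constructed points as the ``equator.''
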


\begin{cor}\label{cor:AP}
For $F\subset \R$ any finite field extension over $\mathbb{Q}$, and for any rational point configuration $Q$ in $S^d_+\subset S^d$, $d\ge 3$, there exists a projectively unique point configuration $\mathrm{COOR}^F[Q]\in S^d_+$ that contains $Q$ and such that $\mathrm{COOR}^F[Q]$ is realizable with coordinates in $F$, but not realizable with coordinates in $G$, where $G$ is any strict subfield of $F$.
\end{cor}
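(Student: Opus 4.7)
The strategy is to apply Proposition \ref{prp:AP} after enlarging $Q$ by a small auxiliary configuration that pins down a specific projective invariant—a cross-ratio—to a primitive element of $F$.

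First, I would fix a primitive element $\alpha$ of $F$ over $\mathbb{Q}$, so $F=\mathbb{Q}(\alpha)$, and choose three distinct rational points $a_1,a_2,a_3\in S^d_+$ on a common projective line $\ell\subset S^d$ whose relative interior meets $S^d_+$. Let $a_4$ be the unique point of $\ell$ for which the cross-ratio satisfies $[a_1,a_2;a_3,a_4]=\alpha$. Because three of the four points have rational homogeneous coordinates, we may pick homogeneous coordinates of $a_4$ in $F^{d+1}$, and by shrinking the configuration towards a rational basepoint we may additionally assume $a_4\in S^d_+$. Set $A:=\{a_1,a_2,a_3,a_4\}$ and $Q^\alpha:=Q\cup A$, a finite point configuration in $S^d_+$ with all coordinates in $F$.

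Next, I would apply Proposition \ref{prp:AP} to $Q^\alpha$ to obtain a projectively unique point configuration $\mathrm{COOR}[Q^\alpha]\subseteq S^d$ containing $Q^\alpha$. The construction underlying that proposition proceeds by iteratively adjoining intersection points of spans of already-present points; a routine localization (applying a projective transformation that contracts $S^d$ towards an interior basepoint of $S^d_+$) lets us arrange $\mathrm{COOR}[Q^\alpha]\subseteq S^d_+$ without altering the Lawrence-equivalence type. Define $\mathrm{COOR}^F[Q]:=\mathrm{COOR}[Q^\alpha]$. By construction this is projectively unique, contains $Q$, lies in $S^d_+$, and admits a realization with coordinates in $F$.

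Finally, I would rule out realizations over any strict subfield $G\subsetneq F$. In any realization of $\mathrm{COOR}^F[Q]$, the four marked points corresponding to $a_1,a_2,a_3,a_4$ remain collinear, since collinearity is a combinatorial (Lawrence-equivalence) condition preserved by projective equivalence, and their cross-ratio—being a projective invariant—equals $\alpha$. If all coordinates of such a realization lay in $G$, then the cross-ratio, being a rational function of the coordinates, would lie in $G$, forcing $\alpha\in G$ and hence $F=\mathbb{Q}(\alpha)\subseteq G$, a contradiction. The cross-ratio argument is classical, so the only real obstacle is the localization step ensuring that the $\mathrm{COOR}$-construction of \cite{AP} can be kept inside $S^d_+$; this is the one place where we cannot treat Proposition \ref{prp:AP} as a pure black box, but it is harmless because the construction is built from finite iterated line intersections and can be confined to any prescribed open neighborhood of a rational basepoint.
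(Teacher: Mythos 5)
The paper states this corollary without proof, treating it as immediate from Proposition~\ref{prp:AP} (attributed to \cite[Cor.~U.17]{AP}). Your argument supplies exactly the omitted reasoning via the standard von Staudt-style route: write $F=\mathbb{Q}(\alpha)$ by the primitive element theorem, adjoin a collinear quadruple with cross-ratio $\alpha$, apply Proposition~\ref{prp:AP} to force projective uniqueness, and conclude from projective invariance of the cross-ratio that any Lawrence-equivalent realization---necessarily a projective image, by uniqueness---has $\alpha$ in its coordinate field, so no proper subfield $G\subsetneq F$ suffices. Two small refinements. For $a_4$: rather than ``shrinking,'' simply choose $\ell$ to meet the interior of $S^d_+$; then $\ell\cap S^d_+$ is an open half great-circle whose gnomonic chart is all of $\R$, so the fourth point lies in $S^d_+$ for every $\alpha\in\R$. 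For confining $\mathrm{COOR}[Q^\alpha]$ to $S^d_+$: you are right to flag this, since Proposition~\ref{prp:AP} as stated only lands in $S^d$, and the corollary's use in Theorem~\ref{thm:anyfield} requires $Q$ (and $V$) to stay fixed, so a global projective contraction is not admissible there; one should instead observe that each von Staudt intersection step produces an antipodal pair and, for generic data, one can consistently select the representative inside $S^d_+$. Subject to that bookkeeping, your proof is correct and is essentially the argument the authors intend.
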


\smallskip

\begin{proof}[\textbf{Proof of Theorem~\ref{thm:anyfield}}] As for Theorem~\ref{mthm:projun}, the proof consists of two parts: We first give a family of rational polytopes $\mathrm{CCTP}^{\mathbb{Q}}_4[n]$ for which $\dim \cR (\cdot)$ is uniformly bounded, and then produce projectively unique polytopes from them.
\paragraph*{Construction of the polytopes $\mathrm{CCTP}^{\mathbb{Q}}_4[n]$:}
We start by constructing CCTs analogous to those given in Section~\ref{ssc:example}. Mirroring the notation of that section, let us replace $\vartheta_0$ and $\vartheta_1$ by
\[\vartheta^\mathbb{Q}_0:=\left(\tfrac{1}{3}, -\tfrac{1}{3}, 2, 0, 1\right)\quad \text{and} \quad \vartheta^\mathbb{Q}_1:=\left(1, 0, \tfrac{3}{5}, 0, 1\right),\]
respectively. With this, we obtain $\CT^\mathbb{Q}[1]$, the starting CCT for our construction. We note, without proof, some facts about this complex:
\begin{compactitem}[$\circ$]
\item The complex $\CT^\mathbb{Q}[1]$ can be extended twice to $\CT^\mathbb{Q}[3]$, which is ideal. Hence, Theorem~\ref{thm:ext} shows that $\CT^\mathbb{Q}[1]$ can be extended to $\CT^\mathbb{Q}[n]$ for any $n\ge 1$.
\item The complex $\CT^\mathbb{Q}[3]$ is in convex position. Hence, the complexes $\CT^\mathbb{Q}[n],\ n\ge 1,$ are in convex position by Theorem~\ref{thm:convp}.
\item Due to symmetry of ideal CCTs, $\CT^\mathbb{Q}[1]$ cannot be rational. However, it is linearly equivalent to a complex $\varTheta(\CT^\mathbb{Q}[1])$ with rational vertex coordinates. For instance, $\varTheta$ can be chosen as the linear transformation given by the diagonal matrix with entries $(1,1,1,\sqrt{3},1)$.
\item As mentioned in Section \ref{ssc:expformula}, the extensions of CCTs are given by a rational functions. Moreover, extension clearly commutes with any projective transformation. Any extension of the rational complex $\varTheta(\CT^\mathbb{Q}[1])$ is therefore rational.
\end{compactitem}
Similar to Example~\ref{ex:stdet}, we give the coordinates of $\CT^\mathbb{Q}[n]$ up to layer $10$ in Table \ref{tab:list2} below. Again, we set $\kappa_0:=\vartheta^\mathbb{Q}_0$, $\kappa_1:=\rot^2_{1,2}\vartheta^\mathbb{Q}_1$, and use Proposition~\ref{prp:it} and Formula~\ref{fml:explicit} to inductively generate the complexes $\CT^\mathbb{Q}[n]$. We omit the fourth and fifth coordinate, since these are always $0$ and $1$, respectively.

\renewcommand{\arraystretch}{1.2}
\begin{table}[htbf]
\centering
$\begin{array}{|c||c|c|c|c|}
\hline
\text{Vertex}&\text{First coordinate} & \text{Second coordinate} &\text{Third coordinate} & \lambda(\kappa_i)\ \text{(float)} \\
\hline
\hline
\kappa_0 & \frac{1}{3}&\frac{\mbox{-}1}{3} & 2&1.8947\\
\hline
\kappa_1 & -1 & 0& \frac{3}{5} &0.5294\\
\hline
\kappa_2 & \frac{1}{25} &\frac{27}{25}& \frac{12}{125}&0.0157\\ 
\hline
\kappa_3 &\frac{179}{165} &\frac{\mbox{-}7}{165}& \frac{4}{275}& 3.5892 \cdot 10^{-04}\\
\hline
\kappa_4 &\frac{\mbox{-}93}{2185} &\frac{\mbox{-}2371}{2185}&\frac{24}{10925}&  8.1842 \cdot 10^{-06} \\ 
\hline
\kappa_5 &\frac{\mbox{-}7851}{7235}& \frac{308}{7235}&\frac{12}{36175} & 1.8660 \cdot 10^{-07}\\
\hline
\kappa_6 & \frac{8159}{191655}& \frac{207973}{191655} & \frac{16}{319425}&4.2549 \cdot 10^{-09}\\
\hline
\kappa_7 &\frac{1377301}{1269235}& \frac{\mbox{-}54033}{1269235} & \frac{48}{6346175}
& 9.7016 \cdot 10^{-11}\\ 
\hline
\kappa_8 & \frac{\mbox{-}715667}{16811015}&\frac{\mbox{-}18242349}{16811015} & \frac{96}{84055075}
& 2.2121 \cdot 10^{-12}\\
\hline
\kappa_9 & \frac{\mbox{-}60404969}{55665465}&\frac{2369752}{55665465} & \frac{16}{92775775}
&5.0438 \cdot 10^{-14}\\ \hline
\kappa_{10} & \frac{62774721}{1474577945} &\frac{1600127387}{1474577945}&   \frac{192}{7372889725}&1.1501 \cdot 10^{-15}\\
\hline \end{array}$
\caption{Coordinates for the polytopes $\mathrm{CCTP}^{\mathbb{Q}}_4[n]$.}\label{tab:list2}

\end{table}

As in the case of the original cross-bedding cubical torus polytopes, we define the desired polytopes $\mathrm{CCTP}^{\mathbb{Q}}_4[n]$ as convex hulls of the CCTs constructed: \[\mathrm{CCTP}^{\mathbb{Q}}_4[n]:=\conv(\varTheta(\CT^{\mathbb{Q}}_4[n])).\]
With this, we have \[f_0(\mathrm{CCTP}^{\mathbb{Q}}_4[n])=f_0(\CT^{\mathbb{Q}}_4[n])=12(n+1),\]
and hence, as in the proof of Theorem~\ref{mthm:Lowdim}, \[\dim\cR (\mathrm{CCTP}^{\mathbb{Q}}_4[n])\le 4f_0(\CT^{\mathbb{Q}}[1])= 96.\]

\paragraph*{Construction of the polytopes $\mathrm{PCCTP}^{F}_4[n]$:}
Set $Q:=\varTheta(\F_0(\CT^{\mathbb{Q}}[1]))=\F_0(\mathrm{CCTP}^{\mathbb{Q}}_4[1])$. Let $V$ be a set of points in $S^4_+$ that span a hyperplane not intersecting any of the polytopes $\mathrm{CCTP}^{\mathbb{Q}}_4[n]$, similar to part VII.\ in the proof of Lemma~\ref{lem:affine}. Apply Corollary~\ref{cor:AP} to find a projectively unique point configuration $\mathrm{COOR}^F[Q\cup V]$ that contains $Q$ and $V$. By Lemma~\ref{lem:uniextem}, $\varTheta(\F_0(\CT^{\mathbb{Q}}[1]))=\F_0(\mathrm{CCTP}^{\mathbb{Q}}_4[1])$ frames $\mathrm{CCTP}^{\mathbb{Q}}_4[n]$ for all $n\ge 1$. Hence \[(\mathrm{CCTP}^{\mathbb{Q}}_4[n],Q,R),\ \ n\ge 1,\ \ R:=\mathrm{COOR}^F[Q\cup V]\setminus Q,\] is a weak projective triple. Realizability in vector spaces over fields is not affected by the operations subdirect cone (Definition~\ref{def:subd} and Lemma~\ref{lem:subdc}) and Lawrence extension (Proposition~\ref{prp:mlwextn}), hence applying both yields the desired polytopes $\mathrm{PCCTP}^{F}_{D(F)}[n]$. These polytopes have dimension ${D(F)}=f_0(Q)+f_0(R)+5=f_0(R)+29$ and the number of vertices computes to
\[f_0(\mathrm{PCCTP}^{F}_{D(F)}[n])=2(f_0(Q)+f_0(R))+f_0\left(\mathrm{CCTP}^{F}_4[n]\right)+1=12(n+1)+2D(F)-9. \qedhere\]
\end{proof}

\subsubsection{Many inscribed projectively unique polytopes}\label{ssc:ins}
It is a classical and elementary fact that if $W$ is a $3$-cube and $S$ is a sphere in $\R^d$ such that seven vertices of $W$ lie in $S$, then all vertices of $W$ lie in $S$ \cite[Rec.~2]{Miquel}. In fact, if $W\in \R^d$ is a $3$-cube and $\mathcal{Q}$ is a quadric that contains seven vertices of ${W}$, then the last vertex of $W$ lies on $\mathcal{Q}$ as well, cf.~\cite[Sec.\ 3.2]{BobSur}. 
As a consequence, we obtain the following beautiful result:

\begin{prp}\label{prp:quadric}
Let $\CT[2]$ denote a CCT in $S^d$ or $\R^d$, and let $S$ be a sphere in $S^d$ resp.\ $\R^d$ that contains $\F_0(\CT[2])$. Then for all extensions $\CT[n]$ of $\CT[2]$, we have $\F_0(\CT[n])\subset S$. 
\end{prp}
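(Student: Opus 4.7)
The plan is to prove the statement by induction on $n\ge 2$, using the cited quadric-extension fact: if seven vertices of a $3$-cube lie on a quadric, then the eighth one does too. The base case $n=2$ is given by hypothesis. For the inductive step, assume $\F_0(\CT[n])\subset S$; we want $\F_0(\CT[n+1])\subset S$.

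Recall from Section~\ref{ssc:extension} (in particular Proposition~\ref{prp:locatt}) that the elementary extension $\CT[n+1]$ of $\CT[n]$ is obtained by attaching, for every vertex $v\in\RR(\CT[n],n-2)$, a $3$-cube $X(v)$ containing $\St(v,\RR(\CT[n],[n-2,n]))$ as a subcomplex. The unique new vertex $x(v)$ of $X(v)$ lies in layer $n+1$, and conversely every vertex of $\RR(\CT[n+1],n+1)$ arises in this way as $x(v)$ for some $v$. By construction, the remaining seven vertices of $X(v)$ belong to $\CT[n]$.

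A sphere $S$ in $\R^d$ or in $S^d$ is a quadric. Since $\F_0(\CT[n])\subset S$ by the inductive hypothesis, the seven vertices of $X(v)$ lying in $\CT[n]$ all belong to the quadric $S$. Applying the $3$-cube/quadric fact quoted above, the eighth vertex $x(v)$ also lies on $S$. Since this holds for every new cube $X(v)$, we conclude $\F_0(\CT[n+1])\subset S$, completing the induction.

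The only mild subtlety is bookkeeping: one must verify that the seven ``old'' vertices of $X(v)$ really all lie in $\F_0(\CT[n])$ (and not, accidentally, already in a layer that is being newly generated), and that every new vertex is captured by some such cube. Both are immediate from the combinatorial description of the extension recalled in Section~\ref{ssc:extension}, so no further work is needed beyond invoking the $3$-cube/quadric lemma.
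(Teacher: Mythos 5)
Your proof is correct and is essentially the argument the paper has in mind when it writes ``As a consequence'' after quoting the quadric-extension lemma for $3$-cubes: one simply inducts on the width $n$, observing that each layer-$(n+1)$ vertex is the eighth vertex of a $3$-cube whose remaining seven vertices live in layers $n-2$ through $n$.

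One small caveat on your citation. You point to Section~\ref{ssc:extension} and Proposition~\ref{prp:locatt} for the structure of the extension, but those results concern the \emph{geometric construction} of extensions of \emph{ideal} CCTs. Proposition~\ref{prp:quadric} is stated for an arbitrary CCT $\CT[2]$ and an arbitrary (abstract) extension $\CT[n]$, not just those produced by the paper's extension machinery. The combinatorial fact you actually need --- that in any CCT $\CT[n+1]$, every vertex $x$ of layer $n+1$ is the top vertex of exactly one $3$-cube $X$, whose other seven vertices lie in layers $n-2,\,n-1,\,n$ and hence in $\F_0(\RR(\CT[n+1],[0,n]))$ --- follows directly from Definition~\ref{def:CCT} and the structure of the unit cube tiling $\RC$ (each vertex $v\in\Z^3$ is the top vertex of the unique cube $\prod_i[v_i-1,v_i]$), combined with Lemma~\ref{lem:uniext}. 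Citing the combinatorial definition rather than the ideal-CCT extension apparatus gives the argument the generality the statement requires; apart from that, the induction is sound.
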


Here, a \Defn{sphere} in $S^d$ is the intersection of $S^d$ with some affine subspace of $\R^{d+1}$. This opens the door to the use of cross-bedding cubical tori in the theory of inscribed polytopes. A polytope in $S^d$ or $\R^d$ is \Defn{inscribed} if all its vertices are contained in some sphere. Combinatorial types of polytopes that can be realized in an inscribed way are \Defn{inscribable}.

Inscribable polytopes are a classical and intriguing subject in polytope theory. Perhaps overly optimistic, Steiner~\cite{Steiner} asked in 1832 for a classification of inscribable polytopes. For a long time, it was not even known whether all combinatorial types of polytopes are inscribable, until Steinitz provided an example of a non-inscribable polytope~\cite{Steinitz}, cf.\ \cite[Sec.\ 13.5]{Grunbaum}. Much later, interest in inscribed $3$-polytopes experienced a revival due to their importance in the theory of hyperbolic $3$-manifolds \cite{Thurston} and Delaunay triangulations \cite{Brown}. Conversely, the connection to hyperbolic geometry led to an almost complete understanding of inscribable polytopes of dimension~$3$~\cite{Rivin, Rivin2}. 
Many problems concerning inscribed polytopes remain; for some recent progress, compare~\cite{Gonska}, \cite{AP2}.

In this section, we present some progress in the direction of understanding \emph{high-dimensional} inscribable polytopes by proving the following analogues of Theorems~\ref{mthm:Lowdim} and \ref{mthm:projun} for inscribed polytopes.
\begin{theorem}\label{thm:lowdimi}
For each $d\ge  4$, there exists an infinite family of combinatorially distinct inscribed $d$-dimensional polytopes $\mathrm{CCTP}^{\hspace{.04em} \mathrm{in}}_d[n]$ with $12(n+1)+d-4$ vertices such that $\dim \cR (\mathrm{CCTP}^{\hspace{.04em} \mathrm{in}}_d[n])\le 76+d(d+1)$ for all $n\ge  1$.
\end{theorem}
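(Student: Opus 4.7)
My plan is to mirror the proof of Main Theorem \ref{mthm:Lowdim} almost verbatim, with a single new ingredient: the initial CCT will be chosen to be inscribed in a common $3$-sphere. Concretely, I aim to produce an ideal $3$-CCT $\CT_{\mathrm{in}}[3]\subset S^4$ in convex position whose $48$ vertices all lie on a common $3$-sphere $\Sigma\subset S^4$. Given this, Proposition \ref{prp:quadric} applied inductively at each elementary extension guarantees $\F_0(\CT_{\mathrm{in}}[n])\subset\Sigma$ for all $n\ge 3$: every newly attached $3$-cube already has seven of its vertices on $\Sigma$, and the quadric-preservation observation underlying Proposition \ref{prp:quadric} forces its eighth vertex onto $\Sigma$ as well. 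Theorem \ref{thm:ext} then supplies the existence of the extensions, Theorem \ref{thm:convp} shows they are in convex position, and setting $\mathrm{CCTP}^{\mathrm{in}}_4[n]:=\conv\CT_{\mathrm{in}}[n]$ yields inscribed $4$-polytopes on $12(n+1)$ vertices, combinatorially distinct for different $n$ because their $f$-vectors are. Combining Lemmas \ref{lem:incl} and \ref{lem:uniextem} exactly as in Section \ref{ssc:pfmthm1} gives
\[
\dim\cR(\mathrm{CCTP}^{\mathrm{in}}_4[n])\le\dim\cR(\CT_{\mathrm{in}}[n])\le\dim\cR(\CT_{\mathrm{in}}[1])\le 4\, f_0(\CT_{\mathrm{in}}[1])=96=76+4\cdot 5,
\]
matching the claimed bound in the case $d=4$.

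For $d>4$ I would iterate the pyramid construction starting from $\mathrm{CCTP}^{\mathrm{in}}_4[n]$. Pyramids preserve inscribability: if $P\subset\R^k$ has vertices on a sphere $S\subset\R^k$, then the $k$-spheres in $\R^{k+1}$ containing $S$ form a one-parameter family (their centers lie on the line through the center of $S$ orthogonal to $\R^k$), so for any apex $v\in\R^{k+1}\setminus\R^k$ there is a unique member of this family through $v$, and $\conv(P\cup\{v\})$ is inscribed in it. A direct count shows that passing from a $k$-polytope with $\dim\cR\le N_k$ to its pyramid increases the realization space dimension by at most $2(k+1)$, namely $k+1$ parameters for the affine hyperplane carrying the base plus $k+1$ for the apex coordinates. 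Starting from $N_4=96$ and iterating $d-4$ times thus yields
\[
N_d\le 96+\sum_{k=5}^{d}2k=96+d(d+1)-20=76+d(d+1),
\]
while the vertex count $12(n+1)+(d-4)$ and combinatorial distinctness across $n$ are both immediate.

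The main obstacle will be Step $1$, the explicit construction of an ideal, inscribed, convex-position $3$-CCT. The $\mathfrak{S}$-symmetric $1$-CCTs are parametrized by two orbit representatives $\vartheta_0,\vartheta_1$; by $\mathfrak{R}$-equivariance, any $\mathfrak{R}$-invariant $3$-sphere passing through $\vartheta_0$ and $\vartheta_1$ automatically contains the entire orbits $L_0\cup L_1$, and $\mathfrak{R}$-invariant $3$-spheres in $S^4$ form a one-parameter family (the horizontal slices $S^4\cap\{x_5=\mathrm{const}\}$, since the only $\mathfrak{R}$-fixed direction in $\R^5$ is $e_5$), so the inscribability condition reduces to the single equation $\langle\vartheta_0,e_5\rangle=\langle\vartheta_1,e_5\rangle$ in homogeneous coordinates. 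This still leaves a codimension-one subfamily of symmetric $1$-CCTs, large enough to accommodate the open conditions of ideality (verified via the transversality criterion Proposition \ref{prp:inj3}) and of convex position after two elementary extensions (verified via the local-to-global convexity result Proposition \ref{prp:loccrt1lay}), exactly as for $\PS[3]$ in Section \ref{ssc:example}. Once a single admissible $\CT_{\mathrm{in}}[1]$ is exhibited in closed form, its two successive extensions to $\CT_{\mathrm{in}}[3]$ are produced by the same direct computation used there, after which Proposition \ref{prp:quadric} together with Theorems \ref{thm:ext} and \ref{thm:convp} carry the argument to all widths $n\ge 3$.
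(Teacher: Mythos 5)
Your route is the same as the paper's: Proposition~\ref{prp:quadric} propagates inscribedness through all extensions (seven vertices of each newly attached cube already lie on the sphere), Theorems~\ref{thm:ext} and~\ref{thm:convp} supply existence and convex position, Lemmas~\ref{lem:incl} and~\ref{lem:uniextem} give the bound $96$ in dimension $4$, and iterated pyramids handle $d>4$; your observations that pyramids preserve inscribability and cost at most $2(k+1)$ dimensions per step are correct and account for the stated bound $76+d(d+1)$ (the paper leaves this bookkeeping implicit).

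The genuine gap is exactly the step you defer as ``the main obstacle'': the existence of an inscribed ideal $3$-CCT in convex position is the entire new content of Theorem~\ref{thm:lowdimi}, and your argument for it is not a proof. Knowing that inscribability cuts out a codimension-one subfamily of symmetric $1$-CCTs and that ideality and convex position of the twice-extended complex are open conditions does not show that the open set meets that hypersurface: the one known good point, $\PS[1]$ of Section~\ref{ssc:example}, does not lie on it (its homogeneous coordinate vectors have $\|\vartheta_0\|_2\approx 2.31\neq\|\vartheta_1\|_2\approx 1.73$), and you supply neither a good point on the other side together with a connectedness argument nor an explicit solution. The paper's proof consists precisely of exhibiting such a seed: $\vartheta^{\hspace{.04em}\mathrm{in}}_0=(1,-1,y,0,1)$, $\vartheta^{\hspace{.04em}\mathrm{in}}_1=(\nicefrac{11}{10},x,z,0,1)$ with explicit algebraic $x,y,z$, and checking that the resulting $\CT^{\hspace{.04em}\mathrm{in}}[3]$ is ideal, in convex position, and has all vertices on a common sphere. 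A related slip: your inscribability condition ``$\langle\vartheta_0,e_5\rangle=\langle\vartheta_1,e_5\rangle$ in homogeneous coordinates'' is vacuous, since homogeneous coordinates normalize the last entry to $1$; the correct condition for the two $\mathfrak{R}$-orbits to lie on a common invariant slice $\{x_5=\mathrm{const}\}\cap S^4$ is that the unit-sphere representatives have equal fifth coordinate, i.e.\ $\|\vartheta_0\|_2=\|\vartheta_1\|_2$ for the homogeneous vectors (which is what the paper's example achieves, all vertices having homogeneous norm $\approx 1.8103$). Until a concrete admissible pair is exhibited and verified, the proof is incomplete.
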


\begin{theorem}\label{thm:projuni}
There is a $D\ge 0$ such that for each $d\ge  D$, there exists an infinite family of combinatorially distinct inscribed $d$-polytopes $\mathrm{PCCTP}^{\hspace{.04em} \mathrm{in}}_{d} [n],\, n\ge 1,$ with $12(n+1)+d+D-9$ vertices, all of which are projectively unique.
\end{theorem}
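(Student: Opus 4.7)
The proof plan mirrors the two-stage strategy used for Theorems~\ref{mthm:projun} and~\ref{thm:anyfield}: first build an infinite family of inscribed $4$-polytopes with uniformly bounded realization space dimension, then promote them to projectively unique polytopes in a fixed higher dimension while preserving the inscribed property at each step. The decisive new ingredient that makes the inscribed version work is Proposition~\ref{prp:quadric}: if a CCT of width $2$ is inscribed in a sphere, every elementary extension is automatically inscribed in the same sphere. Hence the whole ``inscribed'' condition reduces to a \emph{single} condition on the initial data.

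For the first stage, I would construct an inscribed analogue $\CT^{\mathrm{in}}[2]$ of $\PS[2]$ as follows. Replace the initial orbits $\vartheta_0, \vartheta_1$ from Section~\ref{ssc:example} by orbits $\vartheta_0^{\mathrm{in}}, \vartheta_1^{\mathrm{in}}$ still satisfying Definition~\ref{def:symrig}, thereby leaving a two-parameter family of admissible starting data. Apply the explicit iteration map $\mathrm{i}$ of Formula~\ref{fml:explicit} to obtain the layer-$2$ vertex $\vartheta_2^{\mathrm{in}}$; by symmetry the full complex $\CT^{\mathrm{in}}[2]$ is the $\mathfrak{S}$-orbit of these three vertices. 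Because the symmetry group $\mathfrak{S}$ fixes the axis $\Sp\{e_5\}$, any sphere containing the $\mathfrak{S}$-orbits of $\vartheta_0^{\mathrm{in}}, \vartheta_1^{\mathrm{in}}, \vartheta_2^{\mathrm{in}}$ must be centered on that axis, so the inscribability condition collapses to a single rational equation (equality of the squared distances from $\Sp\{e_5\}$ of the three orbit representatives) in the free parameters. Generic choices in the open subset on which $\CT^{\mathrm{in}}[2]$ is ideal, in convex position, and its ideal $3$-CCT extension $\CT^{\mathrm{in}}[3]$ exists in convex position will satisfy this equation; Theorems~\ref{thm:ext} and~\ref{thm:convp} then provide ideal extensions $\CT^{\mathrm{in}}[n]$ in convex position for every $n\ge 1$, and Proposition~\ref{prp:quadric} guarantees that all of them remain inscribed in the same sphere $S$. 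Setting $\mathrm{CCTP}^{\hspace{.04em} \mathrm{in}}_4[n]:=\conv(\CT^{\mathrm{in}}[n])$ and repeating the argument used to prove Theorem~\ref{mthm:Lowdim} (Lemmas~\ref{lem:incl} and~\ref{lem:uniextem}) shows that $\dim\cR(\mathrm{CCTP}^{\hspace{.04em} \mathrm{in}}_4[n])$ is bounded uniformly in $n$.

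For the second stage, I would follow the template of the proof of Theorem~\ref{thm:anyfield}, but performed consistently \emph{inside the inscribed world}. Set $Q:=\F_0(\CT^{\mathrm{in}}[1])$, which frames every $\mathrm{CCTP}^{\hspace{.04em} \mathrm{in}}_4[n]$ by Example~\ref{ex:stdet}(d). An inscribed variant of Corollary~\ref{cor:AP} (obtained from \cite{AP} — the von Staudt constructions there respect quadrics and thus extend any algebraic point configuration on a sphere to a projectively unique one on the same sphere) produces a projectively unique inscribed point configuration $K\supset Q$ on $S$ together with a subconfiguration spanning a hyperplane disjoint from all $\mathrm{CCTP}^{\hspace{.04em} \mathrm{in}}_4[n]$. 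Then $(\mathrm{CCTP}^{\hspace{.04em} \mathrm{in}}_4[n],Q,K\setminus Q)$ is a weak projective triple in the sense of Definition~\ref{def:wpt}. Finally, carry out the subdirect cone (Definition~\ref{def:subd}) and the subsequent Lawrence extensions (Proposition~\ref{prp:mlwextn}) while enforcing that the apex of the cone and each pair $\{\underline{r},\overline{r}\}$ be placed on the ambient sphere. At each step the choice is constrained by a single sphere equation within an open locus of valid Lawrence-equivalent placements, so inscribed representatives exist. The resulting family $\mathrm{PCCTP}^{\hspace{.04em} \mathrm{in}}_d[n]$ inherits projective uniqueness from Lemma~\ref{lem:subdc} and Proposition~\ref{prp:mlwextn}, and the vertex count and dimension formulas are the same as in Theorem~\ref{thm:anyfield}.

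The main obstacle I expect is verifying that the inscribed counterparts of the subdirect-cone apex and the Lawrence pairs actually exist on the chosen sphere. The algebraic constraint (``lies on $S$'') is one equation, while the Lawrence equivalence class is at least one-dimensional, so a transversality argument should produce solutions generically; but one must check that the loci of admissible apex points and admissible lifts $\{\underline{r},\overline{r}\}$ meet the sphere non-trivially and do not collide with the forbidden configurations (e.g.\ the polytope or coplanarities that destroy combinatorial type). A secondary technical issue is the initial algebraic computation producing $\CT^{\mathrm{in}}[2]$: one must certify that the single sphere-equation locus in the parameter space of $(\vartheta_0^{\mathrm{in}},\vartheta_1^{\mathrm{in}})$ meets the open locus where ideality, convex position, and the existence of $\CT^{\mathrm{in}}[3]$ all hold. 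Given the very explicit rational formulas in Section~\ref{ssc:expformula}, this reduces to a finite (if tedious) computation, after which Proposition~\ref{prp:quadric} does all the remaining work.
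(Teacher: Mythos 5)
Your first stage --- constructing an inscribed $\CT^{\mathrm{in}}$ by tuning the initial CCT data to satisfy a single sphere equation, then using Proposition~\ref{prp:quadric} together with Theorems~\ref{thm:ext} and~\ref{thm:convp} to propagate inscribability to all widths --- agrees with the paper's Theorem~\ref{thm:lowdimi} both in spirit and in the dimension count. But your second stage contains a genuine gap, and it stems from a false premise.

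You assert that the von Staudt constructions underlying Proposition~\ref{prp:AP} ``respect quadrics and thus extend any algebraic point configuration on a sphere to a projectively unique one on the same sphere.'' This is not true: the von Staudt coordinatization gadgets proceed by intersecting lines and flats, operations that preserve linear incidences but do not preserve membership in a quadric hypersurface. There is no ``inscribed version'' of Corollary~\ref{cor:AP}, and no such statement appears in \cite{AP}. Moreover, even if you \emph{could} force the auxiliary configuration $R$ onto the inscribing sphere $\mathcal{U}(P)$, this would violate the hypothesis of Lemma~\ref{lem:sbdci}, which requires $H\cap R$ to be disjoint from the closed ball $\mathcal{B}(P)=\conv\mathcal{U}(P)$; points on $\mathcal{U}(P)$ lie in $\partial\mathcal{B}(P)\subset\mathcal{B}(P)$. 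The same objection applies to your intended hypothesis for Proposition~\ref{prp:li}, which needs $R$ entirely outside $\mathcal{B}(P)$.

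The paper's route avoids the issue entirely: one should \emph{not} try to put $R$ on the inscribing sphere. The decisive point is that only the polytope $P$ needs to be inscribed; the auxiliary points $R$ should lie \emph{outside} $\mathcal{B}(P)$, and the Lawrence lifts $\underline{r},\overline{r}$ of Proposition~\ref{prp:li} are placed not on $\mathcal{U}(P)$ itself but on a larger sphere $\mathcal{S}$ in the ambient $S^{d+1}$ chosen so that $\mathcal{S}\cap S^d=\mathcal{U}(P)$ --- any line through $r\notin\mathcal{B}(P)$ can be chosen to meet $\mathcal{S}$ in the two required points. Lemma~\ref{lem:sbdci} handles the subdirect cone analogously under the hypothesis that the wedge hyperplane avoids $\mathcal{B}(P)$. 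So the paper takes $V$ (hence the wedge hyperplane) disjoint from $\mathcal{B}(\mathrm{CCTP}^{\hspace{.04em}\mathrm{in}}_4[1])$, applies the ordinary, non-inscribed Proposition~\ref{prp:AP} to build $\mathrm{COOR}[Q\cup V]$, and lets Lemma~\ref{lem:sbdci} and Proposition~\ref{prp:li} do the inscribing at each higher-dimensional step. Your proposal's attempt to keep everything on a single sphere throughout is both unachievable and unnecessary.
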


By polar duality, analogous results hold for \Defn{circumscribed} polytopes (i.e.\ polytopes all whose facets are tangent to some sphere).

\paragraph*{Many inscribed $4$-polytopes with small realization space}
\enlargethispage{5mm}
\begin{proof}[\textbf{Proof of Theorem \ref{thm:lowdimi}}]
By Proposition~\ref{prp:quadric}, it suffices to provide an ideal CCT $\CT^{\hspace{.04em} \mathrm{in}}[3]$ in convex position in $S^4$ whose vertices lie in some sphere in $S^4$. We do this analogously to Section~\ref{ssc:example}. Parallel to the notation of that section, let us replace $\vartheta_0$ and $\vartheta_1$ by
\[\vartheta^{\hspace{.04em} \mathrm{in}}_0:=(1, -1, y, 0, 1)\quad \text{and} \quad \vartheta^{\hspace{.04em} \mathrm{in}}_1:=(\nicefrac{11}{10}, x, z, 0, 1),\]
respectively. 
Here, $x$ is defined as \begin{align*}
x&=\frac{{{\left(2 i  \sqrt{3} - 2\right)} {\left(-45 i 
\sqrt{566805} + 83895\right)}^{\frac{2}{3}} 60^{\frac{2}{3}} - 263(i\sqrt{3}+1)60^{\frac{4}{3}} - 1560  {\left(-45 i  \sqrt{566805} +
83895\right)}^{\frac{1}{3}}} }{3600  {\left(-45 i 
\sqrt{566805} + 83895\right)}^{\frac{1}{3}}} \\
&=\frac{2^{\frac{5}{6}}}{60}  {\left(2^{\frac{2}{3}} \sqrt{789}
\sin\left(\frac{1}{3}  \arctan\left(\frac{3\sqrt{566805}}{5593}  \right)\right) -
2^{\frac{2}{3}} \sqrt{263} \cos\left(\frac{1}{3} 
\arctan\left(\frac{3\sqrt{566805}}{5593}  \right)\right) - 13  2^{\frac{1}{6}}\right)}
\end{align*}
and we obtain $y$ as
\[y = \frac{ 24 -20  x}{\sqrt{559-400x- 100  x^{2}}}\]
and analogously 
\begin{align*}
z=\frac{319-100   x^{2} - 200   x}{10\sqrt{559-400x- 100  x^{2}}} 
=\frac{{\left(100  x^{2} +
200  x - 319\right)} y}{40(5x-6)}.
\end{align*}
This gives $\CT^{\hspace{.04em} \mathrm{in}}[1]$, and $\CT^{\hspace{.04em} \mathrm{in}}[3]$ is obtained by extending on $\CT^{\hspace{.04em} \mathrm{in}}[1]$ twice. The coordinates of $\vartheta^{\hspace{.04em} \mathrm{in}}_2$ and $\vartheta^{\hspace{.04em} \mathrm{in}}_3$ are too complicated to give them here directly; indeed, it is not even trivial to see that the coordinates for $\vartheta^{\hspace{.04em} \mathrm{in}}_0$ and $\vartheta^{\hspace{.04em} \mathrm{in}}_1$, as given above, are real numbers. Their approximate values are given as
\[x\approx 1.0226363,\qquad y\approx0.5266533,\quad \text{and}\quad z\approx 0.1468968.\]
The CCT $\CT^{\hspace{.04em} \mathrm{in}}[3]$ is an ideal CCT in convex position; in homogeneous coordinates, its vertices lie on a sphere with radius $\approx 1.8103$. Theorems~\ref{thm:ext} and~\ref{thm:convp} demonstrate that $\CT^{\hspace{.04em} \mathrm{in}}[3]$ can be extended to ideal CCTs $\CT^{\hspace{.04em} \mathrm{in}}[n]$ in convex position in $S^4$ of arbitrary width. Analogous to  Example~\ref{ex:stdet} and Section~\ref{ssc:rat}, we give their coordinates in Table~\ref{tab:list3}, this time only approximate and up to layer five, when exact calculations became infeasible. Again, we set $\kappa_0:=\vartheta^{\hspace{.04em} \mathrm{in}}_0$, $\kappa_1:=\rot^2_{1,2}\vartheta^{\hspace{.04em} \mathrm{in}}_1$, and inductively construct the complexes $\CT^{\hspace{.04em} \mathrm{in}}[n]$. As before, we leave out the fourth and fifth coordinates.

\renewcommand{\arraystretch}{1.2}
\begin{table}[h!tbf]
\centering
$\begin{array}{|c||c|c|c|c|c|}
\hline
\text{Vertex}&\text{First coordinate} & \text{Second coordinate} &\text{Third coordinate} & \lambda(\kappa_i) & ||\kappa_i||_2 \\
\hline
\hline
\kappa_0 & 1&
-1&
0.5266533&
0.2435&
1.8103\\
\hline
\kappa_1 & -1.1&
-1.0226363&
0.1468968&
0.0189&
1.8103\\
\hline
\kappa_2 & -1.0243331&
1.1074958&
0.0394770&
1.3686 \cdot 10^{-03}&
1.8103\\ 
\hline
\kappa_3 &1.1080357&
1.0244553&
0.0105801&
9.8305 \cdot 10^{-05}&
1.8103\\
\hline
\kappa_4 &1.0244641&
-1.1080745&
0.0028350&
7.0582 \cdot 10^{-06}&
1.8103\\ 
\hline
\kappa_5 &-1.1080770&
-1.0244648&
0.0007596&
5.0675 \cdot 10^{-07}&
1.8103\\
\hline
\end{array}$
\caption{Coordinates for the polytopes $\mathrm{CCTP}^{\hspace{.04em} \mathrm{in}}_4[n]$.}\label{tab:list3}
\end{table}

As usual, we define
\[ \mathrm{CCTP}^{\hspace{.04em} \mathrm{in}}_4[n]:=\conv\, \CT^{\hspace{.04em} \mathrm{in}}[n].\]
These polytopes are inscribed for all $n\ge 1$ because the vertices of $\CT^{\hspace{.04em} \mathrm{in}}[n]$ lie in a common sphere by Proposition~\ref{prp:quadric}.
Since \[f_0(\CT^{\hspace{.04em} \mathrm{in}}[n])=f_0(\mathrm{CCTP}^{\hspace{.04em} \mathrm{in}}_4[n])=12(n+1),\] Lemma~\ref{lem:uniextem} permits us to conclude the desired bound
\[\dim\cR (\mathrm{CCTP}^{\hspace{.04em} \mathrm{in}}_4[n])\le \cR (\CT^{\hspace{.04em} \mathrm{in}}[1]) \le\dim \cR (\CT^{\hspace{.04em} \mathrm{in}}[1])\le 4f_0(\CT^{\hspace{.04em} \mathrm{in}}[1])= 96.\qedhere\]
\end{proof}

\paragraph*{Many inscribed projectively unique polytopes:}
What does remain is to construct inscribed projectively unique polytopes from the polytopes $\mathrm{CCTP}^{\hspace{.04em} \mathrm{in}}[n]$. The two key tools are the following observations on Lawrence extensions and subdirect cones, respectively. For an inscribed polytope $P$, let us denote by $\mathcal{U}(P)$ the \Defn{circumscribed sphere} to $P$, and let $\mathcal{B}(P):=\conv\, \mathcal{U}(P)$ denote the ball enclosed by it.

\begin{prp}[Lawrence extensions with regard to inscribed polytopes]\label{prp:li}
Let $(P,R)$ denote a PP configuration in $S^d$ such that $P$ is an inscribed polytope and such that $\mathcal{B}(P)$ contains no point of $R$. Then the Lawrence polytope $\mathrm{L}(P,R)$ of $(P,R)$ is inscribable.
\end{prp}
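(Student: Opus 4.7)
The plan is to exploit the freedom one has in the Lawrence construction when choosing the auxiliary direction along which each point $r_i\in R$ is lifted to the pair $(\underline{r}_i,\overline{r}_i)$: I will pick these directions so that all the new vertices land on a single sphere that also contains the vertices of $P$.

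To set up coordinates, first pass to an affine chart in which $\mathcal{U}(P)$ becomes the Euclidean sphere of some radius $\rho$ centered at the origin of $\R^d$, and embed $\R^d\hookrightarrow\R^{d+k}=\R^d\oplus\R^k$ with $k=f_0(R)$. For any $c\in\R^k$, the sphere $\mathcal{U}'\subset\R^{d+k}$ centered at $(0,c)$ of radius $\sqrt{\rho^2+\|c\|^2}$ meets the subspace $\R^d$ in precisely $\mathcal{U}(P)$, so all vertices of $P$ already lie on $\mathcal{U}'$. It remains to place the Lawrence lifts on $\mathcal{U}'$ as well.

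For each $r_i\in R$ I would pick a direction $w_i\in\R^k$ and take $\{\underline{r}_i,\overline{r}_i\}$ to be the two intersection points of the line $r_i+\R w_i$ with $\mathcal{U}'$. The relevant parameters are the roots of
\[
t^2\|w_i\|^2-2t(w_i\cdot c)+\|r_i\|^2-\rho^2\ =\ 0,
\]
whose product equals $(\|r_i\|^2-\rho^2)/\|w_i\|^2$. This is exactly where the hypothesis $R\cap\mathcal{B}(P)=\emptyset$ enters decisively: $\|r_i\|>\rho$ forces the two roots to have the same sign, which places $r_i$ outside the segment $[\underline{r}_i,\overline{r}_i]$; after relabelling $\underline{r}_i$ as the nearer endpoint one recovers $\underline{r}_i\in[r_i,\overline{r}_i]$, exactly the combinatorial data required of a Lawrence lift. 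Reality of the roots amounts to the discriminant condition $(w_i\cdot c)^2>\|w_i\|^2(\|r_i\|^2-\rho^2)$.

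The main obstacle, and really the only delicate point, is to arrange the following \emph{simultaneously}: (i) a positive discriminant for every $r_i\in R$; (ii) linear independence of the $w_i$'s, so that the Lawrence polytope achieves the full dimension $d+k$; and (iii) the transversality condition on the lines $\SSp\{r_i,\overline{r}_i\}$ demanded by the definition of the Lawrence polytope. All three are open conditions on the tuple $(c,w_1,\dots,w_k)\in\R^k\times(\R^k)^k$: fixing $\|c\|$ large relative to $\max_i\sqrt{\|r_i\|^2-\rho^2}$ and choosing each $w_i$ as a small generic perturbation of $c$ yields (i), via the estimate $(w_i\cdot c)^2\approx\|c\|^4$, while (ii) and (iii) are Zariski-open and hence generic. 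For such a choice every vertex of the resulting Lawrence polytope lies on $\mathcal{U}'$, so $\mathrm{L}(P,R)$ is inscribed in the sphere of $S^{d+k}$ corresponding to $\mathcal{U}'$.
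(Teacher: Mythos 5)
Your argument matches the paper's proof in spirit and substance: both lift the circumsphere $\mathcal{U}(P)$ to a higher-dimensional sphere and place each pair $\{\underline{r}_i,\overline{r}_i\}$ on it, using $R\cap\mathcal{B}(P)=\emptyset$ to ensure $r_i$ falls outside the chord so that $\underline{r}_i\in[r_i,\overline{r}_i]$. The paper sketches only the single-point case and leaves the rest implicit, whereas you carry out all $k$ points simultaneously in $\R^{d+k}$ and make the discriminant and genericity conditions explicit — but the underlying idea is the same.
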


We call such a PP configuration an \Defn{inscribed} PP configuration; a PP configuration is \Defn{inscribable} if it is Lawrence equivalent to some inscribed PP configuration. 
\begin{rem}
The converse to Proposition~\ref{prp:li} holds as well:
If  $(P,R)$ is not inscribable,  then $\mathrm{L}(P,R)$ is not inscribable either.
\end{rem}

\begin{proof}
As in the case of Lawrence extensions (Proposition~\ref{prp:mlwextn}), we only treat the case where $R=\{r\}$ consists of a single point. Recall that the Lawrence polytope $\mathrm{L}(P,R)$ is obtained as \[\mathrm{L}(P,R)=\conv P \cup \{\underline{r},\overline{r}\},\]
where $\underline{r}$ and $\overline{r}$ are points in $S^{d+1}\supset S^d$ that lie in a common line with $r$ such that $\underline{r}\in [r,\overline{r}]$. If $r\notin\mathcal{B}(P)$, then $\underline{r}$ and $\overline{r}$ can be chosen as the intersection points of some line containing $r$ with $\mathcal{S}$, where $\mathcal{S}$ is any sphere in $S^{d+1}$ with $\mathcal{S}\cap S^d={\mathcal{U}(P)}.$ With this choice, all vertices of the Lawrence polytope $\mathrm{L}(P,R)$  lie in~$\mathcal{S}$.
\end{proof}

A similar result holds for weak projective triples and subdirect cones.

\begin{lemma}[Subdirect cones with regard to inscribed polytopes]\label{lem:sbdci}
Let $(P,Q,R)$ denote any weak projective triple such that $P$ is inscribed, and such that $H\cap R$ does not intersect $\mathcal{B}(P)$, where $H$ denotes the wedge hyperplane of the weak projective triple. Then the subdirect cone $(P^v,Q\cup R)$ is an inscribable PP configuration. \emph{\qed}
\end{lemma}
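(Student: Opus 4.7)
The strategy is to exploit the freedom in Definition~\ref{def:subd} to construct a single inscribed realization of the subdirect cone. Since $(P^v,Q\cup R)$ is projectively unique by Lemma~\ref{lem:subdc}, any two realizations are projectively (and hence Lawrence) equivalent, so it suffices to exhibit one choice of data $(\mathcal{S},v,\widehat{H})$ for which $P^v$ is inscribed.

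First I would extend the circumscribed sphere $\mathcal{U}(P)\subset S^d_\eq$ to a sphere $\mathcal{S}\subset S^{d+1}$ with $\mathcal{S}\cap S^d_\eq=\mathcal{U}(P)$; this amounts to extending the affine subspace of $\R^{d+1}\times\{0\}$ cutting out $\mathcal{U}(P)$ from $S^d_\eq$ to an affine hyperplane of $\R^{d+2}$, which yields a one-parameter family of candidates. The geometric heart of the argument is to choose the apex $v\in\mathcal{S}\setminus S^d_\eq$ and the hyperplane $\widehat{H}$ so that the central projection $x\mapsto\conv\{v,x\}\cap\widehat{H}$ restricts on $\mathcal{S}$ to a classical stereographic projection. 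A direct computation, or the standard inversive-geometric fact, shows that this holds precisely when $\widehat{H}$ is parallel to the tangent hyperplane $T_v\mathcal{S}$; equivalently, when the radial vector $v-c_\mathcal{S}$ is normal to $\widehat{H}$. Since $\widehat{H}$ must contain $H$, its normal is forced to lie in the two-dimensional orthogonal complement $H^\perp$, imposing $v-c_\mathcal{S}\in H^\perp$. This condition cuts out a circle on each sphere $\mathcal{S}$ in the family, so altogether we obtain a two-parameter family of candidate pairs $(\mathcal{S},v)$; I would pick one with $v\notin S^d_\eq$ and let $\widehat{H}$ be the unique hyperplane through $H$ with normal $v-c_\mathcal{S}$.

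For this choice, classical inversive geometry ensures that the restricted projection sends the subsphere $\mathcal{U}(P)\subset\mathcal{S}$ to a subsphere $\mathcal{U}'\subset\widehat{H}$. Hence every image $p^v=\conv\{v,p\}\cap\widehat{H}$ lies on $\mathcal{U}'$, and because $v\notin\widehat{H}$ there is a unique $d$-sphere $\mathcal{S}'\subset S^{d+1}$ containing $\mathcal{U}'$ and passing through $v$. This $\mathcal{S}'$ is the circumscribed sphere of the pyramid $P^v$, so $P^v$ is inscribed.

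The principal remaining obstacle is to verify, within the two-parameter family of data $(\mathcal{S},v)$, that we may simultaneously arrange (i) that $\widehat{H}$ separates $v$ from $P$, so that the data indeed defines a subdirect cone, and (ii) that $(P^v,Q\cup R)$ is a valid PP configuration. Here the hypothesis $H\cap R\cap\mathcal{B}(P)=\emptyset$ enters decisively: because the spanning vertices of $H$ lie strictly outside the closed ball $\mathcal{B}(P)$, the hyperplane $H$ can be tilted along the direction $v-c_\mathcal{S}$ to a hyperplane $\widehat{H}$ keeping all of $P$ on one side, and by choosing the sign of the extension parameter for $\mathcal{S}$ (which flips $v$ across $S^d_\eq$) we place $v$ on the opposite side. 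The disjointness of $P^v$ from $Q\cup R$ and their containment in a common open hemisphere of $S^{d+1}$ then follow by exactly the argument used in the proof of Lemma~\ref{lem:subdc}.
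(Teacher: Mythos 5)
Your stereographic-projection idea is the right one for showing that $P^v$ can be realized as an inscribed polytope, and the degree-of-freedom count is sensible (though the ``parallel to $T_v\mathcal{S}$'' language is a Euclidean-frame shorthand; in the paper's spherical ambient one would pass to an affine chart first). The paper gives no written proof, so I cannot say you diverge from a specific argument, but the approach is plausible. However, there is a genuine gap in what you verify at the end.

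An \emph{inscribed} PP configuration, per the paper's definition just after Proposition~\ref{prp:li}, requires \emph{two} things: that the polytope be inscribed, and that the ball $\mathcal{B}(\cdot)$ bounded by its circumscribed sphere contain no points of the external configuration. Your final paragraph checks only the weaker conditions of being a valid PP configuration (disjointness and a common open hemisphere), not the containment condition $\mathcal{B}(P^v)\cap(Q\cup R)=\emptyset$. This condition is not automatic. Indeed, $Q\subseteq\F_0(P)\subseteq\mathcal{U}(P)=\partial\mathcal{B}(P)$, and the sphere $\mathcal{U}(P^v)$ you construct passes through $v$ and the \emph{projected} points $p^v$, not through $\mathcal{U}(P)$; the slice $\mathcal{B}(P^v)\cap S^d_\eq$ is therefore a cap in $S^d_\eq$ bounded by a $(d-1)$-sphere that generally differs from $\mathcal{U}(P)$, and it can easily swallow $Q$ for an unfortunate choice of $(\mathcal{S},v,\widehat{H})$ (this already happens in a $d=1$ toy computation). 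Similarly, nothing you say controls the points of $R\setminus H$. The only portion of the ball condition that your hypotheses dispose of cleanly is $R\cap H$: since $\mathcal{B}(P^v)\cap\widehat{H}$ is exactly the image of $\mathcal{B}(P)$ under $p\mapsto p^v$ (both are the cap in $\widehat{H}$ bounded by $\mathcal{U}'$ containing the base of $P^v$), and the central projection fixes $H$ pointwise, a point of $R\cap H$ lies in $\mathcal{B}(P^v)$ if and only if it lies in $\mathcal{B}(P)$, which the hypothesis forbids. A complete proof would have to additionally show, by a suitable choice within your two-parameter family (e.g.\ by a limiting/perturbation argument pushing $\mathcal{U}(P^v)$ close to $\mathcal{S}$ from the side that keeps $\mathcal{U}(P)$ outside), that the remaining points of $Q\cup R$ also end up strictly outside $\mathcal{B}(P^v)$.
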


\begin{rem}
If $(P,Q,R)$ is a weak projective triple such that no projective transformation of $(P,Q,R)$ satisfies the conditions of Lemma~\ref{lem:sbdci}, then the subdirect cone $(P^v,Q\cup R)$ is not inscribable.
\end{rem}

\begin{rem}[Universality of inscribed polytopes and Delaunay triangulations]
While simple observations, Proposition \ref{prp:li} and Lemma \ref{lem:sbdci} can be applied to provide finer characterizations of realization spaces of inscribed polytopes. In \cite{AP2}, we apply them to derive the following results:
\begin{compactitem}[$\bullet$]
\item For every primary basic semialgebraic set $S$ defined over $\Z$, there is an inscribed polytope (resp.\ Delaunay triangulation) whose realization space is homotopy equivalent to $S$. This extends results of Mn\"ev \cite{Mnev} to the inscribed setting.
\item For every inscribed polytope $P$, there is an inscribed, projectively unique polytope containing $P$ as a face. In particular, there is a Delaunay triangulation that contains $P$ and is unique up to similarities. This generalizes results of Padrol and the first named author \cite{AP}.
\end{compactitem}
\end{rem}

\begin{proof}[\textbf{Proof of Theorem~\ref{thm:projuni}}]
Set $Q:=\F_0(\mathrm{CCTP}^{\hspace{.04em} \mathrm{in}}_4[1])$. Then $Q$ frames the polytopes $\mathrm{CCTP}^{\hspace{.04em} \mathrm{in}}_4[n]$ for any $n\ge 1$ by Lemma~\ref{lem:uniextem}, and its elements have algebraic coordinates by construction. Let $V$ be any collection of points in $S^4$ whose span does not intersect the ball $\mathcal{B}(\mathrm{CCTP}^{\hspace{.04em} \mathrm{in}}_4[1])=\mathcal{B}(\mathrm{CCTP}^{\hspace{.04em} \mathrm{in}}_4[n])$ analogous to part VII.\ of the proof of Lemma~\ref{lem:affine}. Consider the weak projective triples 
\[\left(\mathrm{CCTP}^{\hspace{.04em} \mathrm{in}}_4[n],Q,R)\right),\ \ n\ge 1,\ \ R:=\mathrm{COOR}[Q\cup V]\setminus Q, \]
where $\mathrm{COOR}[Q\cup V]$ is the projectively unique point configuration provided by Proposition~\ref{prp:AP}, and let $D=f_0(Q)+f_0(R)+5=f_0(R)+29$. The polytopes $\mathrm{CCTP}^{\hspace{.04em} \mathrm{in}}_4[n]$ are inscribed, hence, the subdirect cones of these triples are inscribable PP configurations (Lemma~\ref{lem:sbdci}). The Lawrence extension of the PP configurations yield polytopes 
$\mathrm{PCCTP}^{\hspace{.04em} \mathrm{in}}_D[n]$, which are inscribable by Proposition~\ref{prp:li} and projectively unique by Proposition~\ref{prp:mlwextn}.
These polytopes have dimension $D$ and satisfy
\[f_0\left(\mathrm{PCCTP}^{\hspace{.04em} \mathrm{in}}_D[n]\right)=2(f_0(Q)+f_0(R))+f_0\left(\mathrm{CCTP}^{\hspace{.04em} \mathrm{in}}_4[n]\right)+1=12(n+1)+2D-9. \qedhere\]
\end{proof}

\subsection{Some technical details}\label{sec:Lemmas}

\subsubsection{Proof of Lemma~\ref{lem:dihang}}\label{ssc:lemdihang}

We now prove Lemma~\ref{lem:dihang}, which was used to prove that ``locally'' every ideal CCT admits an extension. We do so by first translating it into the language of \Defn{dihedral angles}, and then applying a local-to-global theorem for convexity. We stay in the notation of Lemma~\ref{lem:dihang}: $\CT$ is an ideal CCT of width at least $3$ and $\CT^\circ:=\RR(\CT,[k-2,k])$ denotes the subcomplex of $\CT$ induced by the vertices of the last three layers. Recall that $\CT^\circ$ is homeomorphic to $S^1\times S^1$, and it is in particular a manifold that is not a sphere.

\begin{definition}[Dihedral angle]
Let $M$ be a $d$-manifold with polytopal boundary in $\R^d$ or $S^d$. Let $\sigma,\, \tau$ be two facets in $\parti M$ that intersect in a $(d-2)$-face. The \Defn{(interior) dihedral angle} at $\sigma\cap \tau$ w.r.t.\ $M$ is the angle between the hyperplanes spanned by $\sigma$ and $\tau$, respectively, measured in the interior of $M$.
\end{definition}

With this notion, we formulate a lemma that generalizes Lemma~\ref{lem:dihang}.

\begin{lemma}\label{lem:dihang2}
Let $M$ denote the closure of the component of $S^3_\eq{\setminus} \CT$ that contains $\mathcal{C}_0$. Then the dihedral angles at edges in $\RR(\CT,[k-2,k-1])\subset \CT^\circ$ w.r.t.\ $M$ are strictly smaller than $\pi$.
\end{lemma}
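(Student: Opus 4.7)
The plan is to recast Lemma~\ref{lem:dihang2} as a finite family of halfspace-separation conditions, reduce these by $\mathfrak{S}$-symmetry to one or two cases, and dispatch the remaining geometric sign using the slope bound $\alpha(\CT)>\nicefrac{\pi}{2}$. Throughout, set $\CT^\circ:=\RR(\CT,[k-2,k])$. The coordinate-sum window $[k-2,k]$ is too narrow to admit a $3$-cube, so $\CT^\circ$ carries no $3$-cells; inspection of which $2$-faces of $\CT$ border only a single $3$-cube identifies $\CT^\circ$, as a $2$-complex, with the top boundary torus of the $3$-manifold $\CT$, so that $\partial M=\CT^\circ$.

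First I would fix an edge $e=[a,b]\subset\CT^\circ$ with $a\in\RR(\CT,k-2)$, $b\in\RR(\CT,k-1)$, midpoint $m$, and the two $2$-faces $F_i=\{a,b,c_i,d_i\}$ ($i=1,2$) of $\CT^\circ$ meeting at $e$, where $c_i\in\RR(\CT,k-1)$ and $d_i\in\RR(\CT,k)$. In the normal circle $\NO^1_m S^3_\eq\cong S^1$ the tangent directions of $F_1$ and $F_2$ cut $S^1$ into two arcs, and the dihedral angle at $e$ w.r.t.\ $M$ is by definition the length of the arc containing the tangent direction of $[m,\pi_0(m)]$. Hence ``dihedral angle $<\pi$'' is equivalent to the pair of halfspace separations that $c_i$ and $\pi_0(m)$ lie on the same side of the $2$-sphere $\SSp F_j$ for $\{i,j\}=\{1,2\}$. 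Since $\mathfrak{R}\subset\mathfrak{S}$ acts simply transitively on $\RR(\CT,k-2)$, I may fix $a$ once and for all; the reflection $\spi^{e_4}_5$ then reduces the three edges at $a$ to at most two $\mathfrak{S}$-orbit representatives. The complex $\RR(\CT,[k-2,k])$ is a transversal symmetric $2$-CCT, so Proposition~\ref{prp:alignsymm} fully describes the relative positions of $\{b,c_1,c_2,d_1,d_2\}$ under the Clifford projections $\pi_0,\pi_2$, and Proposition~\ref{prp:dict} turns this into halfspace membership. All of the sign conditions except one drop out of parts (e)--(g) of Proposition~\ref{prp:alignsymm}.

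The one remaining sign is the one deciding on which side of $\SSp F_j=\SSp\{a,b,c_j\}$ the point $\pi_0(m)$ lies. Passing to the link at $a$ and working in $\NO^1_a S^3_\eq$, the decision reduces to exactly the right spherical triangle that appears in the proof of Proposition~\ref{prp:slmono}: its right angle sits at the tangent direction of $[a,\pi_0(a)]$, its other base vertex is the apex of $F_j$ in that normal sphere, and the second spherical law of cosines converts the slope hypothesis $\alpha(\CT)>\nicefrac{\pi}{2}$ into $\cos(\pi-\alpha(\CT))<0$, which is precisely the sign putting $\pi_0(m)$ on the $c_i$-side of $\SSp F_j$. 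The hard part will be the bookkeeping underlying this last step: for each $\mathfrak{S}$-orbit of edges, one has to identify which of the vertices $c_i,d_i$ bound the separating $2$-sphere, match the resulting right spherical triangle with the one in Proposition~\ref{prp:slmono}, and verify that the slope inequality aligns with the desired orientation of the halfspace. Once this matching is pinned down, the trigonometric step is a one-line invocation identical to that in Proposition~\ref{prp:slmono}, and the lemma follows by combining the two resulting halfspace separations via the reformulation of the first paragraph.
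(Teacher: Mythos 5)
Your plan and the paper's proof take genuinely different routes, and I am not convinced yours closes. The paper's argument is global and topological: the $3$-cube facets of $\CT$ force the dihedral angles at the edges incident to layer $k$ to be $>\pi$ w.r.t.\ $M$; the van Heijenoort/Tietze theorem then shows that if all the remaining dihedrals were $\geq\pi$, the torus $\CT^\circ$ would bound a convex body and hence be a $2$-sphere; and Observation~\ref{obs:geodesictr} bootstraps ``one angle $<\pi$ at each layer-$(k-2)$ vertex'' to ``all three $<\pi$''. Notably, the paper never invokes the slope bound $\alpha(\CT)>\nicefrac{\pi}{2}$ for this lemma --- it is the cubes of $\CT$ and the topology of $\CT^\circ$ that do the work. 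Your plan replaces all of this with a local sign computation built on symmetry, Propositions~\ref{prp:alignsymm} and~\ref{prp:dict}, and the slope bound. That is an interesting alternative in spirit, and if it worked it would have the virtue of not needing Theorem~\ref{thm:heijor} at all; but several of its load-bearing steps are asserted rather than established.

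Concretely, three gaps. First, the dihedral angle of $\CT^\circ$ at $e$ w.r.t.\ $M$ is \emph{not} ``by definition'' the length of the arc of $\NO^1_mS^3_\eq$ containing the tangent direction of $[m,\pi_0(m)]$; it is by definition the length of the arc pointing into $M$, and the identification of these two arcs is a separate geometric claim requiring transversality and the orientation of $\CT$ (it is exactly what the paper derives when passing from Lemma~\ref{lem:dihang2} to Lemma~\ref{lem:dihang}, and it is not free). Second, the reduction of the last sign to ``exactly the right spherical triangle that appears in the proof of Proposition~\ref{prp:slmono}'' does not match: the triangle there lives in $\NO^1_u S^3_\eq$ for $u$ a vertex of the \emph{top} layer $k$, with the slope measured at the midpoint of a pair of layer-$k$ vertices, whereas you work in $\NO^1_a S^3_\eq$ for $a$ in layer $k-2$ and need to control the side of $\SSp F_j$ on which $\pi_0(m)$ lies --- a different vertex, a different midpoint, and a different triangle. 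Nothing in Proposition~\ref{prp:slmono} or its proof supplies this. Third, and related, you silently replace $\pi_0(m)$ (the quantity the halfspace condition is actually about) by $\pi_0(a)$ when passing to the link; these are distinct points of $\mathcal{C}_0$ and there is no given reason they lie on the same side of $\SSp F_j$. Until these are repaired the ``one-line invocation'' at the end is not available, and the plan as written has a genuine hole precisely at the step you flag as ``bookkeeping''.
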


\begin{proof}[Lemma~\ref{lem:dihang2} implies Lemma~\ref{lem:dihang}]
Let $v$ denote any vertex of $\RR(\CT,k-2)$. By Lemma~\ref{lem:dihang2} all dihedral angles of $\Lk(v,\CT^\circ)$ are smaller than $\pi$, so $\Lk(v,\CT^\circ)$ is the boundary of the convex triangle $\NO_v^1 M\subset \NO_v^1 S^3_\eq$, proving the first statement of Lemma~\ref{lem:dihang}. Furthermore, since $\CT$ is transversal, the tangent direction of $[v,\pi_0(v)]$ at $v$ lies in $\NO_v^1 M=\conv\Lk(v,\CT^\circ)$, proving the second statement.
\end{proof}
 
\smallskip

The rest of this section is consequently dedicated to the proof of Lemma~\ref{lem:dihang2}. We need the following elementary observation.

\begin{obs}\label{obs:geodesictr}
Consider the union $s$ of three segments $[a,b]$, $[b,c]$ and $[a,c]$ on vertices $a$, $b$ and $c$, and any component $B$ of the complement of $s$ in $S^2$. Then the angles between the three segments w.r.t.\ $B$ are either all smaller or equal to $\pi$ or all greater or equal to $\pi$. If $a,\,b,\,c$ do not all lie on some common great circle, these inequalities are strict.
\end{obs}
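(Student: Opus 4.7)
The plan is to identify the two components of $S^2 \setminus s$ as a spherically convex triangle and its reflex complement, and then bound the interior angles using spherical convexity.

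First, suppose $a, b, c$ do not lie on a common great circle. Then $a, b, c \in \R^3$ are linearly independent, so $\conv\{a,b,c\}$ is a Euclidean triangle not containing the origin, and the radial cone $K := \R_{\ge 0} \cdot \conv\{a,b,c\}$ is a pointed convex $3$-dimensional cone. Its intersection $T := K \cap S^2$ is a spherically convex region, and since each edge of the Euclidean triangle spans a $2$-plane through the origin together with its endpoints, its radial projection to $S^2$ is precisely the minor great-circle arc between those endpoints. Hence $\partial T = s$, and $s$ is a simple closed curve (distinct segments share only their common endpoint, since distinct great circles meet only in antipodal pairs). By the Jordan curve theorem, $S^2 \setminus s$ has exactly two components, namely $\intx T$ and $B := S^2 \setminus \cl T$.

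Next, I would compare the angles in the two components. At each vertex $v \in \{a, b, c\}$, the two edges of $s$ meeting at $v$ lie in distinct great circles, so their tangent vectors at $v$ are non-collinear in $T_v S^2$; they cut a neighborhood of $v$ into two complementary angular wedges of measures $\alpha_v$ and $2\pi - \alpha_v$ with $\alpha_v \neq \pi$. Spherical convexity of $T$ forces its interior angle at $v$ to be at most $\pi$, and hence strictly less than $\pi$; the corresponding angle in $B$, being the complementary wedge, measures $2\pi - \alpha_v > \pi$. This yields the strict version of the statement.

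In the degenerate case where $a, b, c$ lie on a common great circle $C$, all tangent directions at each vertex are parallel to $C$, so each of the three angles (in any chosen component) is $0$, $\pi$, or $2\pi$. A brief case analysis on the cyclic order of $a, b, c$ along $C$ then confirms that in any component the three angles are either all $\le \pi$ or all $\ge \pi$, giving the non-strict version. The main obstacle I anticipate is the justification that the radial projection of $\conv\{a,b,c\}$ has boundary exactly $s$ and is spherically convex, but this reduces to the elementary observations that a convex cone intersects the sphere in a spherically convex set, and that a Euclidean line segment not through the origin projects radially onto the minor great-circle arc between its endpoints.
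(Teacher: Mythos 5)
The paper states Observation~\ref{obs:geodesictr} without proof (it is presented as "the following elementary observation" immediately before the proof of Lemma~\ref{lem:dihang2}), so there is no argument in the paper to compare against. Your proof is correct: identifying one component as the radial projection of the pointed cone $\R_{\ge 0}\cdot\conv\{a,b,c\}$ and using convexity of that cone to bound the interior angles of $T$ by $\pi$ (and hence, by non-collinearity of the two tangent directions at each vertex, strictly below $\pi$) is a clean way to fill the gap; the degenerate great-circle case indeed splits into the two easy sub-cases you indicate (either $s$ is the full great circle with all angles equal to $\pi$ in both hemispheres, or one vertex lies on the minor arc between the other two, $s$ is a single arc, and the unique component has angles $2\pi,\pi,2\pi$).
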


\begin{proof}[\textbf{Proof of Lemma~\ref{lem:dihang2}}]

\begin{figure}[htbf]
\centering 
  \includegraphics[width=0.24\linewidth]{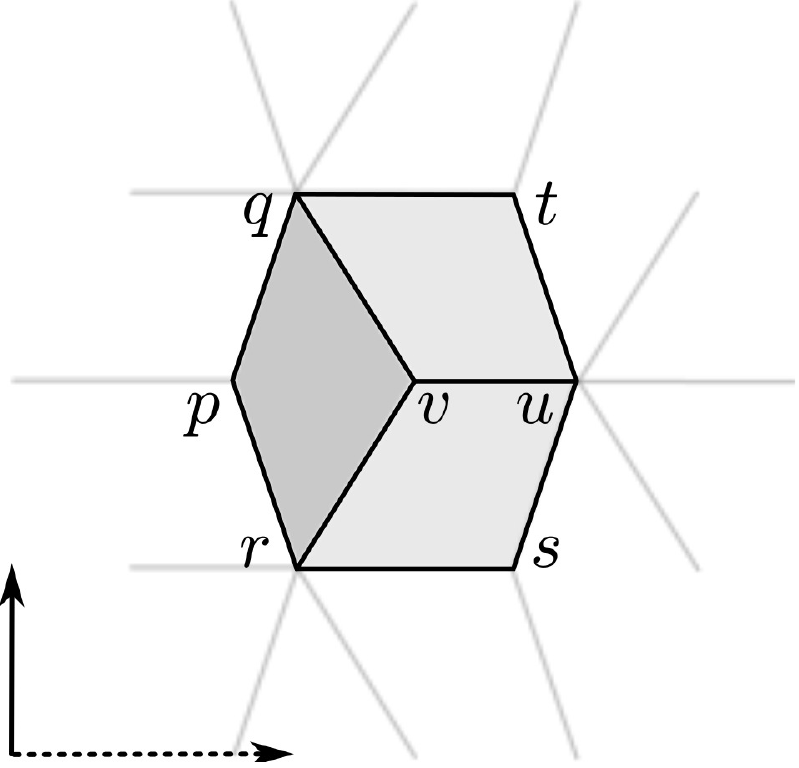} 
  \caption{\small We show a part of the CCT $\CT^\circ\subset \CT$. $v$ is a vertex of layer $k-2$, $q,r,u$ are vertices of layer $k-1$, and the vertices $p,s,t$ lie in layer $k$ of $\CT$.} 
  \label{fig:torusdihan}
\end{figure}
                                                                                                                                                                                                                                                                                                                
Since the facets of $\CT$ are convex, all dihedral angles at vertices of layer $k$ must be larger than $\pi$. Thus the dihedral angles at vertices of layer $k-2$ must be smaller or equal to $\pi$, and at least one of the dihedral angles at each edge of layer $k-2$ must be strictly smaller than $\pi$, since the contrary assumption would imply that $\CT^\circ$ is the boundary of a convex body in~$S^3_\eq$ by Theorem~\ref{thm:heij}, or the more elementary Theorem of Tietze~\cite[Satz 1]{Tietze} \cite{Nakajima}, and in particular homeomorphic to a $2$-sphere, contradicting the assumption that $\CT^\circ$ is a torus.

Consider the star $\St(v,\CT^\circ)$ (Figure~\ref{fig:torusdihan}) for any vertex $v$ of $\RR(\CT,k-2)$. As observed, one of the dihedral angles at edges $[v,u]$, $[v,r]$ and $[v,q]$ must be smaller than $\pi$. In particular, not all vertices of $\Lk(v,\CT^\circ)$ lie on a common great circle, and all dihedral angles at edges $[v,u]$, $[v,r]$ and $[v,q]$ w.r.t.\ $M$ are smaller than $\pi$ by Observation~\ref{obs:geodesictr}. 
\end{proof}

\subsubsection{Proof of Proposition~\ref{prp:loccrt1lay}}\label{ssc:localtoglobal}
The goal of this section is to prove Proposition~\ref{prp:loccrt1lay}, which provides a local-to-global criterion for the convex position of ideal CCTs of width $3$. We work in the $4$-sphere $S^4\subset \R^5$, and the equator sphere~$S^3_\eq$. Furthermore, $\pp$ shall denote the projection from $S^4{\setminus} \{\pm e_5\}$ to~$S^3_\eq$.

\begin{lemma}\label{lem:localcrit}
Let $\CT$ be an ideal $3$-CCT in~$S^4$ such that for every facet $\sigma$ of $\CT$, there exists a halfspace $H(\sigma)$ containing $\sigma$ in the boundary and such that $H(\sigma)$ contains all remaining vertices of layers $1,\, 2$ connected to $\sigma$ via an edge of $\CT$ in the interior. Then $H(\sigma)$ contains all vertices of $\F_0(\CT){\setminus} \F_0(\sigma)$ in the interior. 
\end{lemma}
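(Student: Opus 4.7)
The plan is to fix a facet $\sigma$ with its exposing halfspace $H(\sigma)$ (the hypothesis placing the six layer-$1$ and layer-$2$ edge-neighbors of $\sigma$ in $H(\sigma)^\circ$) and extend the containment to all $34$ remaining vertices of $\F_0(\CT)\setminus \F_0(\sigma)$. The strategy is a symmetry reduction followed by a cube-by-cube propagation through the cubical structure of $\CT$.

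First, I would normalize $\sigma$ via the simply transitive $\mathfrak{R}$-action on layer-$0$ vertices, placing its layer-$0$ corner at a fixed point of $\spi^{(1,-1,0)}_3$ in $\T[3]$, so that the reflective involution $\spi^{e_4}_{5}\in\mathfrak{S}$ stabilizes $\sigma$ and $H(\sigma)$ setwise. This $\mathbb{Z}/2$ symmetry halves the orbits of non-$\sigma$ vertices to inspect. Since $\CT$ is transversal, $\pi_1$ is injective on $\CT$, so Propositions~\ref{prp:alignsymm} and~\ref{prp:dict} reduce halfspace-containment checks to planar incidences on $\mathcal{C}_1$. A direct combinatorial calculation in $\T[3]$ shows that distinct cubes of $\CT$ share at most an edge (never a $2$-face), and $\sigma$ has exactly six edge-adjacent cubes, each sharing one layer-$1$--layer-$2$ edge with $\sigma$; for each such cube $W$, among its six non-shared vertices four are first-tier (one per layer, with the layer-$1$ and layer-$2$ ones covered by hypothesis) and two are second-tier (in layers $1$ and $2$).

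Next, I would apply Lemma~\ref{lem:cubecmpl}, which expresses any vertex of a 3-cube as a rational function of the six vertices on the three faces meeting at it (i.e., of all vertices except the diagonal opposite). Because the second-tier vertices and the first-tier layer-$0$/layer-$3$ vertices of one edge-adjacent cube often reappear in other edge-adjacent cubes of $\sigma$, one propagates the determination across the six cubes in the correct order so that at each step the required six non-opposite vertices of the current cube are already known. The slope inequality $\alpha(\CT)>\nicefrac{\pi}{2}$ fixes the sign of each rational expression and forces the determined vertices strictly inside $H(\sigma)$. The remaining "far" cubes of $\CT$—those sharing no vertex with $\sigma$—are dispatched by iterating the same propagation outward, with Proposition~\ref{prp:slmono} (monotonicity of the slope under extensions) ensuring the sign condition persists. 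By $\mathfrak{R}$-symmetry, only a handful of orbit representatives require explicit verification.

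The main obstacle is the bookkeeping in the propagation step: one must carefully order the applications of Lemma~\ref{lem:cubecmpl} so that each unknown vertex has its six non-opposite cube-neighbors already determined before its turn, and use the slope inequality \emph{quantitatively} to upgrade weak containment into strict containment without circular dependence. The planar reformulation via the Clifford projection $\pi_1$ and Propositions~\ref{prp:alignsymm} and~\ref{prp:dict} is what makes this bookkeeping tractable, reducing many sign checks to standard incidence arguments on $\mathcal{C}_1$.
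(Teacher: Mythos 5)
Your proposal diverges substantially from the paper's proof and contains several gaps that I don't see how to repair.

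The most serious problem is the logical leap from \emph{determination} to \emph{halfspace containment}. Lemma~\ref{lem:cubecmpl} tells you that a vertex of a cube is uniquely determined once six suitable neighbors are fixed; it does \emph{not} tell you where that vertex lands relative to an auxiliary hyperplane $\parti H(\sigma)$. Knowing that six vertices of a cube lie in $\intx H(\sigma)$ gives no constraint on the eighth vertex --- the "determined" point could perfectly well be outside the halfspace. Your claim that "the slope inequality $\alpha(\CT)>\nicefrac{\pi}{2}$ fixes the sign of each rational expression and forces the determined vertices strictly inside $H(\sigma)$" is precisely the missing content: no mechanism is given for converting a one-parameter angular inequality into a sign constraint on the coordinates of far-away vertices, and I don't believe such a mechanism exists in this generality. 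The paper instead argues by contraposition, exploiting the very specific algebraic form of the normal $\n(\sigma)=(\ast,\ast,\e v_3,\e v_4,\ast)$ forced by the $\mathfrak{S}$-symmetry, and then a case analysis (on whether $\e\le 0$, and on which $\rot_{3,4}$-rotate $\ell_0,\ell_{\pm1}$ of $\pi_2^{\mathrm f}(\pp(v))$ contains a witness vertex $w'$) that uses Proposition~\ref{prp:alignsymm}(b),(d),(e) on Clifford projections --- not Lemma~\ref{lem:cubecmpl} at all.

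There are also two factual errors. First, the assertion that "distinct cubes of $\CT$ share at most an edge (never a $2$-face)" is false: the CCTs are manifold-like cubical complexes, and adjacent $3$-cubes share $2$-faces --- this is exactly what makes Lemma~\ref{lem:cubecmpl} applicable in the proof of Lemma~\ref{lem:uniext} ("$W\cap\CT$ consists of three $2$-faces of $W$"). Second, invoking Proposition~\ref{prp:slmono} (monotonicity of the slope under \emph{extensions}) to handle "far" cubes is out of place: here $\CT$ is a fixed $3$-CCT of finite width and no extension is being formed, so the proposition simply does not apply. The symmetry reduction you sketch at the start (normalizing $\sigma$ via $\mathfrak{R}$ and exploiting the reflective stabilizer) is in the right spirit, but it is only a bookkeeping device; the core argument you build on top of it does not go through.
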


\begin{figure}[htbf]
\centering 
\includegraphics[width=0.3\linewidth]{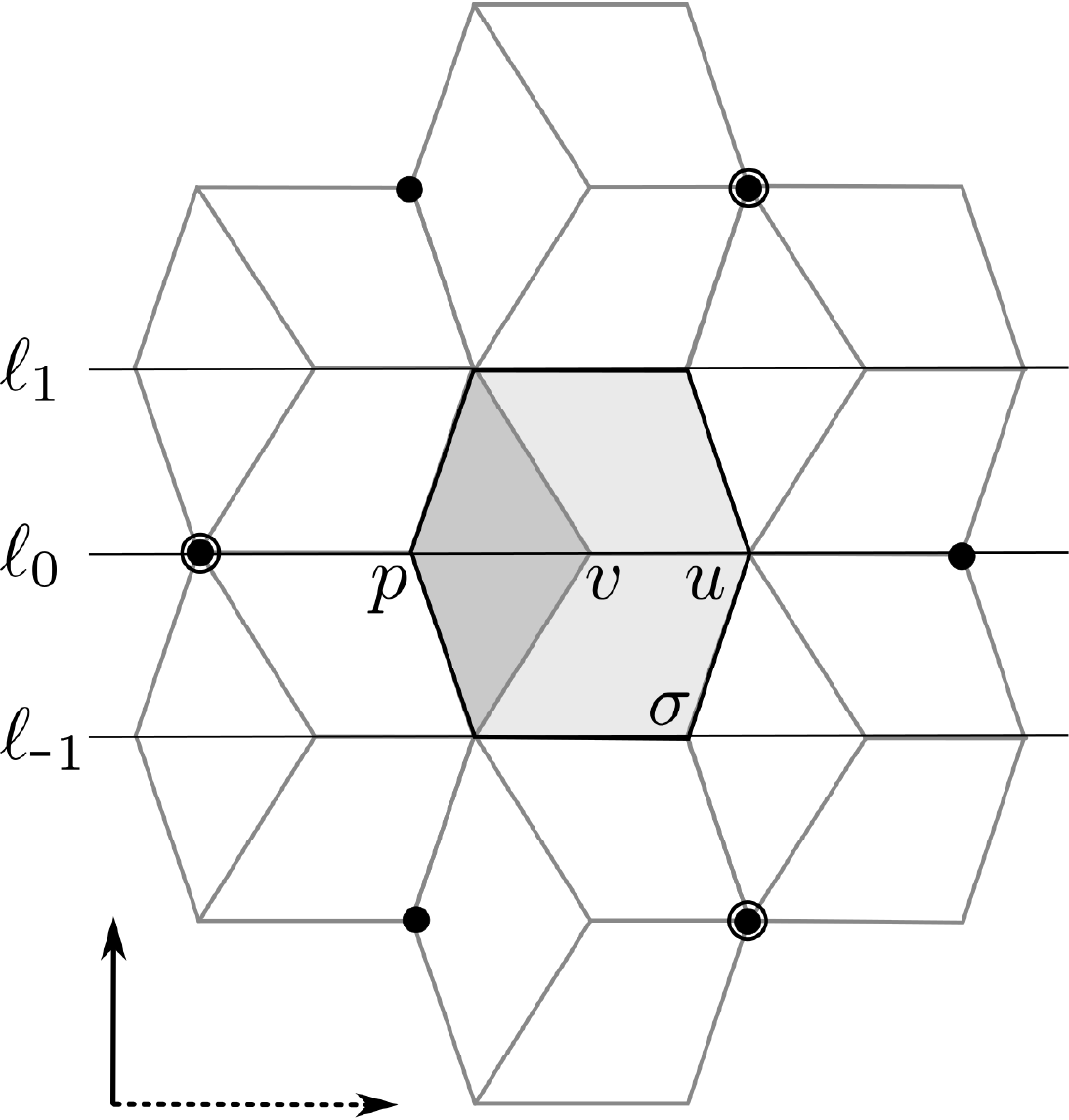} 
\caption{\small The picture shows part of the complex $\RR(\CT,[0,2])$. Lemma~\ref{lem:localcrit} is concerned with the marked by $\bullet$ (connected to $\sigma$ via an edge, in layers $1$ and $2$); Proposition~\ref{prp:loccrt1lay} considers those vertices marked with an additional $\circ$ (connected to $\sigma$ via an edge and in layer $1$).}
\label{fig:localglobalsetup}
\end{figure}

\begin{proof}
We will prove the claim by contraposition. Let $H=H(\sigma)$ denote a halfspace in~$S^4$ containing $\sigma$ in the boundary, with outer normal $\n=\n(H)$. To prove the claim of the lemma, assume that $H^c:=S^4{\setminus} \mathrm{int} H$ contains a vertex $w$ of $\CT$ that is not a vertex of $\sigma$. We have to prove that there is another vertex that
\begin{compactenum}[(i)]
\item lies in layers $1$ or $2$ of $\CT$,
\item lies in $H^c$,
\item is not a vertex of $\sigma$, but that
\item is connected to $\sigma$ via some edge of $\CT$.
\end{compactenum}

\noindent We consider this problem in four cases. Let $\sigma$ be any facet of $\CT$, and let us denote the  orthogonal projection of $x\in \R^5$ to the $\Sp\{e_i,\,e_j\}$-plane in $\R^5$ by $x_{i,j}$. By symmetry, $\vv{n}=(\ast,\,\ast,\,\e v_3,\, \e v_4,\,\ast)$, where $v=v(\sigma)=(v_1(\sigma),\,\dots,\,v_5(\sigma))$ denotes the only layer $0$ vertex of $\sigma$, and $\e=\e(H)$ is some real number. We can take care of the easiest case right away:

\medskip \textbf{Case (0)} If $w$ lies in facet ${\tau}$ of $\CT$ that is obtained from $\sigma$ by a rotation of the $\Sp\{e_3,\,e_4\}$-plane, then $\e=\e(H)\le  0$ by Proposition~\ref{prp:alignsymm}(b) and (d). Thus $H^c$ must contain ${\tau}$ and all other facets of $\CT$ obtained from $\sigma$ by rotation of the $\Sp\{e_3,\,e_4\}$-plane. In particular, it contains all vertices of adjacent facets that are obtained from $\sigma$ by a rotation of the $\Sp\{e_3,\,e_4\}$-plane, among which we find the desired vertex, even a vertex satisfying (i) to (iv) among the vertices of $\RR(\CT,1)$. We may assume from now on that $\e$ is positive.

\medskip

It remains to consider the case in which $w$ satisfies (ii) and (iii) and is obtained from a vertex of $\sigma$ only from a nontrivial rotation of the $\Sp\{e_1,\,e_2\}$-plane followed by a (possibly trivial) rotation of the $\Sp\{e_3,\,e_4\}$-plane. Since $\e$ is positive, there exists a vertex $w'$ satisfying (ii) and (iii) in the same layer of $\CT$ as $w$, but which lies in \[\ell_0=\pp^{-1}\pi_2^{\mathrm{f}}(\pp(v)),\ \ell_1=\pp^{-1}\pi_2^{\mathrm{f}}(\pp(q))=\rot_{3,4} \ell_0\ \text{or}\  \ell_{-1}=\pp^{-1}\pi_2^{\mathrm{f}}(\pp(r))=\rot_{3,4}^{-1}\ell_0.\]
The existence of a vertex satisfying (i)-(iv) now follows from the following observation:

\smallskip

\emph{Let $x,y$ be any two non-antipodal points in $S^1$, and let $m$ be any point in the segment $[x,y]$. Assume $n$ is any further point in $S^1$ such that $\langle n,m\rangle\le  \langle n,-m\rangle$. Then $\langle n,y\rangle\le  \langle n,-y\rangle$ or $\langle n,x\rangle\le  \langle n,-x\rangle$.}

\smallskip 

\begin{figure}[htbf]
\centering 
 \includegraphics[width=1\linewidth]{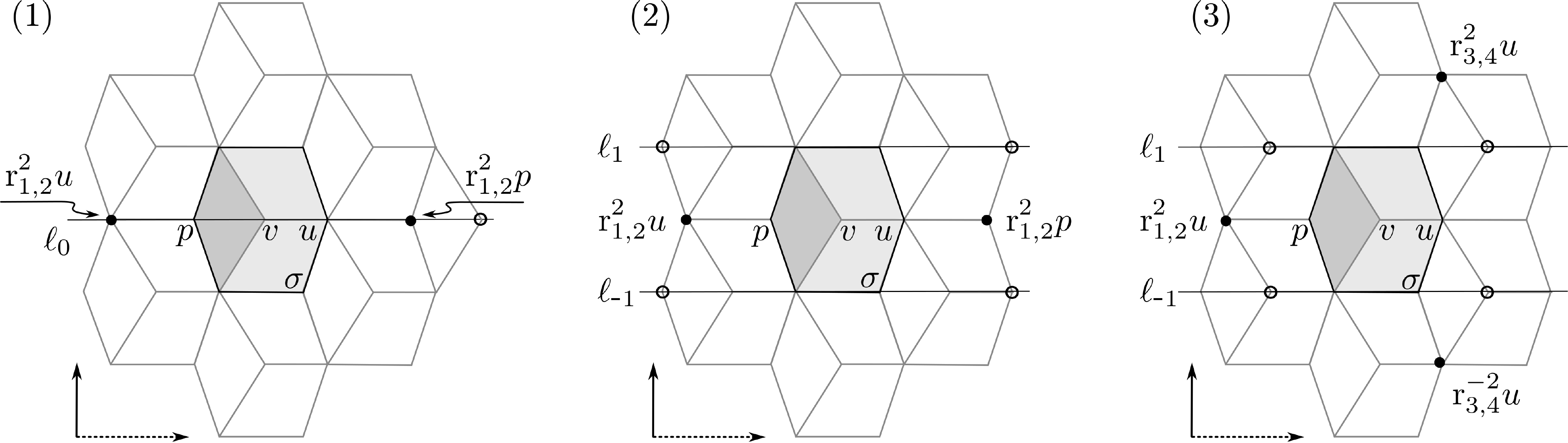} 
\caption{\small The three cases show how to, given a vertex satisfying (ii) and (iii) (circled black), obtain a vertex satisfying (i)-(iv) (found among the vertices marked by black disks).}
  \label{fig:localcglobal}
\end{figure}

\medskip \textbf{Case (1)} Assume $w'\in\ell_0$. This case is only nontrivial if $w'$ is not in $\RR(\CT,[1,2])$. Thus assume (w.l.o.g.) that $w'$ lies in layer $0$ (i.e.\ it is the vertex circled in Figure~\ref{fig:localcglobal}(1)), the other case is fully analogous. Then $\rot_{1,2}^2 w'=v$. To construct the desired vertex $x$, note that $\pi_2(\pp(v))$ lies in the segment $[\pi_2(\pp(u)),\pi_2(\pp(p))]$ by Proposition~\ref{prp:alignsymm}(e). Now, \[w'_{1,2}=-v_{1,2},\, w'_{3,4}=v_{3,4}\ \text{and}\ w'_{5}=v_{5},\] and since $w'\in H^c$, $\langle \n,v_{1,2}\rangle\le  \langle \n,w'_{1,2}\rangle$. Consequently, for $x={u}$ or $x={p}$, $\langle \n,x_{1,2}\rangle\le  \langle \n,-x_{1,2}\rangle$ and thus \[\langle \n,x\rangle\le  \langle \n,\rot_{1,2}^2 x\rangle\ \Longleftrightarrow \ \rot_{1,2}^2 x\in H^c.\] 
Since both $\rot_{1,2}^2{u}$ and $\rot_{1,2}^2{p}$ satisfy (i), (iii) and (iv), this vertex satisfies the properties (i) to (iv).

\medskip \textbf{Case (2)} Assume $w'\in \RR(\CT,[1,2])\cap (\ell_1\cup\ell_{-1})$. Then $w'$ is obtained from a vertex $y$ of $\sigma$ by a rotation of the $\Sp\{e_1,\,e_2\}$-plane, i.e.\ $w'=\rot_{1,2}^2y$. It then follows, as in Case (1), that $\rot_{1,2}^2{u}$ or $\rot_{1,2}^2{p}$ lie in $H^c$, since $\pi_2(\pp(y))\in[\pi_2(\pp(u)),\pi_2(\pp(p))]$ by Proposition~\ref{prp:alignsymm}(e). This vertex satisfies the properties (i) to (iv), because both $\rot_{1,2}^2{u}$ and $\rot_{1,2}^2{p}$ satisfy (i), (iii) and (iv).

\medskip \textbf{Case (3)} 
If $w'\in \RR(\CT,\{0\}\cup\{3\})\cap (\ell_1\cup\ell_{-1})$, then it must lie in a $2$-face $F$ which intersects $\sigma$ in an $1$-face (cf.\ Figure~\ref{fig:localcglobal}(3) for the case of layer $0$ vertices). The remaining vertex of $F$ that does not lie in $\sigma$ is a vertex of layers $1$ or $2$. Since $e\subseteq \parti H^c$ and $w'\in H^c$, this vertex lies in $H^c$ and must be connected to $\sigma$ via an edge, and consequently satisfies properties (i) to (iv), as desired.
\end{proof}

We can now prove Proposition~\ref{prp:loccrt1lay}.

\begin{proof}[\textbf{Proof of Proposition~\ref{prp:loccrt1lay}}] For the labeling of vertices, we refer to Figure~\ref{fig:localglobal2}(1). We stay in the notation of Lemma~\ref{lem:localcrit}. Using its result, we only have to prove that if $\sigma$ is a facet of $\CT$ and $H(\sigma)$ with $\sigma\subseteq \parti H(\sigma)$ is a halfspace such that the vertices $\rot^2_{1,2}{u},\, \rot^2_{3,4}{u}$ and $\rot^{\,-2}_{3,4}{u}$ of $\RR(\CT,1)\subset \CT$ lie in the interior of $H(\sigma)$, then the vertices $\rot^2_{1,2}{p},$ $ \rot^2_{3,4}{p}$ and $\rot^{\,-2}_{3,4}{p}$ of $\RR(\CT,2)\subset \CT$ lie in $\intx H(\sigma)$ as well. Let $\n=\n(\sigma)$ denote the outer normal to $H(\sigma)$.

\begin{figure}[htbf]
\centering 
 \includegraphics[width=0.9\linewidth]{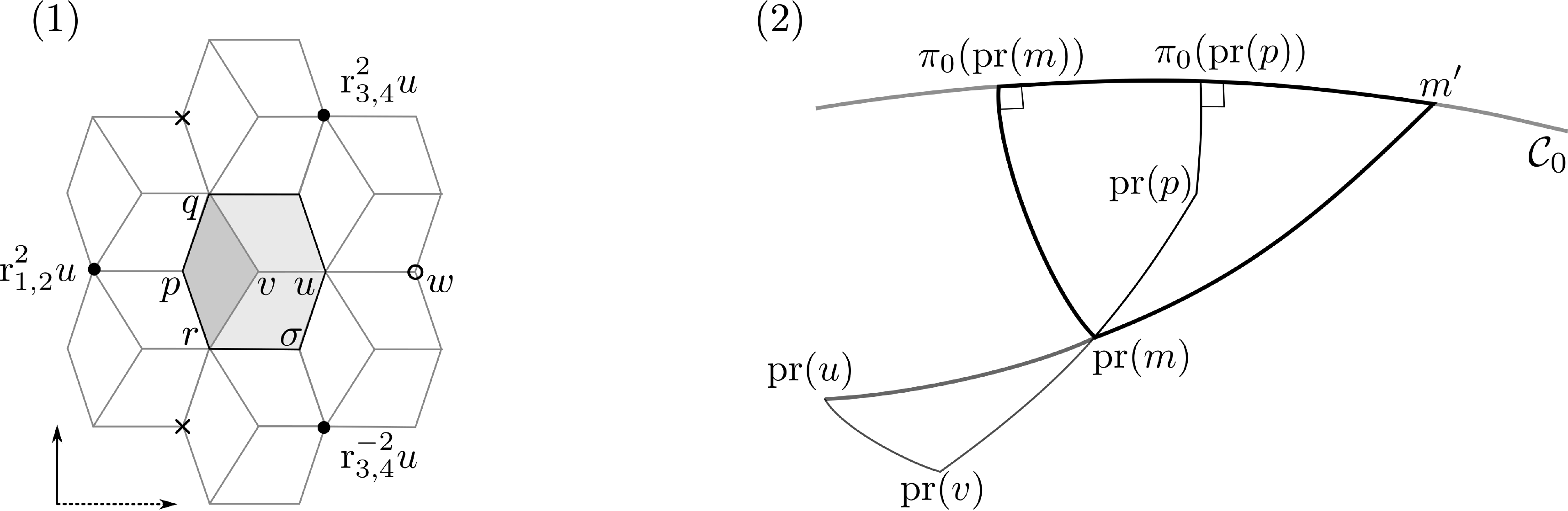} 
\caption{\small  (1) As before, the picture shows part of the complex $\RR(\CT,[0,2])$. We have to show that if all layer $1$ vertices satisfying (iii) and (iv) lie in $H(\sigma)$ (full black disks), then so do all vertices of layer $2$ satisfying (iii) and (iv) (circled black). For the vertices of layer $2$ marked by crosses, this follows as in Lemma~\ref{lem:localcrit}.
\newline (2) Illustration for inequality $\mathrm{d}({p},q)\le  \mathrm{d}(m',q)$.} 
  \label{fig:localglobal2}
\end{figure}

As already observed in the proof of Lemma~\ref{lem:localcrit}, $\n$ is of the form $\n=(\ast,\, \ast,\,\e v_3,\, \e v_4,\, \ast)$, and if $\rot^2_{3,4}{u}\in \intx H(\sigma)$, then $\e>0$ and \[\langle\n,\rot^{\,\pm 2}_{3,4}{p}\rangle<\langle\n,{p}\rangle\ \Longleftrightarrow\ \rot^{\,\pm 2}_{3,4}{p}\in \intx H(\sigma) .\]
Thus it remains to be proven that $w=\rot^2_{1,2}{p}$ lies in $\intx H(\sigma)$, which we will do in two steps. If $x$ and $y$ lie in $S^4{\setminus} \{z\in S^4:\ z_{1,2}=0\}$, let $\mathrm{d}(x,y)$ measure the distance between $\pi_0(\pp(x))$ and $\pi_0(\pp(y))$ in~$S^4$.

\begin{compactenum}[\rm(a)]
\item We prove that $\mathrm{d}({p},q)$ is smaller than $\nicefrac{\pi}{4}$.
\item From $\mathrm{d}({p},q)<\nicefrac{\pi}{4}$ we conclude that $w=\rot_{1,2}^2 {p}$ lies in the interior of $H(\sigma)$.
\end{compactenum}

\noindent To see the inequality of (a), let $m$ denote the midpoint of the segment $[r,q]$. Consider the unique point $m'$ in $\mathcal{C}_0$ such that $\pp(m)\in[\pp({u}),m']$, which is guaranteed to exist since $\pi_2(\pp({u}))=\pi_2(\pp(m))$ (Proposition~\ref{prp:alignsymm}(b) and (d)). We locate $\pp(p)$ w.r.t.\ $\pp({m})$, $\pi_0(\pp({m}))$ and $m'$: 
\begin{compactitem}[$\circ$]
\item By Proposition~\ref{prp:alignsymm}(f), $\pp({p})$ and $m'$ lie in the same component of $S^3_\eq{\setminus} \pi^{\SSp}_0(\pp(q))$.
\item Furthermore, the complement of $\SSp\{\pp(r),\, \pp(q),\, \pp({u})\}$ in~$S^3_\eq$ contains $\pi_0(\pp(m))$ and $\pp({p})$ in the same component, since $\pp(v)$ and $\pi_0(\pp(m))$ lie in different components.
\item Finally, by (b) and (d) of the same Proposition, $\pi_2(\pp(m))=\pi_2(\pp({p}))$. 
\end{compactitem}

\noindent Thus $\pp({p})$ is contained in the triangle on vertices $m'$, $\pi_0(\pp(m))$ and $\pp(m)$ in $\pi_2^{\SSp}(m)$, see also Figure 
\ref{fig:localglobal2}. As the projection of $\pp(m)$ to $\mathcal{C}_0$ lies in the segment $[\pi_0(\pp(m)),m']$, $\pi_0(\pp({p}))$ lies in the segment from $\pi_0(\pp(q))=\pi_0(\pp(m))$ to $m'$. Thus \[\mathrm{d}({p},q)=\mathrm{d}({p},m)\le  \mathrm{d}(m',m)=\mathrm{d}(m',q).\] To compute the latter, notice that, after applying rotations of planes $\Sp\{e_1,\,e_2\}$ and $\Sp\{e_3,\,e_4\}$, we may assume that \[{u}=\big(u_1,0,u_3,0,1\big),\ \   u_1,\, u_3>0.\]
Then the coordinates of $q$ and $r$ are given as \[\big(0,-u_1,\tfrac{1}{2}u_3,\pm\tfrac{\sqrt{3}}{2}u_3,1\big)\ \text{and} \ m'=\tfrac{1}{\sqrt{5}}\big(1,-2,0,0,0\big).\] 
In particular, \[\mathrm{d}({p},q)\le  \mathrm{d}(m',q)=\arctan \tfrac{1}{2}<\tfrac{\pi}{4}.\]
As for step (b): After applying rotations of planes $\Sp\{e_1,\,e_2\}$ and $\Sp\{e_3,\,e_4\}$ of $\CT$, we may assume that $(u_1,0,u_3,0,1)$, as above. Then, as before, the coordinates of the remaining layer $1$ vertices $q$ and $r$ of $\sigma$ are $(0,-u_1,\nicefrac{1}{2}u_3,\pm\nicefrac{\sqrt{3}}{2}u_3,1)$. A straightforward calculation shows that any normal to $\conv\{{u},\,q,\,r\}$ is a dilate of 
\[\vv{n}=\big(\mu,\,-(\mu+\tfrac{u_3}{2u_1}),\,1,\,0,\,n_5\big),\ \ \mu\in \R,\ n_5=-\mu u_1-u_3\]
and if $\vv{n}$ is the outer normal to a halfspace $H(\sigma)$ that exposes the triangle $\conv\{{u},\,q,\,r\}$ among all vertices of $\RR(\CT,1)$ connected to $\sigma$, then the dilation is by a positive real and $\mu>0$. 

Since $\CT$ is transversal, Proposition~\ref{prp:alignsymm}(e) gives ${p}_1<0$. If additionally $\mathrm{d}({p},q)<\nicefrac{\pi}{4}$, then ${p}_2<{p}_1<0$. In particular, 
\[2\mu({p}_1-{p}_2)-\tfrac{u_3}{u_1}{p}_2>0,\]
which, due to the fact that $w_{1,2}=-{p}_{1,2},\, w_{3,4}={p}_{3,4}$ and $w_{5}={p}_{5}=1$, is equivalent to 
\[
\langle\vv{n},{p}\rangle={p}_1 \mu-\big(\mu+\tfrac{u_3}{2u_1}\big){p}_2+{p}_3+n_5>-{p}_1 \mu + \big(\mu+\tfrac{u_3}{2u_1}\big){p}_2+{p}_3+n_5= \langle\vv{n},w\rangle,
\]
and consequently $w$ is in the interior of $H(\sigma)$.
\end{proof}

\subsubsection*{Acknowledgements}
We are grateful to Alexander Bobenko for background information on Q-nets, to Igor Pak for valuable discussions, to Francisco Santos for particularly helpful suggestions, and to Miriam Schl\"oter for some of the figures in this paper. The first author thanks the Hebrew University Jerusalem, whose hospitality he enjoyed on several occasions while working on this paper.

{\small
\bibliographystyle{myamsalpha}
\bibliography{References}}

\end{document}